\newcommand{\vu}{\boldsymbol{u}}
\newcommand{\vx}{\boldsymbol{x}}
\newcommand{\NN}{\mathbb{N}}
\newcommand{\RR}{\mathbb{R}}
\newcommand{\CC}{\mathbb{C}}
\newcommand{\EE}{\mathbb{E}}
\newcommand{\MM}{\mathcal{M}}
\newcommand{\tr}{\mbox{tr}}
\newcommand{\Ric}{\mbox{Ric}}
\newcommand{\ud}{\textup{d}}
\newcommand{\fg}{\mathfrak{g}}
\newcommand{\fk}{\mathfrak{k}}
\newcommand{\res}{\mbox{res}}
\newcommand{\Hom}{\mbox{Hom}}
\newtheorem{thm}{Theorem}[section]
\newtheorem{lem}[thm]{Lemma}
\newtheorem{cor}[thm]{Corollary}
\newcommand{\argmin}{\operatornamewithlimits{argmin}}
\title{Vector Diffusion Maps and the Connection Laplacian}
\author{
A.~Singer%
\thanks{Department of Mathematics and PACM, Princeton University, Fine Hall, Washington Road, Princeton NJ 08544-1000 USA, email: amits@math.princeton.edu}
\and
H.-T.~Wu%
\thanks{Department of Mathematics, Princeton University, Fine Hall, Washington Road, Princeton NJ 08544-1000 USA, email: hauwu@math.princeton.edu}
}
\date{}
\begin{document}
\maketitle
\begin{center}
\large{\emph{Dedicated to the Memory of Partha Niyogi}}
\vspace{0.2cm}
\end{center}

\begin{abstract}
We introduce {\em vector diffusion maps} (VDM), a new mathematical framework for organizing and
analyzing massive high dimensional data sets, images and shapes.
VDM is a mathematical and algorithmic generalization of diffusion maps and other non-linear dimensionality reduction methods, such as LLE, ISOMAP and Laplacian eigenmaps. While existing methods are either directly or indirectly related to the heat kernel for functions over the data, VDM is based on the heat kernel
for vector fields.
VDM provides tools for organizing complex data sets, embedding them in a low dimensional space, and interpolating and regressing vector fields over the data. In particular, it equips the data with a metric, which we refer to as the {\em vector diffusion distance}.
In the manifold learning setup, where the data set is distributed on (or near) a low dimensional manifold $\MM^d$ embedded in $\RR^{p}$, we prove the relation between VDM and the connection-Laplacian operator for vector fields over the manifold.
\end{abstract}

\begin{keywords}
Dimensionality reduction, vector fields, heat kernel, parallel transport, local principal component analysis, alignment.
\end{keywords}

\section{Introduction}

A popular way to describe the affinities between data points is using a weighted graph, whose vertices correspond to the data points, edges that connect data points with large enough affinities and weights that quantify the affinities.
In the past decade we have been witnessed to the emergence of non-linear dimensionality reduction methods, such as locally linear embedding (LLE) \cite{SamRoweis2000}, ISOMAP \cite{Tenenbaum12222000}, Hessian LLE \cite{Donoho2003}, Laplacian eigenmaps \cite{belkinniyogi2003} and diffusion maps \cite{Coifman20065}. These methods use the local affinities in the weighted graph to learn its global features. They provide invaluable tools for organizing complex networks and data sets, embedding them in a low dimensional space, and studying and regressing functions over graphs. Inspired by recent developments in the mathematical theory of cryo-electron microscopy \cite{amit20093,hadani20091} and synchronization \cite{amit2009,amit2010}, in this paper we demonstrate that in many applications, the representation of the data set can be vastly improved by attaching to every edge of the graph not only a weight but also a linear orthogonal transformation (see Figure \ref{fig:1}).

\begin{figure}
\begin{center}
\includegraphics[width=0.6\columnwidth]{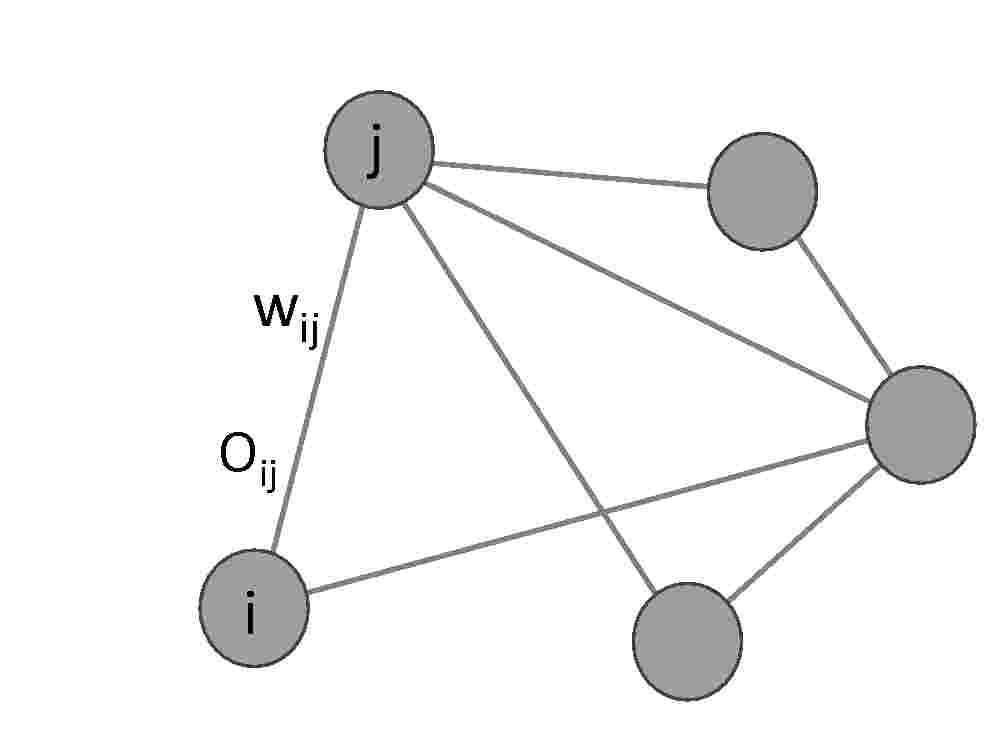}
\end{center}
\caption{In VDM, the relationships between data points are represented as a weighted graph, where the weights $w_{ij}$ are accompanied by linear orthogonal transformations $O_{ij}$.}\label{fig:1}
\end{figure}

\begin{figure}
\begin{center}
\subfigure[$I_i$]{
\includegraphics[width=0.25\textwidth]{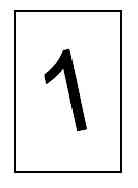}
}
\subfigure[$I_j$]{
\includegraphics[width=0.25\textwidth]{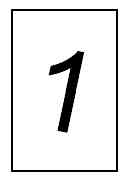}
}
\subfigure[$I_k$]{
\includegraphics[width=0.255\textwidth]{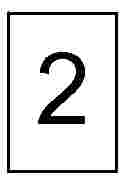}
}
\end{center}
\caption{An example of a weighted graph with orthogonal transformations: $I_i$ and $I_j$ are two different images of the digit one, corresponding to nodes $i$ and $j$ in the graph. $O_{ij}$ is the $2\times 2$ rotation matrix that rotationally aligns $I_j$ with $I_i$ and $w_{ij}$ is some measure for the affinity between the two images when they are optimally aligned. The affinity $w_{ij}$ is large, because the images $I_i$ and $O_{ij}I_j$ are actually the same. On the other hand, $I_k$ is an image of the digit two, and the discrepancy between $I_k$ and $I_i$ is large even when these images are optimally aligned. As a result, the affinity $w_{ik}$ would be small, perhaps so small that there is no edge in the graph connecting nodes $i$ and $k$. The matrix $O_{ik}$ is clearly not as meaningful as $O_{ij}$. If there is no edge between $i$ and $k$, then $O_{ik}$ is not represented in the weighted graph.}
\label{fig:digits}
\end{figure}

Consider, for example, a data set of images, or small patches extracted from images (see, e.g., \cite{Lee,Carlsson}). While weights are usually derived from the pairwise comparison of the images in their original representation, we instead associate the weight $w_{ij}$ to the similarity between image $i$ and image $j$ when they are optimally rotationally aligned. The dissimilarity between images when they are optimally rotationally aligned is sometimes called the rotationally invariant distance \cite{Penczek1996}. We further define the linear transformation $O_{ij}$ as the $2\times 2$ orthogonal transformation that registers the two images (see Figure \ref{fig:digits}). Similarly, for data sets consisting of three-dimensional shapes, $O_{ij}$ encodes the optimal $3\times 3$ orthogonal registration transformation. In the case of manifold learning, the linear transformations can be constructed using local principal component analysis (PCA) and alignment, as discussed in Section \ref{sec:manifold}.

While diffusion maps and other non-linear dimensionality reduction methods are either directly or indirectly related to the heat kernel for functions over the data, our VDM framework is based on the heat kernel
for vector fields. We construct this kernel from the weighted graph and the orthogonal transformations. Through the spectral decomposition of this kernel, VDM defines an embedding of the data in a Hilbert space. In particular, it defines a metric for the data, that is, distances between data points that we call vector diffusion distances. For some applications, the vector diffusion metric is more meaningful than currently used metrics, since it takes into account the linear transformations, and as a result, it provides a better organization of the data. In the manifold learning setup, we prove a convergence theorem illuminating the relation between VDM and the connection-Laplacian operator for vector fields over the manifold.

The paper is organized in the following way: In Section \ref{sec:manifold} we describe the manifold learning setup and a procedure to extract the orthogonal transformations from a point cloud scattered in a high dimensional Euclidean space using local PCA and alignment. In Section \ref{sec:VDM} we specify the vector diffusion mapping of the data set into a finite dimensional Hilbert space. At the heart of the vector diffusion mapping construction lies a certain symmetric matrix that can be normalized in slightly different ways. Different normalizations lead to different embeddings, as discussed in Section \ref{sec:normalizations}. These normalizations resemble the normalizations of the graph Laplacian in spectral graph theory and spectral clustering algorithms. In the manifold learning setup, it is known that when the point cloud is uniformly sampled from a low dimensional Riemannian manifold, then the normalized graph Laplacian approximates the Laplace-Beltrami operator for scalar functions. In Section \ref{convergetoconnlap} we formulate a similar result, stated as Theorem \ref{summary}, for the convergence of the appropriately normalized vector diffusion mapping matrix to the connection-Laplacian operator for vector fields \footnote{One of the main considerations in the way this paper is presented was to make it as accessible as possible, also to readers who are not familiar with differential geometry. Although the connection-Laplacian is essential to the understanding of the mathematical framework that underlies VDM, and differential geometry is extensively used in Appendix \ref{proof} for the proof of Theorem \ref{summary}, we do not assume knowledge of differential geometry in Sections \ref{sec:manifold}-\ref{sec:summary} (except for some parts of Section \ref{gdd}) that detail the algorithmic framework. The concepts of differential geometry that are required for achieving basic familiarity with the connection-Laplacian are explained in Appendix \ref{setupbackground}.}. The proof of Theorem \ref{summary} appears in Appendix \ref{proof}. We verified Theorem \ref{summary} numerically for  spheres of different dimensions, as reported in Section \ref{numerical} and Appendix \ref{app-A}. We also used other surfaces to perform numerical comparisons between the vector diffusion distance, the diffusion distance, and the geodesic distance. In Section \ref{extrapolation} we briefly discuss out-of-sample extrapolation of vector fields via the Nystr\"om extension scheme. The role played by the heat kernel of the connection-Laplacian is discussed in Section \ref{gdd}. We use the well known short time asymptotic expansion of the heat kernel to show the relationship between vector diffusion distances and geodesic distances for nearby points. In Section \ref{applications} we briefly discuss the application of VDM to cryo-electron microscopy, as a prototypical multi-reference rotational alignment problem. We conclude in Section \ref{sec:summary} with a summary followed by a discussion of some other possible applications and extensions of the mathematical framework.

\section{Data sampled from a Riemannian manifold}
\label{sec:manifold}

One of the main objectives in the analysis of a high dimensional large data set is to learn its geometric and topological structure.
Even though the data itself is parameterized as a point cloud in a high dimensional ambient space $\RR^{p}$, the correlation between parameters often suggests the popular ``manifold assumption" that the data points are distributed on (or near) a low dimensional Riemannian manifold $\MM^d$ embedded in $\RR^{p}$, where $d$ is the dimension of the manifold and $d \ll p$.
Suppose that the point cloud consists of $n$ data points $x_1,x_2,\ldots, x_n$ that are viewed as points in $\mathbb{R}^p$ but are restricted to the manifold.
We now describe how the orthogonal transformations $O_{ij}$ can be constructed from the point cloud using local PCA and alignment.

{\bf Local PCA.}
For every data point $x_i$ we suggest to estimate a basis to the tangent plane $T_{x_i}\MM$ to the manifold at $x_i$ using the following procedure which we refer to as local PCA. We fix a scale parameter $\epsilon_{\text{PCA}} > 0$ and define $\mathcal{N}_{x_i,\epsilon_{\text{PCA}}}$ as the neighbors of $x_i$ inside a ball of radius $\sqrt{\epsilon_{\text{PCA}}}$ centered at $x_i$:
$$\mathcal{N}_{x_i,\epsilon_{\text{PCA}}} = \{x_j : 0 < \|x_j-x_i\|_{\mathbb{R}^p} < \sqrt{\epsilon_{\text{PCA}}} \}.$$
Denote the number of neighboring points of $x_i$ by\footnote{Since $N_i$ depends on $\epsilon_{\text{PCA}}$, it should be denoted as $N_{i,\epsilon_{\text{PCA}}}$, but since $\epsilon_{\text{PCA}}$ is kept fixed it is suppressed from the notation, a convention that we use except for cases in which confusion may arise.} $N_i$, that is, $N_i = |\mathcal{N}_{x_i,\epsilon_{\text{PCA}}}|$, and denote the neighbors of $x_i$ by $x_{i_1}, x_{i_2},\ldots, x_{i_{N_i}}$. We assume that $\epsilon_{\text{PCA}}$ is large enough so that $N_i \geq d$, but at the same time $\epsilon_{\text{PCA}}$ is small enough such that $N_i \ll n$. In Theorem \ref{localpcatheorem} we show that a satisfactory choice for $\epsilon_{\text{PCA}}$ is given by $\epsilon_{\text{PCA}} = O(n^{-\frac{2}{d+1}})$, so that $N_i = O(n^{\frac{1}{d+1}})$. In fact, it is even possible to choose $\epsilon_{\text{PCA}} = O(n^{-\frac{2}{d+2}})$ if the manifold does not have a boundary.

Observe that the neighboring points are located near $T_{x_i}\MM$, where deviations are possible either due to curvature or due to neighboring data points that lie slightly off the manifold.
Define $X_i$ to be a $p\times N_i$ matrix whose $j$'th column is the vector $x_{i_j}-x_i$, that is,
$$X_i = \left[\begin{array}{cccc} x_{i_1}-x_i & x_{i_2}-x_i & \ldots & x_{i_{N_i}}-x_i\end{array}  \right].$$
In other words, $X_i$ is the data matrix of the neighbors shifted to be centered at the point $x_i$. Notice, that while it is more common to shift the data for PCA by the mean $\mu_i = \frac{1}{N_i}\sum_{j=1}^{N_i} x_{i_j}$, here we shift the data by $x_i$. Shifting the data by $\mu_i$ is also possible for all practical purposes, but has the slight disadvantage of complicating the proof for the convergence of the local PCA step (see Appendix \ref{localpcatheorem}).

Let $K$ be a $C^2$ positive monotonic decreasing function with support on the interval $[0,1]$, for example, the Epanechnikov kernel $K(u) = (1-u^2)\chi_{[0,1]}$, where $\chi$ is the indicator function \footnote{In fact, $K$ can be chosen in a more general fashion, for example, monotonicity is not required for all theoretical purposes. However, in practice, a monotonic decreasing $K$ leads to a better behavior of the PCA step.}
Let $D_i$ be an $N_i\times N_i$ diagonal matrix with $$D_i(j,j) = \sqrt{K\left(\frac{\|x_i-x_{i_j}\|_{\mathbb{R}^p}}{\sqrt{\epsilon_{\text{PCA}}}}\right)},\quad j=1,2,\ldots,N_i.$$
Define the $p\times N_i$ matrix $B_i$ as
$$B_i = X_i D_i.$$
That is, the $j$'th column of $B_i$ is the vector $(x_{i_j}-x_i)$ scaled by $D_i(j,j)$. The purpose of the scaling is to give more emphasis to nearby points over far away points (recall that $K$ is monotonic decreasing). We denote the singular values of $B_i$ by $\sigma_{i,1} \geq \sigma_{i,2} \geq \cdots \geq \sigma_{i,N_i}$.

In many cases, the intrinsic dimension $d$ is not known in advance and needs to be estimated directly from the data.
If the neighboring points in $\mathcal{N}_{x_i,\epsilon_{\text{PCA}}}$ are located exactly on $T_{x_i}\mathcal{M}$, then $\operatorname{rank}X_i = \operatorname{rank}B_i = d$, and there are only $d$ non-vanishing singular values (i.e., $\sigma_{i,d+1} = \ldots = \sigma_{i,N_i} = 0$). In such a case, the dimension can be estimated as the number of non-zero singular values. In practice, however, due to the curvature effect, there may be more than $d$ non-zero singular values. A common practice is to estimate the dimension as the number of singular values that account for high enough percentage of the variability of the data. That is, one sets a threshold $\gamma$ between 0 and 1 (usually closer to 1 than to 0), and estimates the dimension as the smallest integer $d_i$ for which $$\frac{\sum_{j=1}^{d_i} \sigma_{i,j}^2}{\sum_{j=1}^{N_i} \sigma_{i,j}^2} > \gamma.$$  For example, setting $\gamma=0.9$ means that $d_i$ singular values account for at least $90\%$ variability of the data, while $d_i-1$ singular values account for less than $90\%$. We refer to the smallest integer $d_i$ as the estimated local dimension of $\MM$ at $x_i$. One possible way to estimate the dimension of the manifold would be to use the mean of the estimated local dimensions $d_1,\ldots,d_n$, that is, $\hat{d} = \frac{1}{n}\sum_{i=1}^n d_i$ (and then round it to the closest integer). The mean estimator minimizes the sum of squared errors $\sum_{i=1}^n (d_i-\hat{d})^2$.
We estimate the intrinsic dimension of the manifold by the median value of all the $d_i$'s, that is, we define the estimator $\hat{d}$ for the intrinsic dimension $d$ as $$\hat{d} = \operatorname{median} \{d_1,d_2,\ldots,d_n \}.$$ The median has the property that it minimizes the sum of absolute errors $\sum_{i=1}^n |d_i-\hat{d}|$. As such, estimating the intrinsic dimension by the median is more robust to outliers compared to the mean estimator. In all proceeding steps of the algorithm we use the median estimator $\hat{d}$, but in order to facilitate the notation we write $d$ instead of $\hat{d}$.

Suppose that the singular value decomposition (SVD) of $B_i$ is given by $$B_i = U_i\Sigma_i V_i^T.$$
The columns of the $p\times N_i$ matrix $U_i$ are orthonormal and are known as the left singular vectors
$$U_i = \left[\begin{array}{cccc} u_{i_1} & u_{i_2} & \cdots & u_{i_{N_i}} \end{array} \right].$$
We define the $p\times d$ matrix $O_i$ by the first $d$ left singular vectors (corresponding to the largest singular values):
\begin{equation}
\label{Oi}
O_i = \left[\begin{array}{cccc} u_{i_1} & u_{i_2} & \cdots & u_{i_{d}} \end{array} \right].
\end{equation}
The $d$ columns of $O_i$ are orthonormal, i.e., $O_i^T O_i = I_{d\times d}$. The columns of $O_i$ represent an orthonormal basis to a $d$-dimensional subspace of $\mathbb{R}^p$. This basis is a numerical approximation to an orthonormal basis of the tangent plane $T_{x_i}\MM$. The order of the approximation (as a function of $\epsilon_{\text{PCA}}$ and $n$) is established later, using the fact that the columns of $O_i$ are also the eigenvectors (corresponding to the $d$ largest eigenvalues) of the $p\times p$ covariance matrix $\Xi_i$ given by
\begin{equation}
\label{Xi1}
\Xi_i=\sum_{j=1}^{N_i} K\left(\frac{\|x_i-x_{i_j}\|_{\mathbb{R}^p}}{\sqrt{\epsilon_{\text{PCA}}}}\right)(x_{i_j}-x_i)(x_{i_j}-x_i)^T.
\end{equation}
Since $K$ is supported on the interval $[0,1]$ the covariance matrix $\Xi_i$ can also be represented as
\begin{equation}
\label{Xi2}
\Xi_i = \sum_{j=1}^{n} K\left(\frac{\|x_i-x_j\|_{\mathbb{R}^p}}{\sqrt{\epsilon_{\text{PCA}}}}\right)(x_j-x_i)(x_j-x_i)^T.
\end{equation}
We emphasize that the covariance matrix is never actually formed due to its excessive storage requirements, and all computations are performed with the matrix $B_i$.
We remark that it is also possible to estimate the intrinsic dimension $d$ and the basis $O_i$ using the multiscaled PCA algorithm \cite{Little2010} that uses several different values of $\epsilon_{\text{PCA}}$ for a given $x_i$, but here we try to make our approach as simple as possible while being able to later prove convergence theorems.

{\bf Alignment.}
Suppose $x_i$ and $x_j$ are two nearby points whose Euclidean distance satisfies $\|x_i-x_j \|_{\mathbb{R}^p} < \sqrt{\epsilon}$, where $\epsilon>0$ is a scale parameter different from the scale parameter $\epsilon_{\text{PCA}}$. In fact, $\epsilon$ is much larger than $\epsilon_{\text{PCA}}$ as we later choose $\epsilon = O(n^{-\frac{2}{d+4}})$, while, as mentioned earlier, $\epsilon_{\text{PCA}}=O(n^{-\frac{2}{d+1}})$ (manifolds with boundary) or $\epsilon_{\text{PCA}}=O(n^{-\frac{2}{d+2}})$ (manifolds with no boundary). In any case, $\epsilon$ is small enough so that the tangent spaces $T_{x_i}\MM$ and $T_{x_j}\MM$ are also close.\footnote{In the sense that their Grassmannian distance given approximately by the operator norm $\|O_iO_i^T - O_jO_j^T \|$ is small.}
Therefore, the column spaces of $O_i$ and $O_j$ are almost the same. If the subspaces were to be exactly the same, then the matrices $O_i$ and $O_j$ would have differ by a $d\times d$ orthogonal transformation $O_{ij}$ satisfying $O_i O_{ij} = O_j$, or equivalently $O_{ij} = O_i^T O_j$. In that case, $O_i^T O_j$ is the matrix representation of the operator that transport vectors from $T_{x_j}\MM$ to $T_{x_i}\MM$, viewed as copies of $\mathbb{R}^d$. The subspaces, however, are usually not exactly the same, due to curvature. As a result, the matrix $O_i^T O_j$ is not necessarily orthogonal, and we define $O_{ij}$ as its closest orthogonal matrix, i.e.,
\begin{equation}
\label{HS}
O_{ij} = \argmin_{O \in O(d)} \|O - O_i^T O_j\|_{HS},
\end{equation}
where $\| \cdot \|_{HS}$ is the Hilbert-Schmidt norm (given by $\|A\|_{HS}^2 = \operatorname{Tr}(AA^T)$ for any real matrix $A$) and $O(d)$ is the set of orthogonal $d\times d$ matrices. This minimization problem has a simple solution \cite{Fan1955,Keller1975,Higham1986,arun} via the SVD of $O_i^T O_j$. Specifically, if
\begin{equation*}
O_i^T O_j = U\Sigma V^T
\end{equation*}
is the SVD of $O_i^T O_j$, then $O_{ij}$ is given by
\begin{equation*}
O_{ij} = UV^T.
\end{equation*}

\begin{figure}
\begin{center}
\subfigure{
\includegraphics[width=0.75\textwidth]{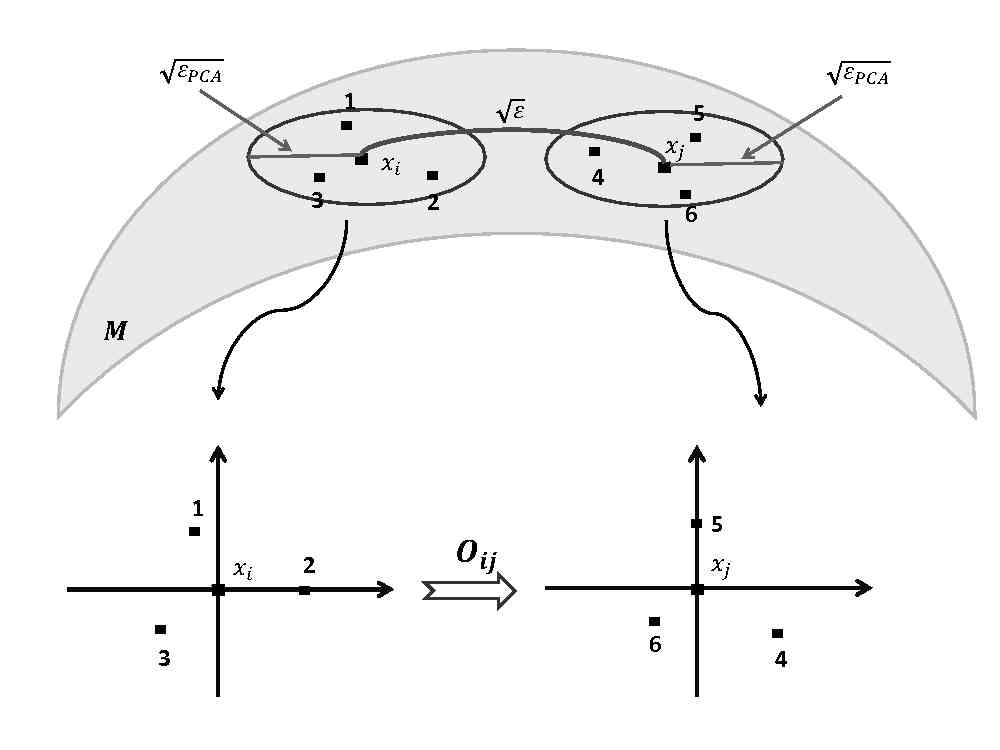}
}
\subfigure{
\includegraphics[width=0.75\textwidth]{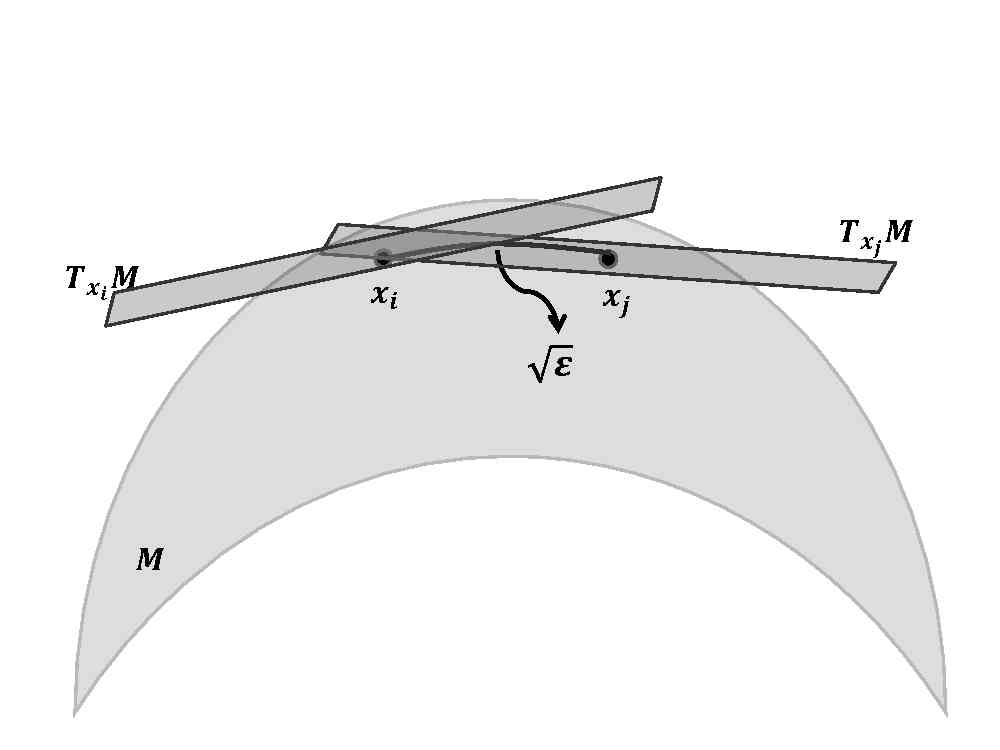}
}
\end{center}
\caption{The orthonormal basis of the tangent plane $T_{x_i}\MM$ is determined by local PCA using data points inside a Euclidean ball of radius $\sqrt{\epsilon_\text{PCA}}$ centered at $x_i$. The bases for $T_{x_i}\MM$ and $T_{x_j}\MM$ are optimally aligned by an orthogonal transformation $O_{ij}$ that can be viewed as a mapping from $T_{x_i}\MM$ to $T_{x_j}\MM$.}
\label{rotref}
\end{figure}

We refer to the process of finding the optimal orthogonal transformation between bases as alignment. Later we show that the matrix $O_{ij}$ is an approximation to the parallel transport operator (see Appendix \ref{setupbackground}) from $T_{x_j}\mathcal{M}$ to $T_{x_i}\mathcal{M}$ whenever $x_i$ and $x_j$ are nearby.

Note that not all bases are aligned; only the bases of nearby points are aligned. We set $E$ to be the edge set of the undirected graph over $n$ vertices that correspond to the data points, where an edge between $i$ and $j$ exists iff their corresponding bases are aligned by the algorithm\footnote{We do not align a basis with itself, so the edge set $E$ does not contain self loops of the form $(i,i)$.} (or equivalently, iff $0<\|x_i-x_j\|_{\mathbb{R}^p} < \sqrt{\epsilon}$).
The weights $w_{ij}$ are defined using a kernel function $K$ as \footnote{Notice that the weights are only a function of the Euclidean distance between data points; another possibility, which we do not consider in this paper, is to include the Grassmannian distance $\|O_iO_i^T - O_jO_j^T \|_{2}$ into the definition of the weight.}
\begin{equation}\label{wij}
w_{ij}=K\left(\frac{\|x_i-x_j \|_{\mathbb{R}^p}}{\sqrt{\epsilon}}\right),
\end{equation}
where we assume that $K$ is supported on the interval $[0,1]$.
For example, the Gaussian kernel $K(u) = \exp\{-u^2\}\chi_{[0,1]}$ leads to weights of the form $w_{ij} = \exp\{-\frac{\|x_i-x_j \|^2}{\epsilon}\}$ for $0 < \|x_i-x_j\| < \sqrt{\epsilon}$ and $0$ otherwise. We emphasize that the kernel $K$ used for the definition of the weights $w_{ij}$ could be different than the kernel used for the previous step of local PCA.

\section{Vector diffusion mapping}
\label{sec:VDM}

We construct the following matrix $S$:
\begin{equation}
\label{S}
S(i,j) = \left\{\begin{array}{ccc}
                  w_{ij}O_{ij} &  & (i,j)\in E, \\
                  0_{d\times d} &  & (i,j)\notin E.
                \end{array}
 \right.
\end{equation}
That is, $S$ is a block matrix, with $n\times n$ blocks, each of which is of size $d\times d$. Each block is either a $d\times d$ orthogonal transformation $O_{ij}$ multiplied by the scalar weight $w_{ij}$, or a zero $d\times d$ matrix.\footnote{As mentioned in the previous footnote, the edge set does not contain self-loops, so $w_{ii}=0$ and $S(i,i)=0_{d\times d}$.} The matrix $S$ is symmetric since $O_{ij}^T = O_{ji}$ and $w_{ij}=w_{ji}$, and its overall size is $nd \times nd$. We define a diagonal matrix $D$ of the same size, where the diagonal blocks are scalar matrices given by
\begin{equation}\label{Dvdm}
D(i,i) = \operatorname{deg}(i) I_{d\times d},
\end{equation}
and
\begin{equation}
\operatorname{deg}(i) = \sum_{j:(i,j)\in E} w_{ij}
\end{equation}
is the weighted degree of node $i$. The matrix $D^{-1}S$ can be applied to vectors $v$ of length $nd$, which we regard as $n$ vectors of length $d$, such that $v(i)$ is a vector in $\mathbb{R}^d$ viewed as a vector in $T_{x_i}\mathcal{M}$. The matrix $D^{-1}S$ is an averaging operator for vector fields, since
\begin{equation}\label{Cmatrix}
(D^{-1}Sv)(i) = \frac{1}{\operatorname{deg}(i)} \sum_{j: (i,j)\in E} w_{ij}O_{ij}v(j).
\end{equation}
This implies that the operator $D^{-1}S$ transport vectors from the tangent spaces $T_{x_j}\mathcal{M}$ (that are nearby to $T_{x_i}\mathcal{M}$) to $T_{x_i}\mathcal{M}$  and then averages the transported vectors in $T_{x_i}\mathcal{M}$.

Notice that diffusion maps and other non-linear dimensionality reduction methods make use of the weight matrix $W=(w_{ij})_{i,j=1}^n$, but not of the transformations $O_{ij}$. In diffusion maps, the weights are used to define a discrete random walk over the graph, where the transition probability $a_{ij}$ in a single time step from node $i$ to node $j$ is given by
\begin{equation}
a_{ij} = \displaystyle{\frac{w_{ij}}{\operatorname{deg}(i)}}.
\end{equation}
The Markov transition matrix $A=(a_{ij})_{i,j=1}^n$ can be written as
\begin{equation}\label{Amatrix}
A=\mathcal{D}^{-1}W,
\end{equation}
where $\mathcal{D}$ is $n\times n$ diagonal matrix with
\begin{equation}
\label{Dii}
\mathcal{D}(i,i) = \operatorname{deg}(i).
\end{equation}
While $A$ is the Markov transition probability matrix in a single time step, $A^t$ is the transition matrix for $t$ steps. In particular, $A^t(i,j)$ sums the probabilities of all paths of length $t$ that start at $i$ and end at $j$. Coifman and Lafon \cite{Coifman20065,LafonPHD} showed that $A^t$ can be used to define an inner product in a Hilbert space. Specifically, the matrix $A$ is similar to the symmetric matrix $\mathcal{D}^{-1/2}W\mathcal{D}^{-1/2}$ through $A = \mathcal{D}^{-1/2} (\mathcal{D}^{-1/2}W\mathcal{D}^{-1/2}) \mathcal{D}^{1/2}$. It follows that $A$ has a complete set of real eigenvalues and eigenvectors $\{\mu_l\}_{l=1}^n$ and $\{\phi_l\}_{l=1}^n$, respectively, satisfying $A\phi_l = \mu_l \phi_l$. Their diffusion mapping $\Phi_t$ is given by
\begin{equation}\label{diffusionmap}
\Phi_t(i) = (\mu_1^t \phi_1(i), \mu_2^t \phi_2(i), \ldots, \mu_n^t \phi_n(i)),
\end{equation}
where $\phi_l(i)$ is the $i$'th entry of the eigenvector $\phi_l$. The mapping $\Phi_t$ satisfies
\begin{equation}
\label{eq:dm}
\sum_{k=1}^n \frac{A^t(i,k)}{\sqrt{\operatorname{deg}(k)}} \frac{A^t(j,k)}{\sqrt{\operatorname{deg}(k)}} = \langle \Phi_t(i), \Phi_t(j) \rangle,
\end{equation}
where $\langle \cdot , \cdot \rangle$ is the usual dot product over Euclidean space.
The metric associated to this inner product is known as the {\em diffusion distance}. The diffusion distance ${d}_{{\text{DM}},t}(i,j)$ between $i$ and $j$ is given by
\begin{equation}\label{dmd}
d_{{\text{DM}},t}^2(i,j) = \sum_{k=1}^n \frac{(A^t(i,k) - A^t(j,k))^2}{\operatorname{deg}(k)}  = \langle \Phi_t(i), \Phi_t(i) \rangle + \langle \Phi_t(j), \Phi_t(j) \rangle - 2 \langle \Phi_t(i), \Phi_t(j) \rangle.
\end{equation}
Thus, the diffusion distance between $i$ and $j$ is the weighted-$\ell_2$ proximity between the probability clouds of random walkers starting at $i$ and $j$ after $t$ steps.

In the VDM framework, we define the affinity between $i$ and $j$ by considering all paths of length $t$ connecting them, but instead of just summing the weights of all paths, we {\em sum the transformations}. A path of length $t$ from $j$ to $i$ is some sequence of vertices $j_0,j_1,\ldots,j_t$ with $j_0=j$ and $j_t=i$ and its corresponding orthogonal transformation is obtained by multiplying the orthogonal transformations along the path in the following order:
\begin{equation}
O_{j_t,j_{t-1}}\cdots O_{j_2,j_1}O_{j_1,j_0}.
\end{equation}
Every path from $j$ to $i$ may therefore result in a different transformation. This is analogous to the parallel transport operator from differential geometry that depends on the path connecting two points whenever the manifold has curvature (e.g., the sphere). Thus, when adding transformations of different paths, cancelations may happen. We would like to define the affinity between $i$ and $j$ as the consistency between these transformations, with higher affinity expressing more agreement among the transformations that are being averaged. To quantify this affinity, we consider again the matrix $D^{-1}S$ which is similar to the symmetric matrix
\begin{equation}
\label{tilde-S}
\tilde{S} = D^{-1/2}SD^{-1/2}
\end{equation}
through $D^{-1}S = D^{-1/2} \tilde{S} D^{1/2}$ and define the affinity between $i$ and $j$ as $\|\tilde{S}^{2t}(i,j)\|^2_{HS}$, that is, as the squared HS norm of the $d\times d$ matrix $\tilde{S}^{2t}(i,j)$, which takes into account all paths of length $2t$, where $t$ is a positive integer. In a sense, $\|\tilde{S}^{2t}(i,j)\|^2_{HS}$ measures not only the number of paths of length $2t$ connecting $i$ and $j$ but also the amount of agreement between their transformations. That is, for a fixed number of paths, $\|\tilde{S}^{2t}(i,j)\|^2_{HS}$ is larger when the path transformations are in agreement, and is smaller when they differ.

Since $\tilde{S}$ is symmetric, it has a complete set of eigenvectors $v_1,v_2,\ldots,v_{nd}$ and eigenvalues $\lambda_1, \lambda_2, \ldots, \lambda_{nd}$. We order the eigenvalues in decreasing order of magnitude $|\lambda_1| \geq |\lambda_2| \geq \ldots \geq |\lambda_{nd}|$. The spectral decompositions of $\tilde{S}$ and $\tilde{S}^{2t}$ are given by
\begin{equation}
\tilde{S}(i,j) = \sum_{l=1}^{nd} \lambda_l v_l(i) v_l(j)^T, \quad \mbox{and}\quad
\tilde{S}^{2t}(i,j) = \sum_{l=1}^{nd} \lambda_l^{2t} v_l(i) v_l(j)^T,
\end{equation}
where $v_l(i)\in \mathbb{R}^d$ for $i=1,\ldots,n$ and $l=1,\ldots,nd$.
The HS norm of $\tilde{S}^{2t}(i,j)$ is calculated using the trace:
\begin{equation}
\label{HS-norm}
\|\tilde{S}^{2t}(i,j)\|^2_{HS} = \operatorname{Tr} \left[ \tilde{S}^{2t}(i,j) \tilde{S}^{2t}(i,j)^T \right] = \sum_{l,r=1}^{nd} (\lambda_l \lambda_r)^{2t} \langle v_l(i), v_r(i) \rangle \langle v_l(j), v_r(j) \rangle.
\end{equation}
It follows that the affinity $\|\tilde{S}^{2t}(i,j)\|^2_{HS}$ is an inner product for the finite dimensional Hilbert space $\mathbb{R}^{(nd)^2}$ via the mapping $V_t$:
\begin{equation}
\label{vec-map}
V_t: i \mapsto \left( (\lambda_l \lambda_r)^{t} \langle v_l(i), v_r(i) \rangle\right)_{l,r=1}^{nd}.
\end{equation}
That is,
\begin{equation}
\|\tilde{S}^{2t}(i,j)\|^2_{HS} = \langle V_t(i), V_t(j) \rangle.
\end{equation}
Note that in the manifold learning setup, the embedding $i \mapsto V_t(i)$ is invariant to the choice of basis for $T_{x_i}\mathcal{M}$ because the dot products $\langle v_l(i), v_r(i)\rangle$ are invariant to orthogonal transformations. We refer to $V_t$ as the vector diffusion mapping.

From the symmetry of the dot products $\langle v_l(i), v_r(i) \rangle = \langle v_r(i), v_l(i) \rangle$, it is clear that $\|\tilde{S}^{2t}(i,j)\|^2_{HS}$ is also an inner product for the finite dimensional Hilbert space $\mathbb{R}^{nd(nd+1)/2}$ corresponding to the mapping $$i \mapsto \left( c_{lr}(\lambda_l \lambda_r)^{t} \langle v_l(i), v_r(i) \rangle\right)_{1 \leq l\leq r \leq nd},$$
where
$$c_{lr} = \left\{\begin{array}{ccc}
                    \sqrt{2} &  & l < r, \\
                    1 &  & l=r.
                  \end{array}\right.$$
We define the symmetric vector diffusion distance $d_{\text{VDM},t}(i,j)$ between nodes $i$ and $j$ as
\begin{equation}
\label{Dt}
d_{\text{VDM},t}^2(i,j) = \langle V_t(i), V_t(i) \rangle + \langle V_t(j), V_t(j) \rangle - 2\langle V_t(i), V_t(j) \rangle.
\end{equation}

The matrices $I-\tilde{S}$ and $I+\tilde{S}$ are positive semidefinite due to the following identity:
\begin{equation}
v^T (I \pm D^{-1/2} S D^{-1/2}) v = \sum_{(i,j)\in E} \left\|\frac{v(i)}{\sqrt{\operatorname{deg}(i)}} \pm \frac{w_{ij}O_{ij}v(j)}{\sqrt{\operatorname{deg}(j)}} \right\|^2 \geq 0,
\end{equation}
for any $v\in \mathbb{R}^{nd}$. As a consequence, all eigenvalues $\lambda_l$ of $\tilde{S}$ reside in the interval $[-1,1]$. In particular, for large enough $t$, most terms of the form $(\lambda_l \lambda_r)^{2t}$ in (\ref{HS-norm}) are close to 0, and $\|\tilde{S}^{2t}(i,j)\|^2_{HS}$ can be well approximated by using only the few largest eigenvalues and their corresponding eigenvectors.
This lends itself into an efficient approximation of the vector diffusion distances $d_{\text{VDM},t}(i,j)$ of (\ref{Dt}), and it is not necessary to raise the matrix $\tilde{S}$ to its $2t$ power (which usually results in dense matrices). Thus, for any $\delta > 0$, we define the truncated vector diffusion mapping $V_t^\delta$ that embeds the data set in $\mathbb{R}^{m^2}$ (or equivalently, but more efficiently in $\mathbb{R}^{m(m+1)/2}$) using the eigenvectors $v_1,\ldots,v_{m}$ as
\begin{equation}
\label{Vtdelta}
V_t^{\delta}: i \mapsto \left( (\lambda_l \lambda_r)^{t} \langle v_l(i), v_r(i) \rangle\right)_{l,r=1}^m
\end{equation}
where $m=m(t,\delta)$ is the largest integer for which $\left(\displaystyle{\frac{\lambda_m}{\lambda_1}}\right)^{2t} > \delta$ and $\left(\displaystyle{\frac{\lambda_{m+1}}{\lambda_1}}\right)^{2t} \leq \delta$.

We remark that we define $V_t$ through $\|\tilde{S}^{2t}(i,j)\|^2_{HS}$ rather than through $\|\tilde{S}^{t}(i,j)\|^2_{HS}$, because we cannot guarantee that in general all eigenvalues of $\tilde{S}$ are non-negative. In Section \ref{gdd}, we show that in the continuous setup of the manifold learning problem all eigenvalues are non-negative. We anticipate that for most practical applications that correspond to the manifold assumption, all negative eigenvalues (if any) would be small in magnitude (say, smaller than $\delta$). In such cases, one can use any real $t>0$ for the truncated vector diffusion map $V_t^\delta$.

\section{Normalized Vector Diffusion Mappings}
\label{sec:normalizations}

It is also possible to obtain slightly different vector diffusion mappings using different normalizations of the matrix $S$. These normalizations are similar to the ones used in the diffusion map framework \cite{Coifman20065}. For example, notice that
\begin{equation}
\label{wv}
w_l=D^{-1/2}v_l
\end{equation}
are the right eigenvectors of $D^{-1}S$, that is, $D^{-1}S w_l = \lambda_l w_l$. We can thus define another vector diffusion mapping, denoted $V'_t$, as
\begin{equation}
\label{vec-map2}
V'_t: i \mapsto \left( (\lambda_l \lambda_r)^{t} \langle w_l(i), w_r(i) \rangle\right)_{l,r=1}^{nd}.
\end{equation}
From (\ref{wv}) it follows that $V'_t$ and $V_t$ satisfy the relations
\begin{equation}
V'_t(i) = \frac{1}{\text{deg}(i)}V_t(i),
\end{equation}
and
\begin{equation}
\langle V'_t(i), V'_t(j) \rangle = \frac{\langle V_t(i), V_t(j) \rangle }{\text{deg}(i) \text{deg}(j)}.
\end{equation}
As a result,
\begin{equation}
\label{vec-map2-innerproduct}
\langle V'_t(i),V'_t(j)\rangle = \frac{\|\tilde{S}^{2t}(i,j)\|^2_{HS}}{\text{deg}(i)\text{deg}(j)} = \frac{\|(D^{-1}S)^{2t}(i,j)\|^2_{HS}}{\text{deg}(j)^2}.
\end{equation}
In other words, the Hilbert-Schmidt norm of the matrix $D^{-1}S$ leads to an embedding of the data set in a Hilbert space only upon proper normalization by the vertex degrees (similar to the normalization by the vertex degrees in (\ref{eq:dm}) and (\ref{dmd}) for the diffusion map). We define the associated vector diffusion distances as
\begin{equation}
\label{Dt2}
{d^2_{\text{VDM}',t}}(i,j) = \langle V'_t(i), V'_t(i) \rangle + \langle V'_t(j), V'_t(j) \rangle - 2\langle V'_t(i), V'_t(j) \rangle.
\end{equation}
The distances are related by ${d^2_{\text{VDM}',t}}(i,j) = \frac{d^2_{\text{VDM},t}(i,j)}{\text{deg}(i)\text{deg}(j)}$.

We comment that the normalized mappings $i \mapsto \frac{V_t(i)}{\| V_t(i) \|}$ and $i \mapsto \frac{V'_t(i)}{\|V'_t(i) \|}$ that map the data points to the unit sphere are equivalent, that is,
\begin{equation}
\frac{V'_t(i)}{\|V'_t(i) \|} = \frac{V_t(i)}{\| V_t(i) \|}.
\end{equation}
This means that the angles between pairs of embedded points are the same for both mappings. For diffusion map, it has been observed that in some cases the distances $\|\frac{\Phi_t(i)}{\|\Phi_t(i) \|} - \frac{\Phi_t(i)}{\|\Phi_t(i) \|} \|$ are more meaningful than $\|\Phi_t(i) - \Phi_t(j) \|$ (see, for example, \cite{Goldberg}).
This may also suggest the usage of the distances $\|\frac{V_t(i)}{\|V_t(i) \|} - \frac{V_t(i)}{\|V_t(i) \|} \|$ in the VDM framework.

Another important family of normalized diffusion mappings is obtained by the following procedure. Suppose $0\leq \alpha \leq 1$, and define the symmetric matrices $W_\alpha$ and $S_\alpha$ as
\begin{equation}
\label{Walpha}
W_\alpha = \mathcal{D}^{-\alpha}W \mathcal{D}^{-\alpha},
\end{equation}
and
\begin{equation}
\label{Salpha}
S_\alpha = D^{-\alpha}S D^{-\alpha}.
\end{equation}
We define the weighted degrees $\operatorname{deg}_\alpha(1),\ldots,\operatorname{deg}_\alpha(n)$ corresponding to $W_\alpha$ by
$$\operatorname{deg}_\alpha(i) = \sum_{j=1}^n W_\alpha(i,j),$$
the $n\times n$ diagonal matrix $\mathcal{D}_\alpha$ as
\begin{equation}
\label{Dalpha}
\mathcal{D}_\alpha(i,i) = \operatorname{deg}_\alpha(i),
\end{equation}
and the $n\times n$ block diagonal matrix $D_\alpha$ (with blocks of size $d\times d$) as
\begin{equation}
\label{mDalpha}
D_\alpha(i,i) = \operatorname{deg}_\alpha(i) I_{d\times d}.
\end{equation}
We can then use the matrices $S_\alpha$ and $D_\alpha$ (instead of $S$ and $D$) to define the vector diffusion mappings $V_{\alpha,t}$ and $V_{\alpha,t}'$. Notice that for $\alpha=0$ we have $S_0=S$ and $D_0=D$, so that $V_{0,t}=V_t$ and $V'_{0,t}=V'_t$. The case $\alpha=1$ turns out to be especially important as discussed in the next Section.

\section{Convergence to the connection-Laplacian}
\label{convergetoconnlap}
For diffusion maps, the discrete random walk over the data points converges to a continuous diffusion process over that manifold in the limit $n\to \infty$ and $\epsilon \to 0$. This convergence can be stated in terms of the normalized graph Laplacian $L$ given by $$L=\mathcal{D}^{-1}W-I.$$ In the case where the data points $\left\{{x}_i\right\}_{i=1}^n$ are
sampled independently from the uniform distribution over $\mathcal M^d$, the graph
Laplacian converges pointwise to the Laplace-Beltrami operator, as we have the
following proposition \cite{LafonPHD,Belkin3,Singer2006a,Hein}: If $f:\mathcal M^d \rightarrow \mathbb{R}$ is a smooth function (e.g., $f\in C^3(\mathcal{M})$),
then with high probability
\begin{equation}
\label{eq:bias1} \frac{1}{\epsilon}\sum_{j=1}^N L_{ij} f({x}_j) = \frac{1}{2}\Delta_{\mathcal{M}} f({x}_i) + O\left(\epsilon + \frac{1}{n^{1/2}\epsilon^{1/2+d/4}}\right),
\end{equation}
where $\Delta_{\mathcal{M}}$ is the Laplace-Beltrami operator on $\mathcal{M}^d$. The error consists of two terms: a bias term $O(\epsilon)$ and a variance term that decreases as $1/\sqrt{n}$, but also depends on $\epsilon$. Balancing the two terms may lead to an optimal choice of the parameter $\epsilon$ as a function of the number of points $n$. In the case of uniform sampling, Belkin and Niyogi \cite{BelkinNiyogi2007} have shown that the eigenvectors of the graph Laplacian converge to the eigenfunctions of the Laplace-Beltrami operator on the manifold, which is stronger than the pointwise convergence given in (\ref{eq:bias1}).

In the case where the data points $\left\{{x}_i\right\}_{i=1}^n$ are independently sampled from a probability density function $p(x)$ whose support is a $d$-dimensional manifold $\MM^d$ and satisfies some mild conditions, the graph Laplacian converges pointwise to the Fokker-Planck operator as stated in following proposition \cite{LafonPHD,Belkin3,Singer2006a,Hein}: If $f\in C^3(\mathcal{M})$,
then with high probability
\begin{equation}
\label{eq:bias} \frac{1}{\epsilon}\sum_{j=1}^N L_{ij} f({x}_j) = \frac{1}{2}\Delta_{\mathcal{M}} f({x}_i) + \nabla U({x}_i)\cdot \nabla f({x_i})
+ O\left(\epsilon + \frac{1}{n^{1/2}\epsilon^{1/2+d/4}}\right),
\end{equation}
where the potential term $U$ is given by $U({x})=-2\log p({x})$. The error is interpreted in the same way as in the uniform sampling case. In \cite{Coifman20065} it is shown that it is possible to recover the Laplace-Beltrami operator also for non-uniform sampling processes using $W_1$ and $\mathcal{D}_1$ (that correspond to $\alpha=1$ in (\ref{Walpha}) and (\ref{mDalpha})). The matrix $\mathcal{D}_1^{-1}W_1-I$ converges to the Laplace-Beltrami operator independently of the sampling density function $p(x)$.

For VDM, we prove in Appendix \ref{proof} the following theorem, Theorem \ref{summary}, that states that the matrix $D_\alpha^{-1}S_\alpha-I$, where $0\leq \alpha \leq1$, converges to the connection-Laplacian operator (defined via the covariant derivative,  see Appendix \ref{setupbackground} and \cite{petersen}) plus some potential terms depending on $p(x)$. In particular, $D_1^{-1}S_1-I$ converges to the connection-Laplacian operator, without any additional potential terms. Using the terminology of spectral graph theory, it may thus be appropriate to call $D_1^{-1}S_1-I$ the {\em connection-Laplacian of the graph}.

The main content of Theorem \ref{summary} specifies the way in which VDM generalizes diffusion maps: while diffusion mapping is based on the heat kernel and the Laplace-Beltrami operator over scalar functions, VDM is based on the heat kernel and the connection-Laplacian over vector fields. While for diffusion maps, the computed eigenvectors are discrete approximations of the Laplacian eigenfunctions, for VDM, the $l$-th eigenvector $v_l$ of $D_1^{-1}S_1-I$ is a discrete approximation of the $l$-th eigen-vector field $X_l$ of the connection-Laplacian $\nabla^2$ over $\MM$, which satisfies $\nabla^2X_l=-\lambda_lX_l$ for some $\lambda_l\geq0$.

In the formulation of the Theorem \ref{summary}, as well as in the remainder of the paper, we slightly change the notation used so far in the paper, as we denote the observed data points in $\mathbb{R}^p$ by $\iota(x_1),\iota(x_2),\ldots,\iota(x_n)$, where $\iota:\mathcal{M}\hookrightarrow\RR^p$ is the embedding of the Riemannian manifold $\mathcal{M}$ in $\mathbb{R}^p$. Furthermore, we denote by $\iota_* T_{x_i}\MM$ the $d$-dimensional subspace of $\mathbb{R}^p$ which is the embedding of $T_{x_i}\MM$ in $\mathbb{R}^p$. It is important to note that in the manifold learning setup, the manifold $\mathcal{M}$, the embedding $\iota$ and the points $x_1,x_2,\ldots,x_n \in \mathcal{M}$ are assumed to exist but cannot be directly observed.

%

\begin{thm}\label{summary}
Let $\iota:\mathcal{M}\hookrightarrow\RR^p$ be a smooth d-dim closed Riemannian manifold embedded in $\RR^p$, with metric $g$ induced from the canonical metric on $\RR^p$. Let $K\in C^2([0,1))$ be a positive function. For $\epsilon>0$, let $K_\epsilon\left(x_i,x_j\right)=K\left(\frac{\|\iota(x_i)-\iota(x_j)\|_{\mathbb{R}^p}}{\sqrt{\epsilon}}\right)$ for $0<\|\iota(x_i)-\iota(x_j)\|<\sqrt{\epsilon}$, and $K_\epsilon\left(x_i,x_j\right)=0$ otherwise. Let the data set $\{x_i\}_{i=1,...,n}$ be independently distributed according to the probability density function $p(x)$ supported on $\mathcal{M}$, where $p$ is uniformly bounded from below and above, that is, $0<p_m\leq p(x)\leq p_M<\infty$. Define \textit{the estimated probability density distribution} by
\[
p_\epsilon(x_i)=\sum_{j=1}^n K_\epsilon\left(x_i,x_j\right)
\]
and for $0\leq \alpha\leq 1$ define \textit{the $\alpha$-normalized kernel $K_{\epsilon,\alpha}$} by
\[
K_{\epsilon,\alpha}(x_i,x_j)=\frac{K_{\epsilon}(x_i,x_j)}{p^\alpha_\epsilon(x_i)p^\alpha_\epsilon(x_j)}.
\]
Then, using $\epsilon_{\text{PCA}}=O(n^{-\frac{2}{d+2}})$ for $X\in C^3(T\MM)$ and for all $x_i$ with high probability (w.h.p.)
\begin{eqnarray}
&&\frac{1}{\epsilon}\left[\frac{\sum_{j=1}^n K_{\epsilon,\alpha}\left(x_i,x_j\right)O_{ij}\bar{X}_j}{\sum_{j=1}^n K_{\epsilon,\alpha}\left(x_i,x_j\right)} - \bar{X}_i\right]\nonumber \\
&=&\frac{m_2}{2dm_0}\left(\left\langle \iota_*\left\{\nabla^2X(x_i)+d\frac{\int_{S^{d-1}} \nabla_{\theta}X(x_i)\nabla_\theta(p^{1-\alpha})(x_i)\ud\theta}{p^{1-\alpha}(x_i)}\right\}, u_l(x_i)\right\rangle\right)_{l=1}^d \label{summary:eq}\\
&&+ O\left(\epsilon^{1/2} + \epsilon^{-1}n^{-\frac{3}{d+2}} + n^{-1/2}\epsilon^{-(d/4 + 1/2)}\right)\nonumber \\
&=& \frac{m_2}{2dm_0}\left(\left\langle \iota_*\left\{\nabla^2X(x_i)+d\frac{\int_{S^{d-1}} \nabla_{\theta}X(x_i)\nabla_\theta(p^{1-\alpha})(x_i)\ud\theta}{p^{1-\alpha}(x_i)}\right\}, e_l(x_i)\right\rangle\right)_{l=1}^d \nonumber \\
&&+ O\left(\epsilon^{1/2} + \epsilon^{-1}n^{-\frac{3}{d+2}} + n^{-1/2}\epsilon^{-(d/4 + 1/2)}\right)\nonumber
\end{eqnarray}
where $\nabla^2$ is the connection-Laplacian, $\bar{X}_i \equiv \left(\langle \iota_*X(x_i),u_l(x_i)\rangle\right)^d_{l=1} \in \mathbb{R}^d$ for all $i$, $\{u_l(x_i)\}_{l=1,...,d}$ is an orthonormal basis for a $d$-dimensional subspace of $\mathbb{R}^p$ determined by local PCA (i.e., the columns of $O_i$), $\{e_l(x_i)\}_{l=1,...,d}$ is an orthonormal basis for $\iota_*T_{x_i}\mathcal{M}$,
$m_l=\int_{\mathbb{R}^d}\|x\|^l K(\|x\|)\ud x$, and $O_{ij}$ is the optimal orthogonal transformation determined by the alignment procedure. In particular, when $\alpha=1$ we have
\begin{eqnarray}\label{summary:eq2}
\frac{1}{\epsilon}\left[\frac{\sum_{j=1}^n K_{\epsilon,1}\left(x_i,x_j\right)O_{ij}\bar{X}_j}{\sum_{j=1}^n K_{\epsilon,1}\left(x_i,x_j\right)} - \bar{X}_i\right]&=&\frac{m_2}{2dm_0}\left(\langle \iota_*\nabla^2 X(x_i),e_l(x_i)\rangle\right)_{l=1}^d \\
&&+O\left(\epsilon^{1/2} + \epsilon^{-1}n^{-\frac{3}{d+2}} + n^{-1/2}\epsilon^{-(d/4 + 1/2)}\right).\nonumber
\end{eqnarray}
Furthermore, for $\epsilon = O(n^{-\frac{2}{d+4}})$, almost surely,
\begin{eqnarray}
&&\lim_{n\to \infty} \frac{1}{\epsilon}\left[\frac{\sum_{j=1}^n K_{\epsilon,\alpha}\left(x_i,x_j\right)O_{ij}\bar{X}_j}{\sum_{j=1}^n K_{\epsilon,\alpha}\left(x_i,x_j\right)} - \bar{X}_i\right]\\
&=&\frac{m_2}{2dm_0}\left(\left\langle \iota_*\left\{\nabla^2X(x_i)+d\frac{\int_{S^{d-1}} \nabla_{\theta}X(x_i)\nabla_\theta(p^{1-\alpha})(x_i)\ud\theta}{p^{1-\alpha}(x_i)}\right\}, e_l(x_i)\right\rangle\right)_{l=1}^d,\nonumber
\end{eqnarray}
and
\begin{equation}
\lim_{n\to \infty}\frac{1}{\epsilon}\left[\frac{\sum_{j=1}^n K_{\epsilon,1}\left(x_i,x_j\right)O_{ij}\bar{X}_j}{\sum_{j=1}^n K_{\epsilon,1}\left(x_i,x_j\right)} - \bar{X}_i\right]=\frac{m_2}{2dm_0}\left(\langle \iota_*\nabla^2 X(x_i),e_l(x_i)\rangle\right)_{l=1}^d.
\end{equation}
When $\epsilon_{\text{PCA}}=O(n^{-\frac{2}{d+1}})$ then the same almost surely convergence results above hold but with a slower convergence rate.   
\end{thm}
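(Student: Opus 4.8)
The plan is to prove the finite-sample, high-probability estimate \eqref{summary:eq} first; the two almost-sure limits then follow by a Borel--Cantelli argument once $\epsilon$ is coupled to $n$ at the stated rate. I would split the error in \eqref{summary:eq} into three pieces, matched to the three terms in the error bound: (i) the concentration error from replacing the empirical sums $\tfrac1n\sum_{j}$ by their expectations (integrals against $p\,\ud V$), giving the $n^{-1/2}\epsilon^{-(d/4+1/2)}$ term; (ii) the geometric-approximation error from replacing the data-driven $O_i$, $O_{ij}$, $\bar X_j$ by their idealizations --- an orthonormal frame of $\iota_*T_{x_i}\MM$, the parallel transport $\iota_*P_{x_j,x_i}:\iota_*T_{x_j}\MM\to\iota_*T_{x_i}\MM$, and the exact coordinate vector of $X$ --- controlled by the local-PCA convergence result (Theorem~\ref{localpcatheorem}) and giving the $\epsilon^{-1}n^{-3/(d+2)}$ term; and (iii) the deterministic $\epsilon\to0$ asymptotics of the resulting integral operator, giving the $\epsilon^{1/2}$ bias and, at leading order, the connection-Laplacian.

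Step (iii) is the analytic core. I would work in geodesic normal coordinates centered at $x_i$, write $y=\exp_{x_i}(v)$ with $v\in T_{x_i}\MM$, $\|v\|<\sqrt\epsilon$, and insert the standard small-$v$ expansions: $\|\iota(x_i)-\iota(\exp_{x_i}v)\|_{\RR^p}^2=\|v\|^2\bigl(1+O(\|v\|^2)\bigr)$ for the chord length, $\ud V(y)=\bigl(1-\tfrac16\Ric(v,v)+O(\|v\|^3)\bigr)\ud v$ for the volume, $p(y)=p(x_i)+\nabla p(x_i)\cdot v+O(\|v\|^2)$ for the density, and the key geometric input
\[
\iota_*\bigl(P_{y,x_i}X(y)\bigr)=\iota_*\Bigl(X(x_i)+\nabla_vX(x_i)+\tfrac12\nabla_v\nabla_vX(x_i)\Bigr)+O(\|v\|^3)
\]
for the parallel transport of $X$ back to $x_i$ along the radial geodesic. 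The $\alpha$-normalization enters only through $p_\epsilon^{-\alpha}(x_j)=(nm_0\epsilon^{d/2})^{-\alpha}p(x_j)^{-\alpha}\bigl(1+O(\epsilon)\bigr)$, so that the $x_i$-dependent constants cancel between numerator and denominator of the quotient and the effective sampling density is $p^{1-\alpha}$. Rescaling $v=\sqrt\epsilon\,u$, all odd powers of $u$ integrate to zero against the radial kernel $K(\|u\|)$ and $\int_{\RR^d}u_au_b\,K(\|u\|)\,\ud u=\tfrac{m_2}{d}\delta_{ab}$; the numerator then equals $m_0\epsilon^{d/2}p^{1-\alpha}(x_i)\bigl[X(x_i)+\epsilon\bigl(\tfrac{m_2}{2dm_0}\nabla^2X(x_i)+\tfrac{m_2}{dm_0}\tfrac{\nabla_{\nabla(p^{1-\alpha})}X(x_i)}{p^{1-\alpha}(x_i)}\bigr)\bigr]$ up to an $O(\epsilon^{3/2})$ remainder and a scalar $O(\epsilon)$ multiple of $X(x_i)$, while the denominator equals $m_0\epsilon^{d/2}p^{1-\alpha}(x_i)$ up to the \emph{same} scalar $O(\epsilon)$ multiple --- these scalar corrections collect the $\Ric$-, chord-, and Hessian-of-$p^{1-\alpha}$ contributions. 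Dividing, subtracting $\bar X_i$, and dividing by $\epsilon$ cancels the scalar corrections and gives \eqref{summary:eq}, after rewriting the term $\nabla_{\nabla(p^{1-\alpha})}X(x_i)$ in the spherical-integral form displayed there; this cancellation is exactly why the case $\alpha=1$, where $p^{1-\alpha}\equiv1$ and the directional-derivative term drops out, produces the clean connection-Laplacian \eqref{summary:eq2}. The two forms of the right-hand side (with $u_l(x_i)$ versus $e_l(x_i)$) differ only by the PCA-frame error of step (ii) and are interchangeable within the error bound.

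For (i) I would apply a Bernstein-type concentration inequality to $\tfrac1n\sum_j K_\epsilon(x_i,x_j)g(x_j)$, with $g$ running over $1$ and the idealized components of $O_{ij}\bar X_j$: the summands are bounded and supported on an $O(\epsilon^{d/2})$ fraction of the mass, so the fluctuation is of order $n^{-1/2}\epsilon^{-d/4}$, and propagating it through the quotient (whose denominator is of order $\epsilon^{d/2}$) and the prefactor $\epsilon^{-1}$ gives the $n^{-1/2}\epsilon^{-(d/4+1/2)}$ term. For (ii) I would use Theorem~\ref{localpcatheorem} to bound $\|u_l(x_i)-e_l(x_i)\|$ and, via perturbation of the SVD of $O_i^TO_j$ in \eqref{HS}, the discrepancy between $O_{ij}$ and the matrix of $\iota_*P_{x_j,x_i}$ in the PCA frames; with $\epsilon_{\text{PCA}}=O(n^{-2/(d+2)})$ --- the choice balancing the bias and variance of local PCA --- these errors enter the averaging at order $\epsilon_{\text{PCA}}^{3/2}=n^{-3/(d+2)}$, which after the outer $\epsilon^{-1}$ is the middle error term; the slower rate under $\epsilon_{\text{PCA}}=O(n^{-2/(d+1)})$ comes from the same mechanism with the weaker local-PCA bound forced on manifolds with boundary. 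Finally, the a.s.\ statements follow by fixing $\epsilon=\epsilon(n)=O(n^{-2/(d+4)})$, so that the failure probabilities in (i)--(ii) are summable in $n$ and Borel--Cantelli applies; at this rate the three error terms balance, each being $O(n^{-1/(d+4)})$.

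I expect step (ii) to be the main obstacle. It forces one to juggle the two scales $\epsilon_{\text{PCA}}\ll\epsilon$ at once; to control the curvature-induced bias of local PCA (the neighbors of $x_i$ lie slightly off $T_{x_i}\MM$, so the top-$d$ eigenspace of $\Xi_i$ is tilted away from $\iota_*T_{x_i}\MM$); and --- most delicately --- to show that the closest-orthogonal-matrix construction \eqref{HS} turns the nearly aligned frames $O_i,O_j$ into an approximation of \emph{parallel transport} accurate enough that its $O(\epsilon)$-size discrepancy from $\iota_*P_{x_j,x_i}$ does not corrupt the leading $O(1)$ term of $\tfrac1\epsilon[\,\cdot\,]$, but enters only at the $O(\epsilon^{1/2})$ level. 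Making all the perturbations of steps (i)--(ii) assemble into exactly the stated rate, rather than something weaker, is the technical crux.
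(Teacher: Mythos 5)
Your decomposition into (i) probabilistic concentration, (ii) geometric approximation of the local-PCA frames and the alignment matrices, and (iii) deterministic small-$\epsilon$ expansion of the integral operator is exactly the paper's strategy, which is carried out through Theorems~\ref{localpcatheorem}, \ref{relateOP}, \ref{convergetokernel}, and \ref{Tepsexpansion} (normal-coordinate expansion, rescaling $v=\sqrt\epsilon\,u$, odd-power cancellation, and cancellation of the scalar $O(\epsilon)$ corrections between numerator and denominator all included). Your identification of the crux---that the closest-orthogonal-matrix construction~\eqref{HS} must approximate parallel transport to order $\epsilon^{3/2}$ rather than $\epsilon$, because the SVD step removes the symmetric $O(\epsilon)$ perturbation of $O_i^TO_j$ so that the $\epsilon^{-1}$ prefactor is not corrupted---is precisely the content of Theorem~\ref{relateOP}, and your balancing of the three error terms at $\epsilon=O(n^{-2/(d+4)})$ followed by Borel--Cantelli is the paper's closing step as well.
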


When the manifold is compact with boundary, (\ref{summary:eq2}) does not hold at the boundary. However, we have the following result for the convergence behavior near the boundary:
\begin{thm}\label{connlapbdry}
Let $\iota:\mathcal{M}\hookrightarrow\RR^p$ be a smooth d-dim compact Riemannian manifold with smooth boundary $\partial\MM$ embedded in $\RR^p$, with metric $g$ induced from the canonical metric on $\RR^p$. Let $\{x_i\}_{i=1,...,n}$, $p(x)$, $K_{\epsilon,1}\left(x_i,x_j\right)$, $p_\epsilon(x_i)$, $\{e_l(x_i)\}_{l=1,...,d}$ and $\bar{X}_i$ be defined in the same way as in Theorem \ref{summary}. Choose $\epsilon_{\text{PCA}}=O(n^{-\frac{2}{d+1}})$. Denote $\MM_{\sqrt{\epsilon}}=\{x\in\MM:~\min_{y\in\partial\MM}d(x,y)\leq \sqrt{\epsilon}\}$, where $d(x,y)$ is the geodesic distance between $x$ and $y$. When $x_i\in\MM_{\sqrt{\epsilon}}$, we have
\begin{eqnarray}
\frac{\sum_{j=1}^n K_{\epsilon,1}\left(x_i,x_j\right)O_{ij}\bar{X}_j}{\sum_{j=1}^n K_{\epsilon,1}\left(x_i,x_j\right)}&=&\left(\left\langle \iota_*P_{x_i,x_0}\left(X(x_0)+\frac{m^\epsilon_1}{m^\epsilon_0}\nabla_{\partial_d}X(x_0)\right),e_l(x_i)\right\rangle\right)_{l=1}^d \nonumber \\
&&+O\left(\epsilon + n^{-\frac{3}{2(d+1)}} + n^{-1/2}\epsilon^{-(d/4 - 1/2)}\right),\label{boundary-error}
\end{eqnarray}
where $x_0=\argmin_{y\in\partial\MM}d(x_i,y)$, $P_{x_i,x_0}$ is the parallel transport from $x_0$ to $x_i$ along the geodesic linking them, $m^\epsilon_1$ and $m^\epsilon_0$ are constants defined in (\ref{meps0}) and (\ref{meps1}), and $\partial_d$ is the normal direction to the boundary at $x_0$.
\end{thm}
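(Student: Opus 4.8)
\emph{Proof sketch.} The plan is to mirror the three-part structure of the proof of Theorem~\ref{summary} --- pass from the random sum to a deterministic kernel integral, evaluate that integral by a local (normal-coordinate) expansion, and absorb the error introduced by building $O_{ij}$ from local PCA --- but carried out over the \emph{truncated} region that the support of $K_\epsilon$ cuts out of $\MM$ near $\partial\MM$ rather than over a full geodesic ball. Fix $x_i\in\MM_{\sqrt\epsilon}$. Conditioning on $\MM$ and $x_i$, the numerator and denominator on the left of (\ref{boundary-error}) are each sums of $n$ i.i.d.\ uniformly bounded terms, so the same Bernstein/Hoeffding concentration estimate used for Theorem~\ref{summary} would replace the ratio, with high probability and up to the claimed $O(n^{-1/2}\epsilon^{-(d/4-1/2)})$ term, by the continuous average
\[
\frac{\int_\MM K_\epsilon(x_i,y)\,p_\epsilon(y)^{-1}\,O_{x_i,y}\bigl(\iota_*X(y)\bigr)\,p(y)\,\ud V(y)}{\int_\MM K_\epsilon(x_i,y)\,p_\epsilon(y)^{-1}\,p(y)\,\ud V(y)},
\]
where $O_{x_i,y}$ is the continuous alignment map, $p_\epsilon$ the continuous estimated density, and the common $p_\epsilon(x_i)$ factors have already cancelled. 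The different power of $\epsilon$ compared with Theorem~\ref{summary} is only because here the leading term is $O(1)$, so we do not pre-multiply by $\epsilon^{-1}$.

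For the geometric core, let $x_0=\argmin_{y\in\partial\MM}d(x_i,y)$ and set $\delta=d(x_i,x_0)\le\sqrt\epsilon$. I would introduce boundary normal coordinates at $x_0$ so that $\partial\MM=\{u_d=0\}$ locally, $\partial_d$ is the inward normal, and $\iota(x_i)$ lies on the $u_d$-axis at height $\delta$. Up to an $O(\epsilon)$ perturbation governed by the curvature and the second fundamental form, the domain $\{y\in\MM:\|\iota(x_i)-\iota(y)\|<\sqrt\epsilon\}$ becomes the Euclidean half-ball $\{|u|<\sqrt\epsilon,\ u_d\ge-\delta\}$, and it is exactly the asymmetry of this region in the $u_d$-direction that produces a surviving first-order term. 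One then (i) replaces $O_{x_i,y}$ by the parallel transport $P_{x_i,y}$ along the connecting geodesic (an $O(\epsilon)$ error, since $O_{ij}$ approximates the parallel transport operator) and factors $P_{x_i,y}=P_{x_i,x_0}P_{x_0,y}(I+O(\epsilon))$; (ii) Taylor-expands $X$ along geodesics from $x_0$, so that in components $P_{x_0,y}\iota_*X(y)=\iota_*X(x_0)+(\text{arclength})\,\iota_*\nabla_{\dot\gamma}X(x_0)+O(\epsilon)$; and (iii) carries the same expansion in the denominator. After rescaling $u\mapsto\sqrt\epsilon\,u$, the zeroth moment of the effective (density-weighted, truncated) kernel over the half-ball reproduces the leading term $P_{x_i,x_0}\iota_*X(x_0)$; the tangential components of the first moment vanish by reflection symmetry in $u_1,\dots,u_{d-1}$; and the $u_d$-component of the first moment, normalized by the zeroth moment --- the ratio $m_1^\epsilon/m_0^\epsilon$, of size $O(\sqrt\epsilon)$ --- multiplies $\iota_*\nabla_{\partial_d}X(x_0)$. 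Projecting the result onto $\{e_l(x_i)\}_{l=1}^d$ would yield (\ref{boundary-error}).

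It then remains to collect the errors. Taylor remainders and the curvature / second-fundamental-form distortions of the integration domain and of $P_{x_i,y}$ contribute $O(\epsilon)$; the passage from the ideal alignment map to the actual $O_{ij}$ and from the true tangent frames to the local-PCA bases $O_i$ near $\partial\MM$ contributes a term that, under the boundary-adapted choice $\epsilon_{\text{PCA}}=O(n^{-2/(d+1)})$, is $O(n^{-3/(2(d+1))})$ --- the slower scaling is needed precisely because near $\partial\MM$ local PCA sees only a half-ball of samples and is correspondingly less accurate; and the concentration step contributes $O(n^{-1/2}\epsilon^{-(d/4-1/2)})$. I expect the geometric core to be the main obstacle: one must verify, uniformly over all $x_i\in\MM_{\sqrt\epsilon}$ --- including $x_i\in\partial\MM$, where $\delta=0$ --- that every distortion of the truncated domain, of $P_{x_i,y}$, and of the density normalization away from its flat half-space model is genuinely $O(\epsilon)$ \emph{relative to the nonvanishing $O(1)$ leading term}, and that the quantities $m_0^\epsilon,m_1^\epsilon$ of (\ref{meps0})--(\ref{meps1}) are exactly the truncated moments that arise. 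The bookkeeping is more delicate than in the interior case because the leading term does not vanish, so first-order errors are no longer automatically subleading; the argument otherwise parallels the boundary analysis of the scalar Laplacian in \cite{LafonPHD,Coifman20065,BelkinNiyogi2007} with the parallel transport operator inserted throughout.
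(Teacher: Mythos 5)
Your sketch follows essentially the same route as the paper's proof: reduce to the continuous kernel average via the concentration bound of Theorem~\ref{convergetokernel}, expand in normal coordinates over the truncated near-boundary domain where only the normal first moment $m_1^\epsilon/m_0^\epsilon$ survives after symmetrizing away the tangential directions, and absorb the $O(\epsilon)$ curvature/domain-distortion errors together with the $O(\epsilon_{\text{PCA}}^{3/4})=O(n^{-3/(2(d+1))})$ local-PCA error from the boundary case of Theorem~\ref{localpcatheorem}. The only cosmetic difference is that you center the normal coordinates and the Taylor expansion directly at $x_0$, whereas the paper expands first at $x_i$ and then re-expands around $x_0$ (which is where the $\eta-\tilde\epsilon$ weight in $m_1^\epsilon$ of (\ref{meps1}) comes from); the resulting truncated moments coincide, and you should just note that the paper takes $\partial_d$ as the \emph{outward} normal so that $m_1^\epsilon<0$, whereas your description uses the inward normal, with the product $(m_1^\epsilon/m_0^\epsilon)\nabla_{\partial_d}X(x_0)$ unchanged.
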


For the choice $\epsilon=O(n^{-\frac{2}{d+4}})$ (as in Theorem \ref{summary}), the error appearing in (\ref{boundary-error}) is $O(\epsilon^{3/4})$ which is asymptotically smaller than $O(\sqrt{\epsilon})$, which is the order of $\frac{m^\epsilon_1}{m^\epsilon_0}$.
A consequence of Theorem \ref{summary}, Theorem \ref{connlapbdry} and the above discussion about the error terms is that the eigenvectors of $D_1^{-1}S_1-I$ are discrete approximations of the eigen-vector-fields of the connection-Laplacian operator with homogeneous Neumann boundary condition that satisfy
\begin{equation}
\left\{\begin{array}{ll}
\nabla^2 X(x) = -\lambda X(x), & \mbox{for } x \in \mathcal{M}, \\
\nabla_{\partial_d}X(x) = 0, & \mbox{for } x \in \partial \mathcal{M}.
\end{array}\right.
\end{equation}
We remark that the Neumann boundary condition also emerges for the choice $\epsilon_{\text{PCA}}=O(n^{-\frac{2}{d+2}})$. This is due to the fact that the error in the local PCA term is $O(\epsilon_{\text{PCA}}^{1/2}) = O(n^{-\frac{1}{d+2}})$, which is asymptotically smaller than $O(\epsilon^{1/2}) = O(n^{-\frac{1}{d+4}})$ error term.

Finally, Theorem \ref{summaryheatkernel} details the way in which the algorithm approximates the continuous heat kernel of the connection-Laplacian:
\begin{thm}\label{summaryheatkernel}
Let $\iota:\mathcal{M}\hookrightarrow\RR^p$ be a smooth d-dim compact Riemannian manifold embedded in $\RR^p$, with metric $g$ induced from the canonical metric on $\RR^p$ and $\nabla^2$ be the connection Laplacian. For $0\leq \alpha\leq 1$, define
\[
T_{\epsilon,\alpha} X(x)=\frac{\int_\MM K_{\epsilon,\alpha}(x,y) P_{x,y}X(y)\ud V(y)}{\int_\MM K_{\epsilon,\alpha}(x,y)\ud V(y)},
\]
where $P_{x,y} : T_y\MM \to T_x\MM$ is the parallel transport operator from $y$ to $x$ along the geodesic connecting them.

Then, for any $t>0$, the heat kernel $e^{t\nabla^2}$ can be approximated on $L^2(T\MM)$ (the space of squared-integrable vector fields) by $T^{\frac{t}{\epsilon}}_{\epsilon,1}$, that is,
\[
\lim_{\epsilon\rightarrow 0}T^{\frac{t}{\epsilon}}_{\epsilon,1}=e^{t\nabla^2},
\]
in the $L^2$ sense.
\end{thm}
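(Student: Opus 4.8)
The plan is to identify $T_{\epsilon,1}$ as a self-adjoint contraction on a suitable (mildly $\epsilon$-dependent) Hilbert space, to compute its infinitesimal generator using the continuum version of the expansion behind Theorem \ref{summary}, and then to conclude via a Chernoff / Trotter--Kato product formula. Write $q_\epsilon(x)=\int_\MM K_{\epsilon,1}(x,y)\,\ud V(y)$ and $\mathcal{K}_\epsilon X(x)=\int_\MM K_{\epsilon,1}(x,y)P_{x,y}X(y)\,\ud V(y)$, so that $T_{\epsilon,1}=q_\epsilon^{-1}\mathcal{K}_\epsilon$. Since $K_{\epsilon,1}(x,y)=K_{\epsilon,1}(y,x)$ and $P_{x,y}^*=P_{y,x}=P_{x,y}^{-1}$, the operator $\mathcal{K}_\epsilon$ is self-adjoint on $L^2(T\MM)$, hence $T_{\epsilon,1}$ is self-adjoint on the weighted space $L^2_{q_\epsilon}(T\MM)$ with inner product $\langle X,Y\rangle_{q_\epsilon}=\int_\MM\langle X,Y\rangle_g\,q_\epsilon\,\ud V$. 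The elementary bound $|\langle P_{x,y}X(y),X(x)\rangle|\le\frac12(|X(x)|^2+|X(y)|^2)$ together with the symmetry of $K_{\epsilon,1}$ gives $|\langle T_{\epsilon,1}X,X\rangle_{q_\epsilon}|\le\|X\|_{q_\epsilon}^2$, so $\|T_{\epsilon,1}\|_{q_\epsilon\to q_\epsilon}\le1$ and its spectrum lies in $[-1,1]$. Because $p$ is bounded above and below, the weight $\epsilon^{d/2}q_\epsilon(x)$ is bounded above and below uniformly in $x$ and $\epsilon$; hence the norms $\|\cdot\|_{q_\epsilon}$ and $\|\cdot\|_{L^2}$ are uniformly equivalent and $\sup_{n\epsilon\le t}\|T_{\epsilon,1}^{\,n}\|_{L^2\to L^2}\le M$ for a constant $M$ depending only on $p_m,p_M$ and the geometry.

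Next I would establish generator consistency. Running the local-coordinate and parallel-transport expansion of Appendix \ref{proof}, but now in the noise-free population setting (sums replaced by integrals against $p\,\ud V$, so that the variance terms disappear), yields for every $X\in C^3(T\MM)$, uniformly in $x$,
\[
\frac1\epsilon\big(T_{\epsilon,1}X-X\big)(x)=\frac{m_2}{2dm_0}\,\nabla^2X(x)+O(\epsilon^{1/2}),
\]
so that $\frac1\epsilon(T_{\epsilon,1}-I)X\to c\,\nabla^2X$ in $L^2(T\MM)$ with $c=\frac{m_2}{2dm_0}$; the constant $c$ is absorbed by rescaling $t$ (equivalently one reads the exponent $t/\epsilon$ as $t/(c\epsilon)$, or the limit as $e^{tc\nabla^2}$), and we suppress it. Since $\MM$ is closed, $\nabla^2=-\nabla^*\nabla$ is essentially self-adjoint and nonpositive on $C^\infty(T\MM)$, so $C^\infty(T\MM)$ is a core for its closure and $c\nabla^2$ generates the strongly continuous contraction semigroup $e^{tc\nabla^2}$ on $L^2(T\MM)$. (In the bounded case one would replace $C^\infty(T\MM)$ by the Neumann domain, consistently with Theorem \ref{connlapbdry}.)

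With these two ingredients the Chernoff product formula --- or the Trotter--Kato approximation theorem in its ``stability plus consistency'' form --- applies: $F_\epsilon:=T_{\epsilon,1}$ satisfies the uniform stability bound $\sup_{n\epsilon\le t}\|F_\epsilon^{\,n}\|_{L^2\to L^2}\le M$, $F_\epsilon\to I$ strongly, and $\frac1\epsilon(F_\epsilon-I)\to c\nabla^2$ strongly on the core $C^\infty(T\MM)$ of the generator of $e^{tc\nabla^2}$; hence $T_{\epsilon,1}^{\,\lfloor t/\epsilon\rfloor}\to e^{tc\nabla^2}$ strongly on $L^2(T\MM)$, uniformly for $t$ in compact subsets of $(0,\infty)$, which is the assertion (the fractional part of $t/\epsilon$ contributes a factor tending strongly to the identity and is harmless). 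I expect the main obstacle to be the generator computation: one must verify that with the $\alpha=1$ normalization the first-order term is exactly $\frac{m_2}{2dm_0}\nabla^2X$ --- this normalization is precisely what removes the density-dependent drift present for general $\alpha$ in Theorem \ref{summary} --- and that the $O(\epsilon^{1/2})$ remainder holds in $L^2$ uniformly in $x$ under only $C^3$ regularity of $X$ and $C^2$ regularity of $K$, which is essentially a specialization of Appendix \ref{proof} to the continuum. A secondary, purely functional-analytic point is matching the $\epsilon$-dependent weighted norms to the single fixed Hilbert space $L^2(T\MM)$ on which Chernoff's theorem is phrased; this is handled by the uniform two-sided bounds on $\epsilon^{d/2}q_\epsilon$ from the first step.
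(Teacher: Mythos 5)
Your proof is correct, but it takes a genuinely different route from the paper's. The paper works eigenspace by eigenspace: it fixes $X_l\in E_l$, shows $\|\frac{1}{\epsilon}(T_{\epsilon,1}X_l-X_l)-\nabla^2 X_l\|_{L^2}=O(\epsilon^{1/4})$ (the $1/4$ coming from the $O(1)$ pointwise error on the boundary layer $\MM_{\sqrt\epsilon}$, which has measure $O(\sqrt\epsilon)$), writes $T_{\epsilon,1}=(I+\epsilon\nabla^2)+O(\epsilon^{5/4})$ on $E_l$, and then compares $T_{\epsilon,1}^{t/\epsilon}$ with $(I+\epsilon\nabla^2)^{t/\epsilon}$ and with $e^{t\nabla^2}$ via an explicit binomial-series estimate $\|(I+B)^s-I\|\le s\|B\|(1+\|B\|)^{s-1}$, valid for $\|B\|<1$, i.e.\ for $\epsilon<1/(2\lambda_l)$. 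You instead set the problem up so that the Chernoff product formula applies directly: you identify the uniform stability bound (via self-adjointness and contractivity of $T_{\epsilon,1}$ on the $q_\epsilon$-weighted $L^2$, transferred to the fixed $L^2(T\MM)$ by uniform two-sided bounds on $\epsilon^{d/2}q_\epsilon$), and combine it with the generator consistency $\frac{1}{\epsilon}(T_{\epsilon,1}-I)X\to c\nabla^2 X$ on a core. What each buys: the paper's argument is more elementary and self-contained (it avoids invoking a named semigroup-approximation theorem), and it produces an explicit $O(\epsilon^{1/4})$ rate on each fixed eigenspace $E_l$; your argument is cleaner structurally, avoids the mildly awkward step of passing from per-eigenspace bounds (with $l$-dependent constants and an $\epsilon<1/(2\lambda_l)$ constraint) to convergence on all of $L^2(T\MM)$, and packages the contractivity/spectral-interval-in-$[-1,1]$ observation --- which the paper proves separately in Section~3 for the discrete $\tilde S$ but does not re-use here --- as a load-bearing stability estimate.

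Two small points worth tightening. First, your claim that $\frac{1}{\epsilon}(T_{\epsilon,1}X-X)=c\nabla^2X+O(\epsilon^{1/2})$ ``uniformly in $x$'' is false near $\partial\MM$, where the pointwise error is $O(1)$ (Theorem~\ref{connlapbdry}); what you actually need and get, after integrating over the thin boundary layer, is an $O(\epsilon^{1/4})$ bound in $L^2$, which is what the paper also uses --- so this is a matter of wording rather than a gap, but the ``uniformly in $x$'' should be dropped. Second, you should explicitly restrict the consistency step to $X$ in the Neumann core $\{X\in C^\infty(T\MM):\nabla_{\partial_d}X|_{\partial\MM}=0\}$ when $\partial\MM\neq\emptyset$; you mention this parenthetically, but it is essential, since for general smooth $X$ the boundary expansion of Theorem~\ref{connlapbdry} contains a $\frac{m^\epsilon_1}{m^\epsilon_0}\nabla_{\partial_d}X$ term of order $\sqrt\epsilon$ that does not go away, and without the Neumann condition the generator limit would not be $\nabla^2$ on a core.
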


\section{Numerical simulations}
\label{numerical}
In all numerical experiments reported in this Section, we use the normalized vector diffusion mapping $V'_{1,t}$ corresponding to $\alpha=1$ in (\ref{Salpha}) and (\ref{Dalpha}), that is, we use the eigenvectors of $D_1^{-1}S_1$ to define the VDM. In all experiments we used the kernel function $K(u)=e^{-5u^2}\chi_{[0,1]}$ for the local PCA step as well as for the definition of the weights $w_{ij}$. The specific choices for $\epsilon$ and $\epsilon_{\text{PCA}}$ are detailed below. We remark that the results are not very sensitive to these choices, that is, similar results are obtained for a wide regime of parameters.

The purpose of the first experiment is to numerically verify Theorem \ref{summary} using spheres of different dimensions. Specifically, we sampled $n=8000$ points uniformly from $S^d$ embedded in $\mathbb{R}^{d+1}$ for $d=2,3,4,5$. Figure \ref{fig1} shows bar plots of the largest 30 eigenvalues of the matrix $D_1^{-1}S_1$ for $\epsilon_{\text{PCA}}=0.1$ when $d=2,3,4$ and $\epsilon_{\text{PCA}}=0.2$ when $d=5$, and $\epsilon=\epsilon_{\text{PCA}}^{\frac{d+1}{d+4}}$. It is noticeable that the eigenvalues have numerical multiplicities greater than 1. Since the connection-Laplacian commutes with rotations, the dimensions of its eigenspaces can be calculated using representation theory (see Appendix \ref{app-A}). In particular, our calculation predicted the following dimensions for the eigenspaces of the largest eigenvalues:
\begin{eqnarray*}
S^2: 6, 10, 14, \ldots. \quad S^3: 4, 6, 9, 16, 16, \ldots.\quad S^4: 5, 10, 14, \ldots. \quad S^5: 6, 15, 20, \ldots.
\end{eqnarray*}
These dimensions are in full agreement with the bar plots shown in Figure \ref{fig1}.

\begin{figure}[h]
\begin{center}
\subfigure[$S^2$]{
\includegraphics[width=0.22\textwidth]{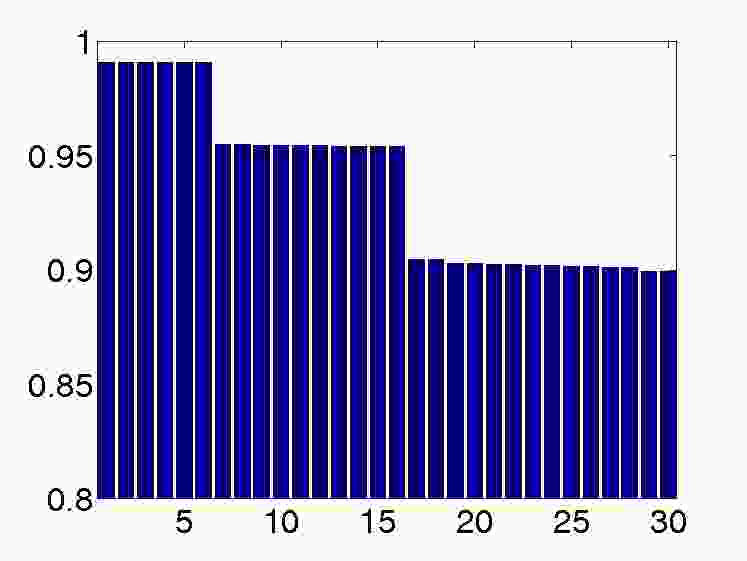}
}\label{fig1:spec}
\subfigure[$S^3$]{
\includegraphics[width=0.22\textwidth]{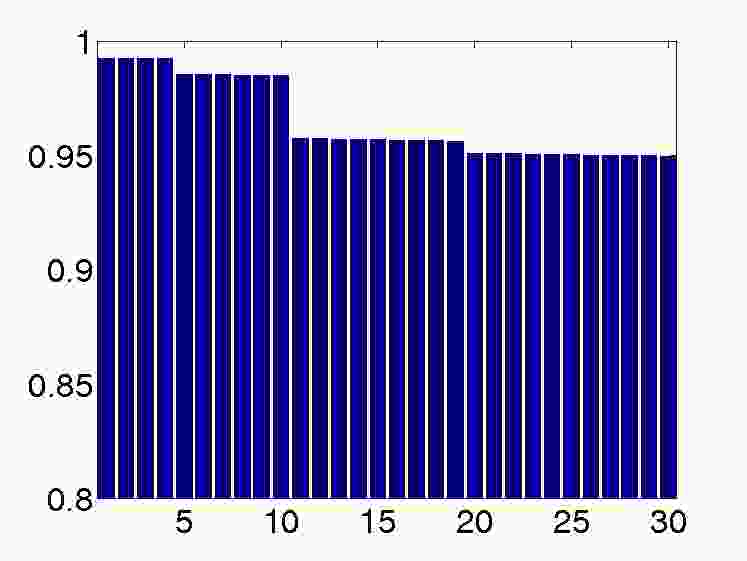}
}\label{fig2:spec}
\subfigure[$S^4$]{
\includegraphics[width=0.22\textwidth]{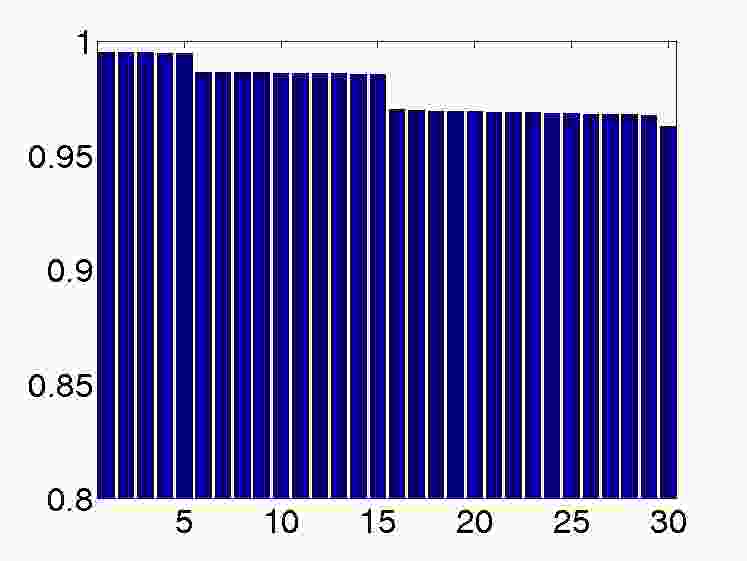}
}
\subfigure[$S^5$]{
\includegraphics[width=0.22\textwidth]{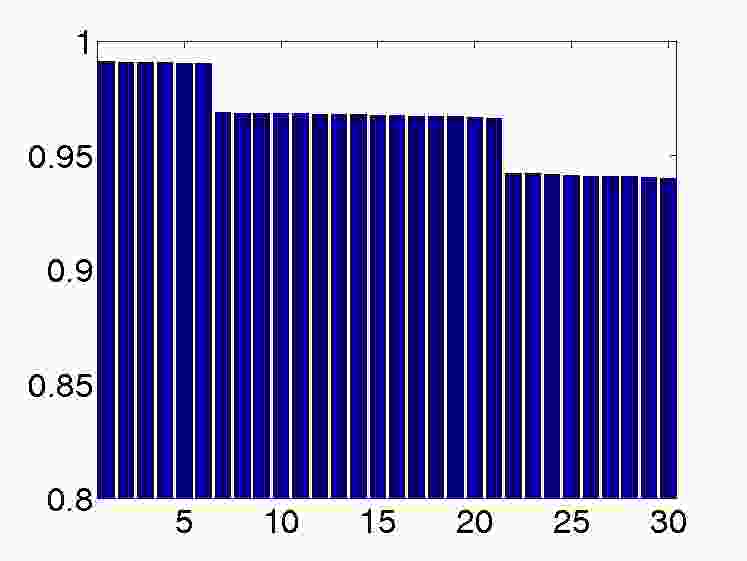}
}
\end{center}
\caption{Bar plots of the largest 30 eigenvalues of $D_1^{-1}S_1$ for $n=8000$ points uniformly distributed over spheres of different dimensions.}
\label{fig1}
\end{figure}

In the second set of experiments, we numerically compare the vector diffusion distance, the diffusion distance, and the geodesic distance for different compact manifolds with and without boundaries. The comparison is performed for the following four manifolds: 1) the sphere $S^2$ embedded in $\mathbb{R}^3$; 2) the torus $T^2$ embedded in $\mathbb{R}^3$; 3) the interval $[-\pi,\pi]$ in $\mathbb{R}$; and 4) the square $[0,2\pi]\times [0,2\pi]$ in $\mathbb{R}^2$. For both VDM and DM we truncate the mappings using $\delta=0.2$, see (\ref{Vtdelta}). The geodesic distance is computed by the algorithm of Dijkstra on a weighted graph, whose vertices correspond to the data points, the edges link data points whose Euclidean distance is less than $\sqrt{\epsilon}$,
and the weights $w_G(i,j)$ are the Euclidean distances, that is,
\begin{equation*}
w_G(i,j) = \left\{\begin{array}{ccc}
                  \|x_i-x_j\|_{\mathbb{R}^p} &  &  \|x_i-x_j\| < \sqrt{\epsilon},\\
                  +\infty &  & \mbox{otherwise}.
                \end{array}
 \right.
\end{equation*}

$S^2$ case: we sampled $n=5000$ points uniformly from $S^2=\{x\in\RR^3:\|x\|=1\}\subset\mathbb{R}^{3}$ and set $\epsilon_{\text{PCA}}=0.1$ and $\epsilon=\sqrt{\epsilon_{\text{PCA}}}\approx0.316$. For the truncated vector diffusion distance, when $t=10$, we find that the number of eigenvectors whose eigenvalue is larger (in magnitude) than $\lambda_1^t\delta$ is $m_{\text{VDM}}=m_{\text{VDM}}(t=10,\delta=0.2)=16$ (recall the definition of $m(t,\delta)$ that appears after (\ref{Vtdelta})). The corresponding embedded dimension is $m_{\text{VDM}}(m_{\text{VDM}}+1)/2$, which in this case is $16\cdot 17/2=136$. Similarly, for $t=100$, $m_{\text{VDM}}=6$ (embedded dimension is $6\cdot 7/2 = 21$), and when $t=1000$, $m_{\text{VDM}}=6$ (embedded dimension is again $21$). Although the first eigenspace (corresponding the largest eigenvalue) of the connection-Laplacian over $S^2$ is of dimension 6, there are small discrepancies between the top 6 numerically computed eigenvalues, due to the finite sampling.  This numerical discrepancy is amplified upon raising the eigenvalues to the $t$'th power, when $t$ is large, e.g., $t=1000$. For demonstration purposes, we remedy this numerical effect by artificially setting $\lambda_l=\lambda_1$ for $l=2,...,6$. For the truncated diffusion distance, when $t=10$, $m_{\text{DM}}=36$ (embedded dimension is $36-1=35$), when $t=100$, $m_{\text{DM}}=4$ (embedded dimension is $3$), and when $t=1000$, $m_{\text{DM}}=4$ (embedded dimension is $3$). Similarly, we have the same numerical effect when $t=1000$, that is, $\mu_2$, $\mu_3$ and $\mu_4$ are close but not exactly the same, so we again set $\mu_l=\mu_2$ for  $l=3,4$. The results are shown in Figure \ref{fig2}.

\begin{figure}[h]
\begin{center}
\subfigure[$d_{\text{VDM}',t=10}$]{
\includegraphics[width=0.22\textwidth]{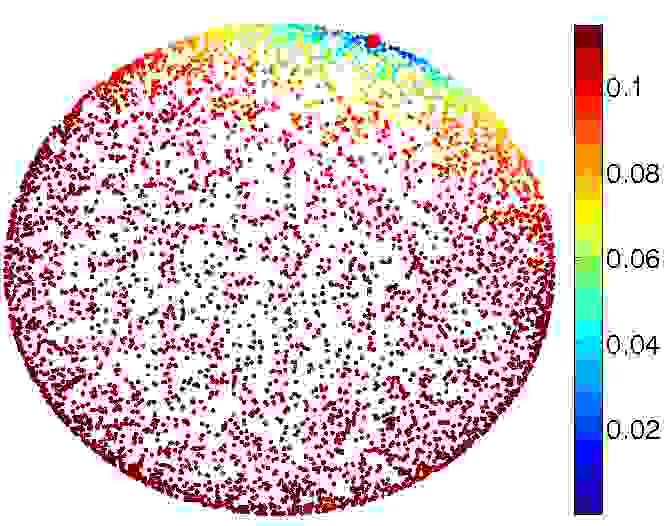}
}
\subfigure[$d_{\text{VDM}',t=100}$]{
\includegraphics[width=0.22\textwidth]{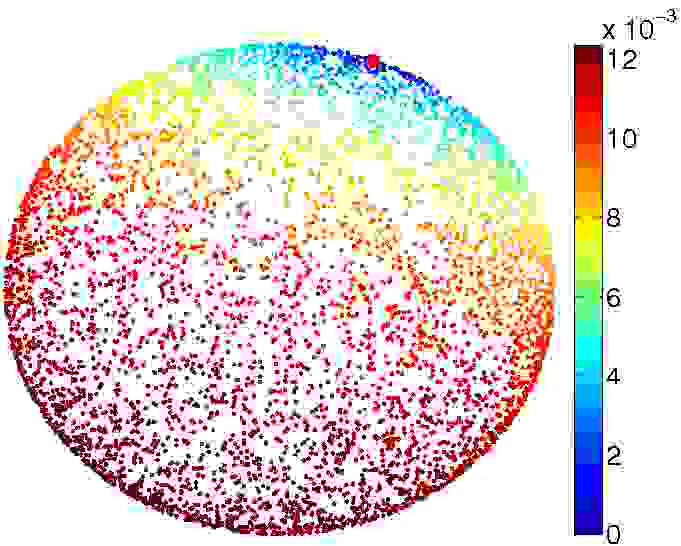}
}
\subfigure[$d_{\text{VDM}',t=1000}$]{
\includegraphics[width=0.22\textwidth]{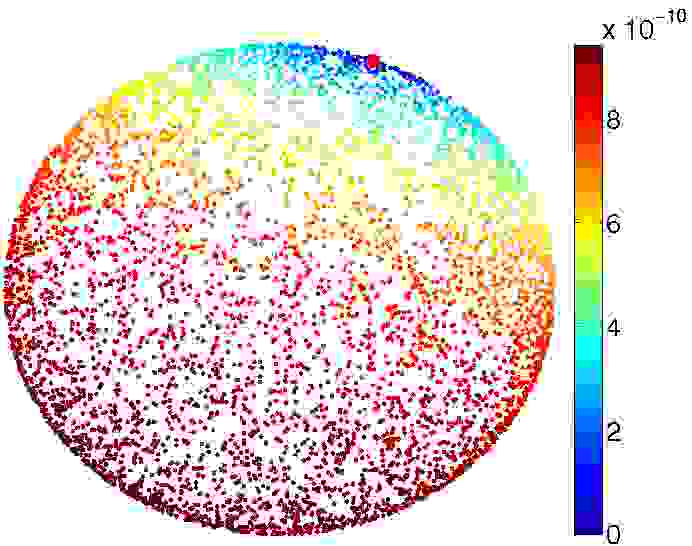}
}
\\
\subfigure[$d_{\text{DM},t=10}$]{
\includegraphics[width=0.22\textwidth]{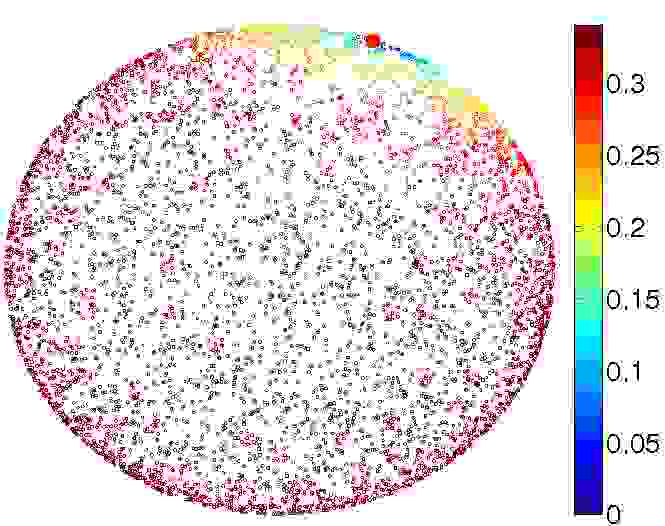}
}
\subfigure[$d_{\text{DM},t=100}$]{
\includegraphics[width=0.22\textwidth]{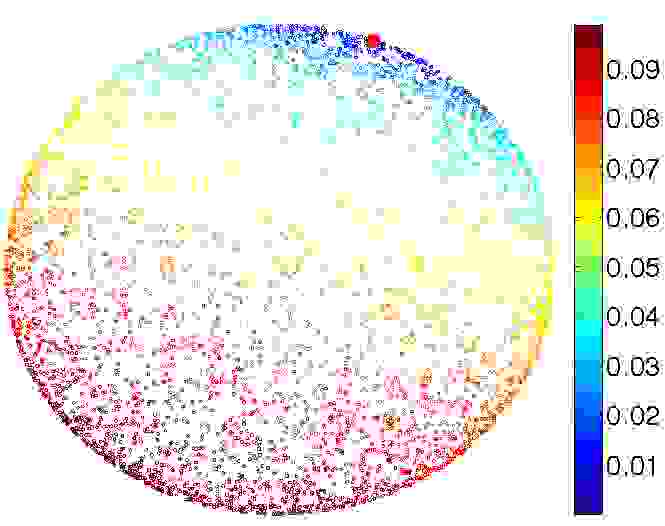}
}
\subfigure[$d_{\text{DM},t=1000}$]{
\includegraphics[width=0.22\textwidth]{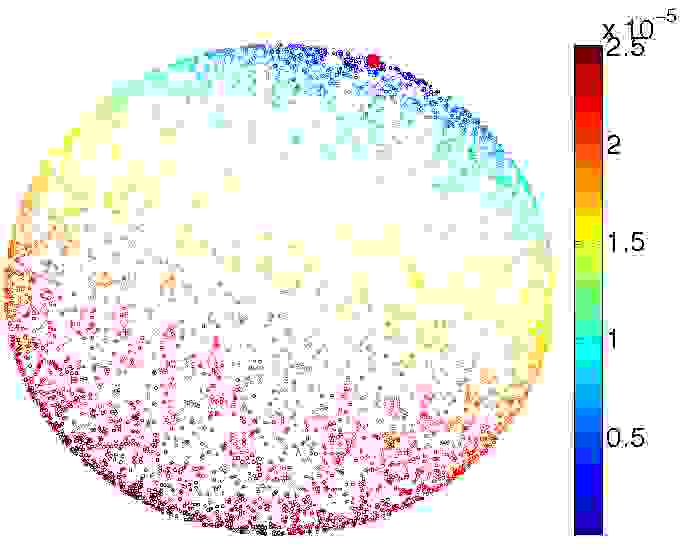}
}
\subfigure[Geodesic distance]{
\includegraphics[width=0.22\textwidth]{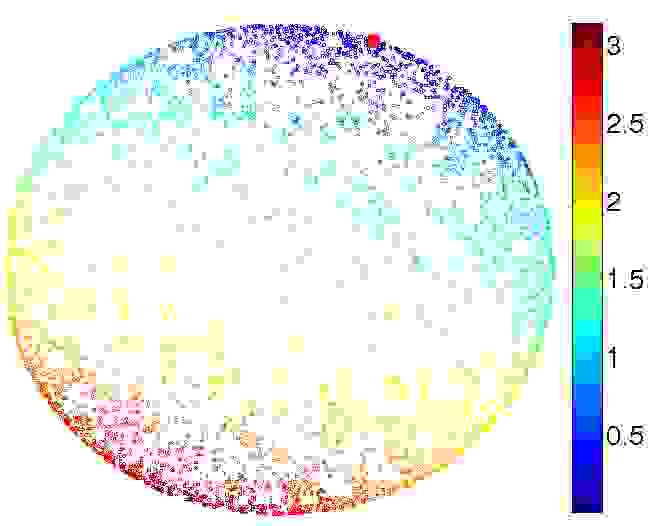}
}
\end{center}
\caption{$S^2$ case. Top: truncated vector diffusion distances for $t=10$, $t=100$ and $t=1000$; Bottom: truncated diffusion distances for $t=10$, $t=100$ and $t=1000$, and the geodesic distance. The reference point from which distances are computed is marked in red.}
\label{fig2}
\end{figure}

$T^2$ case: we sampled $n=5000$ points $(u,v)$ uniformly over the square $[0,2\pi)\times [0,2\pi)$ and then mapped them to $\mathbb{R}^3$ using the following transformation that defines the surface $T^2$ as
\[
T^2=\{((2+\cos(v))\cos(u),(2+\cos(v))\sin(u),\sin(v)):(u,v)\in[0,2\pi)\times[0,2\pi)\}\subset\mathbb{R}^{3}.\]
Notice that the resulting sample points are non-uniformly distributed over $T^2$. Therefore, the usage of $S_1$ and $D_1$ instead of $S$ and $D$ is important if we want the eigenvectors to approximate the eigen-vector-fields of the connection-Laplacian over $T^2$. We used $\epsilon_{\text{PCA}}=0.2$ and $\epsilon=\sqrt{\epsilon_{\text{PCA}}}\approx0.447$, and find that for the truncated vector diffusion distance, when $t=10$, the embedded dimension is 2628, when $t=100$, the embedded dimension is 36, and when $t=1000$, the embedded dimension is 3. For the truncated diffusion distance, when $t=10$, the embedded dimension is 130, when $t=100$, the embedded dimension is 14, and when $t=1000$, the embedded dimension is 2. The results are shown in Figure \ref{fig3}.

\begin{figure}[h]
\begin{center}
\subfigure[$d_{\text{VDM}',t=10}$]{
\includegraphics[width=0.22\textwidth]{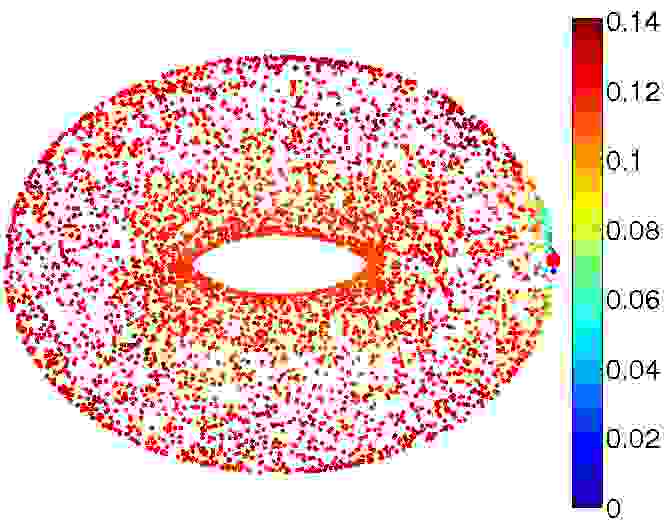}
}
\subfigure[$d_{\text{VDM}',t=100}$]{
\includegraphics[width=0.22\textwidth]{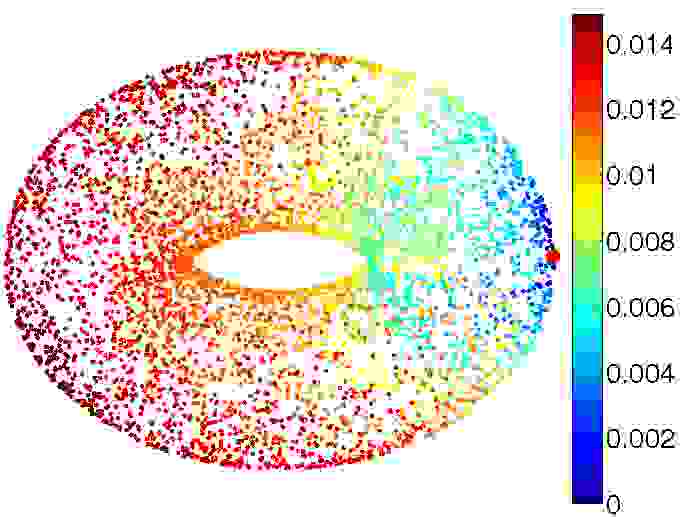}
}
\subfigure[$d_{\text{VDM}',t=1000}$]{
\includegraphics[width=0.22\textwidth]{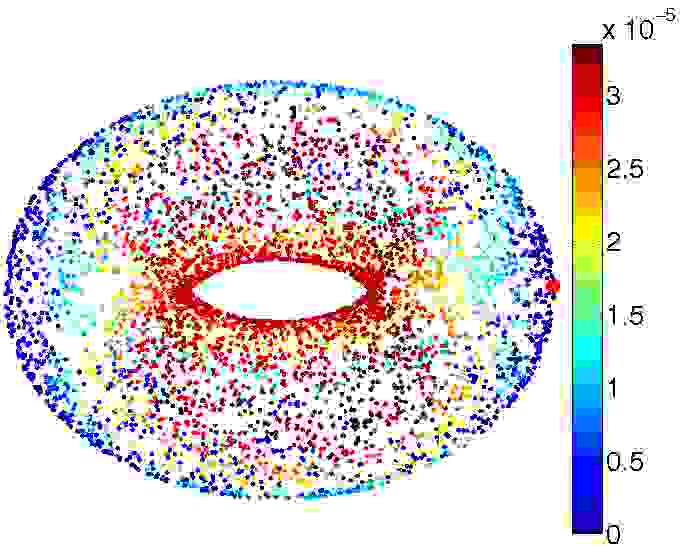}
}
\\
\subfigure[$d_{\text{DM},t=10}$]{
\includegraphics[width=0.22\textwidth]{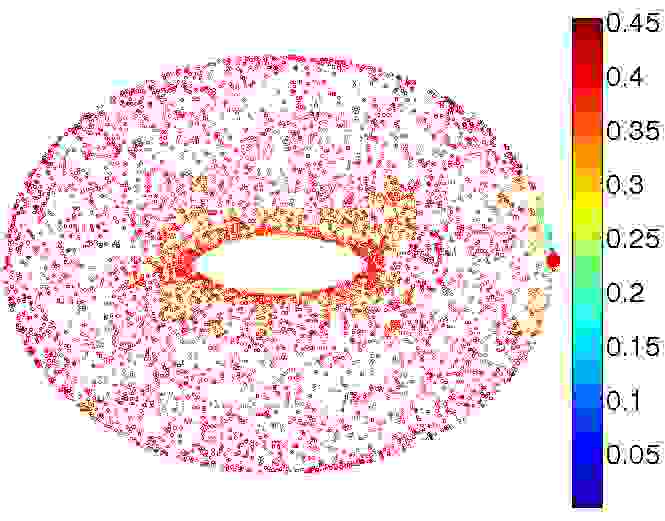}
}
\subfigure[$d_{\text{DM},t=100}$]{
\includegraphics[width=0.22\textwidth]{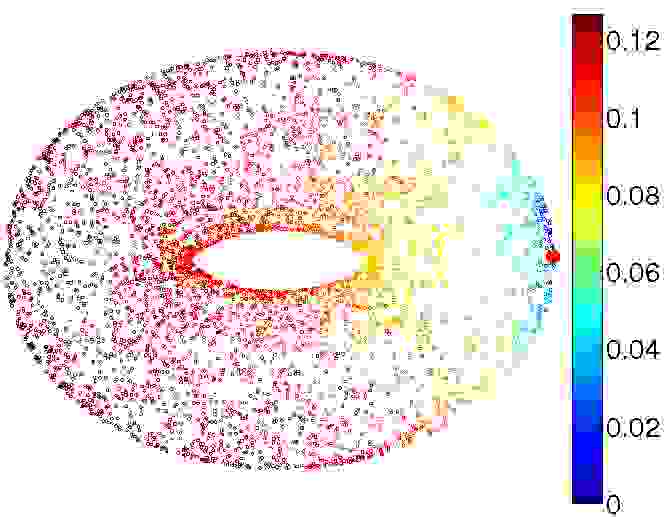}
}
\subfigure[$d_{\text{DM},t=1000}$]{
\includegraphics[width=0.22\textwidth]{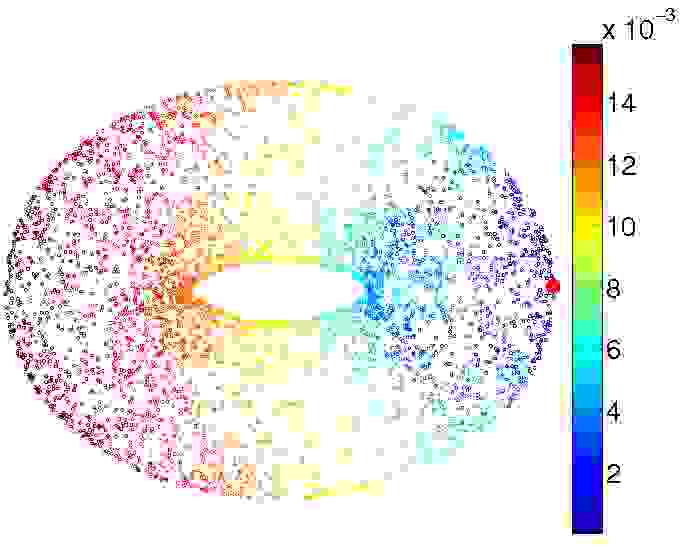}
}
\subfigure[Geodesic distance]{
\includegraphics[width=0.22\textwidth]{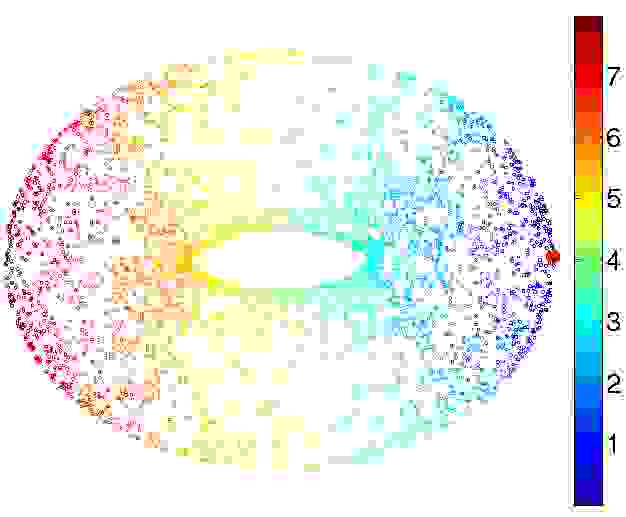}
}
\end{center}
\caption{$T^2$ case. Top: truncated vector diffusion distances for $t=10$, $t=100$ and $t=1000$; Bottom: truncated diffusion distances for $t=10$, $t=100$ and $t=1000$, and the geodesic distance. The reference point from which distances are computed is marked in red.}
\label{fig3}
\end{figure}

1-dim interval case: we sampled $n=5000$ equally spaced grid points from the interval $[-\pi,\pi]\subset\mathbb{R}^{1}$ and set $\epsilon_{\text{PCA}}=0.01$ and $\epsilon=\epsilon_{\text{PCA}}^{2/5}\approx 0.158$. For the truncated vector diffusion distance, when $t=10$, the embedded dimension is $120$, when $t=100$, the embedded dimension is $15$, and when $t=1000$, the embedded dimension is $3$. For the truncated diffusion distance, when $t=10$, the embedded dimension is $36$, when $t=100$, the embedded dimension is $11$, and when $t=1000$, the embedded dimension is $3$. The results are shown in Figure \ref{figint}.

\begin{figure}[h]
\begin{center}
\subfigure[$d_{\text{VDM}',t=10}$]{
\includegraphics[width=0.22\textwidth]{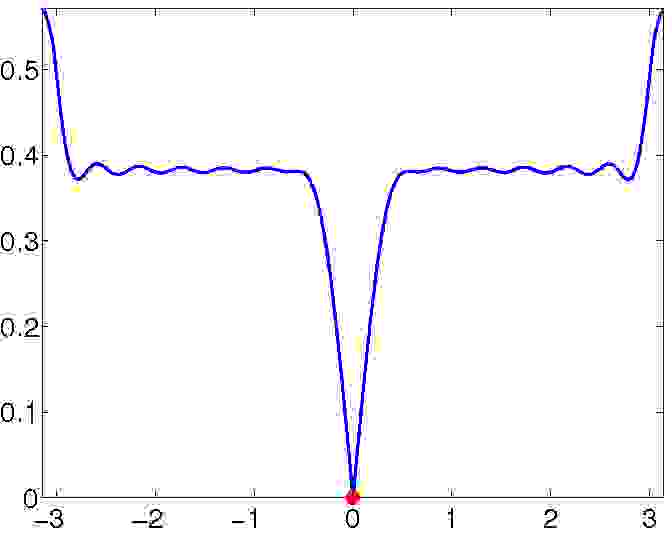}
}
\subfigure[$d_{\text{VDM}',t=100}$]{
\includegraphics[width=0.22\textwidth]{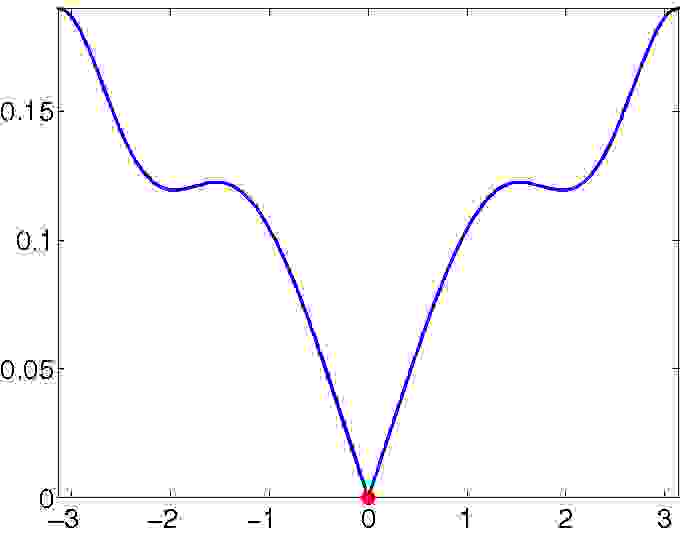}
}
\subfigure[$d_{\text{VDM}',t=1000}$]{
\includegraphics[width=0.22\textwidth]{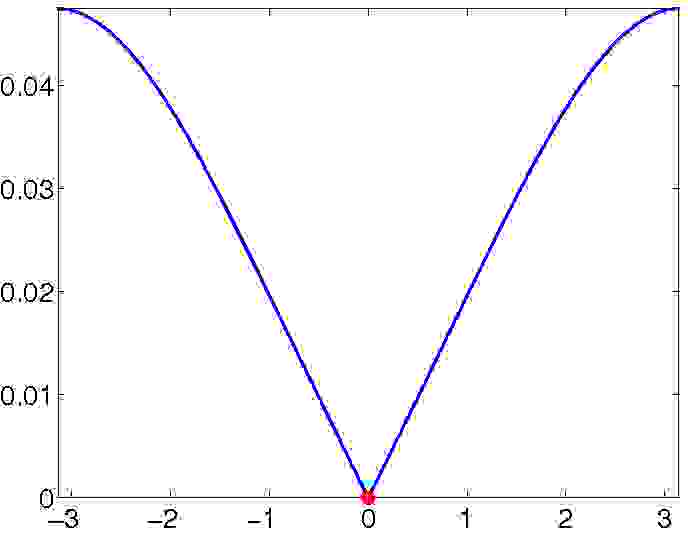}
}
\\
\subfigure[$d_{\text{DM},t=10}$]{
\includegraphics[width=0.22\textwidth]{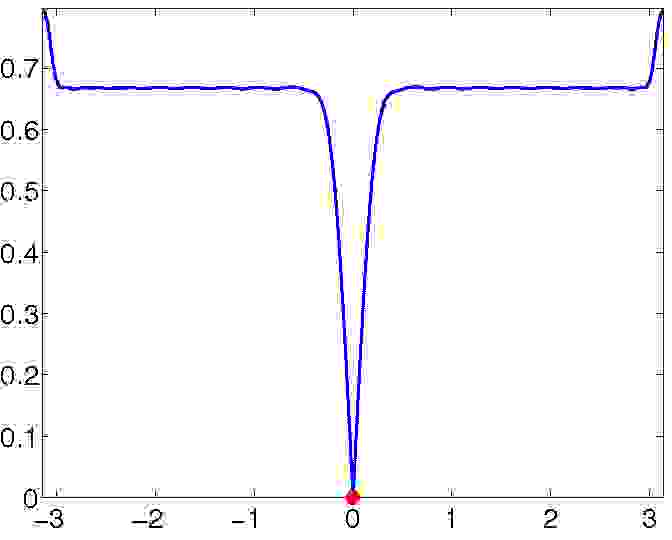}
}
\subfigure[$d_{\text{DM},t=100}$]{
\includegraphics[width=0.22\textwidth]{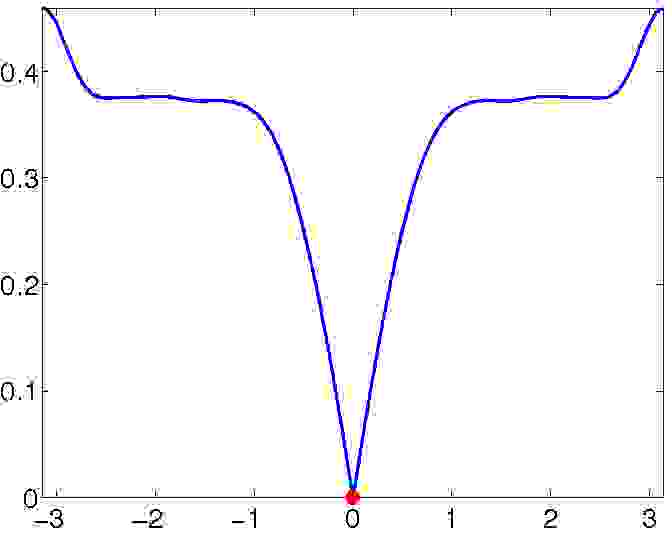}
}
\subfigure[$d_{\text{DM},t=1000}$]{
\includegraphics[width=0.22\textwidth]{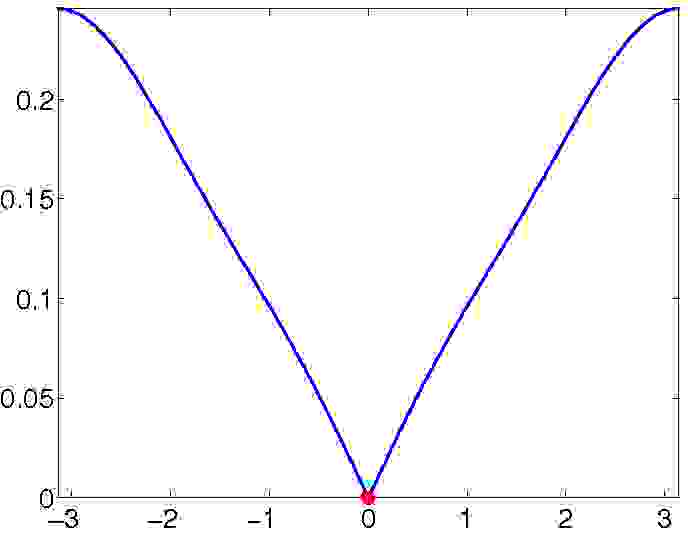}
}
\subfigure[Geodesic distance]{
\includegraphics[width=0.22\textwidth]{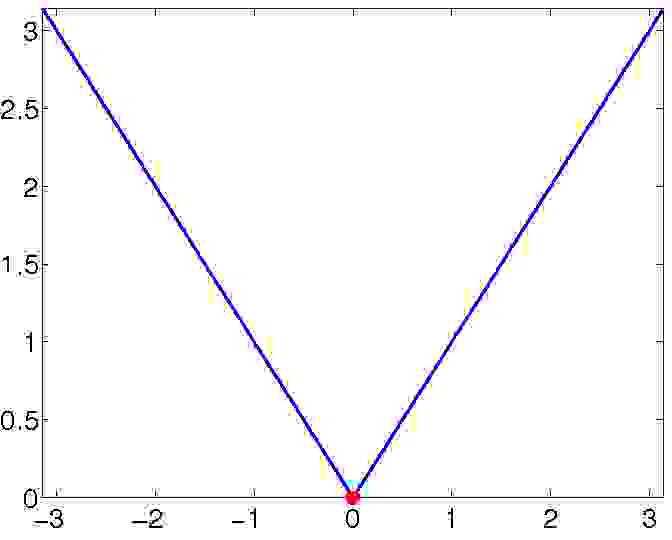}
}
\end{center}
\caption{1-dim interval case. Top: truncated vector diffusion distances for $t=10$, $t=100$ and $t=1000$; Bottom: truncated diffusion distances for $t=10$, $t=100$ and $t=1000$, and the geodesic distance. The reference point from which distances are computed is marked in red.}
\label{figint}
\end{figure}

Square case: we sampled $n=6561=81^2$ equally spaced grid points from the square $[0,2\pi]\times [0,2\pi]$ and fix $\epsilon_{\text{PCA}}=0.01$ and $\epsilon=\sqrt{\epsilon_{\text{PCA}}}=0.1$. For the truncated vector diffusion distance, when $t=10$, the embedded dimension is $20100$ (we only calculate the first 200 eigenvalues), when $t=100$, the embedded dimension is $1596$, and when $t=1000$, the embedded dimension is $36$. For the truncated diffusion distance, when $t=10$, the embedded dimension is $200$ (we only calculate the first 200 eigenvalues), when $t=100$, the embedded dimension is $200$, and when $t=1000$, the embedded dimension is $28$. The results are shown in Figure \ref{fig4}.

\begin{figure}[h]
\begin{center}
\subfigure[$d_{\text{VDM}',t=10}$]{
\includegraphics[width=0.22\textwidth]{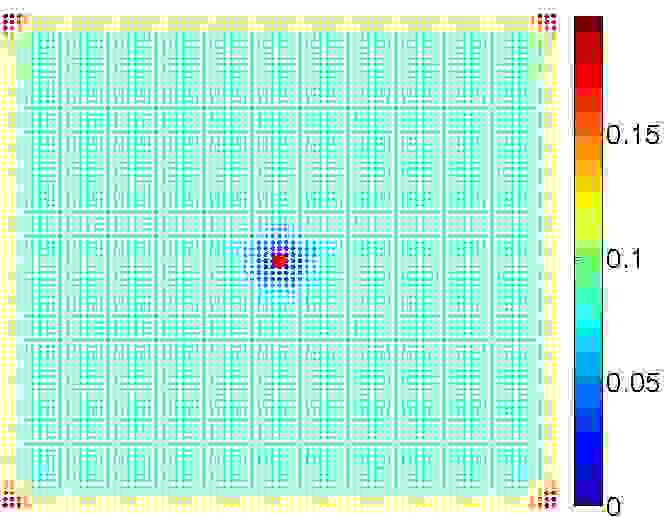}
}
\subfigure[$d_{\text{VDM}',t=100}$]{
\includegraphics[width=0.22\textwidth]{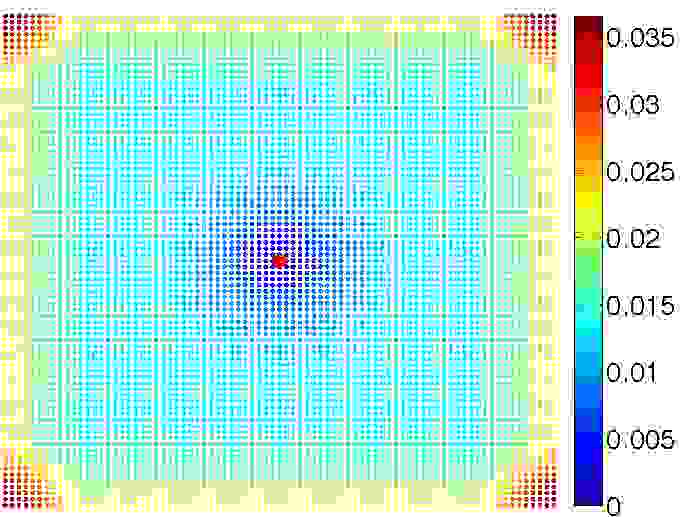}
}
\subfigure[$d_{\text{VDM}',t=1000}$]{
\includegraphics[width=0.22\textwidth]{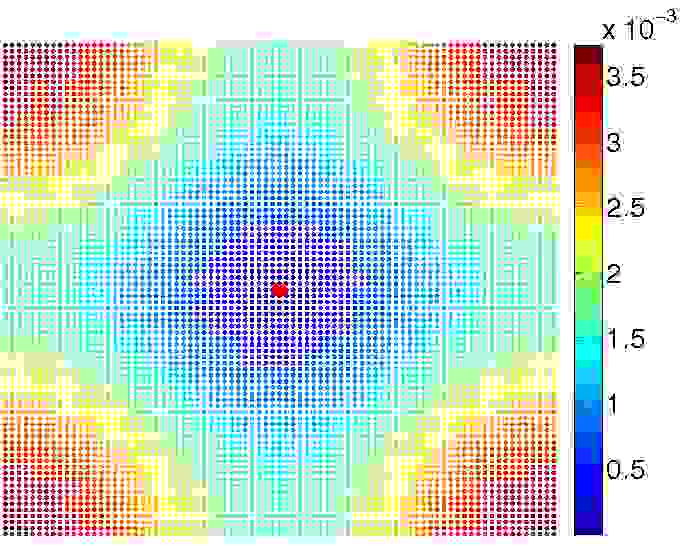}
}
\\
\subfigure[$d_{\text{DM},t=10}$]{
\includegraphics[width=0.22\textwidth]{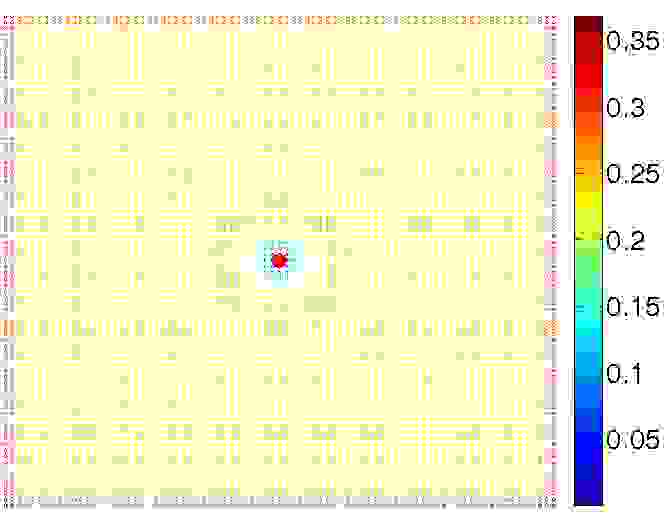}
}
\subfigure[$d_{\text{DM},t=100}$]{
\includegraphics[width=0.22\textwidth]{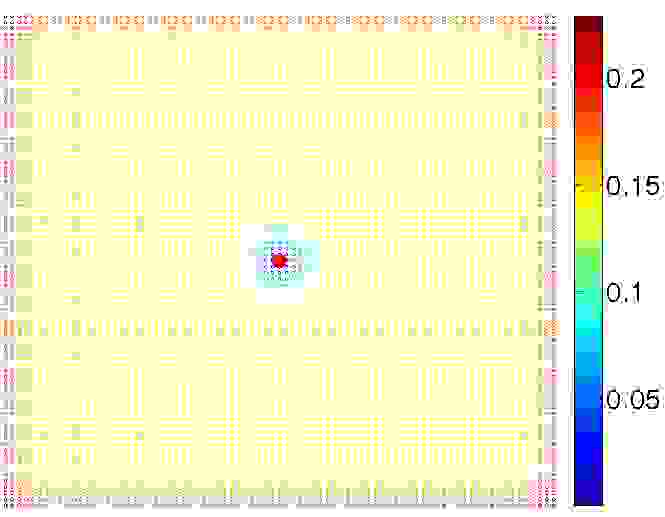}
}
\subfigure[$d_{\text{DM},t=1000}$]{
\includegraphics[width=0.22\textwidth]{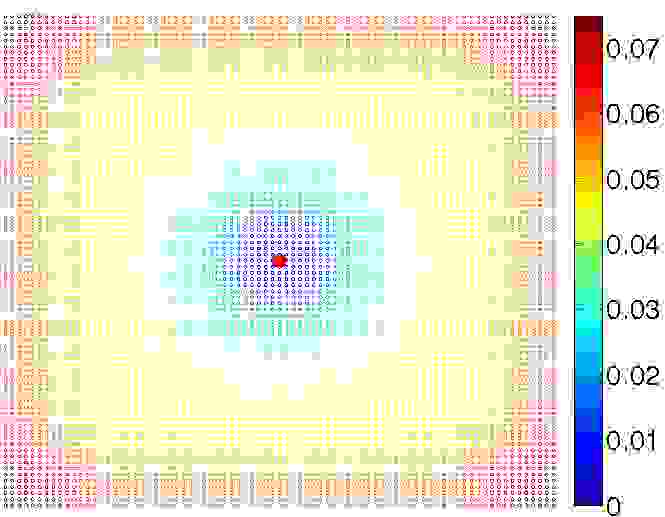}
}
\subfigure[Geodesic distance]{
\includegraphics[width=0.22\textwidth]{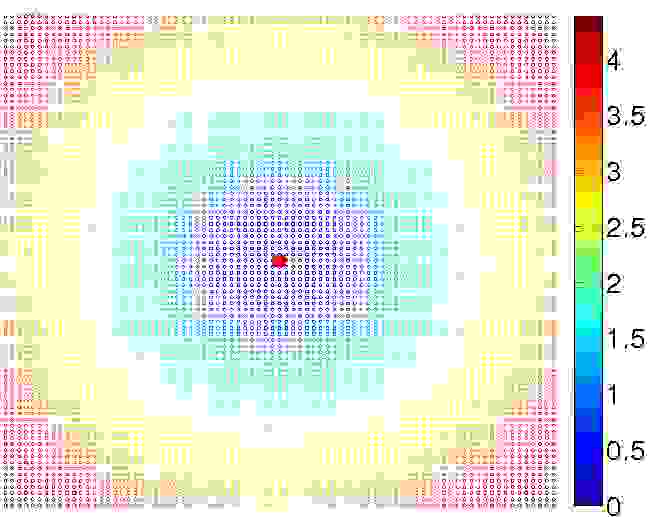}
}
\end{center}
\caption{Square case. Top: truncated vector diffusion distances for $t=10$, $t=100$ and $t=1000$; Bottom: truncated diffusion distances for $t=10$, $t=100$ and $t=1000$, and the geodesic distance. The reference point from which distances are computed is marked in red.}
\label{fig4}
\end{figure}

\section{Out-of-sample extension of vector fields}
\label{extrapolation}
Let $\mathcal{X}=\{x_i\}_{i=1}^n$ and $\mathcal{Y}=\{y_i\}_{i=1}^m$ so that $\mathcal{X},\mathcal{Y}\subset \MM^d$, where $\MM$ is embedded in $\RR^p$ by $\iota$. Suppose $X$ is a smooth vector field that we observe only on $\mathcal{X}$ and want to extend to $\mathcal{Y}$. That is, we observe the vectors $\iota_*X(x_1),\ldots, \iota_*X(x_n)\in \mathbb{R}^p$ and want to estimate $\iota_*X(y_1),\ldots, \iota_*X(y_m)$. The set $\mathcal{X}$ is assumed to be fixed, while the points in $\mathcal{Y}$ may arrive on-the-fly and need to be processed in real time. We propose the following Nystr\"om scheme for extending $X$ from $\mathcal{X}$ to $\mathcal{Y}$.

In the preprocessing step we use the points $x_1,\ldots,x_n$ for local PCA, alignment and vector diffusion mapping as described in Sections \ref{sec:manifold} and \ref{sec:VDM}. That is, using local PCA, we find the $p\times d$ matrices $O_i$ $(i=1,\ldots,n)$, such that the columns of $O_i$ are an orthonormal basis for a subspace that approximates the embedded tangent plane $\iota_*T_{x_i}\MM$; using alignment we find the orthonormal $d\times d$ matrices $O_{ij}$ that approximate the parallel transport operator from $T_{x_j}\MM$ to $T_{x_i}\MM$; and using $w_{ij}$ and $O_{ij}$ we construct the matrices $S$ and $D$ and compute (a subset of) the eigenvectors $v_1,v_2,\ldots,v_{nd}$ and eigenvalues $\lambda_1,\ldots,\lambda_{nd}$ of $D^{-1}S$.

We project the embedded vector field $\iota_*X(x_i) \in \mathbb{R}^p$ into the $d$-dimensional subspace spanned by the columns of $O_i$, and define $X_i \in \mathbb{R}^d$ as
\begin{equation}
X_i = O_i^T \iota_*X(x_i).
\end{equation}
We represent the vector field $X$ on $\mathcal{X}$ by the vector $\vx$ of length $nd$, organized as $n$ vectors of length $d$, with $$\vx(i)=X_i,\quad \mbox{for } i=1,\ldots,n.$$
We use the orthonormal basis of eigen-vector-fields $v_1,\ldots,v_{nd}$ to decompose $\vx$ as
\begin{equation}
\vx=\sum_{l=1}^{nd} a_l v_l,
\end{equation}
where $a_l=\vx^T v_l$. This concludes the preprocessing computations.

Suppose $y\in \mathcal{Y}$ is a ``new" out-of-sample point. First, we perform the local PCA step to find a $p\times d$ matrix, denoted $O_y$, whose columns form an orthonormal basis to a $d$-dimensional subspace of $\mathbb{R}^p$ that approximates the embedded tangent plane $\iota_{*}T_{y}\MM$. The local PCA step uses only the neighbors of $y$ among the points in $\mathcal{X}$ (but not in $\mathcal{Y}$) inside a ball of radius $\sqrt{\epsilon_{\text{PCA}}}$ centered at $y$.

Next, we use the alignment process to compute the $d\times d$ orthonormal matrix $O_{y,i}$ between $x_i$ and $y$ by setting
\[
O_{y,i}=\argmin_{O\in O(d)}\|O_y^TO_i-O\|_{HS}.
\]
Notice that the eigen-vector-fields satisfy
$$v_l(i) = \frac{1}{\lambda_l} \frac{\sum_{j=1}^n K_\epsilon(\|x_i-x_j\|)O_{ij}v_l(j)}{\sum_{j=1}^n K_\epsilon(\|x_i-x_j\|)}.$$
We denote the extension of $v_l$ to the point $y$ by $\tilde{v}_l(y)$ and define it as
\begin{equation}
\label{ex-v}
\tilde{v}_l(y) = \frac{1}{\lambda_l} \frac{\sum_{j=1}^n K_\epsilon(\|y-x_j\|)O_{y,j}v_l(j)}{\sum_{j=1}^n K_\epsilon(\|y-x_j\|)}.
\end{equation}
To finish the extrapolation problem, we denote the extension of $\vx$ to $y$ by $\tilde{\vx}(y)$ and define it as
\begin{equation}
\tilde{\vx}(y)=\sum_{l=1}^{m(\delta)} a_l\tilde{v_l}(y),
\end{equation}
where $m(\delta) = \max_l |\lambda_l| > \delta$, and $\delta>0$ is some fixed parameter to ensure the numerical stability of the extension procedure (due to the division by $\lambda_l$ in (\ref{ex-v}), $\frac{1}{\delta}$ can be regarded as the condition number of the extension procedure).
The vector $\iota_* X(y) \in \mathbb{R}^p$ is estimated as
\begin{equation}
\iota_* X(y) = O_y \tilde{\vx}(y).
\end{equation} 

\section{The continuous case: heat kernels}
\label{gdd}
As discussed earlier, in the limit $n\to \infty$ and $\epsilon\to 0$ considered in (\ref{eq:bias}), the normalized graph Laplacian converges to the Laplace-Beltrami operator, which is the generator of the heat kernel for functions (0-forms). Similarly, in the limit $n\to \infty$ considered in (\ref{summary:eq}), we get the connection Laplacian operator, which is the generator of a heat kernel for vector fields (or 1-forms). The connection Laplacian $\nabla^2$ is a self-adjoint, second order elliptic operator defined over the tangent bundle $T\MM$. It is well-known \cite{gilkey} that the spectrum of $\nabla^2$ is discrete inside $\RR^-$ and the only possible accumulation point is $-\infty$. We will denote the spectrum as $\{-\lambda_k\}_{k=0}^\infty$, where $0\leq\lambda_0\leq\lambda_1...$. From the classical elliptic theory, see for example \cite{gilkey}, we know that
$e^{t\nabla^2}$ has the kernel
\[
k_t(x,y)=\sum_{n=0}^\infty e^{-\lambda_n t}X_n(x)\otimes\overline{X_n(y)}.
\]
where $\nabla^2X_n=-\lambda_nX_n$. Also, the eigenvector-fields $X_n$ of $\nabla^2$ form an orthonormal basis of $L^2(T\MM)$. In the continuous setup, we define the vector diffusion distance between $x,y\in \mathcal{M}$ using $\|k_t(x,y)\|^2_{HS}$. An explicit calculation gives
\begin{eqnarray}
\label{KF}
\|k_t(x,y)\|_{HS}^2 &=& \operatorname{Tr}\left[k_t(x,y)k_t(x,y)^* \right] \nonumber \\
&=& \sum_{n,m=0}^\infty e^{-(\lambda_n+\lambda_m)t} \langle X_n(x), X_m(x) \rangle \overline{\langle X_n(y), X_m(y) \rangle}.
\end{eqnarray}
It is well known that the heat kernel $k_t(x,y)$ is smooth in $x$ and $y$ and analytic in $t$ \cite{gilkey}, so for $t>0$ we can define a family of vector diffusion mappings $V_t$, that map any $x\in\MM$ into the Hilbert space $\ell^2$ by:
\begin{equation}\label{Vt}
V_t:x \mapsto \left( e^{-(\lambda_n + \lambda_m)t/2} \langle X_n(x), X_m(x) \rangle \right)_{n,m=0}^\infty,
\end{equation}
which satisfies
\begin{equation}
\|k_t(x,y)\|_{HS}^2 = \langle V_t(x), V_t(y)\rangle_{\ell^2}.
\end{equation}
The vector diffusion distance $d_{\text{VDM},t}(x,y)$ between $x\in M$ and $y\in M$ is defined as
\begin{equation}
\label{dd}
d_{\text{VDM},t}(x,y) := \|V_t(x)-V_t(y)\|_{\ell^2},
\end{equation}
which is clearly a distance function over $\MM$. In practice, due to the decay of $e^{-(\lambda_n+\lambda_m)t}$, only pairs $(n,m)$ for which $\lambda_n+\lambda_m$ is not too large are needed to get a good approximation of this vector diffusion distance. Like in the discrete case, the dot products $\langle X_n(x), X_m(x) \rangle$ are invariant to the choice of basis for the tangent space at $x$.

We now study some properties of the vector diffusion map $V_t$ (\ref{Vt}).
First, we claim for all $t>0$, the vector diffusion mapping $V_t$ is an embedding of the compact Riemannian manifold $\MM$ into $\ell^2$.
\begin{thm}
Given a $d$-dim closed Riemannian manifold $(\MM,g)$ and an orthonormal basis $\{X_n\}_{n=0}^\infty$ of $L^2(T\MM)$ composed of the eigen-vector-fields of the connection-Laplacian $\nabla^2$, then for any $t>0$, the vector diffusion map $V_t$ is a diffeomorphic embedding of $\MM$ into $\ell^2$.
\end{thm}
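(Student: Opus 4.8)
The plan is to establish the two parts of the statement separately: first that $V_t$ is a smooth immersion, and second that it is injective; since $\MM$ is compact, an injective immersion is automatically a diffeomorphic embedding onto its image. Throughout, the essential tool is the short-time asymptotic expansion of the heat kernel $k_t(x,y)$ of the connection-Laplacian, which is smooth in $(x,y)$ and analytic in $t$ (as recalled in Section \ref{gdd}), together with the fact that as $t \to 0^+$, $k_t(x,y)$ concentrates near the diagonal and behaves like the Euclidean heat kernel on $\mathbb{R}^d$ twisted by parallel transport: $k_t(x,y) \sim (4\pi t)^{-d/2} e^{-d(x,y)^2/4t}\, P_{x,y}\,(1 + O(t))$ for $x,y$ close, where $P_{x,y}$ is parallel transport along the minimizing geodesic. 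Because the coordinates of $V_t(x)$ are the quantities $\langle X_n(x), X_m(x)\rangle$ and $\|V_t(x) - V_t(y)\|^2_{\ell^2} = \|k_t(x,x)\|^2_{HS} + \|k_t(y,y)\|^2_{HS} - 2\|k_t(x,y)\|^2_{HS}$, all metric information about the map is encoded in $\|k_t(x,y)\|_{HS}^2$, and I would work with this scalar quantity rather than the individual eigen-vector-fields.

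For injectivity: suppose $V_t(x) = V_t(y)$ for some $x \neq y$. Then $\|k_t(x,y)\|_{HS}^2 = \|k_t(x,x)\|_{HS}^2 = \|k_t(y,y)\|_{HS}^2$. The idea is to fix $x$ and consider the function $y \mapsto \|k_t(x,y)\|_{HS}^2$; I would argue it attains its strict global maximum at $y = x$. One route: expand $\|k_t(x,y)\|_{HS}^2 = \sum_{n,m} e^{-(\lambda_n+\lambda_m)t}\langle X_n(x),X_m(x)\rangle \overline{\langle X_n(y),X_m(y)\rangle}$; by Cauchy--Schwarz applied to the $\ell^2$ inner product of the vectors $(e^{-\lambda_n t/2} e^{-\lambda_m t/2}\langle X_n(x),X_m(x)\rangle)_{n,m}$ and the analogous vector at $y$, we get $\|k_t(x,y)\|_{HS}^2 \le \|V_t(x)\|_{\ell^2}\|V_t(y)\|_{\ell^2}$ with equality iff $V_t(x)$ and $V_t(y)$ are positively proportional. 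Combined with $\|k_t(x,y)\|_{HS}^2 = \|V_t(x)\|^2 = \|V_t(y)\|^2$, equality in Cauchy--Schwarz forces $V_t(x) = V_t(y)$, which returns us to the hypothesis; so instead I would use the small-$t$ concentration: for $t$ small and $x \neq y$ fixed, $\|k_t(x,y)\|_{HS}$ is exponentially smaller than $\|k_t(x,x)\|_{HS} \sim (4\pi t)^{-d/2} \cdot d^{1/2}$, a contradiction. To upgrade from small $t$ to all $t > 0$, I would invoke analyticity of $t \mapsto \|k_t(x,y)\|_{HS}^2 - \|k_t(x,x)\|_{HS}^2$ for fixed $x \neq y$: if this vanishes at some $t_0>0$ but is strictly negative for all small $t$, analyticity alone does not immediately give a contradiction, so the cleaner argument is a direct comparison — one shows $\|k_t(x,y)\|_{HS}^2 < \|k_t(x,x)\|_{HS}^2$ for every $t>0$ by noting that $\langle X_n(x),X_m(x)\rangle$ and $\langle X_n(y),X_m(y)\rangle$ cannot coincide for all $n,m$ unless the parallel-transport-twisted evaluation functionals at $x$ and $y$ agree, which (using that $\{X_n\}$ separates points of $T\MM$, e.g. by solving appropriate initial-value data for the heat equation) forces $x = y$.

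For the immersion property: fix $x$ and a tangent vector $v \in T_x\MM$, $v \neq 0$. Differentiating, $dV_t|_x(v)$ has coordinates $\frac{d}{ds}\big|_{s=0}\langle X_n(\gamma(s)),X_m(\gamma(s))\rangle$ along a curve $\gamma$ with $\gamma'(0)=v$. To see this is nonzero, I would compute the induced pullback metric: $g^{V_t}_{ij}(x) = \frac{1}{2}\partial_{y^i}\partial_{y^j}\big|_{y=x}\left(\|k_t(x,x)\|^2 + \|k_t(y,y)\|^2 - 2\|k_t(x,y)\|^2\right) = -\partial_{y^i}\partial_{y^j}\big|_{y=x}\|k_t(x,y)\|^2_{HS}$, i.e. minus the Hessian in the second variable of $y \mapsto \|k_t(x,y)\|_{HS}^2$ at its maximum $y=x$. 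Using the small-$t$ expansion $\|k_t(x,y)\|_{HS}^2 \approx (4\pi t)^{-d} e^{-d(x,y)^2/2t}(d + O(t))$ near the diagonal, the Hessian is $(4\pi t)^{-d}\, d \cdot \frac{1}{t}\, g_{ij}(x) + (\text{lower order in } 1/t)$, which is strictly positive definite for $t$ small; hence $g^{V_t}$ is positive definite and $V_t$ is an immersion for small $t$. For general $t > 0$, positive-definiteness of the pullback metric again follows from a direct argument: $g^{V_t}(v,v) = 0$ would say the curve $s \mapsto V_t(\gamma(s))$ has zero speed, hence (since $\|k_t(x,y)\|_{HS}^2$ is the inner product) $\frac{d}{ds}\langle X_n(\gamma(s)),X_m(\gamma(s))\rangle = 0$ at $s=0$ for all $n,m$, and one shows this cannot happen for $v \neq 0$ because the family $\{X_n \otimes X_m\}$ is rich enough to detect first-order variation of the base point.

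The main obstacle I anticipate is passing from the easy small-$t$ regime (where the heat kernel asymptotics make both injectivity and the immersion property transparent) to arbitrary $t > 0$: the quantity $\|k_t(x,y)\|_{HS}^2$ is genuinely a sum over all eigenvalues, and for large $t$ it is dominated by the bottom of the spectrum, so one must rule out accidental coincidences or degeneracies coming from the low-frequency eigen-vector-fields. I expect the resolution to rest on the fact that the eigen-vector-fields $\{X_n\}$ form a complete orthonormal basis of $L^2(T\MM)$ and, being analytic, their $2$-jets at any point already determine the point — equivalently, the "diagonal restriction" map $x \mapsto (x \mapsto \langle X_n(x), X_m(x)\rangle)$ is a faithful encoding — so no information is lost at positive $t$. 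Making the "completeness $\Rightarrow$ separates points and tangent vectors" step fully rigorous (rather than just invoking it) is where the real work lies.
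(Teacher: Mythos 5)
Your plan correctly distinguishes the immersion and injectivity halves, but the obstacle you anticipate --- passing from the easy small-$t$ regime to arbitrary $t>0$ --- is a mirage. The coordinates of $V_t$ are $e^{-(\lambda_n+\lambda_m)t/2}\langle X_n(\cdot),X_m(\cdot)\rangle$, and the scalar weights $e^{-(\lambda_n+\lambda_m)t/2}$ are strictly positive for \emph{every} $t>0$. Consequently $V_t(x)=V_t(y)$ holds if and only if $\langle X_n(x),X_m(x)\rangle=\langle X_n(y),X_m(y)\rangle$ for all $n,m$, and $dV_t\vert_x(v)=0$ if and only if $\nabla_v\langle X_n(x),X_m(x)\rangle=0$ for all $n,m$; the $t$-dependence factors out entirely, so neither statement requires heat kernel concentration nor analyticity in $t$. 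Your detour through the short-time asymptotics, and your worry about ``accidental coincidences'' at large $t$, add nothing --- the question is a $t$-independent separating property of the family $\{\langle X_n,X_m\rangle\}_{n,m}$, and that is where you explicitly stop, saying ``making this fully rigorous is where the real work lies.'' That step, however, is genuinely easy, and the paper does it directly: given $x\ne y$, choose any smooth vector field $X$ with $\|X(x)\|^2\ne\|X(y)\|^2$ (a bump-function construction), expand $X=\sum_n c_n X_n$ in the eigen-vector-field basis, obtain $\|X(z)\|^2=\sum_{n,m}c_nc_m\langle X_n(z),X_m(z)\rangle$, and conclude that some pair $(n,m)$ already separates $x$ from $y$. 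This replaces your kernel estimate with a four-line algebraic argument that works uniformly in $t$.

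On the immersion half you are, if anything, ahead of the paper: the paper's proof only establishes continuity plus injectivity from a compact domain into a Hausdorff space, which yields a \emph{topological} embedding, and never verifies that $dV_t$ has full rank, so the word ``diffeomorphic'' in the statement is not actually earned by the printed proof. Your instinct to compute the pullback metric and check positive-definiteness is the right one, and --- again by the positivity of the weights --- you may discharge it for all $t$ at once by the same mechanism: choose a smooth $X$ with $\nabla_v\|X(x)\|^2\ne0$, expand in the basis, and conclude that some $\nabla_v\langle X_n(x),X_m(x)\rangle\ne0$. The Hessian/asymptotics calculation you sketch for small $t$ is correct but unnecessary once you see that $t$ factors out.
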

\begin{proof}
We show that $V_t:\MM \rightarrow \ell^2$ is continuous in $x$ by noting that
\begin{align}\label{vddexpansion}
\begin{split}
\|V_t(x)-V_t(y)\|^2_{\ell^2} &= \sum_{n,m=0}^\infty e^{-(\lambda_n + \lambda_m)t} (\langle X_n(x), X_m(x) \rangle-\langle X_n(y), X_m(y) \rangle)^2\\
&=\operatorname{Tr}(k_t(x,x)k_t(x,x)^*)+\operatorname{Tr}(k_t(y,y)k_t(y,y)^*)-2\operatorname{Tr}(k_t(x,y)k_t(x,y)^*)\\
\end{split}
\end{align}
From the continuity of the kernel $k_t(x,y)$, it is clear that $\|V_t(x)-V_t(y)\|^2_{\ell^2}\rightarrow 0$ as $y\rightarrow x$. Since $\MM$ is compact, it follows that $V_t(\MM)$ is compact in $\ell^2$. Then we show that $V_t$ is one-to-one. Fix $x\neq y$ and a smooth vector field $X$ that satisfies $\langle X(x),X(x) \rangle \neq \langle X(y),X(y) \rangle$. Since the eigen-vector fields $\{X_n\}_{n=0}^\infty$ form a basis to $L^2(T\MM)$, we have
\[
X(z)=\sum_{n=0}^\infty c_n X_n(z),\quad \mbox{for all } z\in \MM,
\]
where $c_n = \displaystyle \int_\MM \langle X,X_n \rangle \ud V$. As a result,
$$\langle X(z),X(z) \rangle = \sum_{n,m=0}^\infty c_n c_m  \langle X_n(z),X_m(z) \rangle.$$
Since $\langle X(x),X(x) \rangle \neq \langle X(y),X(y) \rangle$, there exist $n,m\in\NN$ such that $\langle X_n(x),X_m(x) \rangle \neq \langle X_n(y),X_m(y)\rangle$, which shows that $V_t(x)\neq V_t(y)$, i.e., $V_t$ is one-to-one. From the fact that the map $V_t$ is continuous and one-to-one from $\MM$, which is compact, onto $V_t(\MM)$, we conclude that $V_t$ is an embedding.

\end{proof}


Next, we demonstrate the asymptotic behavior of the vector diffusion distance $d_{\text{VDM},t}(x,y)$ and the diffusion distance $d_{\text{DM},t}(x,y)$ when $t$ is small and $x$ is close to $y$. The following theorem shows that in this asymptotic limit both the vector diffusion distance and the diffusion distance behave like the geodesic distance.
\begin{thm}\label{thm:geod}
Let $(\MM,g)$ be a smooth $d$-dim closed Riemannian manifold. Suppose $x,y\in \MM$ so that $x=\exp_yv$, where $v\in T_y\MM$. For any $t>0$, when $\|v\|^2\ll t\ll 1$ we have the following asymptotic expansion of the vector diffusion distance:
\begin{align*}
\begin{split}
d^2_{\text{VDM},t}(x,y)= d(4\pi)^{-d}\frac{\|v\|^2}{t^{d+1}}+O(t^{-d})
\end{split}
\end{align*}
Similarly, when $\|v\|^2\ll t\ll 1$, we have the following asymptotic expansion of the diffusion distance:
\begin{align*}
\begin{split}
d^2_{\text{DM},t}(x,y)=(4\pi)^{-d/2}\frac{\|v\|^2}{2t^{d/2+1}}+O(t^{-d/2}).
\end{split}
\end{align*}
\end{thm}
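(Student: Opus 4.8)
The plan is to reduce each distance to a short-time expansion of the appropriate heat kernel and then expand in the regime $\|v\|^2\ll t\ll 1$. For the vector diffusion distance I start from the identity recorded in (\ref{vddexpansion}),
\[
d^2_{\text{VDM},t}(x,y)=\operatorname{Tr}\bigl(k_t(x,x)k_t(x,x)^*\bigr)+\operatorname{Tr}\bigl(k_t(y,y)k_t(y,y)^*\bigr)-2\operatorname{Tr}\bigl(k_t(x,y)k_t(x,y)^*\bigr),
\]
where $k_t$ is the heat kernel of $\nabla^2$; for the diffusion distance I use the scalar analogue $d^2_{\text{DM},t}(x,y)=p_t(x,x)+p_t(y,y)-2p_t(x,y)$, where $p_t$ is the heat kernel of $\Delta_\MM$. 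Since $x=\exp_y v$ with $\|v\|$ below the injectivity radius, we have $d(x,y)=\|v\|$ and $x$ lies in a normal neighborhood of $y$, so the parametrix is available.

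Next I invoke the short-time asymptotic expansion of the heat kernel of a Laplace-type operator on the bundle $T\MM$ (see, e.g., \cite{gilkey}): uniformly for $x$ near $y$,
\[
k_t(x,y)=(4\pi t)^{-d/2}e^{-\|v\|^2/(4t)}\bigl(u_0(x,y)P_{x,y}+t\,\Phi_1(x,y)+O(t^2)\bigr),
\]
where $P_{x,y}\colon T_y\MM\to T_x\MM$ is parallel transport along the minimizing geodesic, $u_0$ is the leading scalar heat coefficient (smooth, symmetric, with $u_0(x,x)=1$ and $u_0(x,y)=1+O(\|v\|^2)$), and $\Phi_1$ is smooth with $\Phi_1(x,y)=\Phi_1(y,x)^*$; the scalar kernel $p_t$ has the same shape with $P_{x,y}$ replaced by $1$. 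The two facts I need are $P_{x,x}=\mathrm{Id}_{T_x\MM}$, so $\|u_0(x,x)P_{x,x}\|^2_{HS}=d$, and that $P_{x,y}$ is an isometry, so $\|u_0(x,y)P_{x,y}\|^2_{HS}=u_0(x,y)^2 d=d\bigl(1+O(\|v\|^2)\bigr)$. Substituting into the trace identity gives
\[
d^2_{\text{VDM},t}(x,y)=2d(4\pi t)^{-d}\bigl(1-e^{-\|v\|^2/(2t)}\bigr)+(4\pi t)^{-d}\bigl(O(t)+e^{-\|v\|^2/(2t)}O(\|v\|^2)\bigr),
\]
and, expanding $1-e^{-u}=u+O(u^2)$ with $u=\|v\|^2/(2t)$ and using $O(\|v\|^2)=O(t)$ in the stated regime, the leading term is $2d(4\pi t)^{-d}\tfrac{\|v\|^2}{2t}=d(4\pi)^{-d}\|v\|^2/t^{d+1}$ with all remainders $O(t^{-d})$. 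The same computation for the scalar kernel yields $d^2_{\text{DM},t}(x,y)=2(4\pi t)^{-d/2}\bigl(1-e^{-\|v\|^2/(4t)}\bigr)+O(t^{-d/2})=(4\pi)^{-d/2}\|v\|^2/(2t^{d/2+1})+O(t^{-d/2})$.

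I expect the only delicate point to be the error bookkeeping, i.e., verifying that the curvature and subleading contributions really are absorbed by $O(t^{-d})$ and $O(t^{-d/2})$. The potentially dangerous terms are the $\Phi_1$-level ones: they enter as $(4\pi t)^{-d}$ times $2\operatorname{Tr}\Phi_1(x,x)+2\operatorname{Tr}\Phi_1(y,y)-4e^{-\|v\|^2/(2t)}\operatorname{Tr}\Phi_1(x,y)$ (schematically, with the appropriate real parts), and one must note that this combination vanishes at $x=y$ and, since $\operatorname{Tr}\Phi_1$ is a symmetric function of its arguments (a consequence of $k_t(x,y)=k_t(y,x)^*$), has no first-order term in $v$, hence is $O(\|v\|^2)$; likewise for the $u_0-1$ correction. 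Consequently all remainders are of size $O(\|v\|^2 t^{-d})+O(\|v\|^4 t^{-d-2})=o(t^{-d})$ (and similarly $o(t^{-d/2})$ in the scalar case), which is harmless. Everything else is routine heat-kernel parametrix analysis on a closed manifold.
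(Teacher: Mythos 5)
Your proof is correct and follows the same route as the paper's: both start from the trace identity $d^2_{\text{VDM},t}(x,y)=\operatorname{Tr}(k_t(x,x)k_t(x,x)^*)+\operatorname{Tr}(k_t(y,y)k_t(y,y)^*)-2\operatorname{Tr}(k_t(x,y)k_t(x,y)^*)$ (and its scalar analogue), substitute the short-time parametrix $k_t(x,y)\sim(4\pi t)^{-d/2}e^{-\|v\|^2/4t}(P_{x,y}+O(t))$, use $\|P_{x,y}\|_{HS}^2=d$, and expand $1-e^{-\|v\|^2/(2t)}$ in the regime $\|v\|^2\ll t\ll1$. The only cosmetic difference is that the paper names your $u_0$ factor as $j(x,y)^{-1/2}=1-\Ric(v,v)/6+O(\|v\|^3)$ citing Getzler; also, your symmetry argument for $\operatorname{Tr}\Phi_1$ is unnecessary, since those terms already carry an explicit factor of $t$ and hence are $O(t^{1-d})=O(t^{-d})$ without any cancellation.
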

\begin{proof}
Fix $y$ and a normal coordinate around $y$. Denote $j(x,y)=|\det (d_v\exp_y)|$, where $x=\exp_y(v)$, $v\in T_x\MM$. Suppose $\|v\|$ is small enough so that $x=\exp_y(v)$ is away from the cut locus of $y$. It is well known that the heat kernel $k_t(x,y)$ for the connection Laplacian $\nabla^2$ over the vector bundle $\mathcal{E}$ possesses the following asymptotic expansion when $x$ and $y$ are close: \cite[p. 84]{getzler} or \cite{dewitt}
\begin{equation}\label{kt}
\|\partial^k_t(k_t(x,y)-k^N_t(x,y))\|_l=O(t^{N-d/2-l/2-k}),
\end{equation}
where $\|\cdot\|_l$ is the $C^l$ norm,
\begin{equation}\label{kt1}
k^N_t(x,y):=(4\pi t)^{-d/2}e^{-\|v\|^2/4t}j(x,y)^{-1/2}\sum^N_{i=0}t^i\Phi_i(x,y),
\end{equation}
$N>d/2$, and $\Phi_i$ is a smooth section of the vector bundle $\mathcal{E}\otimes \mathcal{E}^*$ over $\MM\times \MM$. Moreover, $\Phi_0(x,y)=P_{x,y}$ is the parallel transport from $\mathcal{E}_y$ to $\mathcal{E}_x$. In the VDM setup, we take $\mathcal{E}=T\MM$, the tangent bundle of $\MM$. Also, by \cite[Proposition 1.28]{getzler}, we have the following expansion:
\begin{equation}\label{kt2}
j(x,y)=1+\Ric(v,v)/6+O(\|v\|^3).
\end{equation}
Equations (\ref{kt1}) and (\ref{kt2}) lead to the following expansion under the assumption $\|v\|^2 \ll t$:
\begin{align*}
\begin{split}
&\quad\operatorname{Tr}(k_t(x,y)k_t(x,y)^*)\\
&=(4\pi t)^{-d}e^{-\|v\|^2/2t}(1+\Ric(v,v)/6+O(\|v\|^3))^{-1}\operatorname{Tr}((P_{x,y}+O(t))((P_{x,y}+O(t))^*)\\&=(4\pi t)^{-d}e^{-\|v\|^2/2t}(1-\Ric(v,v)/6+O(\|v\|^3))(d+O(t))\\
&=(d+O(t))(4\pi t)^{-d}\left(1-\frac{\|v\|^2}{2t}+O\left(\frac{\|v\|^4}{t^2}\right)\right).
\end{split}
\end{align*}
In particular, for $\|v\|=0$ we have
\begin{align*}
\begin{split}
\operatorname{Tr}(k_t(x,x)k_t(x,x)^*)=(d+O(t))(4\pi t)^{-d}.
\end{split}
\end{align*}
Thus, for $\|v\|^2\ll t\ll 1$, we have
\begin{eqnarray}
d^2_{\text{VDM},t}(x,y)&=& \operatorname{Tr}(k_t(x,x)k_t(x,x)^*) + \operatorname{Tr}(k_t(y,y)k_t(y,y)^*) - 2\operatorname{Tr}(k_t(x,y)k_t(x,y)^*)\nonumber\\
&=& d(4\pi)^{-d}\frac{\|v\|^2}{t^{d+1}}+O(t^{-d}).
\end{eqnarray}

By the same argument we can carry out the asymptotic expansion of the diffusion distance $d_{\text{DM},t}(x,y)$. Denote the eigenfunctions and eigenvalues of the Laplace-Beltrami operator $\Delta$ by $\phi_n$ and $\mu_n$. We can rewrite the diffusion distance as follows:
\begin{align}
\begin{split}
d^2_{\text{DM},t}(x,y) = \sum_{n=1}^\infty e^{-\mu_n t} (\phi_n(x)-\phi_n(y))^2
=\tilde{k}_t(x,x)+\tilde{k}_t(y,y)-2\tilde{k}_t(x,y),
\end{split}
\end{align}
where $\tilde{k}_t$ is the heat kernel of the Laplace-Beltrami operator. Note that the Laplace-Beltrami operator is equal to the connection-Laplacian operator defined over the trivial line bundle over $\MM$. As a result, equation (\ref{kt1}) also describes the asymptotic expansion of the heat kernel for the Laplace-Beltrami operator as
\begin{align*}
\begin{split}
\tilde{k}_t(x,y)=(4\pi t)^{-d/2}e^{-\|v\|^2/4t}(1+\Ric(v,v)/6+O(\|v\|^3))^{-1/2}(1+O(t)).
\end{split}
\end{align*}
Put these facts together, we obtain
\begin{align}
\begin{split}
d^2_{\text{DM},t}(x,y)=(4\pi)^{-d/2}\frac{\|v\|^2}{2t^{d/2+1}}+O(t^{-d/2}),
\end{split}
\end{align}
when $\|v\|^2\ll t\ll 1$.
\end{proof}

\section{Application of VDM to Cryo-Electron Microscopy}
\label{applications}
Besides being a general framework for data analysis and manifold learning, VDM is useful for performing robust multi-reference rotational alignment of objects, such as one-dimensional periodic signals, two-dimensional images and three-dimensional shapes. In this Section, we briefly describe the application of VDM to a particular multi-reference rotational alignment problem of two-dimensional images that arise in the field of cryo-electron microscopy (EM). A more comprehensive study of this problem can be found in \cite{amit20093} and \cite{hadani20092}. It can be regarded as a prototypical multi-reference alignment problem, and we expect many other multi-reference alignment problems that arise in areas such as computer vision and computer graphics to benefit from the proposed approach.

The goal in cryo-EM \cite{frank2006} is to determine 3D macromolecular structures from noisy projection images taken at unknown random orientations by an electron microscope, i.e., a random Computational Tomography (CT).
Determining 3D macromolecular structures for large biological molecules remains vitally important, as witnessed, for example, by the 2003 Chemistry Nobel Prize, co-awarded to R. MacKinnon for resolving the 3D structure of the Shaker $K^+$ channel protein, and by the 2009 Chemistry Nobel Prize, awarded to V. Ramakrishnan, T. Steitz and A. Yonath for studies of the structure and function of the ribosome. The standard procedure for structure determination of large molecules is X-ray crystallography. The challenge in this method is often more in the crystallization itself than in the interpretation of the X-ray results, since many large proteins have so far withstood all attempts to crystallize them.

In cryo-EM, an alternative to X-ray crystallography, the sample of macromolecules is rapidly frozen in an ice layer so thin that their tomographic projections are typically disjoint; this seems the most promising alternative for large molecules that defy crystallization. The cryo-EM imaging process produces a large collection of tomographic projections of the same molecule, corresponding to different and unknown projection orientations. The goal is to reconstruct the three-dimensional structure of the molecule from such unlabeled projection images, where data sets typically range from $10^4$ to $10^5$ projection images whose size is roughly $100\times 100$ pixels.
The intensity of the pixels in a given projection image is proportional to the line integrals of the electric potential induced by the molecule along the path of the imaging electrons (see Figure \ref{fig:sketch}). The highly intense electron beam destroys the frozen molecule and it is therefore impractical to take projection images of the same molecule at known different directions as in the case of classical CT. In other words, a single molecule can be imaged only once, rendering an extremely low signal-to-noise ratio (SNR) for the images (see Figure \ref{fig:images} for a sample of real microscope images), mostly due to shot noise induced by the maximal allowed electron dose (other sources of noise include the varying width of the ice layer and partial knowledge of the contrast function of the microscope). In the basic homogeneity setting considered hereafter, all imaged molecules are assumed to have the exact same structure; they differ only by their spatial rotation. Every image is a projection of the same molecule but at an unknown random three-dimensional rotation, and the cryo-EM problem is to find the three-dimensional structure of the molecule from a collection of noisy projection images.

\begin{figure}
\begin{center}
\includegraphics[width=0.4\textheight]{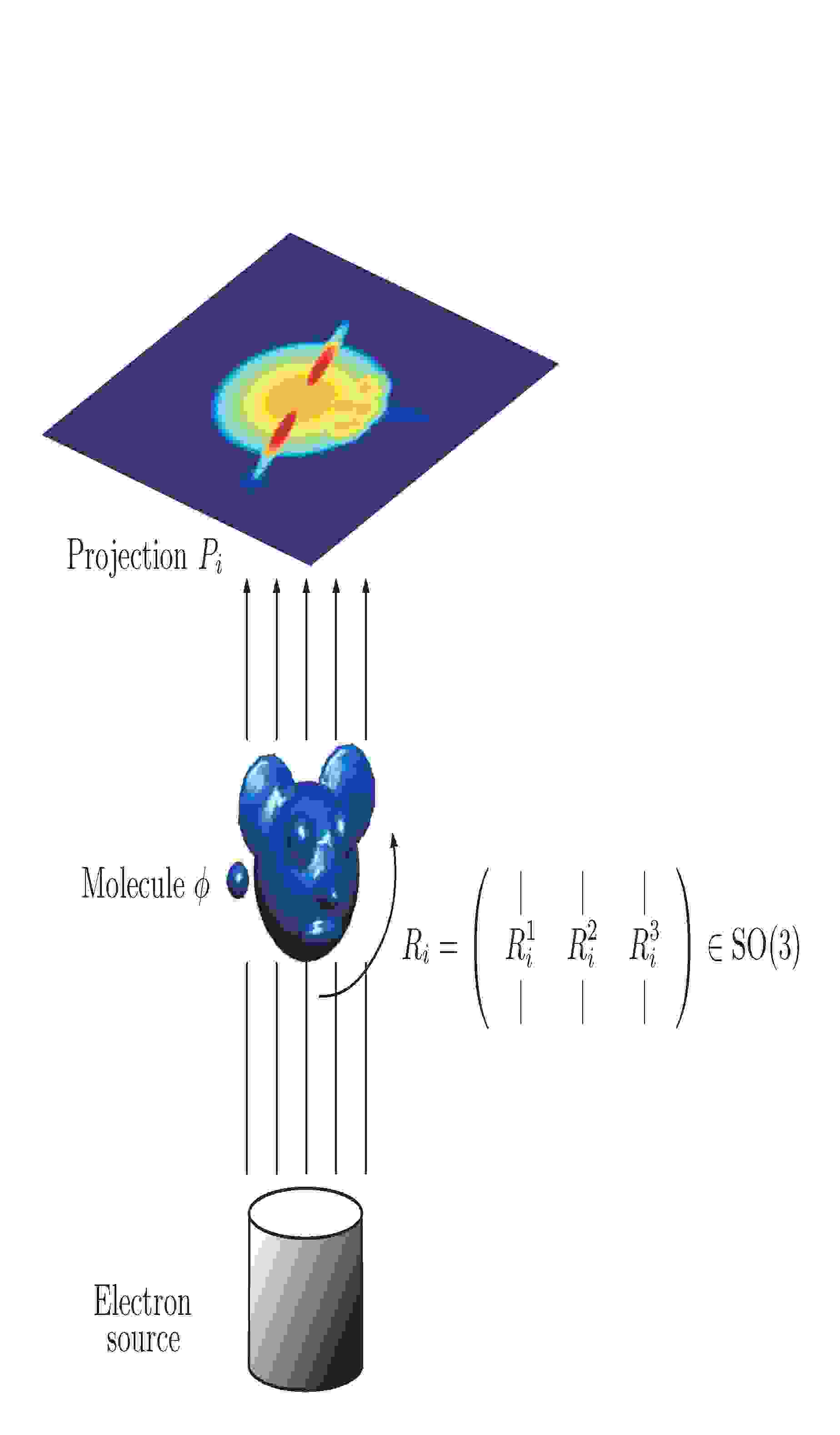}
\end{center}
\caption{Schematic drawing of the imaging process: every projection image corresponds to some unknown 3D rotation of the unknown molecule.}
\label{fig:sketch}
\end{figure}

\begin{figure}[h]
\begin{center}
\includegraphics[width=0.17\textwidth]{./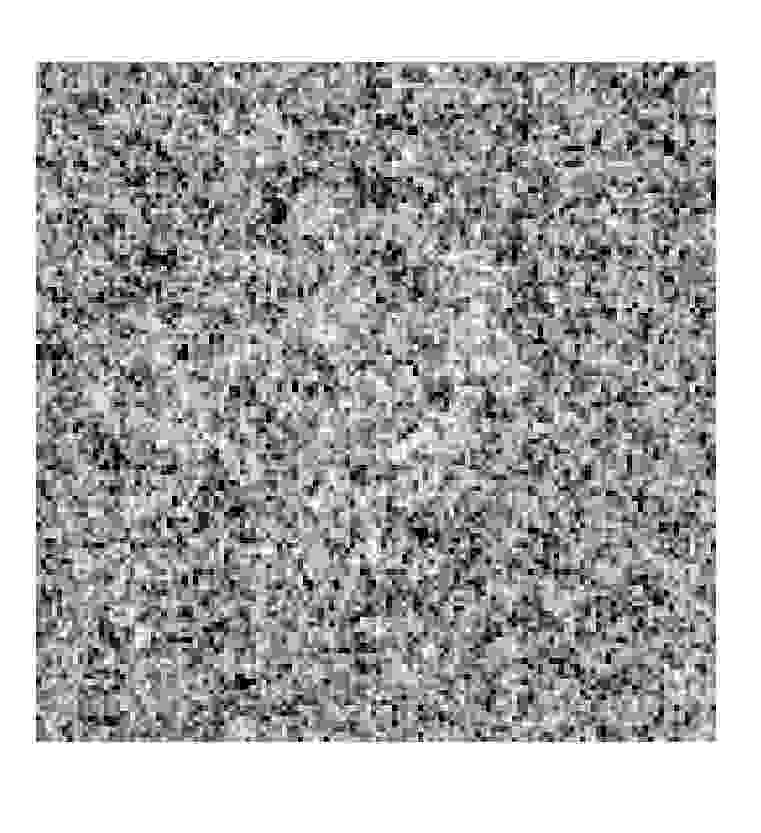}%
\hfill
\includegraphics[width=0.17\textwidth]{./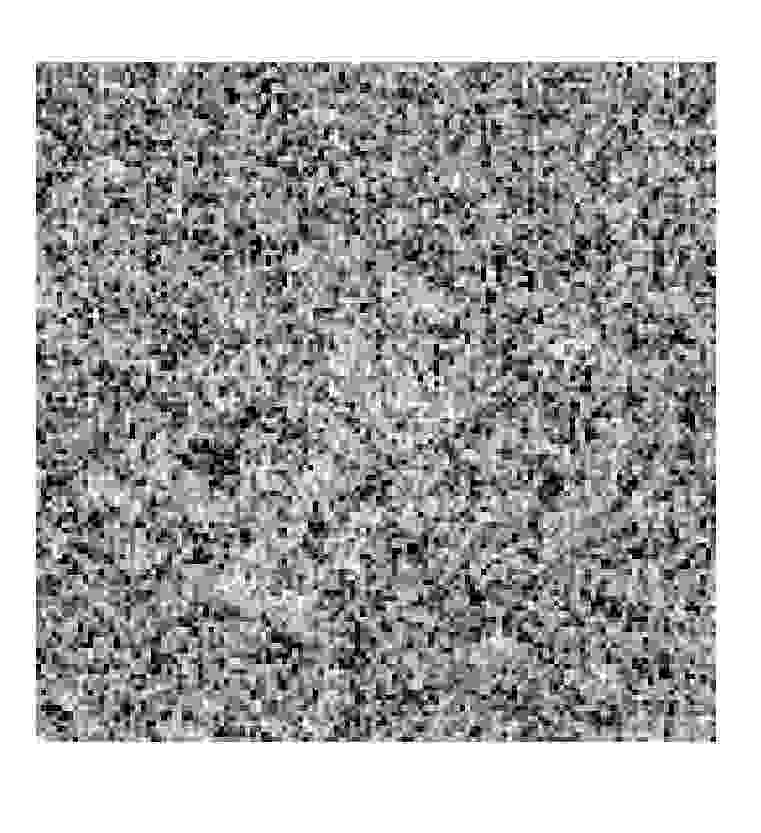}%
\hfill
\includegraphics[width=0.17\textwidth]{./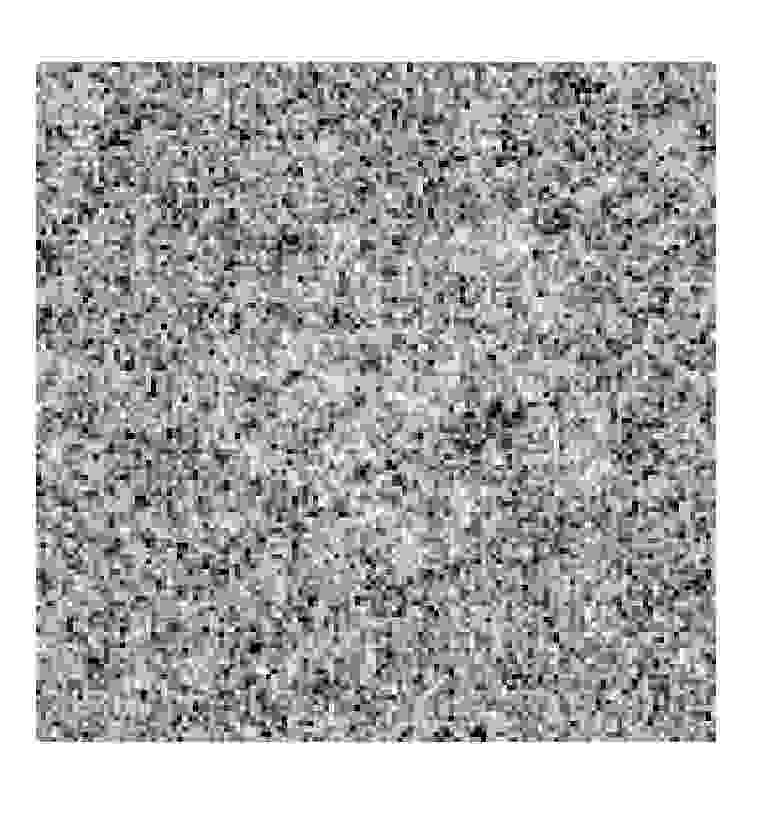}%
\hfill
\includegraphics[width=0.17\textwidth]{./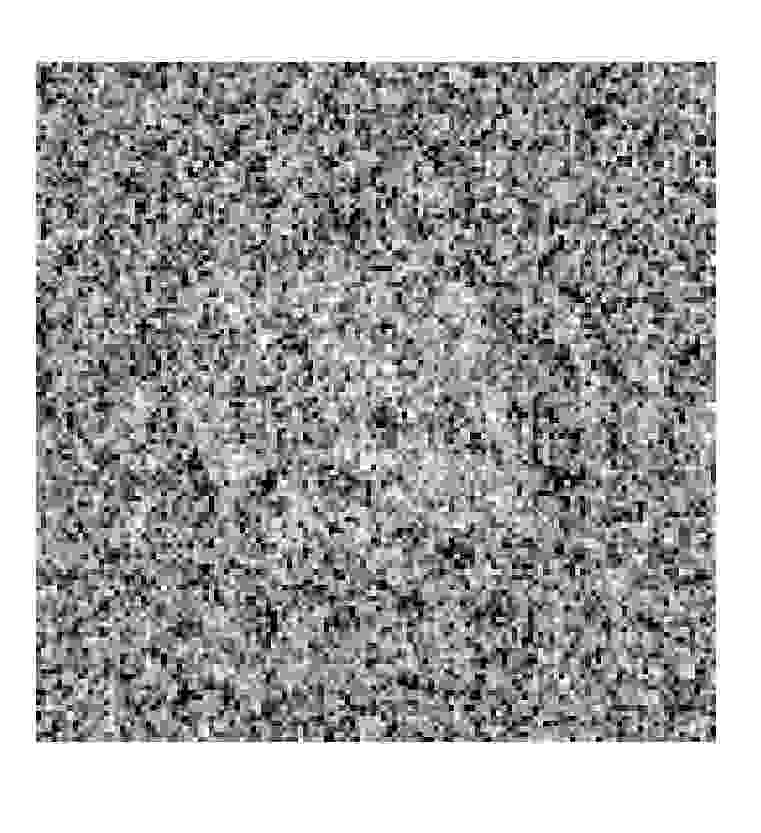}%
\end{center}
\caption{A collection of four real electron microscope images of the E.~coli 50S ribosomal subunit; courtesy of Dr. Fred Sigworth.}\label{fig:images}
\end{figure}

The rotation group SO(3) is the group of all orientation preserving orthogonal transformations about the origin of the three-dimensional Euclidean space $\mathbb{R}^3$ under the operation of composition.
Any 3D rotation can be expressed using a $3\times 3$ orthogonal matrix
$R = \left(\begin{array}{ccc}
                                  | & | & | \\
                                  R^1 & R^2 & R^3 \\
                                  | & | & |
                                \end{array}
 \right)$
satisfying $RR^T=R^TR=I_{3\times 3}$ and  $\det R = 1$.
The column vectors $R^1,R^2,R^3$ of $R$ form an orthonormal basis to $\mathbb{R}^3$.
To each projection image $P$ there corresponds a $3\times 3$ unknown rotation matrix $R$ describing its orientation (see Figure \ref{fig:sketch}). Excluding the contribution of noise, the intensity $P(x,y)$ of the pixel located at $(x,y)$ in the image plane
corresponds to the line integral of the electric potential induced by the molecule along the path of the imaging
electrons, that is,
\begin{equation}\label{eq:projection integral}
P(x,y) = \int_{-\infty}^\infty \phi(xR^1 + yR^2 + zR^3)\,dz
\end{equation}
where $\phi : \mathbb{R}^3\mapsto \mathbb{R}$ is the electric potential of the molecule in some fixed `laboratory'
coordinate system.
The projection operator~\eqref{eq:projection integral} is also known
as the X-ray transform \cite{Natr2001a}.

We therefore identify the third column $R^3$ of $R$ as the imaging direction, also known as the {\em viewing angle} of the molecule.
The first two columns $R^1$ and $R^2$ form an orthonormal basis for the plane in $\mathbb{R}^3$ perpendicular to the viewing angle $R^3$.
All clean projection images of the molecule that share the same viewing angle look the same up to some in-plane rotation. That is, if $R_i$ and $R_j$ are two rotations with the same viewing angle $R_i^3=R_j^3$ then $R_i^1, R_i^2$ and $R_j^1, R_j^2$ are two orthonormal bases for the same plane.
On the other hand, two rotations with opposite viewing angles $R_i^3=-R_j^3$ give rise to two projection images that are the same after reflection (mirroring) and some in-plane rotation.

As projection images in cryo-EM have extremely low SNR, a crucial initial step in all reconstruction methods is ``class averaging" \cite{frank2006,vanheel}. Class averaging is the grouping of a large data set of $n$ noisy raw projection images $P_1,\ldots,P_n$ into clusters, such that images within a single cluster have similar viewing angles (it is possible to artificially double the number of projection images by including all mirrored images). Averaging rotationally-aligned noisy images within each cluster results in ``class averages"; these are images that enjoy a higher SNR and are used in later cryo-EM procedures such as the angular reconstitution procedure \cite{Fred5} that requires better quality images. Finding consistent class averages is challenging due to the high level of noise in the raw images as well as the large size of the image data set. A sketch of the class averaging procedure is shown in Figure \ref{fig:class_averaging}.

\begin{figure}[h]
\begin{center}
\subfigure[Clean image]{%
\label{fig:clean_image}
\includegraphics[width=0.17\textwidth]{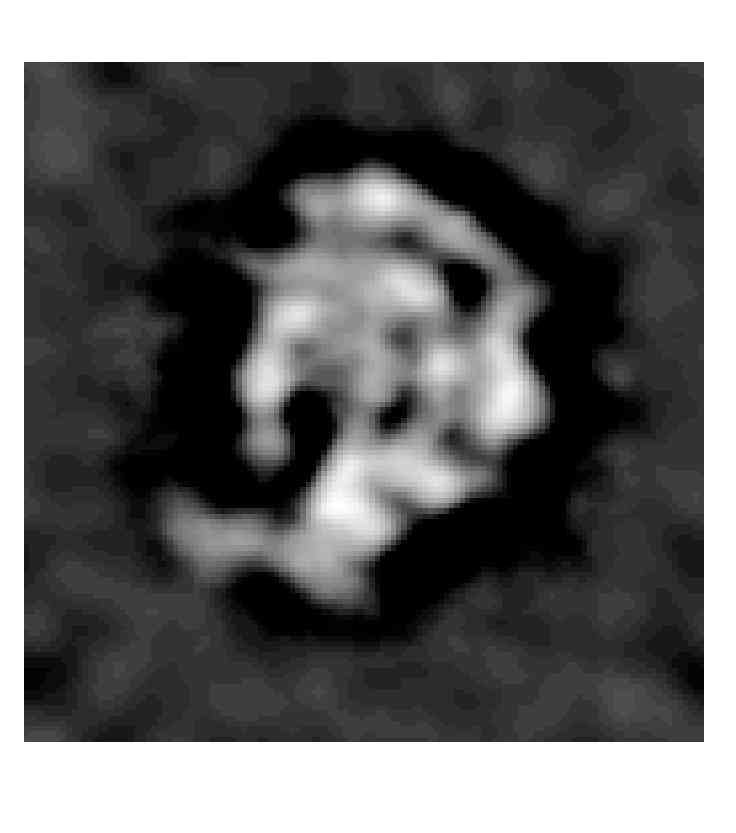}}
\hfill
\subfigure[$P_i$]{%
\label{fig:noisy_image}
\includegraphics[width=0.17\textwidth]{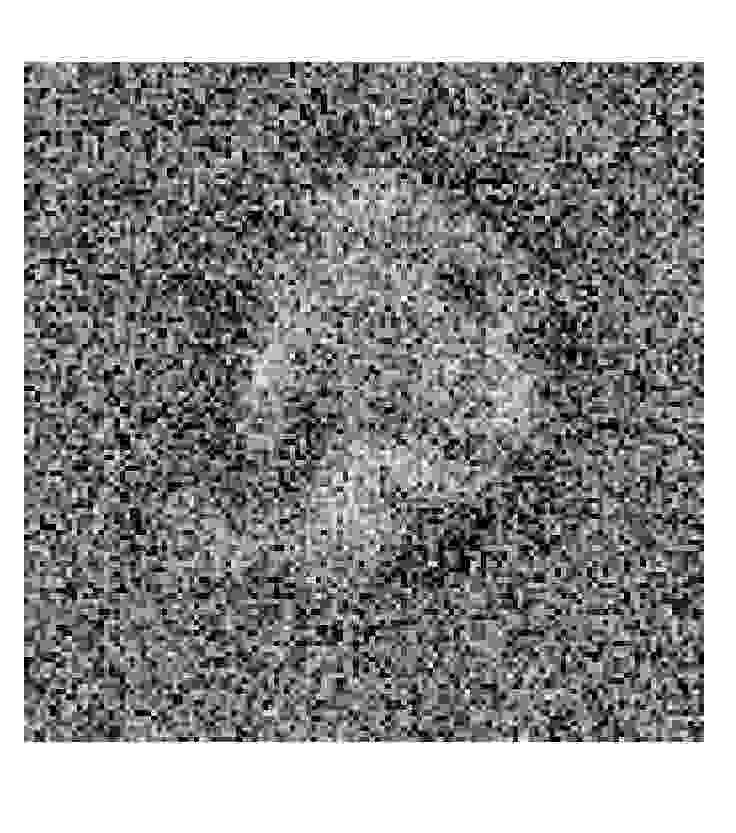}}
\hfill
\subfigure[$P_j$]{%
\label{fig:rot_image}
\includegraphics[width=0.17\textwidth]{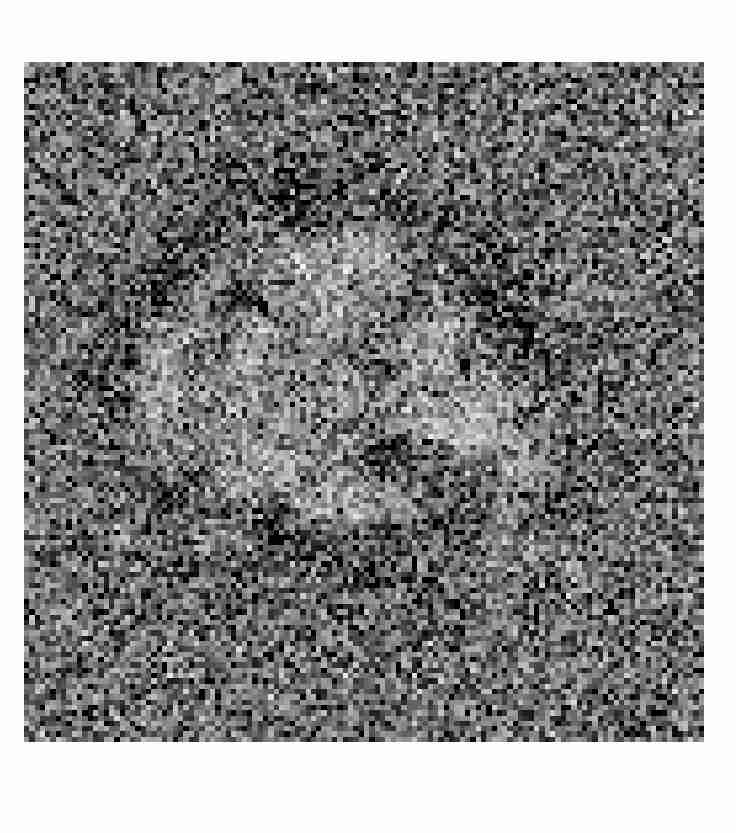}}
\hfill
\subfigure[Average]{%
\label{fig:class_average}
\includegraphics[width=0.17\textwidth]{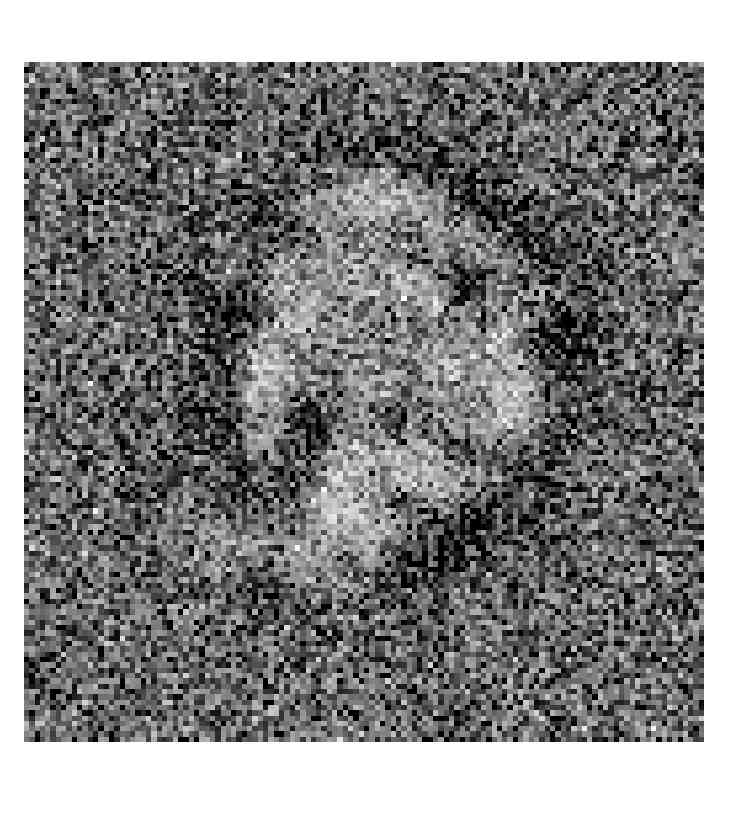}}
\end{center}
\caption{\subref{fig:clean_image} A clean simulated projection image of the ribosomal subunit generated from its known volume; \subref{fig:noisy_image} Noisy instance of \subref{fig:clean_image}, denoted $P_i$, obtained by the addition of white Gaussian noise. For the simulated images we chose the SNR to be higher than that of experimental images in order for image features to be clearly visible; \subref{fig:rot_image} Noisy projection, denoted $P_j$, taken at the same viewing angle but with a different in-plane rotation; \subref{fig:class_average} Averaging the noisy images \subref{fig:noisy_image} and \subref{fig:rot_image} after in-plane rotational alignment. The class average of the two images has a higher SNR than that of the noisy images \subref{fig:noisy_image} and \subref{fig:rot_image}, and it has better similarity with the clean image \subref{fig:clean_image}. }\label{fig:class_averaging}
\end{figure}

Penczek, Zhu and Frank \cite{Penczek1996} introduced the rotationally invariant K-means clustering procedure to identify images that have similar viewing angles. Their Rotationally Invariant Distance $d_{\text{RID}}(i,j)$ between image $P_i$ and image $P_j$ is defined as the Euclidean distance between the images when they are optimally aligned with respect to in-plane rotations (assuming the images are centered)
\begin{equation}
\label{dist}
d_{\text{RID}}(i,j) = \min_{\theta \in [0,2\pi)} \|P_i - R(\theta) P_j \|,
\end{equation}
where $R(\theta)$ is the rotation operator of an image by an angle $\theta$ in the counterclockwise direction. Prior to computing the invariant distances of (\ref{dist}), a common practice is to center all images by correlating them with their total average $\frac{1}{n}\sum_{i=1}^n P_i$, which is approximately radial (i.e., has little angular variation) due to the randomness in the rotations. The resulting centers usually miss the true centers by only a few pixels (as can be validated in simulations during the refinement procedure). Therefore, like \cite{Penczek1996}, we also choose to focus on the more challenging problem of rotational alignment by assuming that the images are properly centered, while the problem of translational alignment can be solved later by solving an overdetermined linear system.

It is worth noting that the specific choice of metric to measure proximity between images can make a big difference in class averaging. The cross-correlation or Euclidean distance (\ref{dist}) are by no means optimal measures of proximity. In practice, it is common to denoise the images prior to computing their pairwise distances. 
Although the discussion which follows is independent of the particular choice of filter or distance metric, we emphasize that filtering can have a dramatic effect on finding meaningful class averages.

The invariant distance between noisy images that share the same viewing angle (with perhaps a different in-plane rotation) is expected to be small. Ideally, all neighboring images of some reference image $P_i$ in a small invariant distance ball centered at $P_i$ should have similar viewing angles, and averaging such neighboring images (after proper rotational alignment) would amplify the signal and diminish the noise.

Unfortunately, due to the low SNR, it often happens that two images of completely different viewing angles have a small invariant distance. This can happen when the realizations of the noise in the two images match well for some random in-plane rotational angle, leading to spurious neighbor identification. Therefore, averaging the nearest neighbor images can sometimes yield a poor estimate of the true signal in the reference image.

\begin{figure}[h]
\begin{center}
\subfigure[Clean]{
    \includegraphics[width=0.16\textwidth]{./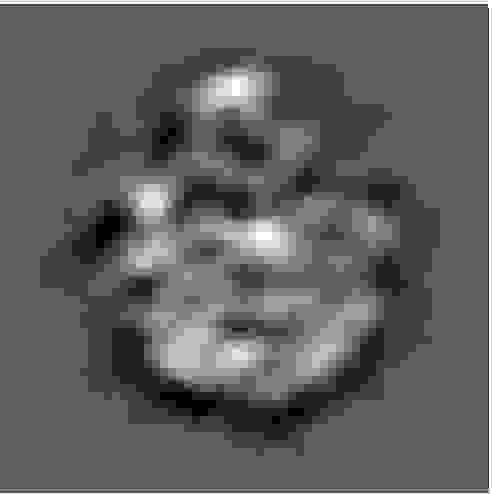}%
    }
\subfigure[SNR=1]{
    \includegraphics[width=0.16\textwidth]{./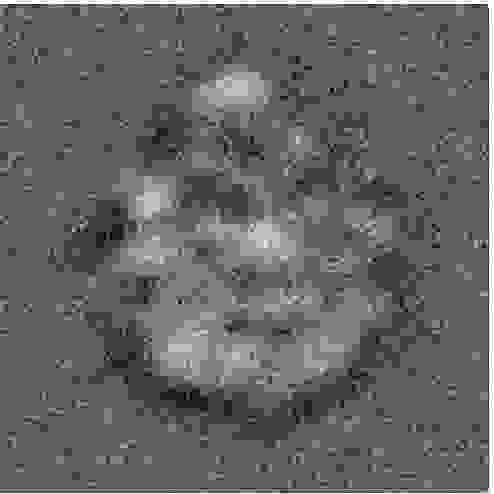}%
    }
\subfigure[SNR=1/2]{
    \includegraphics[width=0.16\textwidth]{./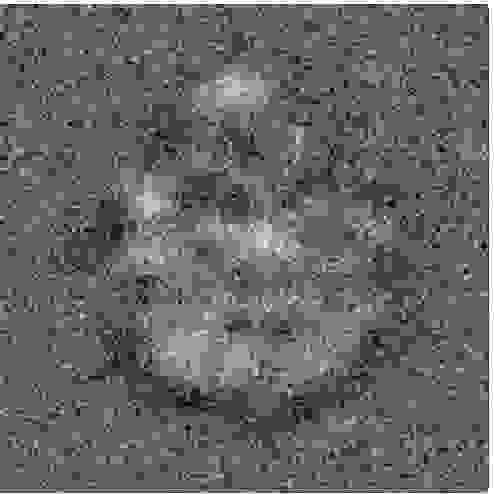}%
    }
\subfigure[SNR=1/4]{
    \includegraphics[width=0.16\textwidth]{./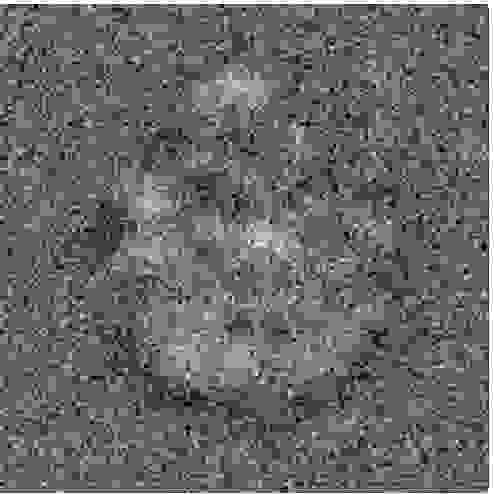}%
    }
\subfigure[SNR=1/8]{
    \includegraphics[width=0.16\textwidth]{./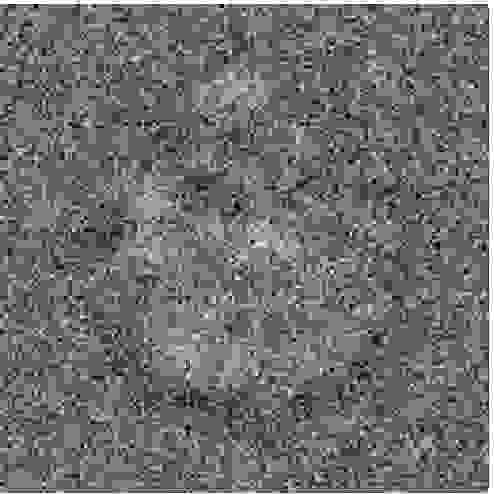}%
    }\\
\subfigure[SNR=1/16]{
    \includegraphics[width=0.16\textwidth]{./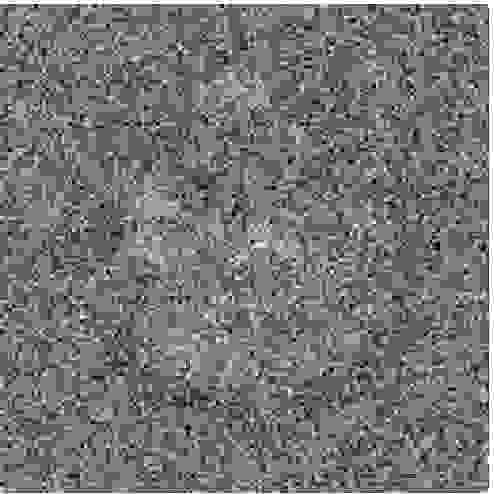}%
    }
\subfigure[SNR=1/32]{
    \includegraphics[width=0.16\textwidth]{./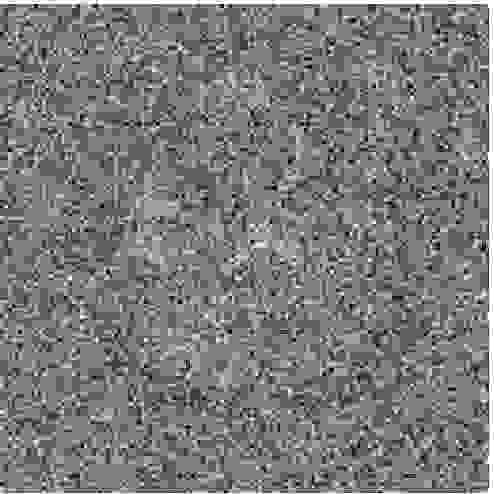}%
    }
\subfigure[SNR=1/64]{
    \includegraphics[width=0.16\textwidth]{./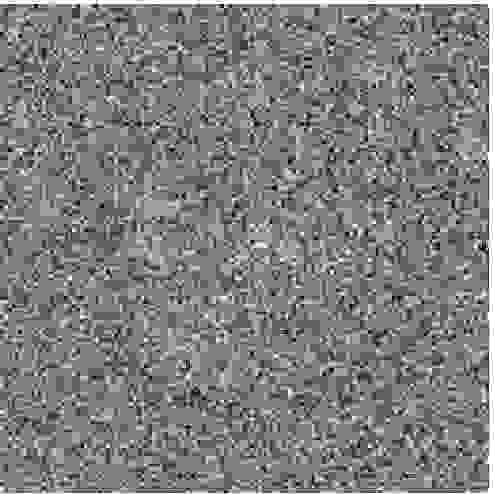}%
    }
\subfigure[SNR=1/128]{
    \includegraphics[width=0.16\textwidth]{./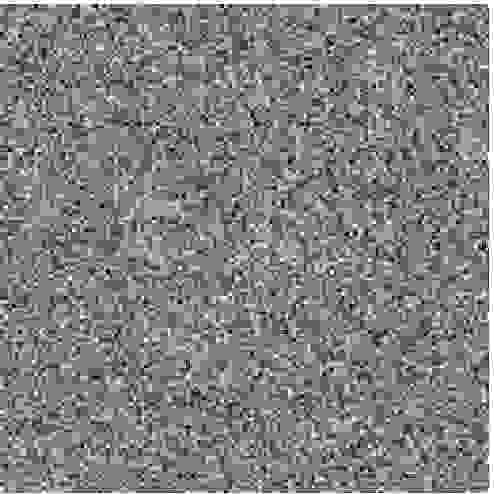}%
    }
\subfigure[SNR=1/256]{
    \includegraphics[width=0.16\textwidth]{./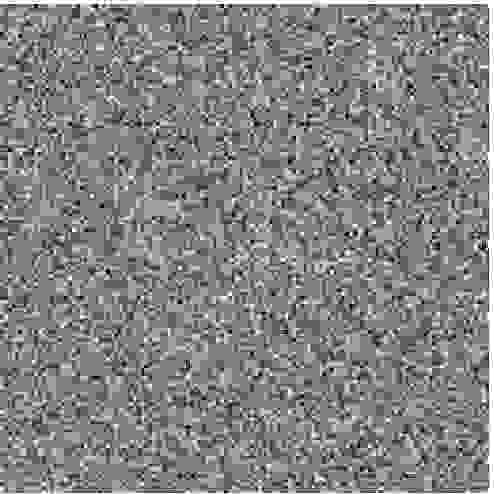}%
    }
\end{center}
\caption{Simulated projection with various levels of additive
Gaussian white noise.} \label{fig:ribosome projections}
\end{figure}

The histograms of Figure \ref{fig:histograms} demonstrate the ability of small rotationally invariant distances to identify images with similar viewing directions. For each image we use the rotationally invariant distances to find its 40 nearest neighbors among the entire set of $n=40,000$ images. In our simulation we know the original viewing directions, so for each image we compute the angles (in degrees) between the viewing direction of the image and the viewing directions of its 40 neighbors. Small angles indicate successful identification of ``true" neighbors that belong to a small spherical cap, while large angles correspond to outliers. We see that for SNR=$1/2$ there are no outliers, and all the viewing directions of the neighbors belong to a spherical cap whose opening angle is about $8^\circ$. However, for lower values of the SNR, there are outliers, indicated by arbitrarily large angles (all the way to $180^\circ$).

\begin{figure}[h]
\begin{center}
\subfigure[SNR=1/2]{
    \includegraphics[width=0.23\textwidth]{./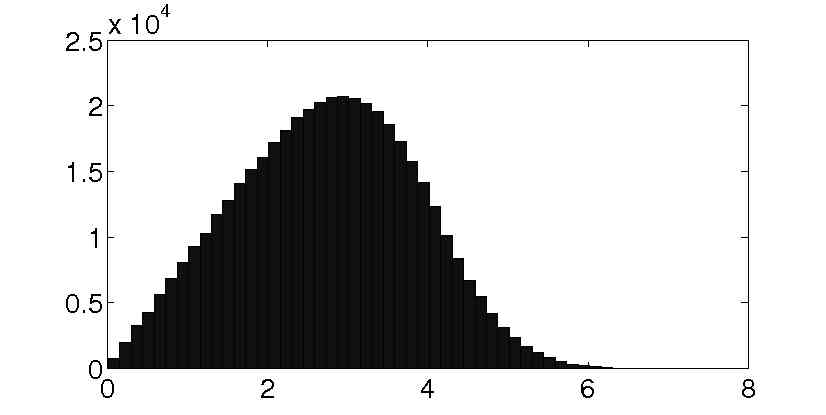}%
    }
\subfigure[SNR=1/16]{
    \includegraphics[width=0.23\textwidth]{./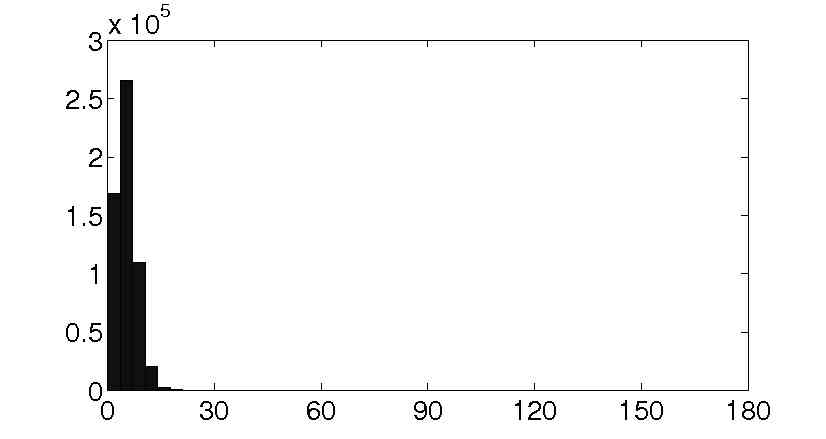}%
    }
\subfigure[SNR=1/32]{
    \includegraphics[width=0.23\textwidth]{./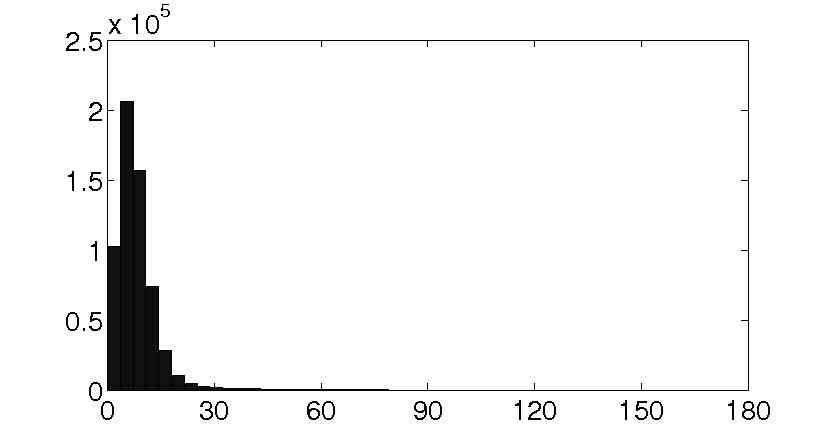}%
    }
\subfigure[SNR=1/64]{
    \includegraphics[width=0.23\textwidth]{./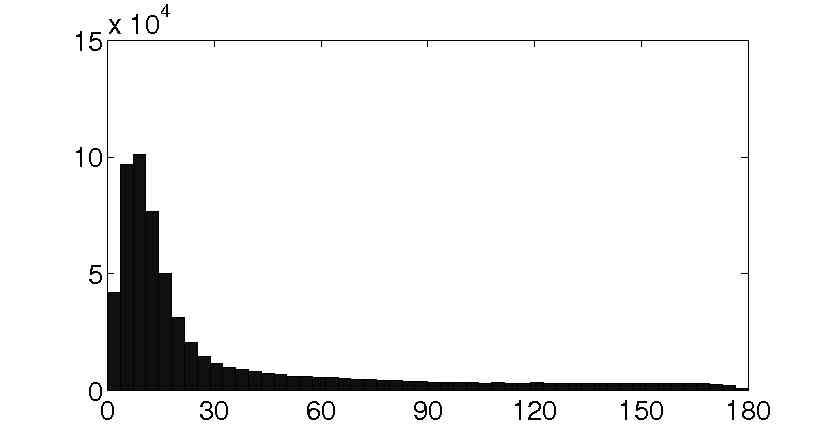}%
    }
\end{center}
\caption{Histograms of the angle (in degrees, $x$-axis) between the viewing directions of 40,000 images and the viewing directions of their 40 nearest neighboring images as found by computing the rotationally invariant distances.} \label{fig:histograms}
\end{figure}

Clustering algorithms, such as the K-means algorithm, perform much better than na\"ive nearest neighbors averaging, because they take into account all pairwise distances, not just distances to the reference image. Such clustering procedures are based on the philosophy that images that share a similar viewing angle with the reference image are expected to have a small invariant distance not only to the reference image but also to all other images with similar viewing angles. This observation was utilized in the rotationally invariant K-means clustering algorithm \cite{Penczek1996}. 
Still, due to noise, the rotationally invariant K-means clustering algorithm may suffer from misidentifications at the low SNR values present in experimental data.

VDM is a natural algorithmic framework for the class averaging problem, as it can further improve the detection of neighboring images even at lower SNR values. The rotationally invariant distance neglects an important piece of information, namely, the optimal angle that realizes the best rotational alignment in (\ref{dist}):
\begin{equation}
\label{delta}
\theta_{ij} = \argmin_{\theta \in [0,2\pi)} \|P_i - R(\theta) P_j \|, \quad i,j=1,\ldots,n.
\end{equation}
In VDM, we use the optimal in-plane rotation angles $\theta_{ij}$ to define the orthogonal transformations $O_{ij}$ and to construct the matrix $S$ in (\ref{S}). The eigenvectors and eigenvalues of $D^{-1}S$ (other normalizations of $S$ are also possible) are then used to define the vector diffusion distances between images.

This VDM based classification method is proven to be quite powerful in practice. We applied it to a set of $n=40,000$ noisy images with SNR=$1/64$. For every image we find the 40 nearest neighbors using the vector diffusion metric. In the simulation we know the viewing directions of the images, and we compute for each pair of neighbors the angle (in degrees) between their viewing directions. The histogram of these angles is shown in Figure \ref{fig:1/64} (Left panel). About $92\%$ of the identified images belong to a small spherical cap of opening angle $20^\circ$, whereas this percentage is only about $65\%$ when neighbors are identified by the rotationally invariant distances (Right panel). We remark that for SNR=$1/50$, the percentage of correctly identified images by the VDM method goes up to about $98\%$.
\begin{figure}[h]
\begin{center}
\subfigure[Neighbors are identified using $d_{\text{VDM}',t=2}$]{
    \includegraphics[width=0.48\textwidth]{./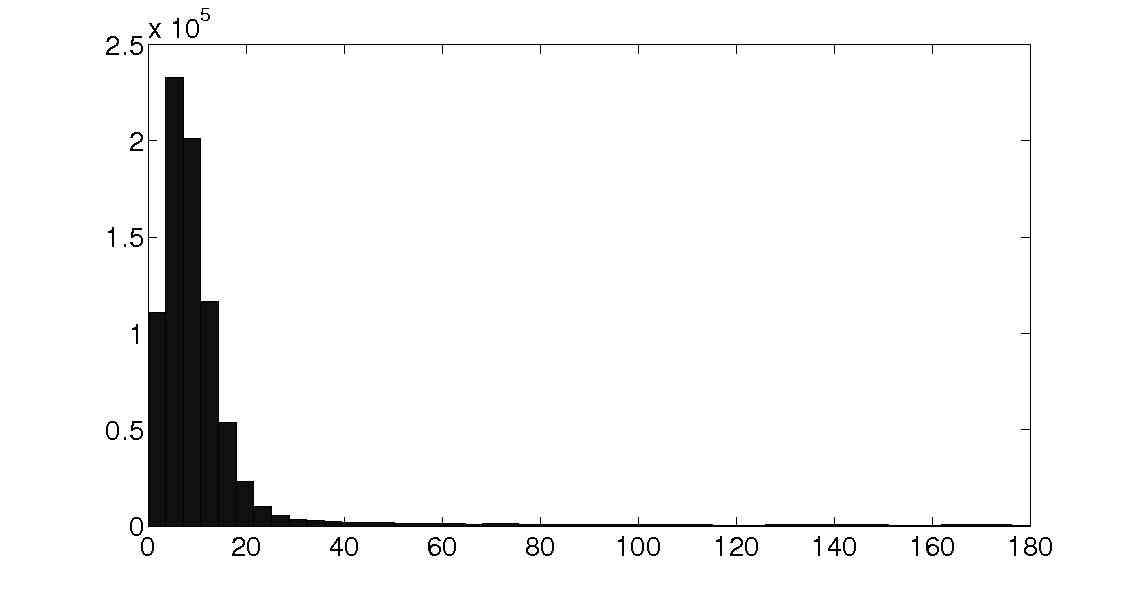}%
    }
\subfigure[Neighbors are identified using $d_{\text{RID}}$]{
    \includegraphics[width=0.48\textwidth]{./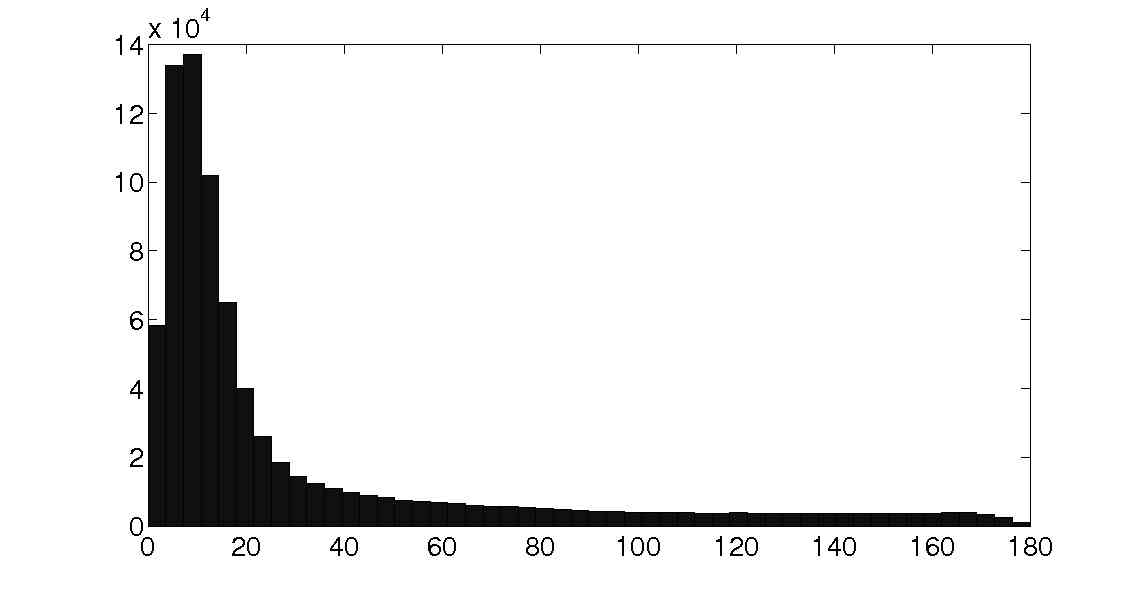}%
    }
\end{center}
\caption{SNR=$1/64$: Histogram of the angles ($x$-axis, in degrees) between the viewing directions of each image (out of $40000$) and it 40 neighboring images. Left: neighbors are post identified using vector diffusion distances. Right: neighbors are identified using the original rotationally invariant distances $d_{\text{RID}}$.} \label{fig:1/64}
\end{figure}

The main advantage of the algorithm presented here is that it successfully identifies images with similar viewing angles even in the presence of a large number of spurious neighbors, that is, even when many pairs of images with viewing angles that are far apart have relatively small rotationally invariant distances. In other words, the VDM-based algorithm is shown to be robust to outliers.

\section{Summary and Discussion}
\label{sec:summary}
This paper introduced vector diffusion maps, an algorithmic and mathematical framework for analyzing data sets where scalar affinities between data points are accompanied with orthogonal transformations. The consistency among the orthogonal transformations along different paths that connect any fixed pair of data points is used to define an affinity between them. We showed that this affinity is equivalent to an inner product, giving rise to the embedding of the data points in a Hilbert space and to the definition of distances between data points, to which we referred as vector diffusion distances.

For data sets of images, the orthogonal transformations and the scalar affinities are naturally obtained via the procedure of optimal registration. The registration process seeks to find the optimal alignment of two images over some class of transformations (also known as deformations), such as rotations, reflections, translations and dilations. For the purpose of vector diffusion mapping, we extract from the optimal deformation only the corresponding orthogonal transformation (rotation and reflection). We demonstrated the usefulness of the vector diffusion map framework in the organization of noisy cryo-electron microscopy images, an important step towards resolving three-dimensional structures of macromolecules. Optimal registration is often used in various mainstream problems in computer vision and computer graphics, for example, in optimal matching of three-dimensional shapes. We therefore expect the vector diffusion map framework to become a useful tool in such applications.

In the case of manifold learning, where the data set is a collection of points in a high dimensional Euclidean space, but with a low dimensional Riemannian manifold structure, we detailed the construction of the orthogonal transformations via the optimal alignment of the orthonormal bases of the tangent spaces. These bases are found using the classical procedure of PCA. Under certain mild conditions about the sampling process of the manifold, we proved that the orthogonal transformation obtained by the alignment procedure approximates the parallel transport operator between the tangent spaces. The proof required careful analysis of the local PCA step which we believe is interesting of it own. Furthermore, we proved that if the manifold is sampled uniformly, then the matrix that lies at the heart of the vector diffusion map framework approximates the connection-Laplacian operator. Following spectral graph theory terminology, we call that matrix the connection-Laplacian of the graph. Using different normalizations of the matrix we proved convergence to the connection-Laplacian operator also for the case of non-uniform sampling. We showed that the vector diffusion mapping is an embedding and proved its relation with the geodesic distance using the asymptotic expansion of the heat kernel for vector fields. These results provide the mathematical foundation for the algorithmic framework that underlies the vector diffusion mapping.

We expect many possible extensions and generalizations of the vector diffusion mapping framework. We conclude by mentioning a few of them.
\begin{itemize}
\item {\em The topology of the data.} In \cite{odm} we showed how the vector diffusion mapping can determine if a manifold is orientable or non-orientable, and in the latter case to embed its double covering in a Euclidean space. To that end we used the information in the determinant of the optimal orthogonal transformation between bases of nearby tangent spaces. In other words, we used just the optimal reflection between two orthonormal bases. This simple example shows that vector diffusion mapping can be used to extract topological information from the point cloud. We expect more topological information can be extracted using appropriate modifications of the vector diffusion mapping.

\item {\em Hodge and higher order Laplacians.} Using tensor products of the optimal orthogonal transformations it is possible to construct higher order connection-Laplacians that act on $p$-forms ($p\geq 1$). The index theorem \cite{gilkey} relates topological structure with geometrical structure. For example, the so-called Betti numbers are related to the multiplicities of the harmonic $p$-forms of the Hodge Laplacian. For the extraction of topological information it would therefore be useful to modify our construction in order to approximate the Hodge Laplacian instead of the connection-Laplacian.

\item {\em Multiscale, sparse and robust PCA.} In the manifold learning case, an important step of our algorithm is local PCA for estimating the bases for tangent spaces at different data points. In the description of the algorithm, a single scale parameter $\epsilon_{\text{PCA}}$ is used for all data points. It is conceivable that a better estimation can be obtained by choosing a different, location-dependent scale parameter. A better estimation of the tangent space $T_{x_i}\mathcal{M}$ may be obtained by using a location-dependent scale parameter $\epsilon_{\text{PCA},i}$ due to several reasons: non-uniform sampling of the manifold, varying curvature of the manifold, and global effects such as different pieces of the manifold that are almost touching at some points (i.e., varying ``condition number" of the manifold). Choosing the correct scale $\epsilon_{\text{PCA},i}$ is a problem of its own interest that was recently considered in \cite{Little2010}, where a multiscale approach was taken to resolve the optimal scale. We recommend the incorporation of such multiscale PCA approaches into the vector diffusion mapping framework. Another difficulty that we may face when dealing with real-life data sets is that the underlying assumption about the data points being located exactly on a low-dimensional manifold does not necessarily hold. In practice, the data points are expected to reside off the manifold, either due to measurement noise or due to the imperfection of the low-dimensional manifold model assumption. It is therefore necessary to estimate the tangent spaces in the presence of noise. Noise is a limiting factor for successful estimation of the tangent space, especially when the data set is embedded in a high dimensional space and noise effects all coordinates \cite{johnstone-2006}. We expect recent methods for robust PCA \cite{RobustPCA} and sparse PCA \cite{BickelLevina,JohnstoneLu} to improve the estimation of the tangent spaces and as a result to become useful in the vector diffusion map framework.

\item {\em Random matrix theory and noise sensitivity.} The matrix $S$ that lies at the heart of the vector diffusion map is a block matrix whose blocks are either $d\times d$ orthogonal matrices $O_{ij}$ or the zero blocks. We anticipate that for some applications the measurement of $O_{ij}$ would be imprecise and noisy. In such cases, the matrix $S$ can be viewed as a random matrix and we expect tools from random matrix theory to be useful in analyzing the noise sensitivity of its eigenvectors and eigenvalues. The noise model may also allow for outliers, for example, orthogonal matrices that are uniformly distributed over the orthogonal group $O(d)$ (according to the Haar measure). Notice that the expected value of such random orthogonal matrices is zero, which leads to robustness of the eigenvectors and eigenvalues even in the presence of large number of outliers (see, for example, the random matrix theory analysis in \cite{amit2009}).

\item {\em Compact and non-compact groups and their matrix representation.} As mentioned earlier, the vector diffusion mapping is a natural framework to organize data sets for which the affinities and transformations are obtained from an optimal alignment process over some class of transformations (deformations). In this paper we focused on utilizing orthogonal transformations. At this point the reader have probably asked herself the following question: Is the method limited to orthogonal transformations, or is it possible to utilize other groups of transformations such as translations, dilations, and more? We note that the orthogonal group $O(d)$ is a compact group that has a matrix representation and remark that the vector diffusion mapping framework can be extended to such groups of transformations without much difficulty. However, the extension to non-compact groups, such as the Euclidean group of rigid transformation, the general linear group of invertible matrices and the special linear group is less obvious. Such groups arise naturally in various applications, rendering the importance of extending the vector diffusion mapping to the case of non-compact groups.
\end{itemize}

\section{Acknowledgements}
A. Singer was partially supported by Award Number DMS-0914892 from the NSF, by Award Number FA9550-09-1-0551 from AFOSR, by Award Number R01GM090200 from the NIGMS and by the Alfred P. Sloan Foundation. H.-T. Wu acknowledges support by FHWA grant DTFH61-08-C-00028. The authors would like to thank Charles Fefferman for various discussions regarding this work. They also express gratitude to the audiences of the seminars at Tel Aviv University, the Weizmann Institute of Science, Princeton University, Yale University, Stanford University and the Air Force, where parts of this work have been presented during 2010-2011.

\bibliographystyle{plain}
\bibliography{connLap}

\appendix

\section{Some Differential Geometry Background}
\label{setupbackground}
The purpose of this appendix is to provide the required mathematical background for readers who are not familiar with concepts such as the parallel transport operator, connection, and the connection Laplacian. We illustrate these concepts by considering a surface $\mathcal{M}$ embedded in $\mathbb{R}^3$.

Given a function $f(x):\RR^3\rightarrow \RR$, its gradient vector field is given by
\[
\nabla f:=\left(\frac{\partial f}{\partial x},\frac{\partial f}{\partial y},\frac{\partial f}{\partial z}\right).
\]
Through the gradient, we can find the rate of change of $f$ at $x\in\RR^3$ in a given direction $v\in\RR^3$, using the directional derivative:
\[
vf(x):=\lim_{t\rightarrow 0}\frac{f(x+tv)-f(v)}{t}.
\]
By chain rule we have $vf(x)=\nabla f(x)(v)$. Define $\nabla_vf(x):=\nabla f(x)(v)$.

Let $X$ be a vector field on $\RR^3$,
\[
X(x,y,z)=(f_1(x,y,z),f_2(x,y,z),f_3(x,y,z)).
 \]
It is natural to extend the derivative notion to a given vector field $X$ at $x\in\RR^3$ by mimicking the derivative definition for functions in the following way:
\begin{equation}\label{derivative1}
\lim_{t\rightarrow 0}\frac{X(x+tv)-X(x)}{t}
\end{equation}
where $v\in\RR^3$. Following the same notation for the directional derivative of a function, we denote this limit by $\nabla_v X(x)$. This quantity tells us that at $x$, following the direction $v$, we compare the vector field at two points $x$ and $x+tv$, and see how the vector field changes. While this definition looks good at first sight, we now explain that it has certain shortcomings that need to be fixed in order to generalize it to the case of a surface embedded in $\RR^3$.

Consider a two dimensional smooth surface $\MM$ embedded in $\RR^3$ by $\iota$. Fix a point $x\in\MM$ and a smooth curve $\gamma(t):(-\epsilon,\epsilon)\rightarrow \MM\subset\RR^3$, where $\epsilon\ll 1$ and $\gamma(0)=x$. $\gamma'(0)\in\RR^3$ is called a tangent vector to $\MM$ at $x$. The 2 dimensional affine space spanned by the collection of all tangent vectors to $\MM$ at $x$ is defined to be the tangent plane at $x$ and denoted by\footnote{Here we abuse notation slightly. Usually $T_x\MM$ defined here is understood as the embedded tangent plane by the embedding $\iota$ of the tangent plane at $x$. Please see \cite{petersen} for a rigorous definition of the tangent plane.} $T_x\MM$, which is a two dimensional affine space inside $\RR^3$, as illustrated in Figure \ref{txm} (left panel). Having defined the tangent plane at each point $x\in\MM$, we define a vector field $X$ over $\MM$ to be a differentiable map that maps $x$ to a tangent vector in $T_x\MM$.\footnote{See \cite{petersen} for the exact notion of differentiability. Here, again, we abuse notation slightly. Usually $X$ defined here is understood as the embedded vector field by the embedding $\iota$ of the vector field $X$. For the rigorous definition of a vector field, please see \cite{petersen}.}

\begin{figure}[h]
\begin{centering}
\subfigure{
\includegraphics[width=0.3\textwidth]{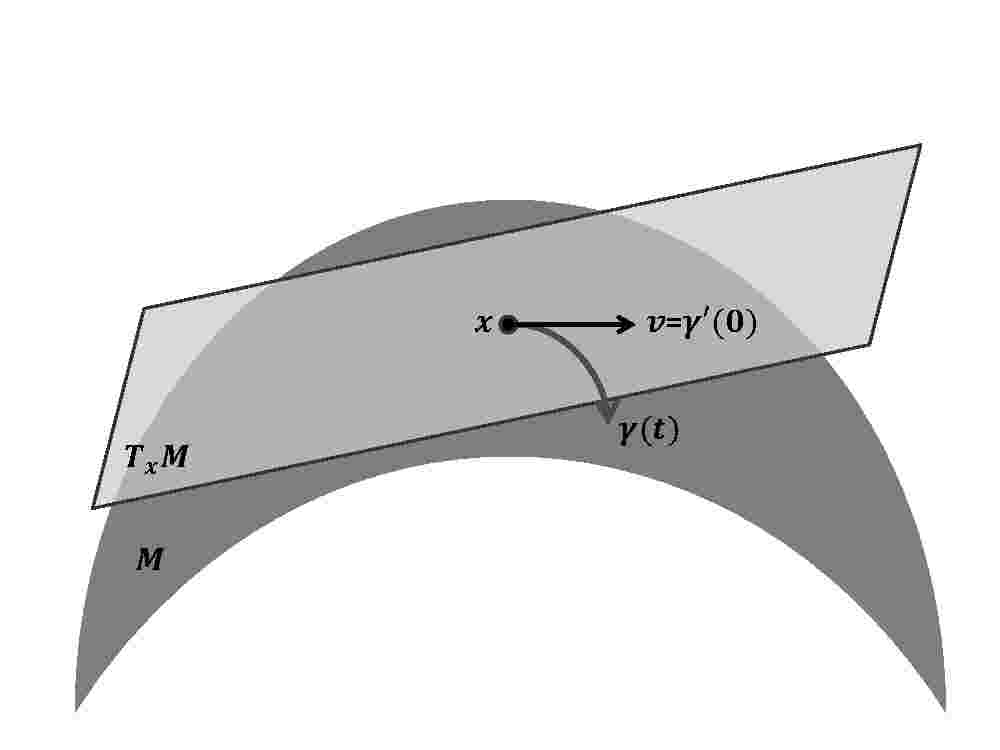}
}
\subfigure{
\includegraphics[width=0.3\textwidth]{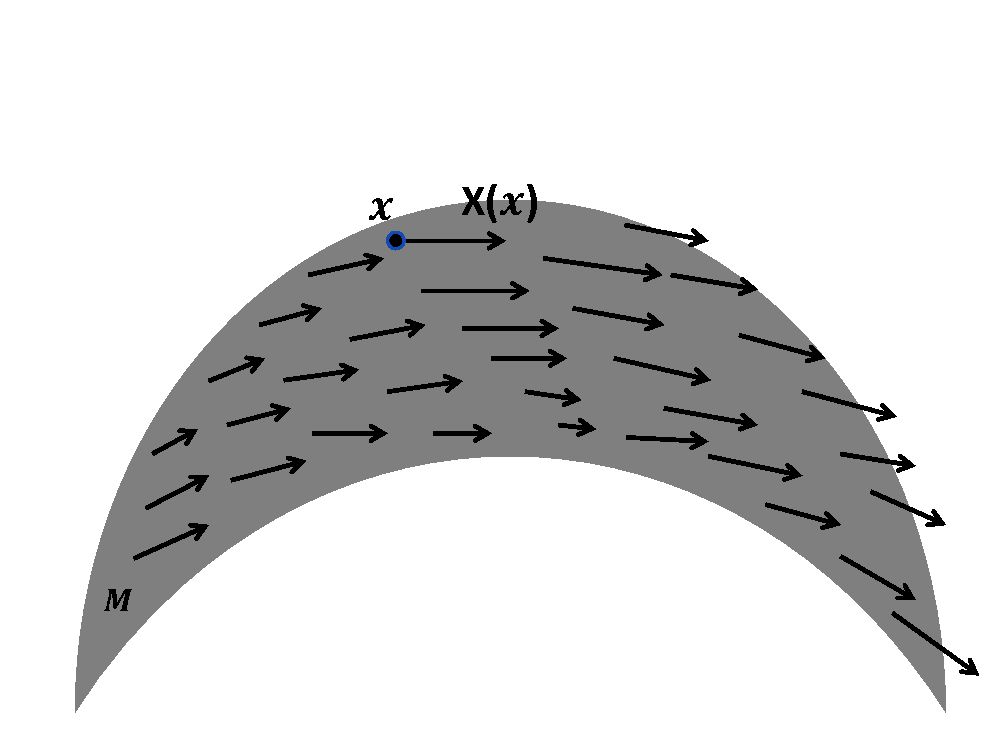}
}
\subfigure{
\includegraphics[width=0.3\textwidth]{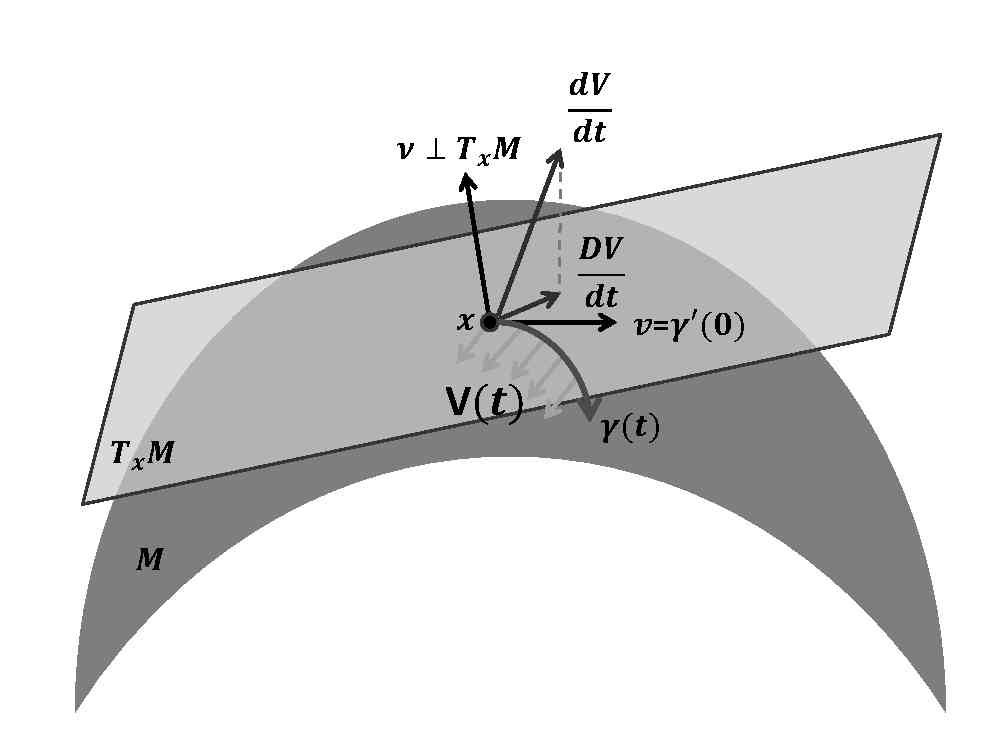}
}
\end{centering}
\caption{Left: a tangent plane and a curve $\gamma$; Middle: a vector field; Right: the covariant derivative}
\label{txm}
\end{figure}

We now generalize the definition of the derivative of a vector field over $\RR^3$ (\ref{derivative1}) to define the derivative of a vector field over $\MM$. The first difficulty we face is how to make sense of ``$X(x+tv)$'', since $x+tv$ does not belong to $\MM$. This difficulty can be tackled easily by changing the definition (\ref{derivative1}) a bit by considering the curve $\gamma:(-\epsilon,\epsilon)\rightarrow \RR^3$ so that $\gamma(0)=x$ and $\gamma'(0)=v$. Thus, (\ref{derivative1}) becomes
\begin{equation}\label{derivative2}
\lim_{t\rightarrow 0}\frac{X(\gamma(t))-X(\gamma(0))}{t}
\end{equation}
where $v\in\RR^3$. In $\MM$, the existence of the curve $\gamma:(-\epsilon,\epsilon)\rightarrow \RR^3$ so that $\gamma(0)=x$ and $\gamma'(0)=v$ is guaranteed by the classical ordinary differential equation theory. However, (\ref{derivative2}) still cannot be generalized to $\MM$ directly even though $X(\gamma(t))$ is well defined. The difficulty we face here is how to compare $X(\gamma(t))$ and $X(x)$, that is, how to make sense of the subtraction $X(\gamma(t))-X(\gamma(0))$. It is not obvious since a priori we do not know how $T_{\gamma(t)}\MM$ and $T_{\gamma(0)}\MM$ are related. The way we proceed is by defining an important notion in differential geometry called ``parallel transport'', which plays an essential role in our VDM framework.

Fix a point $x\in\MM$ and a vector field $X$ on $\MM$, and consider a parametrized curve $\gamma:(-\epsilon,\epsilon)\rightarrow \MM$ so that $\gamma(0)=x$. Define a vector valued function $V:(-\epsilon,\epsilon)\rightarrow \RR^3$ by restricting $X$ to $\gamma$, that is, $V(t)=X(\gamma(t))$. The derivative of $V$ is well defined as usual:
\[
\frac{\ud V}{\ud t}(h):=\lim_{t\rightarrow 0}\frac{V(h+t)-V(h)}{t},
\]
where $h\in(-\epsilon,\epsilon)$. The \textit{covariant derivative} $\frac{D V}{\ud t}(h)$ is defined as the projection of $\frac{\ud V}{\ud t}(h)$ onto $T_{\gamma(h)}\MM$. Then, using the definition of $\frac{D V}{\ud t}(h)$,
we consider the following equation:
\[
\left\{
\begin{array}{l}
\frac{DW}{\ud t}(t)=0\\
W(0)=w
\end{array}
\right.
\]
where $w\in T_{\gamma(0)}\MM$. The solution $W(t)$ exists by the classical ordinary differential equation theory. The solution $W(t)$ along $\gamma(t)$ is called the \textit{parallel vector field} along the curve $\gamma(t)$, and we also call $W(t)$ the \textit{parallel transport of $w$ along the curve $\gamma(t)$} and denote $W(t)=P_{\gamma(t),\gamma(0)}w$.

We come back to address the initial problem: how to define the ``derivative'' of a given vector field over a surface $\MM$. We define the \textit{covariant derivative} of a given vector field $X$ over $\MM$ as follows:
\begin{equation}\label{covderi}
\nabla_vX(x)=\lim_{t\rightarrow 0}\frac{P_{\gamma(0),\gamma(t)}X(\gamma(t))-X(\gamma(0))}{t},
\end{equation}
where $\gamma:(-\epsilon,\epsilon)\rightarrow\MM$ with $\gamma(0)=x\in\MM$, $\gamma'(0)=v\in T_{\gamma(0)}\MM$. This definition says that if we want to analyze how a given vector field at $x\in\MM$ changes along the direction $v$, we choose a curve $\gamma$ so that $\gamma(0)=x$ and $\gamma'(0)=v$, and then ``transport'' the vector field value at point $\gamma(t)$ to $\gamma(0)=x$ so that the comparison of the two tangent planes makes sense. The key fact of the whole story is that without applying parallel transport to transport the vector at point $\gamma(t)$ to $T_{\gamma(0)}\MM$, then the subtraction $X(\gamma(t))-X(\gamma(0))\in\RR^3$ in general does not live on $T_x\MM$, which distorts the notion of derivative. For comparison, let us reconsider the definition (\ref{derivative1}). Since at each point $x\in\RR^3$, the tangent plane at $x$ is $T_x\RR^3=\RR^3$, the substraction $X(x+tv)-X(x)$ always makes sense. To be more precise, the true meaning of $X(x+tv)$ is $P_{\gamma(0),\gamma(t)}X(\gamma(t))$, where $P_{\gamma(0),\gamma(t)}=id$, and $\gamma(t)=x+tv$.

With the above definition, when $X$ and $Y$ are two vector fields on $\MM$, we define $\nabla_XY$ to be a new vector field on $\MM$ so that $$\nabla_XY(x):=\nabla_{X(x)}Y.$$
Note that $X(x)\in T_x\MM$. We call $\nabla$ a connection on $\MM$.\footnote{The notion of connection can be quite general. For our purposes, this definition is sufficient.}

Once we know how to differentiate a vector field over $\MM$, it is natural to consider the second order differentiation of a vector field. The second order differentiation of a vector field is a natural notion in $\RR^3$.
For example, we can define a second order differentiation of a vector field $X$ over $\RR^3$ as follows:
\begin{equation}\label{connlapr3}
\nabla^2 X:=\nabla_x \nabla_xX+\nabla_y \nabla_yX+\nabla_z \nabla_zX,
\end{equation}
where $x,y,z$ are standard unit vectors corresponding to the three axes. This definition can be generalized to a vector field over $\MM$ as follows:
\begin{equation}
\nabla^2 X(x):=\nabla_{E_1} \nabla_{E_1}X(x)+\nabla_{E_2} \nabla_{E_2}X(x),
\end{equation}
where $X$ is a vector field over $\MM$, $x\in\MM$, and $E_1,E_2$ are two vector fields on $\MM$ that satisfy $\nabla_{E_i}E_j=0$ for $i,j=1,2$. The condition  $\nabla_{E_i}E_j=0$ (for $i,j=1,2$) is needed for technical reasons. Note that in the $\RR^3$ case (\ref{connlapr3}), if we set $E_1=x$, $E_2=y$ and $E_3=z$, then $\nabla_{E_i}E_j=0$ for $i,j=1,2,3$.\footnote{Please see \cite{petersen} for details.} The operator $\nabla^2$ is called the \textit{connection Laplacian operator}, which lies in the heart of the VDM framework. The notion of \textit{eigen-vector-field} over $\MM$ is defined to be the solution of the following equation:
\[
\nabla^2 X(x)=\lambda X(x)
\]
for some $\lambda\in\RR$. The existence and other properties of the eigen-vector-fields can be found in \cite{gilkey}.
Finally, we comment that all the above definitions can be extended to the general manifold setup without much difficulty, where, roughly speaking, a ``manifold'' is the higher dimensional generalization of a surface\footnote{We will not provide details in the manifold setting, and refer readers to standard differential geometry textbooks, such as \cite{petersen}.}.

\section{Proof of Theorem \ref{summary}, Theorem \ref{connlapbdry} and Theorem \ref{summaryheatkernel}}
\label{proof}

Before stating and proving the theorems, we set up the notation that is used throughout this Appendix. Let  $\iota:\mathcal{M}\hookrightarrow\RR^p$ be a smooth $d$-dim compact Riemannian manifold embedded in $\RR^p$, with metric $g$ induced from the canonical metric on $\RR^p$. Denote $\MM_{t}=\{x\in\MM:~\min_{y\in\partial\MM}d(x,y)\leq t\}$, where $d(x,y)$ is the geodesic distance between $x$ and $y$. The data points $x_1,x_2,\ldots,x_n$ are independent samples from $\MM$ according to the probability density function $p\in C^3(\MM)$ supported on $\MM\subset\RR^p$ and satisfies $0< p(x)<\infty$. We assume that the kernels used in the local PCA step and for the construction of the matrix $S$ are in $C^2([0,1])$. Although these kernels can be different we denote both of them by $K$ and expect their meaning to be clear from the context. Denote $\tau$ to be the largest number having the property: the open normal bundle about $\MM$ of radius $r$ is embedded in $\RR^p$ for every $r<\tau$ \cite{NSW}. This condition holds automatically since $\MM$ is compact. In all theorems, we assume that $\sqrt{\epsilon}< \tau$. In \cite{NSW}, $1/\tau$ is referred to as the ``condition number'' of $\MM$. We denote $P_{y,x} : T_{x}\MM \to T_{y}\MM$ to be the parallel transport from $x$ to $y$ along the geodesic linking them. Denote by $\nabla$ the connection over $T\MM$ and $\nabla^2$ the connection Laplacian over $\MM$. Denote by $\mathcal{R}$, $\Ric$, and $s$ the curvature tensor, the Ricci curvature, and the scalar curvature of $\MM$, respectively. The second fundamental form of the embedding $\iota$ is denoted by $\Pi$. To ease notation, in the sequel we use the same notation $\nabla$ to denote different connections on different bundles whenever there is no confusion and the meaning is clear from the context.

We divide the proof of Theorem \ref{summary} into four theorems, each of which has its own interest. The first theorem, Theorem \ref{localpcatheorem}, states that the columns of the matrix $O_i$ that are found by local PCA (see (\ref{Oi})) form an orthonormal basis to a $d$-dimensional subspace of $\mathbb{R}^p$ that approximates the embedded tangent plane $\iota_*T_{x_i}\MM$. The proven order of approximation is crucial for proving Theorem \ref{summary}. The proof of Theorem \ref{localpcatheorem} involves geometry and probability theory.
\begin{thm}\label{localpcatheorem}
If $\epsilon_{\text{PCA}}=O(n^{-\frac{2}{d+2}})$ and $x_i\notin\MM_{\sqrt{\epsilon_{\text{PCA}}}}$, then, with high probability (w.h.p.), the columns $\{u_l(x_i)\}_{l=1}^d$ of the $p\times d$ matrix $O_i$ which is determined by local PCA, form an orthonormal basis to a $d$-dim subspace of $\RR^p$ that deviates from $\iota_*T_{x_i}\MM$ by $O(\epsilon_{\text{PCA}}^{3/2})$, in the following sense:
\begin{equation}\label{minOiQi}
\min_{O\in O(d)} \|O_i^T \Theta_i - O\|_{HS} = O(\epsilon_{\text{PCA}}^{3/2}) = O(n^{-\frac{3}{d+2}}),
\end{equation}
where $\Theta_i$ is a $p\times d$ matrix whose columns form an orthonormal basis to  $\iota_*T_{x_i}\MM$.
Let the minimizer in (\ref{minOiQi}) be
\begin{equation}
\hat{O}_i=\argmin_{O\in O(d)} \|O_i^T \Theta_i - O\|_{HS},
\end{equation}
and denote by $Q_i$ the $p\times d$ matrix
\begin{equation}\label{hatQ}
Q_i:=\Theta_i\hat{O}_i^T,
\end{equation}
and $e_l(x_i)$ the $l$-th column of $Q_i$. The columns of $Q_i$ form an orthonormal basis to $\iota_*T_{x_i}\MM$, and
\begin{equation}
\label{OiQiPCA}
\|O_i - Q_i \|_{HS} = O(\epsilon_{\text{PCA}}).
\end{equation}

If $x_i\in\MM_{\sqrt{\epsilon_{\text{PCA}}}}$, then, w.h.p.
\begin{equation*}
\min_{O\in O(d)} \|O_i^T \Theta_i - O\|_{HS} = O(\epsilon_{\text{PCA}}^{1/2}) = O(n^{-\frac{1}{d+2}}).
\end{equation*}

Better convergence near the boundary is obtained for $\epsilon_{\text{PCA}} = O(n^{-\frac{2}{d+1}})$, which gives
\begin{equation*}
\min_{O\in O(d)} \|O_i^T \Theta_i - O\|_{HS} = O(\epsilon_{\text{PCA}}^{3/4}) = O(n^{-\frac{3}{2(d+1)}}),
\end{equation*}
for $x_i\in\MM_{\sqrt{\epsilon_{\text{PCA}}}}$, and 
\begin{equation}
\min_{O\in O(d)} \|O_i^T \Theta_i - O\|_{HS} = O(\epsilon_{\text{PCA}}^{5/4}) = O(n^{-\frac{5}{2(d+1)}}),
\end{equation}
for $x_i\notin\MM_{\sqrt{\epsilon_{\text{PCA}}}}$.
\end{thm}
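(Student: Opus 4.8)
The plan is to analyze the $p\times p$ weighted covariance matrix $\Xi_i$ of (\ref{Xi2}), whose top $d$ eigenvectors are by construction the columns of $O_i$ (see (\ref{Oi})). The strategy has three parts: identify an approximate block structure for $\EE\,\Xi_i$; show that the stochastic fluctuation $\Xi_i-\EE\,\Xi_i$ is small, w.h.p.\ and uniformly in $i$; and invoke eigenspace perturbation theory to pin $\mathrm{span}(O_i)$ to $\iota_*T_{x_i}\MM$.

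For the bias, condition on $x_i$, so that $\EE\,\Xi_i=(n-1)\int_\MM K\!\left(\tfrac{\|\iota(y)-\iota(x_i)\|}{\sqrt{\epsilon_{\text{PCA}}}}\right)(\iota(y)-\iota(x_i))(\iota(y)-\iota(x_i))^T p(y)\,\ud V(y)$. I would substitute $y=\exp_{x_i}(v)$, $v\in T_{x_i}\MM$, and use the Taylor expansion of the embedding, $\iota(\exp_{x_i}v)-\iota(x_i)=\iota_*v+\tfrac12\Pi(v,v)+O(\|v\|^3)$, so that the component of $\iota(y)-\iota(x_i)$ normal to $\iota_*T_{x_i}\MM$ is $O(\|v\|^2)$ and is governed by the second fundamental form $\Pi$; combined with the normal-coordinate expansions of $p(y)$ (to second order) and of $\ud V$ (the $\Ric$ correction), integrate in geodesic polar coordinates. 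Because odd moments over $S^{d-1}$ vanish, the tangential $d\times d$ block of $\EE\,\Xi_i$ is $\tfrac{m_2}{d}\,p(x_i)\,\epsilon_{\text{PCA}}^{(d+2)/2}\,n\,I_{d\times d}$ to leading order on $\iota_*T_{x_i}\MM$, while both the tangential--normal coupling block and the normal block are $O(n\,\epsilon_{\text{PCA}}^{(d+4)/2})$, a full factor of $\epsilon_{\text{PCA}}$ smaller. When $x_i\in\MM_{\sqrt{\epsilon_{\text{PCA}}}}$ the integration region is, to leading order, a half-ball; the spherical symmetry is broken and the coupling block only drops to $O(n\,\epsilon_{\text{PCA}}^{(d+3)/2})$, which is the source of the slower $O(\epsilon_{\text{PCA}}^{1/2})$ rate near the boundary, and of the sharper $O(\epsilon_{\text{PCA}}^{3/4})$, $O(\epsilon_{\text{PCA}}^{5/4})$ rates under the coarser scaling $\epsilon_{\text{PCA}}=O(n^{-2/(d+1)})$.

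For the fluctuation, write $\Xi_i-\EE\,\Xi_i=\sum_{j\ne i}(Y_j-\EE Y_j)$ with $Y_j=K(\cdot)(x_j-x_i)(x_j-x_i)^T\succeq 0$, i.i.d.\ given $x_i$. Each $Y_j$ has operator norm $O(\epsilon_{\text{PCA}})$, but its tangential--normal block has norm only $O(\epsilon_{\text{PCA}}^{3/2})$, pairing a factor of size $\|v\|$ with one of size $\|v\|^2$. Matrix Bernstein --- with variance proxies $\asymp n\,\epsilon_{\text{PCA}}^{(d+4)/2}$ for the whole matrix and $\asymp n\,\epsilon_{\text{PCA}}^{(d+6)/2}$ for the coupling block --- followed by a union bound over $i=1,\dots,n$ that costs only logarithmic factors, shows that w.h.p.\ the coupling block of $\Xi_i$ is within $O(\epsilon_{\text{PCA}})$ of its mean. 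With $\epsilon_{\text{PCA}}=O(n^{-2/(d+2)})$ the top $d$ eigenvalues of $\Xi_i$ are $\Theta(1)$ and separated by a $\Theta(1)$ gap from the rest, so the Davis--Kahan $\sin\Theta$ theorem --- applied with the off-diagonal (tangential-to-complement) block of the perturbation, since the diagonal blocks move only the eigenvalues inside each invariant subspace --- gives that $\mathrm{span}(O_i)$ makes principal angles $\theta_1,\dots,\theta_d=O(\epsilon_{\text{PCA}})$ with $\iota_*T_{x_i}\MM$. Taking $\hat O_i$ and $Q_i=\Theta_i\hat O_i^T$ as in the statement then yields (\ref{OiQiPCA}), while $\min_{O\in O(d)}\|O_i^T\Theta_i-O\|_{HS}=\bigl(\sum_k(1-\cos\theta_k)^2\bigr)^{1/2}$ depends on the $\theta_k$ only to second order, hence is $O(\epsilon_{\text{PCA}}^{2})$, which in particular gives (\ref{minOiQi}). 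The boundary estimates, and the rates for $\epsilon_{\text{PCA}}=O(n^{-2/(d+1)})$, come from rerunning this bias/variance bookkeeping with the half-ball domain and the correspondingly shifted powers of $\epsilon_{\text{PCA}}$.

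I expect the main obstacle to be the bias computation: organizing, to high enough order in $\|v\|$, all the contributions to $\EE\,\Xi_i$ from the cubic term of the embedding's Taylor expansion, from $\nabla p$ and $\mathrm{Hess}\,p$, and from the curvature correction to $\ud V$, and checking that each tangential--normal coupling term is either killed by parity over $S^{d-1}$ (away from the boundary) or is of exactly the claimed order (near the boundary) --- this is precisely where the gain from $1$ to $\tfrac32$ powers (resp.\ from $\tfrac34$ to $\tfrac54$) is won or lost. By comparison the concentration step is routine, and the perturbation argument is standard once the block structure and the eigengap are in hand.
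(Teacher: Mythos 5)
Your proposal is correct, and it takes a genuinely different route from the paper's. Two differences are worth noting. First, on the concentration step you use matrix Bernstein on the block structure, whereas the paper applies scalar Bernstein entry-by-entry to each $\Xi_i(k,l)$ and assembles the blockwise error magnitudes by hand; this is a mild stylistic distinction, and both give the same orders $O(\epsilon_{\text{PCA}}^{1/2})$, $O(\epsilon_{\text{PCA}})$, $O(\epsilon_{\text{PCA}})$ for the tangential, coupling and normal blocks (after normalization by the leading eigenvalue) in the interior. Second — and this is the substantive difference — after the covariance matrix is controlled, the paper carries out an explicit expansion of the top eigenvectors $u_k$ in powers of $\epsilon_{\text{PCA}}^{1/2}$, finds $u_k = (w_k + O(\epsilon_{\text{PCA}}^{3/2}),\, O(\epsilon_{\text{PCA}}))^T$ with $w_k$ the eigenvectors of the $O(\epsilon_{\text{PCA}}^{1/2})$ tangential perturbation $A$, and then bounds $\min_{O}\|O_i^T\Theta_i - O\|_{HS}$ by comparing to the \emph{particular} orthogonal matrix $\hat O = W^T$, yielding $O(\epsilon_{\text{PCA}}^{3/2})$. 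You instead pass directly to the subspace via Davis--Kahan ($\sin\theta_k = O(\epsilon_{\text{PCA}})$) and then use the identity $\min_{O\in O(d)}\|O_i^T\Theta_i - O\|_{HS} = \bigl(\sum_k(1-\cos\theta_k)^2\bigr)^{1/2}$, which quadruples the rate to $O(\epsilon_{\text{PCA}}^2)$. Your bound is in fact sharper: the paper's $W^T$ is not the polar factor of $O_i^T\Theta_i$, and the discrepancy $O_i^T\Theta_i - W^T = \epsilon_{\text{PCA}}^{3/2}\tilde V^T + O(\epsilon_{\text{PCA}}^2)$ has $W^T\tilde V$ antisymmetric (forced by orthonormality of the $u_k$), so the optimal $O$ absorbs it. What your route buys is precisely this automatic absorption; what the paper's buys is the explicit form (\ref{uk-wk}) for the eigenvectors, which is what is actually reused in the proof of Theorem~\ref{relateOP} (there the paper needs the $\epsilon_{\text{PCA}}^{3/2}$/$\epsilon_{\text{PCA}}$ split between the tangential and normal components of $u_k$, not only the bound on $\min_O\|O_i^T\Theta_i - O\|_{HS}$). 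One small inconsistency to fix in your write-up: you derive $O(\epsilon_{\text{PCA}}^2)$ in the interior from the quadratic gain in $1-\cos\theta_k$, but then report the paper's boundary and coarse-scaling rates $O(\epsilon_{\text{PCA}}^{1/2}), O(\epsilon_{\text{PCA}}^{3/4}), O(\epsilon_{\text{PCA}}^{5/4})$ verbatim, even though your own argument would again square the principal-angle estimate there (e.g.\ coupling block $O(\epsilon_{\text{PCA}}^{1/2})$ at the boundary gives $\sin\theta_k = O(\epsilon_{\text{PCA}}^{1/2})$ and hence $\min_O\|\cdot\|_{HS} = O(\epsilon_{\text{PCA}})$, not $O(\epsilon_{\text{PCA}}^{1/2})$). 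Since all these are still upper bounds implying the stated rates, the theorem is proved either way, but you should apply the same reasoning in every regime.
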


Theorem \ref{localpcatheorem} may seem a bit counterintuitive at first glance. When considering data points in a ball of radius $\sqrt{\epsilon_{\text{PCA}}}$, it is expected that the order of approximation would be $O(\epsilon_{\text{PCA}})$, while equation (\ref{minOiQi}) indicates that the order of approximation is higher ($3/2$ instead of 1). The true order of approximation for the tangent space, as observed in (\ref{OiQiPCA}) is still $O(\epsilon)$. The improvement observed in (\ref{minOiQi}) is of relevance to Theorem \ref{relateOP} and we relate it to the probabilistic nature of the PCA procedure, more specifically, to a large deviation result for the error in the law of large numbers for the covariance matrix that underlies PCA. Since the convergence of PCA is slower near the boundary, then for manifolds with boundary we need a smaller $\epsilon_{\text{PCA}}$. Specifically, for manifolds without boundary we choose $\epsilon_{\text{PCA}}=O(n^{-\frac{2}{d+2}})$ and for manifolds with boundary we choose $\epsilon_{\text{PCA}} = O(n^{-\frac{2}{d+1}})$. We remark that the first choice works also for manifolds with boundary at the expense of a slower convergence rate.

The second theorem, Theorem \ref{relateOP}, states that the $d\times d$ orthonormal matrix $O_{ij}$, which is the output of the alignment procedure (\ref{HS}), approximates the parallel transport operator $P_{x_i,x_j}$ from $x_j$ to $x_i$ along the geodesic connecting them. Assuming that $\|x_i-x_j \| = O(\sqrt{\epsilon})$ (here, $\epsilon$ is different than $\epsilon_{\text{PCA}}$), the order of this approximation is $O(\epsilon_{\text{PCA}}^{3/2} + \epsilon^{3/2})$ whenever $x_i,x_j$ are away from the boundary. This result is crucial for proving Theorem \ref{summary}. The proof of Theorem \ref{relateOP} uses Theorem \ref{localpcatheorem} and is purely geometric.

\begin{thm}\label{relateOP}
Consider $x_i,x_j\notin\MM_{\sqrt{\epsilon_\text{PCA}}}$ satisfying that the geodesic distance between $x_i$ and $x_j$ is $O(\sqrt{\epsilon})$. For $\epsilon_{\text{PCA}}=O(n^{-\frac{2}{d+2}})$, w.h.p., $O_{ij}$ approximates $P_{x_i,x_j}$ in the following sense:
\begin{align}
\begin{split}
O_{ij}\bar{X}_j=\left(\langle \iota_*P_{x_i,x_j}X(x_j),u_l(x_i)\rangle\right)^d_{l=1}+O(\epsilon_{\text{PCA}}^{3/2} + \epsilon^{3/2}), \quad \mbox{for all } X\in C^3(T\MM),
\end{split}
\end{align}
where $\bar{X}_i \equiv (\langle \iota_*X(x_i),u_l(x_i)\rangle)^d_{l=1}\in \mathbb{R}^d$, and $\{u_l(x_i)\}_{l=1}^d$ is an orthonormal set determined by local PCA. For $x_i,x_j\in\MM_{\sqrt{\epsilon_\text{PCA}}}$
\begin{align}
\begin{split}
O_{ij}\bar{X}_j=\left(\langle \iota_*P_{x_i,x_j}X(x_j),u_l(x_i)\rangle\right)^d_{l=1}+O(\epsilon_{\text{PCA}}^{1/2} + \epsilon^{3/2}), \quad \mbox{for all } X\in C^3(T\MM),
\end{split}
\end{align}
For $\epsilon_{\text{PCA}}=O(n^{-\frac{2}{d+1}})$, the orders of $\epsilon_{\text{PCA}}$ in the error terms change according to Theorem \ref{localpcatheorem}.  
\end{thm}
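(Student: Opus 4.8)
The plan is to peel the statement down to a purely geometric comparison. First I would use Theorem~\ref{localpcatheorem} to replace the data-driven frames $O_i,O_j$ by the exact tangent-plane frames $Q_i,Q_j$, being careful to lose only $O(\epsilon_{\text{PCA}}^{3/2})$ rather than $O(\epsilon_{\text{PCA}})$; what remains is the assertion that the orthogonal polar factor of the ``transition matrix'' $Q_i^TQ_j$ between two nearby embedded tangent planes agrees with parallel transport up to $O(\epsilon^{3/2})$, which is then handled by a Taylor expansion in the geodesic distance $\delta:=d(x_i,x_j)=O(\sqrt\epsilon)$. All probabilistic content and all dependence on $\epsilon_{\text{PCA}}$ enters through Theorem~\ref{localpcatheorem}; the rest is deterministic geometry. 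Since both sides are linear in $X(x_j)\in T_{x_j}\MM$, it suffices to bound the error for a single tangent vector with implied constant proportional to $\|X(x_j)\|$, so the $C^3$ hypothesis is not used here.

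\textbf{Step 1 (from $O_i$ to $Q_i$).} Write $O_{ij}=\mathrm{orth}(O_i^TO_j)$ where $\mathrm{orth}(A):=A(A^TA)^{-1/2}$ is the orthogonal polar factor; this map is smooth, hence Lipschitz, near $O(d)$. Theorem~\ref{localpcatheorem}, equation (\ref{minOiQi}), is exactly the statement $\|O_i^TQ_i-I\|_{HS}=O(\epsilon_{\text{PCA}}^{3/2})$, which is genuinely sharper than $\|O_i-Q_i\|_{HS}=O(\epsilon_{\text{PCA}})$ because $O_i-Q_i$ is almost entirely normal to $\mathrm{col}(O_i)$. Decomposing $O_j=Q_j(Q_j^TO_j)+(I-Q_jQ_j^T)O_j$ and using that the tangent planes at $x_i$ and $x_j$ differ by an angle $O(\delta)=O(\sqrt\epsilon)$ (so $\|(I-Q_iQ_i^T)Q_j\|=O(\sqrt\epsilon)$, while $Q_i^T(I-Q_iQ_i^T)=0$ exactly), one verifies
\[
O_i^TO_j=Q_i^TQ_j+O(\epsilon_{\text{PCA}}^{3/2}+\epsilon_{\text{PCA}}\sqrt\epsilon+\epsilon_{\text{PCA}}^2)=Q_i^TQ_j+O(\epsilon_{\text{PCA}}^{3/2}+\epsilon^{3/2}),
\]
the last step by the elementary inequalities $\epsilon_{\text{PCA}}\sqrt\epsilon\le\epsilon_{\text{PCA}}^{3/2}+\epsilon^{3/2}$ and $\epsilon_{\text{PCA}}^2\le\epsilon_{\text{PCA}}^{3/2}$. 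As $Q_i^TQ_j$ has singular values in $[1-O(\epsilon),1]$, Lipschitzness of $\mathrm{orth}$ gives $O_{ij}=\mathrm{orth}(Q_i^TQ_j)+O(\epsilon_{\text{PCA}}^{3/2}+\epsilon^{3/2})$. The same decomposition, together with the exact inclusions $\iota_*X(x_j)\in\mathrm{col}(Q_j)$ and $\iota_*P_{x_i,x_j}X(x_j)\in\mathrm{col}(Q_i)$, gives $O_j^T\iota_*X(x_j)=Q_j^T\iota_*X(x_j)+O(\epsilon_{\text{PCA}}^{3/2})$ and $Q_i^T\iota_*P_{x_i,x_j}X(x_j)=O_i^T\iota_*P_{x_i,x_j}X(x_j)+O(\epsilon_{\text{PCA}}^{3/2})$.

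\textbf{Step 2 (the geometric core, and the main obstacle).} Let $R_{ij}\in O(d)$ be the matrix of $\iota_*P_{x_i,x_j}$ in the orthonormal frames $Q_j$ (domain), $Q_i$ (codomain). Factoring orthogonal projection through parallel transport shows $Q_i^TQ_j=\Phi R_{ij}$, where $\Phi$ is the matrix of the endomorphism of $\iota_*T_{x_i}\MM$ given by ``parallel transport from $x_i$ to $x_j$, then orthogonally project back onto $T_{x_i}\MM$''; hence $\mathrm{orth}(Q_i^TQ_j)=\mathrm{orth}(\Phi)R_{ij}$. Now the naive bound here is only $O(\epsilon)$: indeed $\Phi=I+O(\delta^2)=I+O(\epsilon)$ and $Q_i^TQ_j-R_{ij}$ is truly of order $\epsilon$ (on a round sphere a tangent vector in the tilt direction is contracted by $\cos\theta$ under projection). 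The improvement to $O(\epsilon^{3/2})$ is the crux of the argument and rests on showing, via the Gauss formula $\nabla^{\RR^p}_{\dot\gamma}V=\Pi(\dot\gamma,V)$ for a parallel field $V$ along the connecting geodesic, that the order-$\delta^2$ correction in $\Phi$ is a \emph{symmetric} operator (it is assembled from $\Pi$ and the curvature tensor evaluated symmetrically in the two slots). A symmetric perturbation is invisible to the orthogonal polar factor, so $\mathrm{orth}(\Phi)=I+O(\delta^3)=I+O(\epsilon^{3/2})$ and therefore $\mathrm{orth}(Q_i^TQ_j)=R_{ij}+O(\epsilon^{3/2})$. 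Tracking the second fundamental form and curvature cleanly through this expansion is the technical heart of the proof.

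\textbf{Step 3 (assembly and the remaining cases).} Combining Steps~1 and~2, and using $\|\mathrm{orth}(\cdot)\|=1$,
\[
O_{ij}\bar X_j=O_{ij}O_j^T\iota_*X(x_j)=\mathrm{orth}(Q_i^TQ_j)\,Q_j^T\iota_*X(x_j)+O(\epsilon_{\text{PCA}}^{3/2}+\epsilon^{3/2})=R_{ij}Q_j^T\iota_*X(x_j)+O(\epsilon_{\text{PCA}}^{3/2}+\epsilon^{3/2}).
\]
Since $\iota_*X(x_j)\in\mathrm{col}(Q_j)$, the defining property of $R_{ij}$ gives $R_{ij}Q_j^T\iota_*X(x_j)=Q_i^T\iota_*P_{x_i,x_j}X(x_j)$, which by the last identity of Step~1 equals $O_i^T\iota_*P_{x_i,x_j}X(x_j)+O(\epsilon_{\text{PCA}}^{3/2})=(\langle\iota_*P_{x_i,x_j}X(x_j),u_l(x_i)\rangle)_{l=1}^d+O(\epsilon_{\text{PCA}}^{3/2})$; this proves the bound for $x_i,x_j\notin\MM_{\sqrt{\epsilon_{\text{PCA}}}}$. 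For $x_i,x_j\in\MM_{\sqrt{\epsilon_{\text{PCA}}}}$ the only change is that Theorem~\ref{localpcatheorem} now supplies $\|O_i^TQ_i-I\|_{HS}=O(\epsilon_{\text{PCA}}^{1/2})$, which propagates through Steps~1 and~3 to the error $O(\epsilon_{\text{PCA}}^{1/2}+\epsilon^{3/2})$ (the geometric Step~2 is unchanged). Finally, for $\epsilon_{\text{PCA}}=O(n^{-2/(d+1)})$ one simply substitutes the corresponding powers of $\epsilon_{\text{PCA}}$ from Theorem~\ref{localpcatheorem} in the same places, yielding the stated slower rates.
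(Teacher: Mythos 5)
Your proposal is correct and follows essentially the same route as the paper's proof: you reduce to the exact tangent frames via Theorem \ref{localpcatheorem} (exploiting the anisotropic structure of $O_i-Q_i$, with $O(\epsilon_{\text{PCA}}^{3/2})$ tangential and $O(\epsilon_{\text{PCA}})$ normal error, to get $O_i^TO_j=Q_i^TQ_j+O(\epsilon_{\text{PCA}}^{3/2}+\epsilon^{3/2})$), and then argue that the order-$\epsilon$ correction to $Q_i^TQ_j$ relative to the parallel-transport matrix is a symmetric operator built from $\Pi$ and the curvature tensor, hence invisible to the orthogonal polar factor---exactly the mechanism the paper implements through its Jacobi-field expansion (Lemma \ref{relatebasis}) together with the Gauss and Weingarten equations. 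Your factorization $Q_i^TQ_j=\Phi R_{ij}$ with $\mathrm{orth}(\Phi R_{ij})=\mathrm{orth}(\Phi)R_{ij}$ is a slightly cleaner packaging of the paper's direct minimization over $O(d)$, but the substance, including the deferred symmetry computation you flag as the technical heart, coincides with the paper's argument.
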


The third theorem, Theorem \ref{convergetokernel}, states that the $n\times n$ block matrix $D_\alpha^{-1}S_\alpha$ is a discrete approximation of an integral operator over smooth sections of the tangent bundle. The integral operator involves the parallel transport operator. The proof of Theorem \ref{convergetokernel} mainly uses probability theory.

\begin{thm}\label{convergetokernel}
Suppose $\epsilon_{\text{PCA}}=O(n^{-\frac{2}{d+2}})$, and for $0\leq \alpha\leq 1$, define \textit{the estimated probability density distribution} by
\[
p_\epsilon(x_i)=\sum_{j=1}^n K_\epsilon\left(x_i,x_j\right)
\]
and \textit{the normalized kernel $K_{\epsilon,\alpha}$} by
\[
K_{\epsilon,\alpha}(x_i,x_j)=\frac{K_{\epsilon}(x_i,x_j)}{p^\alpha_\epsilon(x_i)p^\alpha_\epsilon(x_j)},
\]
where $K_\epsilon\left(x_i,x_j\right)=K\left(\frac{\|\iota(x_i)-\iota(x_j)\|_{\mathbb{R}^p}}{\sqrt{\epsilon}}\right)$.

For $x_i\notin\MM_{\sqrt{\epsilon_{\text{PCA}}}}$ we have w.h.p.
\begin{eqnarray}\label{conv}
\frac{\sum_{j=1,j\neq i}^n K_{\epsilon,\alpha}\left(x_i,x_j\right)O_{ij}\bar{X}_j}{\sum_{j=1,j\neq i}^n K_{\epsilon,\alpha}\left(x_i,x_j\right)} &=& (\langle \iota_*T_{\epsilon,\alpha} X(x_i),u_l(x_i)\rangle )^d_{l=1}\\
&& + O\left(\frac{1}{n^{1/2}\epsilon^{d/4-1/2}} + \epsilon_{\text{PCA}}^{3/2}+\epsilon^{3/2}\right),\nonumber
\end{eqnarray}
where
\begin{equation}\label{TepsalphaDEF}
T_{\epsilon,\alpha} X(x_i)=\frac{\int_\MM K_{\epsilon,\alpha}(x_i,y) P_{x_i,y}X(y)\ud V(y)}{\int_\MM K_{\epsilon,\alpha}(x_i,y)\ud V(y)},
\end{equation}
$\bar{X}_i \equiv (\langle \iota_*X(x_i),u_l(x_i)\rangle)^d_{l=1}\in \mathbb{R}^d$, $\{u_l(x_i)\}_{l=1}^d$ is the orthonormal set determined by local PCA, $X\in C^3(T\MM)$, and $O_{ij}$ is the optimal orthogonal transformation determined by the alignment procedure.

For $x_i\in\MM_{\sqrt{\epsilon_{\text{PCA}}}}$ we have w.h.p.
\begin{eqnarray}\label{convbdry}
\frac{\sum_{j=1,j\neq i}^n K_{\epsilon,\alpha}\left(x_i,x_j\right)O_{ij}\bar{X}_j}{\sum_{j=1,j\neq i}^n K_{\epsilon,\alpha}\left(x_i,x_j\right)} &=& (\langle \iota_*T_{\epsilon,\alpha} X(x_i),u_l(x_i)\rangle )^d_{l=1} \\
&& + O\left(\frac{1}{n^{1/2}\epsilon^{d/4-1/2}} + \epsilon_{\text{PCA}}^{1/2}+\epsilon^{3/2}\right).\nonumber
\end{eqnarray}

For $\epsilon_{\text{PCA}}=O(n^{-\frac{2}{d+1}})$ the orders of $\epsilon_{\text{PCA}}$ in the error terms change according to Theorem \ref{localpcatheorem}. 
\end{thm}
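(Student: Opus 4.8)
The idea is to reduce everything to Theorem~\ref{relateOP} together with a concentration bound for empirical kernel sums, in three reduction steps followed by a final variance estimate. First, note that only indices $j$ with $\|\iota(x_i)-\iota(x_j)\|_{\RR^p}<\sqrt{\epsilon}$ contribute to either sum, since $K_\epsilon$ is supported on $[0,\sqrt{\epsilon})$; for such $j$ the chord and geodesic distances are comparable (using $\sqrt{\epsilon}<\tau$), so $d(x_i,x_j)=O(\sqrt{\epsilon})$ and Theorem~\ref{relateOP} applies to every contributing term with a constant that is uniform ($X\in C^3(T\MM)$ and $\MM$ is compact). Substituting $O_{ij}\bar{X}_j=(\langle\iota_*P_{x_i,x_j}X(x_j),u_l(x_i)\rangle)_{l=1}^d+O(\epsilon_{\text{PCA}}^{3/2}+\epsilon^{3/2})$ into the numerator and bounding the error contribution by $(\sum_{j\neq i}K_{\epsilon,\alpha}(x_i,x_j))\cdot O(\epsilon_{\text{PCA}}^{3/2}+\epsilon^{3/2})$, we see that after dividing by the normalization the discrepancy between $O_{ij}$ and parallel transport costs only $O(\epsilon_{\text{PCA}}^{3/2}+\epsilon^{3/2})$ (respectively $O(\epsilon_{\text{PCA}}^{1/2}+\epsilon^{3/2})$ for $x_i\in\MM_{\sqrt{\epsilon_{\text{PCA}}}}$, and the rates of Theorem~\ref{localpcatheorem} for $\epsilon_{\text{PCA}}=O(n^{-2/(d+1)})$). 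Since $\iota_*P_{x_i,x_j}X(x_j)\in\iota_*T_{x_i}\MM\subset\RR^p$ is a fixed-space vector once $x_i$ is fixed and the projections onto $\{u_l(x_i)\}$ can be postponed to the very end, it remains to compare the $\RR^p$-valued ratio $\big(\sum_{j\neq i}K_{\epsilon,\alpha}(x_i,x_j)\iota_*P_{x_i,x_j}X(x_j)\big)/\big(\sum_{j\neq i}K_{\epsilon,\alpha}(x_i,x_j)\big)$ with $\iota_*T_{\epsilon,\alpha}X(x_i)$.

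Second, I would remove the randomness hidden in $K_{\epsilon,\alpha}$. The factor $p_\epsilon(x_i)^\alpha$ is common to numerator and denominator and cancels. For the factors $p_\epsilon(x_j)^\alpha$, a Bernstein inequality applied to $p_\epsilon(x_j)=\sum_k K_\epsilon(x_j,x_k)$ (each summand is $O(1)$ with conditional mean of order $\epsilon^{d/2}$) plus a union bound over the $n$ sample points shows that w.h.p.\ $p_\epsilon(x_j)=\bar p_\epsilon(x_j)(1+O(n^{-1/2}\epsilon^{-d/4}))$ uniformly in $j$, with $\bar p_\epsilon$ the deterministic (and $C^2$) conditional-expectation profile; the same estimate, made uniform over $y\in\MM$ using the smoothness of $y\mapsto K_\epsilon(y,\cdot)$, applies inside the integral defining $T_{\epsilon,\alpha}$. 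Replacing $p_\epsilon$ by $\bar p_\epsilon$ in both the discrete ratio and in $T_{\epsilon,\alpha}$ produces perturbations of the schematic form $\big(\sum_{j\neq i}\bar K_{\epsilon,\alpha}(x_i,x_j)\,\delta_j\,(\iota_*P_{x_i,x_j}X(x_j)-R)\big)/\big(\sum_{j\neq i}\bar K_{\epsilon,\alpha}(x_i,x_j)\big)$, where $R$ is the current value of the ratio and $|\delta_j|=O(n^{-1/2}\epsilon^{-d/4})$; since $R=\iota_*X(x_i)+O(\epsilon)$ and $d(x_i,x_j)=O(\sqrt{\epsilon})$ force $\iota_*P_{x_i,x_j}X(x_j)-R=O(\sqrt{\epsilon})$ (as $X\in C^1$), this perturbation is only $O(\sqrt{\epsilon}\cdot n^{-1/2}\epsilon^{-d/4})=O(n^{-1/2}\epsilon^{-d/4+1/2})$, absorbed into the claimed error.

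Third, with the deterministic kernel $\bar K_{\epsilon,\alpha}(x_i,y)=K_\epsilon(x_i,y)/(\bar p_\epsilon(x_i)^\alpha\bar p_\epsilon(y)^\alpha)$ the discrete ratio is, conditionally on $x_i$, a ratio of sample means of i.i.d.\ quantities in the $x_j$'s. Writing the numerator as $\frac1n\sum_{j\neq i}\bar K_{\epsilon,\alpha}(x_i,x_j)(\iota_*P_{x_i,x_j}X(x_j)-R)+R\cdot\frac1n\sum_{j\neq i}\bar K_{\epsilon,\alpha}(x_i,x_j)$ with $R$ the mean-ratio, the centered summand $\bar K_{\epsilon,\alpha}(x_i,x_j)(\iota_*P_{x_i,x_j}X(x_j)-R)$ is $O(\sqrt{\epsilon})$ on an event of probability of order $\epsilon^{d/2}$, hence has variance of order $\epsilon^{1+d/2}$ (up to the harmless $\alpha$-normalization constants, which cancel between numerator variance and denominator); Bernstein gives a fluctuation $O((\epsilon^{1+d/2}/n)^{1/2}\log n)$, and dividing by $\frac1n\sum_{j\neq i}\bar K_{\epsilon,\alpha}(x_i,x_j)$, of order $\epsilon^{d/2}$, yields the $O(n^{-1/2}\epsilon^{-d/4+1/2})$ variance term. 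The remaining expectation ratio equals $(\langle\iota_*T_{\epsilon,\alpha}X(x_i),u_l(x_i)\rangle)_{l=1}^d$ up to the $\bar p_\epsilon\leftrightarrow p_\epsilon$ substitution already accounted for: here one uses $\frac1n\sum_{j\neq i}f(x_j)\to\int_\MM f(y)\,p(y)\,\ud V(y)$, the sampling density $p$ supplying exactly the weight against which $T_{\epsilon,\alpha}$ is normalized (the same $p^{1-\alpha}$-type weight that surfaces in Theorem~\ref{summary}). Collecting the three error sources — $O(\epsilon_{\text{PCA}}^{3/2}+\epsilon^{3/2})$ from Step~1, $O(n^{-1/2}\epsilon^{-d/4+1/2})$ from the $p_\epsilon$-fluctuation in Step~2, and $O(n^{-1/2}\epsilon^{-d/4+1/2})$ from the sampling fluctuation — gives (\ref{conv}); the near-boundary case (\ref{convbdry}) and the choice $\epsilon_{\text{PCA}}=O(n^{-2/(d+1)})$ follow by substituting the corresponding estimates of Theorems~\ref{localpcatheorem}--\ref{relateOP}.

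The main obstacle is the combination of Steps~2 and~3: the normalization $p_\epsilon$ enters nonlinearly, through the power $\alpha$, and is itself an empirical sum coupled across the whole sample, so the discrete operator is genuinely a degree-two (V-statistic--type) object rather than a plain sample mean. The sharp exponent $\epsilon^{-d/4+1/2}$ — as opposed to the naive $\epsilon^{-d/4}$ — is obtained only by systematically exploiting that the centered integrand is $O(\sqrt{\epsilon})$ because $X$ is smooth and the kernel has bandwidth $\sqrt{\epsilon}$, and one must check this cancellation survives both the $p_\epsilon\to\bar p_\epsilon$ replacement and the law-of-large-numbers step. The geometric inputs (chord-versus-geodesic comparison, and Theorem~\ref{relateOP} for replacing $O_{ij}$ by $P_{x_i,x_j}$) are essentially quoted, and the $\epsilon$-expansion of $T_{\epsilon,\alpha}$ that turns this result into Theorem~\ref{summary} is deferred to the next stage of the proof.
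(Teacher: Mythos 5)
Your proposal is correct and follows essentially the same route as the paper: replace $O_{ij}\bar{X}_j$ by the parallel-transport quantity via Theorem \ref{relateOP} (at cost $O(\epsilon_{\text{PCA}}^{3/2}+\epsilon^{3/2})$, respectively $O(\epsilon_{\text{PCA}}^{1/2}+\epsilon^{3/2})$ near the boundary), then treat the resulting ratio of kernel sums by a law-of-large-numbers/Bernstein argument whose centered summand is $O(\sqrt{\epsilon})$ on an event of probability $O(\epsilon^{d/2})$, which is exactly what produces the $O(n^{-1/2}\epsilon^{-(d/4-1/2)})$ variance term. The only difference is that the paper carries out the details just for uniform sampling (where the $\alpha$-normalization is trivial) and quotes the deviation bound of \cite{Singer2006a}, declaring the non-uniform case ``the same but more tedious,'' whereas you explicitly control the random $p_\epsilon^{\alpha}$ factors by a uniform Bernstein bound and a first-order perturbation of the ratio --- a more complete write-up of the same strategy.
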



The fourth theorem, Theorem \ref{Tepsexpansion}, states that the operator $T_{\epsilon,\alpha}$ can be expanded in powers of $\sqrt{\epsilon}$, where the leading order term is the identity operator, the second order term is the connection-Laplacian operator plus some possible potential terms, and the first and third order terms vanish for vector fields that are sufficiently smooth. For $\alpha=1$, the potential terms vanish, and as a result, the second order term is the connection-Laplacian. The proof is based on geometry.

\begin{thm}\label{Tepsexpansion}
For $X \in C^3(T\mathcal{M})$ and $x\notin\MM_{\sqrt{\epsilon}}$ we have:
\begin{equation}\label{FPEexpansion}
T_{\epsilon,\alpha} X(x)=X(x)+\epsilon\frac{m_2}{2dm_0}\left\{\nabla^2X(x)+d\frac{\int_{S^{d-1}} \nabla_{\theta}X(x)\nabla_\theta(p^{1-\alpha})(x)\ud\theta}{p^{1-\alpha}(x)}\right\}+O(\epsilon^{2}),
\end{equation}
where $T_{\epsilon,\alpha}$ is defined in (\ref{TepsalphaDEF}), $\nabla^2$ is the connection-Laplacian over vector fields, and $m_l=\int_{\mathbb{R}^d}\|x\|^l K(\|x\|)\ud x$. 

\end{thm}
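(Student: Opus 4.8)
The proof reduces Theorem~\ref{Tepsexpansion} to a local computation. Since $x\notin\MM_{\sqrt\epsilon}$, the kernel $K_\epsilon(x,\cdot)$ is supported inside a geodesic ball of radius $O(\sqrt\epsilon)$ that lies well within $\MM$ (using $\sqrt\epsilon<\tau$ and compactness of $\MM$ to get a uniform lower bound on the injectivity radius), so one may work in geodesic normal coordinates centered at $x$ and write $y=\exp_x(v)$ with $\|v\|=O(\sqrt\epsilon)$. The first step is to record the three expansions that drive everything: the comparison between the ambient and intrinsic distances, $\|\iota(x)-\iota(y)\|_{\RR^p}^2=\|v\|^2-\tfrac1{12}\|\Pi(v,v)\|^2+O(\|v\|^5)$, which gives $K_\epsilon(x,y)=K(\|v\|/\sqrt\epsilon)$ up to an $O(\epsilon)$ correction that is quartic in $v$; the Riemannian volume density in normal coordinates, $\ud V(y)=\big(1-\tfrac16\Ric(v,v)+O(\|v\|^3)\big)\,\ud v$; and the Taylor expansion of the parallel-transported field along the geodesic, $P_{x,y}X(y)=X(x)+(\nabla X)(v)+\tfrac12(\nabla^2X)(v,v)+\tfrac16(\nabla^3X)(v,v,v)+O(\|v\|^4)$, in which $\nabla^kX$ denotes the $k$-fold iterated covariant derivative and $\sum_l(\nabla^2X)(e_l,e_l)$ is exactly the connection-Laplacian $\nabla^2X$; the hypothesis $X\in C^3(T\MM)$ is what legitimizes the last remainder. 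One also notes that the weight entering the ratio that defines $T_{\epsilon,\alpha}$ --- namely $p_\epsilon(\cdot)^{-\alpha}$ combined with the sampling density --- equals $p^{1-\alpha}$ up to an overall constant and an $O(\epsilon)$ multiplicative factor, and that this factor affects $T_{\epsilon,\alpha}X$ only through its value and gradient at $x$, hence only at order $\epsilon^2$; it is therefore discarded.

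Substituting, rescaling $v=\sqrt\epsilon u$, and integrating, the numerator and the denominator of $T_{\epsilon,\alpha}X(x)$ become power series in $\sqrt\epsilon$ whose coefficients are Euclidean moments of $K$. The coefficients of $\sqrt\epsilon$ and of $\epsilon^{3/2}$ vanish because $\int u_i K(\|u\|)\,\ud u=0$ and $\int u_iu_ju_k K(\|u\|)\,\ud u=0$ by symmetry --- here the assumption $x\notin\MM_{\sqrt\epsilon}$ is precisely what makes the domain of integration a full ball, up to radially symmetric corrections. At order $\epsilon^0$ the numerator is $\epsilon^{d/2}m_0\,p^{1-\alpha}(x)\,X(x)$ and the denominator is $\epsilon^{d/2}m_0\,p^{1-\alpha}(x)$, so the ratio starts at $X(x)$. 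At order $\epsilon$, using $\int u_iu_j K(\|u\|)\,\ud u=\tfrac{m_2}{d}\delta_{ij}$, the numerator acquires: (a) a term $\tfrac{m_2}{2d}\,p^{1-\alpha}(x)\,\nabla^2 X(x)$ from the quadratic term of $P_{x,y}X(y)$; (b) a term from the cross product of the gradient of $p^{1-\alpha}$ with $(\nabla X)(v)$, which after performing the angular integration becomes $\tfrac{m_2}{2d}\,p^{1-\alpha}(x)\cdot d\,\tfrac{\int_{S^{d-1}}\nabla_\theta X(x)\,\nabla_\theta(p^{1-\alpha})(x)\,\ud\theta}{p^{1-\alpha}(x)}$; and (c) a collection of terms proportional to $X(x)$, coming from the Hessian of the weight, from the Ricci factor in $\ud V$, and from the quartic $\Pi$-correction in $K_\epsilon$. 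The denominator acquires exactly the same scalar coefficients as those in (c), now multiplying $1$ in place of $X(x)$. Dividing, all terms of type (c) cancel and what remains is precisely the right-hand side of (\ref{FPEexpansion}); when $\alpha=1$ the weight $p^{1-\alpha}\equiv1$ is constant, so the drift term (b) disappears and only the connection-Laplacian survives.

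The main obstacle is not any single identity but the combination of this bookkeeping with a uniform $O(\epsilon^2)$ remainder estimate. Two points require genuine care. First, one must verify that the quartic discrepancy between $\|\iota(x)-\iota(y)\|$ and the geodesic distance enters at order $\epsilon$ (not merely $\epsilon^2$) yet contributes only to the $X(x)$-proportional part that cancels in the ratio --- and similarly for the Hessian-of-the-weight and Ricci contributions --- so that no spurious potential term $c(x)X(x)$ survives. Second, the $O(\epsilon^2)$ error must be controlled uniformly over all $x\notin\MM_{\sqrt\epsilon}$, which requires the three geometric expansions above to hold on a neighborhood of $\epsilon$-independent size (this is where the reach $\tau$ and the $C^3$ regularity of both $X$ and $p$ are used) together with dominated-convergence-type bounds on the rescaled integrals. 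The identification of the second-moment cross term in step (b) with the spherical average appearing in (\ref{FPEexpansion}) is then a routine rotational-invariance computation.
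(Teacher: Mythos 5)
Your proposal is correct in its essentials and takes the same route as the paper's proof: expand the kernel, parallel transport, volume form, and density weight in geodesic normal coordinates (you use Cartesian $v$ with a rescaling $v=\sqrt{\epsilon}\,u$ where the paper integrates in polar $(t,\theta)$), let the odd-moment integrals vanish by symmetry, and divide numerator by denominator so that the $X(x)$-proportional curvature and weight-Hessian corrections cancel. One minor arithmetic caveat: carrying out the angular integration in your item~(b) actually yields the coefficient $m_2/(m_0|S^{d-1}|)$ on the drift integral $\frac{1}{p^{1-\alpha}(x)}\int_{S^{d-1}}\nabla_\theta X(x)\,\nabla_\theta(p^{1-\alpha})(x)\,\ud\theta$, in agreement with the paper's own computation of the term $A$, not $m_2/(2m_0)$; the factor $d$ displayed in \eqref{FPEexpansion}, which you carried over, appears to be a typographical slip in the paper, and in any case the drift term vanishes in the $\alpha=1$ case that drives Theorem~\ref{summary}.
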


\begin{cor}\label{Tepsexpansioncor}
Under the same conditions and notations as in Theorem \ref{Tepsexpansion}, if $X\in C^3(T\MM)$, then for all $x\notin\MM_{\sqrt{\epsilon}}$ we have:
\begin{equation}\label{unif}
T_{\epsilon,1} X(x)=X(x)+\epsilon\frac{m_2}{2dm_0}\nabla^2 X(x)+O(\epsilon^{2}).
\end{equation}
\end{cor}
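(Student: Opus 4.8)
The plan is to obtain Corollary \ref{Tepsexpansioncor} as the special case $\alpha=1$ of Theorem \ref{Tepsexpansion}, so essentially no new work is required beyond inspecting the density-dependent term. First I would substitute $\alpha=1$ into the expansion \eqref{FPEexpansion}. The only term there that is sensitive to $\alpha$ is the drift-type correction
\[
d\,\frac{\int_{S^{d-1}} \nabla_{\theta}X(x)\,\nabla_\theta(p^{1-\alpha})(x)\,\ud\theta}{p^{1-\alpha}(x)}.
\]
At $\alpha=1$ one has $p^{1-\alpha}\equiv p^{0}\equiv 1$, a constant function on $\MM$; hence $\nabla_\theta(p^{1-\alpha})(x)=0$ for every unit tangent direction $\theta\in S^{d-1}$, so the integrand vanishes identically, while the denominator equals $p^{0}(x)=1\neq 0$. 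Therefore the whole correction term is zero and \eqref{FPEexpansion} collapses to $T_{\epsilon,1}X(x)=X(x)+\epsilon\frac{m_2}{2dm_0}\nabla^2X(x)+O(\epsilon^{2})$, which is exactly \eqref{unif}.

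Second, I would check that the hypotheses and the remainder transfer verbatim. Theorem \ref{Tepsexpansion} already assumes $X\in C^3(T\MM)$ and $x\notin\MM_{\sqrt{\epsilon}}$, and its $O(\epsilon^{2})$ error is uniform over $\MM\setminus\MM_{\sqrt{\epsilon}}$ for such $X$; none of the constants entering that bound degrade when $\alpha=1$ — on the contrary, the estimate only simplifies, since one no longer has to control derivatives of $p^{1-\alpha}$. Thus there is genuinely no obstacle: the statement is a true corollary, and the ``hard part'' (the geometric normal-coordinate expansion identifying the $\epsilon^{1}$ coefficient with $\frac{m_2}{2dm_0}\nabla^2X$) has already been carried out in the proof of Theorem \ref{Tepsexpansion}.

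If one preferred a self-contained argument, one could instead re-run the computation of Theorem \ref{Tepsexpansion} directly with the kernel $K_{\epsilon,1}$: expand the numerator and denominator of $T_{\epsilon,1}X(x)$ in geodesic normal coordinates about $x$, observe that the $\alpha=1$ normalization by $p_\epsilon$ at both arguments exactly cancels the first-order density-gradient contribution, and read off the second-order coefficient as $\frac{m_2}{2dm_0}\nabla^2X(x)$ using the second-fundamental-form identities and the definition of the connection-Laplacian recalled in Appendix \ref{setupbackground}. Since this merely reproduces the proof of Theorem \ref{Tepsexpansion}, the one-line specialization $p^{1-\alpha}\equiv 1$ is the natural route, and it also makes transparent why $\alpha=1$ is the distinguished normalization — it removes the sampling-density-dependent drift, leaving the pure connection-Laplacian, in direct analogy with the $\alpha=1$ normalization of diffusion maps.
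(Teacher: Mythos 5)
Your proposal is correct and matches the paper's route exactly: Corollary~\ref{Tepsexpansioncor} is obtained by setting $\alpha=1$ in Theorem~\ref{Tepsexpansion}, noting that $p^{1-\alpha}\equiv 1$ makes $\nabla_\theta(p^{1-\alpha})$ vanish identically so the drift term drops out, leaving only the connection-Laplacian contribution. Nothing further is needed, and the error bound carries over unchanged.
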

Putting Theorems \ref{localpcatheorem} \ref{convergetokernel} and \ref{Tepsexpansion} together, we now prove Theorem \ref{summary}:
\begin{proof}[Proof of Theorem \ref{summary}]
Suppose $x_i \notin \MM_{\sqrt{\epsilon}}$. By Theorem \ref{convergetokernel}, w.h.p.
\begin{eqnarray}\label{eq:proof1}
\frac{\sum_{j=1,j\neq i}^n K_{\epsilon,\alpha}\left(x_i,x_j\right)O_{ij}\bar{X}_j}{\sum_{j=1,j\neq i}^n K_{\epsilon,\alpha}\left(x_i,x_j\right)} &=& (\langle \iota_*T_{\epsilon,\alpha} X(x_i),u_l(x_i)\rangle )^d_{l=1}\\
&& + O\left(\frac{1}{n^{1/2}\epsilon^{d/4-1/2}} + \epsilon_{\text{PCA}}^{3/2}+\epsilon^{3/2}\right),\nonumber \\
&=& (\langle \iota_*T_{\epsilon,\alpha} X(x_i),e_l(x_i)\rangle )^d_{l=1}\nonumber \\
&& + O\left(\frac{1}{n^{1/2}\epsilon^{d/4-1/2}} + \epsilon_{\text{PCA}}^{3/2}+\epsilon^{3/2}\right),\nonumber
\end{eqnarray}
where $\epsilon_{\text{PCA}} = O(n^{-\frac{2}{d+2}})$, and we used Theorem \ref{localpcatheorem} to replace $u_l(x_i)$ by $e_l(x_i)$. Using Theorem \ref{Tepsexpansion} for the right hand side of (\ref{eq:proof1}), we get
\begin{eqnarray*}
&&\frac{\sum_{j=1,j\neq i}^n K_{\epsilon,\alpha}\left(x_i,x_j\right)O_{ij}\bar{X}_j}{\sum_{j=1,j\neq i}^n K_{\epsilon,\alpha}\left(x_i,x_j\right)}\\
&=&\left(\left\langle \iota_*X(x_i)+\epsilon\frac{m_2}{2dm_0}\iota_*\left\{\nabla^2X(x_i)+d\frac{\int_{S^{d-1}} \nabla_{\theta}X(x_i)\nabla_\theta(p^{1-\alpha})(x_i)\ud\theta}{p^{1-\alpha}(x_i)}\right\},e_l(x_i)\right\rangle\right)_{l=1}^d\\
&&+O\left(\frac{1}{n^{1/2}\epsilon^{d/4-1/2}} + \epsilon_{\text{PCA}}^{3/2}+\epsilon^{3/2}\right).
\end{eqnarray*}
For $\epsilon = O(n^{-\frac{2}{d+4}})$, upon dividing by $\epsilon$, the three error terms are
\begin{eqnarray*}
\frac{1}{n^{1/2}\epsilon^{d/4+1/2}} &=& O(n^{-\frac{1}{d+4}}),\\
\frac{1}{\epsilon}\epsilon_{\text{PCA}}^{3/2} &=& O(n^{-\frac{d+8}{(d+1)(d+2)}}), \\
\epsilon^{1/2} &=& O(n^{-\frac{1}{d+4}}).
\end{eqnarray*}
Clearly the three error terms vanish as $n\to \infty$. Specifically, the dominant error is $O(n^{-\frac{1}{d+4}})$ which is the same as $O(\sqrt{\epsilon})$. 
As a result, in the limit $n\to \infty$, almost surely,
\begin{align*}
\begin{split}
&\quad\lim_{n\rightarrow\infty}\frac{1}{\epsilon}\left[\frac{\sum_{j=1,j\neq i}^n K_{\epsilon,\alpha}\left(x_i,x_j\right)O_{ij}\bar{X}_j}{\sum_{j=1,j\neq i}^n K_{\epsilon,\alpha}\left(x_i,x_j\right)}-\bar{X}_i\right]\\
&=\frac{m_2}{2dm_0}\left(\left\langle \iota_*\left\{\nabla^2X(x_i)+d\frac{\int_{S^{d-1}} \nabla_{\theta}X(x_i)\nabla_\theta(p^{1-\alpha})(x_i)\ud\theta}{p^{1-\alpha}(x_i)}\right\},e_l(x_i)\right\rangle\right)_{l=1}^d,
\end{split}
\end{align*}
as required.
\end{proof}

\subsection{Preliminary Lemmas}
For the proofs of Theorem \ref{localpcatheorem}-\ref{Tepsexpansion}, we need the following Lemmas.

\begin{lem}\label{volexpansion}
In polar coordinates around $x\in\MM$, the Riemannaian measure is given by
\[
\ud V(\exp_xt\theta)=J(t,\theta)\ud t\ud\theta,
\]
where $\theta\in T_x\MM$, $\|\theta\|=1$, $t>0$, and
\[
J(t,\theta)=t^{d-1}+t^{d+1}\Ric(\theta,\theta)+O(t^{d+2}).
\]
\end{lem}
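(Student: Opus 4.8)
The plan is to reduce Lemma~\ref{volexpansion} to the classical Taylor expansion of the Riemannian volume density in geodesic normal coordinates, which is itself obtained by solving the Jacobi equation along radial geodesics to second order.

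First I would fix $x\in\MM$, choose an orthonormal basis $\{E_1,\dots,E_d\}$ of $T_x\MM$, and introduce geodesic normal coordinates $y=(y^1,\dots,y^d)\mapsto \exp_x\big(\sum_i y^iE_i\big)$. Restricted to a unit-speed radial geodesic $\gamma(t)=\exp_x(t\theta)$, the coordinate vector fields are Jacobi fields with the obvious initial data, so a second-order solution of $\frac{D^2}{\ud t^2}J+\mathcal{R}(J,\dot\gamma)\dot\gamma=0$ yields the standard expansion $g_{ij}(y)=\delta_{ij}-\tfrac13 R_{ikjl}\,y^ky^l+O(|y|^3)$, with $R_{ikjl}$ the components of $\mathcal{R}$ at $x$. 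I would cite this (for instance from do Carmo, or from Gray's book on tubes) rather than rederive it.

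Second I would take determinants: writing $g_{ij}=\delta_{ij}+A_{ij}$ with $A_{ij}=O(|y|^2)$ and using $\det(I+A)=1+\tr A+O(|y|^4)$, I obtain $\det g=1-\tfrac13\Ric(y,y)+O(|y|^3)$ after the contraction $\sum_i R_{ikil}=\Ric_{kl}$, hence $\sqrt{\det g}(y)=1-\tfrac16\Ric(y,y)+O(|y|^3)$, where $y$ is identified with $\sum_i y^iE_i\in T_x\MM$. Passing to polar coordinates $y=t\theta$, $t>0$, $\theta\in S^{d-1}\subset T_x\MM$, the Euclidean volume element is $\ud y=t^{d-1}\,\ud t\,\ud\theta$, so
\[
\ud V(\exp_x t\theta)=\sqrt{\det g}(t\theta)\,t^{d-1}\,\ud t\,\ud\theta=\Big(t^{d-1}-\tfrac16\Ric(\theta,\theta)\,t^{d+1}+O(t^{d+2})\Big)\,\ud t\,\ud\theta,
\]
which is $J(t,\theta)$ in the asserted form; the precise numerical coefficient in front of the curvature term depends only on the curvature/sign convention, and uniformity of the $O(t^{d+2})$ remainder in $\theta$ follows from smoothness of $\MM$ and compactness of $S^{d-1}$.

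The step demanding the most care --- bookkeeping rather than a genuine obstacle --- is the second-order Jacobi-field/normal-coordinate expansion together with the index contraction turning $\mathcal{R}$ into $\Ric$; the one point to flag is matching the sign and the constant in front of $\Ric$ with the convention used elsewhere in the paper.
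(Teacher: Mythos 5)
Your proposal is correct and is essentially the argument behind the result the paper simply cites: the paper's ``proof'' of this lemma is just a pointer to \cite{petersen}, and the standard route there is exactly the one you take --- second-order Jacobi/normal-coordinate expansion of $g_{ij}$, determinant expansion with the contraction to $\Ric$, and passage to polar coordinates. One remark worth making explicit: your computation gives the classical coefficient $-\tfrac{1}{6}\Ric(\theta,\theta)\,t^{d+1}$, whereas the lemma as displayed in the paper writes the curvature correction as $t^{d+1}\Ric(\theta,\theta)$ with no $\tfrac16$ and no sign; your version is the accurate one, and the discrepancy is harmless for the paper's purposes, since in all later uses (e.g.\ the expansion of $T_{\epsilon,\alpha}$ and the local PCA moments) only the order $t^{d+1}$ and the fact that the term integrates over $S^{d-1}$ to a constant multiple of the scalar curvature matter --- that constant is absorbed into $w(x)$, which cancels between numerator and denominator. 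So your flag about matching the sign/constant convention is exactly the right caution, and nothing in the proposal needs repair.
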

\begin{proof}
Please see \cite{petersen}.
\end{proof}

The following Lemma is needed in Theorem \ref{localpcatheorem} and \ref{relateOP}.
\begin{lem}\label{relateexp}
Fix $x\in\MM$ and denote $\exp_x$ the exponential map at $x$ and $\exp^{\RR^p}_{\iota(x)}$ the exponential map at $\iota(x)$. With the identification of $T_{\iota(x)}\RR^p$ with $\RR^p$, for $v\in T_x\MM$ with $\|v\|\ll 1$ we have
\begin{equation}\label{relateexp1}
\iota\circ\exp_x(v)=\iota(x)+d\iota(v)+\frac{1}{2}\Pi(v,v)+\frac{1}{6}\nabla_v\Pi(v,v)+O(\|v\|^4).
\end{equation}
Furthermore, for $w\in T_x\MM\cong \RR^d$, we have
\begin{equation}\label{relateexp2}
d\left[\iota\circ\exp_x\right]_{v}(w)=d\left[\iota\circ\exp_x\right]_{v=0}(w)+\Pi(v,w)+\frac{1}{6}\nabla_v\Pi(v,w)+\frac{1}{3}\nabla_w\Pi(v,v)+O(\|v\|^3)
\end{equation}

\end{lem}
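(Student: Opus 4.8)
The plan is to derive both \eqref{relateexp1} and \eqref{relateexp2} from a single third--order Taylor expansion in $\RR^p$ of the curve $c(t):=\iota\circ\exp_x(tv)$, where $v\in T_x\MM$ is fixed with $\|v\|\ll1$, small enough that $\exp_x$ is a diffeomorphism on the ball of radius $\|v\|$ (uniformly in $x$, since $\MM$ is compact). First I would compute $c',c'',c'''$ at $t=0$ using the Gauss and Weingarten formulas of the embedding together with the geodesic equation $\nabla_{\dot\gamma}\dot\gamma=0$ for $\gamma(t)=\exp_x(tv)$. Since $c'(t)=d\iota(\dot\gamma(t))$ we get $c'(0)=d\iota(v)$; the flat ambient derivative of $d\iota(\dot\gamma)$ splits, by the Gauss formula, into its tangential part $d\iota(\nabla_{\dot\gamma}\dot\gamma)=0$ and its normal part $\Pi(\dot\gamma,\dot\gamma)$, so $c''(0)=\Pi(v,v)$; one further differentiation, again using $\nabla_{\dot\gamma}\dot\gamma=0$, expresses $c'''(0)$ through the covariant derivative of the second fundamental form and produces the cubic term $\nabla_v\Pi(v,v)$. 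Taylor's formula with integral remainder, evaluated at $t=1$, then gives \eqref{relateexp1} with error $R(v)=\tfrac16\int_0^1(1-s)^3 c^{(4)}(s)\,\ud s$.

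Next I would show $R(v)=O(\|v\|^4)$ uniformly in $x$: the point is that $c^{(4)}(s)$ is a universal algebraic expression in $\Pi$, the curvature tensor, their covariant derivatives and the derivatives of $\iota$ at $\gamma(s)$, paired with $\dot\gamma(s)$ — whose norm is $\|v\|$ at every $s$ because $\gamma$ is a geodesic — so $c^{(4)}(s)=O(\|v\|^4)$ with a constant depending only on finitely many derivatives of the embedding and the metric, hence uniform over $x$ by compactness of $\MM$. For \eqref{relateexp2} I would differentiate \eqref{relateexp1} in $v$ along a direction $w\in T_x\MM$, i.e.\ apply $\frac{\ud}{\ud s}\big|_{s=0}$ to $\iota\circ\exp_x(v+sw)$: the linear term gives $d\iota(w)=d[\iota\circ\exp_x]_{v=0}(w)$ (using $d(\exp_x)_0=\mathrm{id}$), the quadratic term gives $\frac{\ud}{\ud s}\big|_{s=0}\tfrac12\Pi(v+sw,v+sw)=\Pi(v,w)$ by symmetry of $\Pi$, and the cubic term produces three contributions that collapse to $\tfrac16\nabla_v\Pi(v,w)+\tfrac13\nabla_w\Pi(v,v)$ once one invokes the symmetry of $\Pi$ and the symmetry of $\nabla\Pi$ furnished by the Codazzi equation for $\MM\hookrightarrow\RR^p$ (the ambient space being flat); the derivative of the remainder contributes $O(\|v\|^3)$.

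I expect the delicate point to be the control of the error terms rather than the differential--geometric identities, which are routine. Two things require care. First, that the implied constant in $R(v)=O(\|v\|^4)$ is uniform over $x\in\MM$, which is exactly where compactness enters. Second, more subtly, that differentiating this remainder in $v$ still yields $O(\|v\|^3)$: this does not follow from the pointwise bound alone, and one must use that $R(v)$ is $\iota\circ\exp_x(v)$ minus its degree--$3$ Taylor polynomial in $v$, hence a smooth function of $v$ whose full $3$--jet at the origin vanishes (with all bounds uniform in $x$), so that its differential is $O(\|v\|^3)$. The one remaining subtlety is bookkeeping which tangential (Weingarten/shape--operator) and normal pieces are absorbed into the cubic term $\nabla_v\Pi(v,v)$, i.e.\ fixing the convention for the covariant derivative of $\Pi$ as a map into $\RR^p$; once that is pinned down the computation of $c'''(0)$ and the collapse of the three cubic terms are immediate from Gauss--Weingarten and Codazzi.
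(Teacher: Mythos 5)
Your proof of \eqref{relateexp1} is correct and takes a genuinely different route from the paper's. The paper introduces $\phi=(\exp^{\RR^p}_{\iota(x)})^{-1}\circ\iota\circ\exp_x$, proves that the iterated covariant derivatives of $d\exp_x$ vanish at the origin, and then identifies $\nabla d\iota$ with $\Pi$ and $\nabla^2 d\iota$ with $\nabla\Pi$. You instead expand the geodesic curve $c(t)=\iota\circ\exp_x(tv)$: the Gauss formula kills the tangential part of $c''$ (geodesic equation), giving $c''(0)=\Pi(v,v)$, and one more derivative along $\dot\gamma$ with $\nabla_{\dot\gamma}\dot\gamma=0$ gives $c'''(0)=\nabla_v\Pi(v,v)$ in the paper's convention (the full $\iota^*T\RR^p$-valued covariant derivative $\nabla_U\Pi(V,W)=\bar\nabla_U(\Pi(V,W))-\Pi(\nabla_UV,W)-\Pi(V,\nabla_UW)$). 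This is cleaner. Your remark that the remainder has vanishing $3$-jet at $0$ (so that $d_vR=O(\|v\|^3)$, not just $R=O(\|v\|^4)$) is precisely the point one needs for \eqref{relateexp2} and is handled more carefully than in the paper's own wording.

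However, there is a genuine error in your treatment of \eqref{relateexp2}. Differentiating \eqref{relateexp1} in $v$ along $w$ gives the cubic contribution
\[
\tfrac16\bigl[\nabla_w\Pi(v,v)+\nabla_v\Pi(w,v)+\nabla_v\Pi(v,w)\bigr]=\tfrac16\nabla_w\Pi(v,v)+\tfrac13\nabla_v\Pi(v,w),
\]
using only the symmetry of $\Pi$ in its two arguments. You then invoke Codazzi to ``collapse'' this to the printed $\tfrac16\nabla_v\Pi(v,w)+\tfrac13\nabla_w\Pi(v,v)$, which would require $\nabla_v\Pi(v,w)=\nabla_w\Pi(v,v)$. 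That identity is false for the $\nabla\Pi$ used here. Codazzi gives total symmetry of the \emph{normal} covariant derivative $\nabla^\perp\Pi$ only; the tangential component of $\nabla_U\Pi(V,W)$ is, by Weingarten, $-A_{\Pi(V,W)}U$, and the Gauss equation gives
\[
\langle A_{\Pi(v,w)}v-A_{\Pi(v,v)}w,\,u\rangle=\langle\Pi(v,u),\Pi(v,w)\rangle-\langle\Pi(w,u),\Pi(v,v)\rangle=\langle\mathcal{R}(v,w)v,u\rangle,
\]
which is generically nonzero. So the tangential parts of $\nabla_v\Pi(v,w)$ and $\nabla_w\Pi(v,v)$ differ and the ``collapse'' does not hold. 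In fact, the coefficients your differentiation produces, $\tfrac16\nabla_w\Pi(v,v)+\tfrac13\nabla_v\Pi(v,w)$, are the correct ones: the paper's own proof computes the same thing ($\tfrac12 G_w(v,v)$ with $G_w(u,v)=\tfrac13(\nabla_w\Pi(u,v)+\nabla_u\Pi(w,v)+\nabla_v\Pi(u,w))$), and the two coefficients in the printed statement \eqref{relateexp2}, as well as in the paper's last sentence evaluating $G(v,v)$, appear to have been transposed by a typographical slip. You should drop the Codazzi step rather than introduce an identity that does not hold, and note that the displayed coefficients in \eqref{relateexp2} need correcting.
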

\begin{proof}
Denote $\phi=(\exp^{\RR^p}_{\iota(x)})^{-1}\circ\iota\circ\exp_x$, that is,
\begin{equation}\label{iexp}
\iota\circ\exp_x=\exp_{\iota(x)}^{\RR^p}\circ \phi.
\end{equation}
Note that $\phi(0)=0$. Since $\phi$ can be viewed as a function from $T_x\MM\cong \RR^d$ to $T_{\iota(x)}\RR^p\cong \RR^p$, we can Taylor expand it to get
\[
\iota\circ\exp_x(v)=\exp_{\iota(x)}^{\RR^p}\left(d\phi|_0(v)+\frac{1}{2}\nabla d\phi|_0(v,v)+\frac{1}{6}\nabla^2d\phi|_0(v,v,v)+O(\|v\|^4)\right).
\]
We claim that
\begin{equation}\label{dexpx}
\nabla^kd\exp_x|_0(v,\ldots,v)=0 \mbox{ for all }k\geq 1.
\end{equation}
Indeed, from the definition of the exponential map we have that $d\exp_x\in \Gamma(T^*T_x\MM\otimes T\MM)$ and $$\nabla d\exp_x(v'(t),v'(t))=\nabla_{d\exp_x(v'(t))}d\exp_x(v'(t))-d\exp_x(\nabla_{v'(t)}v'(t)),$$ where $v\in T_x\MM$, $v(t)=tv\in T_x\MM$, and $v'(t)=v\in T_{tv}T_x\MM$. When evaluated at $t=0$, we get the claim (\ref{dexpx}) for $k=1$. The result for $k\geq 2$ follows from a similar argument.

We view $d\iota$ as a smooth section of $Hom(T\MM,T\RR^p)$. Thus, by combining (\ref{dexpx}) with the chain rule, from (\ref{iexp}) we have
\[
\nabla^kd\phi|_0(v,\ldots,v)=\nabla^kd\iota|_x(v,\ldots,v)\mbox{ for all }k\geq 0,
\]
and hence we obtain
\[
\iota\circ\exp_x(v)=\exp_{\iota(x)}^{\RR^p}\left(d\iota(v)+\frac{1}{2}\nabla d\iota(v,v)+\frac{1}{6}\nabla^2d\iota(v,v,v)+O(\|v\|^4)\right).
\]
To conclude (\ref{relateexp1}), note that for all $v\in T_x\MM$, we have $\exp^{\RR^p}_{\iota(x)}(v)=\iota(x)+v$ for all $v\in T_{\iota(x)}\RR^p$ if we identify $T_{\iota(x)}\RR^p$ with $\RR^p$. Next consider vector fields $U$, $V$ and $W$ around $x$ so that $U(x)=u$, $V(x)=v$ and $W(x)=w$, where $u,v,w\in T_x\MM$. A direct calculation gives
\begin{align*}
\begin{split}
\nabla d\iota(V,W)=(\nabla_Vd\iota)W=\nabla_{V}(d\iota (W))-d\iota(\nabla_V W)
\end{split}
\end{align*}
which is by definition the second fundamental form $\Pi(V,W)$ of the embedding $\iota$. Similarly, we have
\begin{align*}
\begin{split}
\nabla^2 d\iota(U,V,W)&=(\nabla^2_{U,V}d\iota)W
=(\nabla_U(\nabla_Vd\iota))W-(\nabla_{\nabla_UV}d\iota)W\\
&=\nabla_{U}((\nabla_Vd\iota )W)-\nabla_Vd\iota(\nabla_U W)-(\nabla_{\nabla_UV}d\iota)W\\
&=\nabla_{U}(\Pi(V,W))-\Pi(V,\nabla_UW)-\Pi(\nabla_UV,W)=:(\nabla_U\Pi)(V,W),
\end{split}
\end{align*}
Evaluating $\nabla d\iota(V,V)$ and $\nabla^2 d\iota(V,V,V)$ at $x$ gives us (\ref{relateexp1}).

Next, when $w\in T_vT_x\MM$ and $v\in T_x\MM$, since $d\left[\iota\circ\exp_x\right]_{v}(w)\in T_{\iota\circ\exp_x v}\RR^p\cong \RR^p$, we can view $d\left[\iota\circ\exp_x\right]_{\cdot}(w)$ as a function from $T_x\MM\cong \RR^d$ to $\RR^p$. Thus, when $\|v\|$ is small enough, Taylor expansion gives us
\[
d\left[\iota\circ\exp_x\right]_{v}(w)=d\left[\iota\circ\exp_x\right]_{0}(w)+\nabla (d\left[\iota\circ\exp_x\right]_{\cdot}(w))|_0(v)+\frac{1}{2}\nabla^2(d\left[\iota\circ\exp_x\right]_{\cdot}(w))|_0(v,v)+O(\|v\|^3),
\]
here $d$ and $\nabla$ are understood as the ordinary differentiation over $\RR^d$. To simplify the following calculation, for $u,v,w\in \RR^d$, we denote
\[
H_w(v)=\frac{1}{6}\nabla_{w}\Pi(v,v)+\frac{1}{6}\nabla_v\Pi(w,v)+\frac{1}{6}\nabla_v\Pi(v,w),
\]
and
\[
G_w(u,v)=\frac{1}{3}(\nabla_{w}\Pi(u,v)+\nabla_u\Pi(w,v)+\nabla_v\Pi(u,w)).
\]
Note that we again identify $\RR^d$ with $T_x\MM$ in the following calculation. By (\ref{relateexp1}), when $\|v\|$ is small enough, we have
\begin{align*}
\begin{split}
d\left[\iota\circ\exp_x\right]_{v}(w)&=\lim_{\delta\rightarrow 0}\frac{\iota\circ\exp_x(v+\delta w)-\iota\circ\exp_x(v)}{\delta}\\
&=\lim_{\delta\rightarrow 0}\frac{d\iota(\delta w)+\Pi(v,\delta w)+\delta H(v)+R(v+\delta w)-R(v)}{\delta},\\
\end{split}
\end{align*}
where $R(v)$ is the remainder term in the Taylor expansion:
\[
R(v)=\sum_{|\alpha|=4}\frac{1}{\alpha!}\left(\int_0^1(1-t)^3\nabla^3(\iota\circ\exp_x)(tv)\ud t\right)v^\alpha,
\]
from which it follows that
\[
\frac{R(v+\delta w)-R(v)}{\delta}=O(\|v\|\|w\|),
\]
and as a result
\begin{align}\label{relateexp3}
\begin{split}
d\left[\iota\circ\exp_x\right]_{v}(w)=d\iota(w)+\Pi(v,w)+H(v)+O(\|v\|\|w\|).
\end{split}
\end{align}
Similarly, from (\ref{relateexp3}), when $\|u\|$ is small enough we have
\begin{align}\label{relateexp4}
\begin{split}
\nabla (d\left[\iota\circ\exp_x\right]_{\cdot}(w))|_{u}(v)&=\lim_{\delta\rightarrow 0}\frac{d\left[\iota\circ\exp_x\right]_{u+\delta v}(w)-d\left[\iota\circ\exp_x\right]_{u}(w)}{\delta}\\
&=\Pi(v,w)+G(u,v)+O(\|u\|\|v\|\|w\|).
\end{split}
\end{align}
Finally, from (\ref{relateexp4}) we have
\begin{align}\label{relateexp5}
\begin{split}
\nabla^2 d(\left[\iota\circ\exp_x\right]_{\cdot}(w))|_{0}(v,v)&=\lim_{\delta\rightarrow 0}\frac{\nabla (d\left[\iota\circ\exp_x\right]_{\cdot}(w))|_{\delta v}(v)-\nabla (d\left[\iota\circ\exp_x\right]_{\cdot}(w))|_0(v)}{\delta}\\
&=G(v,v).
\end{split}
\end{align}
Thus, from (\ref{relateexp3}) we have that
$$d\left[\iota\circ\exp_x\right]_{0}(w)=d\iota(w),$$ from (\ref{relateexp4}) we have that
$$\nabla (d\left[\iota\circ\exp_x\right]_{\cdot}(w))|_{0}(v)=\Pi(v,w),$$ and from (\ref{relateexp5}) we have that $$G(v,v)=\frac{1}{3}\nabla_v\Pi(v,w)+\frac{2}{3}\nabla_w\Pi(v,v).$$ Putting it all together we get (\ref{relateexp2}) as required.
\end{proof}

\begin{lem}\label{rdist}
Suppose $x,y\in \MM$ such that $y=\exp_x(t\theta)$, where $\theta\in T_x\MM$ and $\|\theta\|=1$. If $t\ll 1$, then $h=\|\iota(x)-\iota(y)\|\ll 1$ satisfies
\begin{equation}
t=h+\frac{1}{24}\|\Pi(\theta,\theta)\|h^3+O(h^4).
\end{equation}
\end{lem}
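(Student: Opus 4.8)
The plan is to substitute $v=t\theta$ into the ambient Taylor expansion of $\iota\circ\exp_x$ furnished by Lemma~\ref{relateexp}, read off the squared Euclidean length of the chord $\iota(y)-\iota(x)$ to order $t^4$, and then invert the resulting power series to express $t$ in terms of $h$. Concretely, from (\ref{relateexp1}) with $\|\theta\|=1$ one gets
\[
\iota(y)-\iota(x)=t\,d\iota(\theta)+\tfrac{t^2}{2}\Pi(\theta,\theta)+\tfrac{t^3}{6}\nabla_\theta\Pi(\theta,\theta)+O(t^4),
\]
so the whole computation reduces to evaluating a handful of Euclidean inner products among the three leading terms.

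The only input that is not routine Taylor bookkeeping is the identity
\[
\langle d\iota(\theta),\nabla_\theta\Pi(\theta,\theta)\rangle_{\RR^p}=-\|\Pi(\theta,\theta)\|^2 .
\]
I would obtain it by differentiating, along the geodesic $\gamma(s)=\exp_x(s\theta)$, the identity $\langle d\iota(\gamma'(s)),\Pi(\gamma'(s),\gamma'(s))\rangle\equiv 0$, which holds since $\Pi$ is normal-bundle valued. By the Gauss formula, $\tfrac{d}{ds}d\iota(\gamma'(s))=\tfrac{d^2}{ds^2}(\iota\circ\gamma)(s)=d\iota(\nabla_{\gamma'}\gamma')+\Pi(\gamma',\gamma')=\Pi(\gamma',\gamma')$ because $\gamma$ is a geodesic, while $\tfrac{d}{ds}\Pi(\gamma'(s),\gamma'(s))\big|_{s=0}=\nabla_\theta\Pi(\theta,\theta)$ by the definition of $\nabla_\theta\Pi$ used in Lemma~\ref{relateexp} (again using $\nabla_{\gamma'}\gamma'|_0=0$). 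Evaluating at $s=0$ gives $0=\|\Pi(\theta,\theta)\|^2+\langle d\iota(\theta),\nabla_\theta\Pi(\theta,\theta)\rangle$, as claimed. I expect this (mild) geometric step to be the main obstacle; the rest is elementary.

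With this identity, and using that $\|d\iota(\theta)\|=1$ (the metric $g$ is induced from $\RR^p$) and $\langle d\iota(\theta),\Pi(\theta,\theta)\rangle=0$ (normality of $\Pi$), the $t^2$ and $t^3$ contributions to $h^2$ are $t^2$ and $0$, while the cross term between $t\,d\iota(\theta)$ and $\tfrac{t^3}{6}\nabla_\theta\Pi(\theta,\theta)$ together with $\|\tfrac{t^2}{2}\Pi(\theta,\theta)\|^2$ contributes $\bigl(\tfrac14-\tfrac13\bigr)t^4\|\Pi(\theta,\theta)\|^2$; all remaining terms are $O(t^5)$. Hence
\[
h^2=t^2-\tfrac{1}{12}\,t^4\|\Pi(\theta,\theta)\|^2+O(t^5),\qquad h=t-\tfrac{1}{24}\,t^3\|\Pi(\theta,\theta)\|^2+O(t^4).
\]
Since $h=t+O(t^3)$ with $t\ll 1$, this relation is invertible; substituting $t=h+O(h^3)$ back into the last display yields $t=h+\tfrac{1}{24}\|\Pi(\theta,\theta)\|^2\,h^3+O(h^4)$, which is the assertion of Lemma~\ref{rdist} (the displayed statement being understood with $\|\Pi(\theta,\theta)\|^2$).
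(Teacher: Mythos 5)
Your proof is correct and fills in exactly the computation the paper gestures at (``apply (\ref{relateexp1}) directly''): expand the chord $\iota(y)-\iota(x)$ to third order using Lemma~\ref{relateexp}, use normality of $\Pi$ and the differentiated orthogonality identity $\langle d\iota(\theta),\nabla_\theta\Pi(\theta,\theta)\rangle=-\|\Pi(\theta,\theta)\|^2$ along the geodesic, collect $h^2=t^2-\tfrac{1}{12}t^4\|\Pi(\theta,\theta)\|^2+O(t^5)$, and invert. You also rightly observe that the displayed coefficient should read $\|\Pi(\theta,\theta)\|^2$ rather than $\|\Pi(\theta,\theta)\|$; the unsquared form is a typo.
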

\begin{proof}
Please see \cite{Coifman20065} or apply (\ref{relateexp1}) directly.
\end{proof}

\begin{lem}\label{relatebasis}
Fix $x\in\MM$ and $y=\exp_x(t\theta)$, where $\theta\in T_x\MM$ and $\|\theta\|=1$. Let $\{\partial_l(x)\}_{l=1}^d$ be the normal coordinate on a neighborhood $U$ of $x$, then for a sufficiently small $t$,  we have:
\begin{align}
\begin{split}
\iota_*P_{y,x}\partial_l(x)&=\iota_*\partial_l(x)+t\Pi(\theta,\partial_l(x))+\frac{t^2}{6}\nabla_\theta\Pi(\theta,\partial_l(x))\\
&\quad\quad+\frac{t^2}{3}\nabla_{\partial_l(x)}\Pi(\theta,\theta)
-\frac{t^2}{6}\iota_*P_{y,x}(\mathcal{R}(\theta,\partial_l(x))\theta)+O(t^3).
\end{split}
\end{align}
for all $l=1,\ldots,d$.
\end{lem}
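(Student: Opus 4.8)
The plan is to reduce the statement to the expansion (\ref{relateexp2}) that Lemma \ref{relateexp} already provides. The key observation is that $\iota_*P_{y,x}\partial_l(x)$ differs from the embedded coordinate vector field $\iota_*\partial_l(y)$ only by a curvature term of order $t^2$, while the expansion of $\iota_*\partial_l(y)$ in terms of $\Pi$ and $\nabla\Pi$ is exactly (\ref{relateexp2}). So I would split the proof into two independent steps: first expand $\iota_*\partial_l(y)$, then control the discrepancy $\iota_*\partial_l(y)-\iota_*P_{y,x}\partial_l(x)$ by a Jacobi-field computation, and finally add the two.

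For the first step, recall that in a normal coordinate chart centered at $x$ the coordinate vector field at $y=\exp_x(t\theta)$ is $\partial_l(y)=d[\exp_x]_{t\theta}(\partial_l(x))$, so that $\iota_*\partial_l(y)=d[\iota\circ\exp_x]_{t\theta}(\partial_l(x))$ and $\iota_*\partial_l(x)=d[\iota\circ\exp_x]_{0}(\partial_l(x))$. Applying (\ref{relateexp2}) with $v=t\theta$ and $w=\partial_l(x)$ gives
\[
\iota_*\partial_l(y)=\iota_*\partial_l(x)+t\Pi(\theta,\partial_l(x))+\frac{t^2}{6}\nabla_\theta\Pi(\theta,\partial_l(x))+\frac{t^2}{3}\nabla_{\partial_l(x)}\Pi(\theta,\theta)+O(t^3).
\]
For the second step, let $J_l$ be the Jacobi field along the geodesic $\gamma(t)=\exp_x(t\theta)$ with $J_l(0)=0$ and $\frac{DJ_l}{dt}(0)=\partial_l(x)$; the standard identity $J_l(t)=d[\exp_x]_{t\theta}(t\,\partial_l(x))$ gives $\partial_l(y)=\frac{1}{t}J_l(t)$. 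I would then expand $J_l$ by passing to $\widetilde J_l(t):=P_{x,\gamma(t)}J_l(t)$, a curve in the fixed vector space $T_x\MM$, and reading its low-order Taylor coefficients off the Jacobi equation $\frac{D^2J_l}{dt^2}=-\mathcal{R}(J_l,\gamma')\gamma'$: one finds $\widetilde J_l(0)=0$, $\widetilde J_l'(0)=\partial_l(x)$, $\widetilde J_l''(0)=0$ (since $J_l(0)=0$), and $\widetilde J_l'''(0)=-\mathcal{R}(\partial_l(x),\theta)\theta$ (differentiating the Jacobi equation once and using that $\gamma$ is a geodesic and $J_l(0)=0$ to discard the $\nabla\mathcal{R}$ and $\frac{D\gamma'}{dt}$ contributions). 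Transporting back and dividing by $t$ yields $\partial_l(y)=P_{y,x}\partial_l(x)-\frac{t^2}{6}P_{y,x}(\mathcal{R}(\partial_l(x),\theta)\theta)+O(t^3)$, hence, using $\mathcal{R}(\partial_l(x),\theta)\theta=-\mathcal{R}(\theta,\partial_l(x))\theta$,
\[
\iota_*P_{y,x}\partial_l(x)=\iota_*\partial_l(y)-\frac{t^2}{6}\iota_*P_{y,x}(\mathcal{R}(\theta,\partial_l(x))\theta)+O(t^3).
\]
Combining the two displays gives the asserted formula.

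I expect the one point requiring genuine care to be the third-derivative step in the Jacobi-field expansion, since that is where the curvature correction $-\frac{t^2}{6}P_{y,x}(\mathcal{R}(\theta,\partial_l(x))\theta)$ is produced: one must verify that every other term arising from differentiating $\mathcal{R}(J_l,\gamma')\gamma'$ along $\gamma$ either vanishes at $t=0$ (because $J_l(0)=0$ or because $\gamma$ is a geodesic) or is absorbed into $O(t^3)$. The remaining ingredients — the normal-coordinate description of $\partial_l(y)$, the Jacobi-field formula $J_l(t)=d[\exp_x]_{t\theta}(t\,\partial_l(x))$, and the expansion (\ref{relateexp2}) — are standard or already established in the excerpt. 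One could alternatively avoid Jacobi fields by integrating the Gauss formula $\frac{d}{dt}\iota_*P_{\gamma(t),x}\partial_l(x)=\Pi(\gamma'(t),P_{\gamma(t),x}\partial_l(x))$ and Taylor-expanding the integrand, but reconciling the result with the stated form would then require invoking the Codazzi and Gauss equations, which is more laborious; I would prefer the Jacobi-field route since it plugs directly into Lemma \ref{relateexp}.
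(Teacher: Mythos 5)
Your proof is correct and follows essentially the same route as the paper: both use the Jacobi-field identity $\partial_l(\exp_x(t\theta))=J_l(t)/t$ with $J_l(0)=0$, $\nabla_tJ_l(0)=\partial_l(x)$, compute $\nabla_t^3J_l(0)$ from the Jacobi equation to obtain the $-\frac{t^2}{6}P_{y,x}(\mathcal{R}(\theta,\partial_l(x))\theta)$ correction, and then invoke the expansion (\ref{relateexp2}) from Lemma \ref{relateexp} for $\iota_*\partial_l(\exp_x(t\theta))$. The only difference is presentational (you phrase the Taylor expansion via a transported curve $\widetilde J_l=P_{x,\gamma(t)}J_l$ in $T_x\MM$, whereas the paper writes the covariant Taylor coefficients directly with $P_{y,x}$ in front), and your sign bookkeeping, via $\mathcal{R}(\partial_l(x),\theta)\theta=-\mathcal{R}(\theta,\partial_l(x))\theta$, lands exactly on the paper's formula.
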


\begin{proof}
Choose an open subset $U\subset \MM$ small enough and find an open neighborhood $B$ of $0\in T_x\MM$ so that $\exp_x:B\rightarrow U$ is diffeomorphic. It is well known that
\[
\partial_l(\exp_x(t\theta))=\frac{J_l(t)}{t},
\]
where $J_l(t)$ is the Jacobi field with $J_l(0)=0$ and $\nabla_tJ_l(0)=\partial_l(x)$. By applying Taylor's expansion in a neighborhood of $t=0$, we have
\begin{align*}
\begin{split}
J_l(t)=P_{y,x}\left(J_l(0)+t\nabla_t J_l(0)+\frac{t^2}{2}\nabla_t^2 J_l(0)+\frac{t^3}{6}\nabla_t^3J_l(0)\right)+O(t^4),\\
\end{split}
\end{align*}
Since $J_l(0)=\nabla^2_t J_l(0)=0$, the following relationship holds:
\begin{align}\label{jacobirelation}
\begin{split}
\partial_l(\exp_x(t\theta))&=P_{y,x}\left(\nabla_t J_l(0)+\frac{t^2}{6}\nabla_t^3J_l(0)\right)+O(t^3)\\
&=P_{y,x}\partial_l(x)+\frac{t^2}{6}P_{y,x}(\mathcal{R}(\theta,\partial_l(x))\theta)+O(t^3).
\end{split}
\end{align}
Thus we obtain
\begin{align}\label{partialE}
\begin{split}
P_{y,x}\partial_l(x)=\partial_l(\exp_x(t\theta))-\frac{t^2}{6}P_{y,x}(\mathcal{R}(\theta,\partial_l(x))\theta)+O(t^3),
\end{split}
\end{align}
On the other hand, from (\ref{relateexp2}) in Lemma \ref{relateexp} we have
\begin{align}\label{partiall}
\begin{split}
\iota_*\partial_l(\exp_x(t\theta))=\iota_*\partial_l(x)+t\Pi(\theta,\partial_l(x))+\frac{t^2}{6}\nabla_\theta\Pi(\theta,\partial_l(x))+\frac{t^2}{3}\nabla_{\partial_l(x)}\Pi(\theta,\theta)
+O(t^3).
\end{split}
\end{align}
Putting (\ref{partialE}) and (\ref{partiall}) together, it follows that for $l=1,\ldots,d$:
\begin{align}
\begin{split}
\iota_*P_{y,x}\partial_l(x)&=\iota_*\partial_l(\exp_x(t\theta))-\frac{t^2}{6}\iota_*P_{y,x}(\mathcal{R}(\theta,\partial_l(x))\theta)+O(t^3)\\
&=\iota_*\partial_l(x)+t\Pi(\theta,\partial_l(x))+\frac{t^2}{6}\nabla_\theta\Pi(\theta,\partial_l(x))\\
&\quad\quad+\frac{t^2}{3}\nabla_{\partial_l(x)}\Pi(\theta,\theta)
-\frac{t^2}{6}\iota_*P_{y,x}(\mathcal{R}(\theta,\partial_l(x))\theta)+O(t^3).
\end{split}
\end{align}

\end{proof}

\subsection{[Proof of Theorem \ref{localpcatheorem}]}

\begin{proof}
Fix $x_i\notin\MM_{\sqrt{\epsilon_{\text{PCA}}}}$. Denote $\{v_k\}_{k=1}^p$ the standard orthonormal basis of $\RR^p$, that is, $v_k$ has $1$ in the $k$-th entry and $0$ elsewhere. We can properly translate and rotate the embedding $\iota$ so that $\iota(x_i)=0$, the first $d$ components $\{v_1,\ldots,v_d\}\subset\RR^p$ form the orthonormal basis of $\iota_*T_{x_i}\MM$, and find a normal coordinate $\{\partial_k\}_{k=1}^d$ around $x_i$ so that $\iota_*\partial_k(x_i)=v_k$. Instead of directly analyzing the matrix $B_i$ that appears in the local PCA procedure given in (\ref{Xi2}), we analyze the covariance matrix $\Xi_i:=B_iB_i^T$, whose eigenvectors coincide with the left singular vectors of $B_i$. We rewrite $\Xi_i$ as
\begin{equation}
\Xi_i = \sum_{j\neq i}^{n} F_j,
\end{equation}
where
\begin{equation}
\label{Fj}
F_j=K\left(\frac{\|\iota(x_i)-\iota(x_j)\|_{\mathbb{R}^p}}{\sqrt{\epsilon_{\text{PCA}}}}\right)(\iota(x_j)-\iota(x_i))(\iota(x_j)-\iota(x_i))^T,
\end{equation}
and
\begin{equation}
\label{Fjkl}
F_j(k,l)=K\left(\frac{\|\iota(x_i)-\iota(x_j)\|_{\mathbb{R}^p}}{\sqrt{\epsilon_{\text{PCA}}}}\right)\langle \iota(x_j)-\iota(x_i),v_k\rangle\langle \iota(x_j)-\iota(x_i),v_l\rangle.
\end{equation}

Denote $B_{\sqrt{\epsilon_{\text{PCA}}}}(x_i)$ to be the geodesic ball of radius $\sqrt{\epsilon_{\text{PCA}}}$ around $x_i$. We apply the same variance error analysis as in \cite[Section 3]{Singer2006a} to approximate $\Xi_i$. Since the points $x_i$ are independent identically distributed (i.i.d.), $F_j$, $j\neq i$, are also i.i.d., by the law of large numbers one expects
\begin{align}\label{approximationlocalpca}
\begin{split}
\frac{1}{n-1}\sum_{j\neq i}^{n} F_j \approx \EE F,
\end{split}
\end{align}
where $F=F_1$,
\begin{align}
\begin{split}
\EE F = \int_{B_{\sqrt{\epsilon_{\text{PCA}}}}(x_i)} K_{\epsilon_{\text{PCA}}}(x_i,y)(\iota(y)-\iota(x_i))(\iota(y)-\iota(x_i))^T
p(y)\ud V(y),
\end{split}
\end{align}
and
\begin{align}\label{Fkl}
\begin{split}
\EE F(k,l)=\int_{B_{\sqrt{\epsilon_{\text{PCA}}}}(x_i)} K_{\epsilon_{\text{PCA}}}(x_i,y)\langle\iota(y)-\iota(x_i),v_k\rangle\langle \iota(y)-\iota(x_i),v_l\rangle p(y)
\ud V(y).
\end{split}
\end{align}
In order to evaluate the first moment $\EE F(k,l)$ of (\ref{Fkl}), we note that for $y=\exp_{x_i}v$, where $v\in T_{x_i}\MM$, by (\ref{relateexp1}) in Lemma \ref{relateexp} we have
\begin{equation}\label{fkl}
\langle\iota(\exp_{x_i}v)-\iota(x_i),v_k\rangle = \langle \iota_*v, v_k\rangle+\frac{1}{2}\langle \Pi(v,v), v_k\rangle+\frac{1}{6}\langle \nabla_v\Pi(v,v), v_k\rangle+O(\|v\|^4).
\end{equation}
Substituting (\ref{fkl}) into (\ref{Fkl}), applying Taylor's expansion, and combining Lemma \ref{rdist} and Lemma \ref{volexpansion}, we have
\begin{eqnarray}
&& \int_{B_{\sqrt{\epsilon_{\text{PCA}}}}(x_i)} K_{\epsilon_{\text{PCA}}}(x_i,y)\langle\iota(y)-\iota(x_i),v_k\rangle\langle \iota(y)-\iota(x_i),v_l\rangle p(y)
\ud V(y) \nonumber \\
&=&\int_{S^{d-1}}\int_0^{\sqrt{\epsilon_{\text{PCA}}}} \left[K\left(\frac{t}{\sqrt{\epsilon_{\text{PCA}}}}\right)+O\left(\frac{t^3}{\sqrt{\epsilon_{\text{PCA}}}}\right)\right] \times \label{expansionfkl} \\
&& \Big\{t^2\langle \iota_*\theta, v_k\rangle\langle \iota_*\theta, v_l\rangle + \frac{t^3}{2}\Big(\langle \Pi(\theta,\theta), v_k\rangle\langle \iota_*\theta, v_l\rangle+\langle \Pi(\theta,\theta), v_l\rangle\langle \iota_*\theta, v_k\rangle \Big) + O(t^4)\Big\} \times \nonumber \\
&& \left(p(x_i)+t\nabla_\theta p(x_i)+O(t^2)\right)\left(t^{d-1}+O(t^{d+1})\right)\ud t\ud \theta \nonumber \\
&=& \int_{S^{d-1}}\int_0^{\sqrt{\epsilon_{\text{PCA}}}} \left[ K\left(\frac{t}{\sqrt{\epsilon_{\text{PCA}}}}\right)\langle \iota_*\theta, v_k\rangle\langle \iota_*\theta, v_l\rangle p(x_i)t^{d+1} + O(t^{d+3})\right]\ud t\ud \theta, \label{expansionfkl2}
\end{eqnarray}
where (\ref{expansionfkl2}) holds since integrals involving odd powers of $\theta$ must vanish due to the symmetry of the sphere $S^{d-1}$.
Note that $\langle \iota_*\theta, v_k\rangle=0$ when $k=d+1,\ldots,p$. Therefore,
\begin{equation}
\label{firstmomentFjkl}
\EE F(k,l)= \left\{\begin{array}{ll}
                           D{\epsilon_{\text{PCA}}}^{d/2+1}+O({\epsilon_{\text{PCA}}}^{d/2+2}) & \mbox{for } 1\leq k=l \leq d, \\
                           O({\epsilon_{\text{PCA}}}^{d/2+2}) & \mbox{otherwise} .
                         \end{array}
 \right.
\end{equation}
where $D = \int_{S^{d-1}} |\langle \iota_*\theta, v_1 \rangle|^2 \ud \theta \int_0^{1} K(u) u^{d+1} \ud u$ is a positive constant.

Similar considerations give the second moment of $F(k,l)$ as
\begin{equation}\label{secondmoment}
\EE [F(k,l)^2]= \left\{
\begin{array}{ll}
O({\epsilon_{\text{PCA}}}^{d/2+2})&\mbox{for } k,l=1,\ldots,d,\\
O({\epsilon_{\text{PCA}}}^{d/2+4})&\mbox{for } k,l=d+1,\ldots,p,\\
O({\epsilon_{\text{PCA}}}^{d/2+3})&\mbox{otherwise.}
\end{array}
\right.
\end{equation}
Hence, the variance of $F(k,l)$ becomes
\begin{equation}
\operatorname{Var} F(k,l) = \left\{
\begin{array}{ll}
O({\epsilon_{\text{PCA}}}^{d/2+2})&\mbox{for } k,l=1,\ldots,d,\\
O({\epsilon_{\text{PCA}}}^{d/2+4})&\mbox{for } k,l=d+1,\ldots,p,\\
O({\epsilon_{\text{PCA}}}^{d/2+3})&\mbox{otherwise.}
\end{array}
\right.
\end{equation}

We now move on to establish a large deviation bound on the estimation of $\frac{1}{n-1}\sum_{j\neq i} F_j(k,l)$ by its mean $\EE F_j(k,l)$.
For that purpose, we measure the deviation from the mean value by $\alpha$ and define its probability by
\begin{equation}
p_{k,l}(n,\alpha):=\mbox{Pr}\left\{\left|\frac{1}{n-1}\sum_{j\neq i}^{n} F_j(k,l)-\EE F(k,l)\right|>\alpha\right\}.
\end{equation}
To establish an upper bound for the probability $p_{k,l}(n,\alpha)$, we use Bernstein's inequality, see, e.g., \cite{hoeffding}.
Define $$Y_j(k,l):=F_j(k,l)-\EE F(k,l).$$
Clearly $Y_j(k,l)$ are zero mean i.i.d. random variables.
From the definition of $F_j(k,l)$ (see \ref{Fj} and \ref{Fjkl}) and from the calculation of its first moment (\ref{firstmomentFjkl}), it follows that $Y_j(k,l)$ are bounded random variables. More specifically,
\begin{equation}
Y_j(k,l) = \left\{\begin{array}{ll}
                    O({\epsilon_{\text{PCA}}}) & \mbox{for } k,l=1,\ldots,d,\\
                    O({\epsilon_{\text{PCA}}}^2) & \mbox{for } k,l=d+1,\ldots,p, \\
                    O({\epsilon_{\text{PCA}}}^{3/2}) & \mbox{otherwise}.
                  \end{array}
 \right.
\end{equation}
%
%
Consider first the case $k,l=1,\ldots,d$, for which Bernstein's inequality gives
\begin{align}\begin{split}\label{pklbdd}
p_{k,l}(n,\alpha)\leq \exp\left\{-\frac{(n-1)\alpha^2}{2\EE (Y_1(k,l)^2)+O({\epsilon_{\text{PCA}}})\alpha}\right\}\leq\exp\left\{-\frac{(n-1)\alpha^2}{O({\epsilon_{\text{PCA}}}^{d/2+2})+O({\epsilon_{\text{PCA}}})\alpha}\right\}.
\end{split}\end{align}
From (\ref{pklbdd}) it follows that w.h.p.
\[
\alpha =  O\left(\frac{{\epsilon_{\text{PCA}}}^{d/4+1}}{n^{1/2}}\right),
\]
provided that
\begin{equation}
\label{condition-n-epsilon}
\frac{1}{n^{1/2}{\epsilon_{\text{PCA}}}^{d/4}}\ll 1.
\end{equation}
Similarly, for $k,l=d+1,\ldots,p$, we have
\begin{align*}\begin{split}
p_{k,l}(n,\alpha)\leq \exp\left\{-\frac{(n-1)\alpha^2}{O({\epsilon_{\text{PCA}}}^{d/2+4})+O({\epsilon_{\text{PCA}}}^2)\alpha}\right\},
\end{split}\end{align*}
which means that w.h.p.
\[
\alpha = O\left(\frac{{\epsilon_{\text{PCA}}}^{d/4+2}}{n^{1/2}}\right)
\]
provided (\ref{condition-n-epsilon}).
Finally, for $k=d+1,\ldots,p$, $l=1,\ldots,d$ or $l=d+1,\ldots,,p$, $k=1,\ldots,d$, we have
\begin{align*}\begin{split}
p_{k,l}(n,\alpha)\leq \exp\left\{-\frac{(n-1)\alpha^2}{O({\epsilon_{\text{PCA}}}^{d/2+3})+O({\epsilon_{\text{PCA}}}^{3/2})\alpha}\right\},
\end{split}\end{align*}
which means that w.h.p.
\[
\alpha = O\left(\frac{{\epsilon_{\text{PCA}}}^{d/4+3/2}}{n^{1/2}}\right)
\]
provided (\ref{condition-n-epsilon}). The condition (\ref{condition-n-epsilon}) is quite intuitive as it is equivalent to $n {\epsilon_{\text{PCA}}}^{d/2} \gg 1$, which says that the expected number of points inside $B_{\sqrt{\epsilon_{\text{PCA}}}}(x_i)$ is large.

As a result, when (\ref{condition-n-epsilon}) holds, w.h.p., the covariance matrix $\Xi_i$ is given by
\begin{eqnarray}
\label{covariance-bias-variance}
\Xi_i &=& {\epsilon_{\text{PCA}}}^{d/2+1}D\left[
\begin{array}{ll}
I_{d\times d} & 0_{d\times p-d} \\
0_{p-d\times d} & 0_{p-d\times p-d} \\
\end{array}
\right]\\
&&+ {\epsilon_{\text{PCA}}}^{d/2+2}\left[\begin{array}{ll}
                                                                      O(1) & O(1) \\
                                                                      O(1) & O(1)
                                                                    \end{array}\right]\label{covariance-bias-variance2}\\
&&+ \frac{{\epsilon_{\text{PCA}}}^{d/4+1}}{\sqrt{n}}\left[\begin{array}{ll}
                                                                      O(1) & O({\epsilon_{\text{PCA}}}^{1/2}) \\
                                                                      O({\epsilon_{\text{PCA}}}^{1/2}) & O({\epsilon_{\text{PCA}}})
                                                                    \end{array}
 \right],\label{covariance-bias-variance3}
\end{eqnarray}
where $I_{d\times d}$ is the identity matrix of size $d\times d$, and $0_{m\times m'}$ is the zero matrix of size $m \times m'$. The error term in (\ref{covariance-bias-variance2}) is the bias term due to the curvature of the manifold, while the error term in (\ref{covariance-bias-variance3}) is the variance term due to finite sampling (i.e., finite $n$). In particular, under the condition in the statement of the theorem for the sampling rate, namely, ${\epsilon_{\text{PCA}}}=O(n^{-\frac{2}{d+2}})$, we have w.h.p.
\begin{eqnarray}
\Xi_i &=& {\epsilon_{\text{PCA}}}^{d/2+1}D\left[
\begin{array}{ll}
I_{d\times d} & 0_{d\times p-d}  \\
0_{p-d\times d} & 0_{p-d\times p-d} \\
\end{array}
\right]\label{Xiexpansion}\\
&&+ {\epsilon_{\text{PCA}}}^{d/2+2}\left[\begin{array}{ll}
                                                                      O(1) & O(1) \\
                                                                      O(1) & O(1)
                                                                    \end{array}\right] + {\epsilon_{\text{PCA}}}^{d/2 + 3/2}\left[\begin{array}{ll}
                                                                      O(1) & O({\epsilon_{\text{PCA}}}^{1/2}) \\
                                                                      O({\epsilon_{\text{PCA}}}^{1/2}) & O({\epsilon_{\text{PCA}}})
                                                                    \end{array}
 \right] \nonumber \\
 &=& {\epsilon_{\text{PCA}}}^{d/2+1}\left\{D\left[
\begin{array}{ll}
I_{d\times d} & 0_{d\times p-d} \\
0_{p-d\times d} & 0_{p-d\times p-d} \\
\end{array}
\right] +  \left[\begin{array}{ll}
                                                                      O({\epsilon_{\text{PCA}}}^{1/2}) & O({\epsilon_{\text{PCA}}}) \\
                                                                      O({\epsilon_{\text{PCA}}}) & O({\epsilon_{\text{PCA}}})
                                                                    \end{array}
 \right]\right\}.\nonumber
\end{eqnarray}
Note that by definition $\Xi_i$ is symmetric, so we rewrite (\ref{Xiexpansion}) as
\begin{eqnarray}
\Xi_i &=& {\epsilon_{\text{PCA}}}^{d/2+1}D\left[
\begin{array}{ll}
I+{\epsilon_{\text{PCA}}}^{1/2}A & {\epsilon_{\text{PCA}}} C\\
{\epsilon_{\text{PCA}}}C^T & {\epsilon_{\text{PCA}}} B \\
\end{array}
\right],
\end{eqnarray}
where $I$ is the $d\times d$ identity matrix, $A$ is a $d\times d$ symmetric matrix, $C$ is a $d\times (p-d)$ matrix, and $B$ is a $(p-d)\times (p-d)$ symmetric matrix. All entries of $A$, $B$, and $C$ are $O(1)$.
Denote by $u_k$ and $\lambda_k$, $k=1,\ldots,p$, the eigenvectors and eigenvalues of $\Xi_i$, where the eigenvectors are orthonormal, and the eigenvalues are ordered in a decreasing order. Using regular perturbation theory, we find that $\lambda_k = D{\epsilon_{\text{PCA}}}^{d/2+1}\left(1+O({\epsilon_{\text{PCA}}}^{1/2})\right)$ (for $k=1,\ldots,d$), and that the expansion of the first $d$ eigenvectors $\{u_k\}_{k=1}^d$ is given by
\begin{equation}
\label{uk-wk}
u_k=\left[
\begin{array}{l}
\left[ w_k+O({\epsilon_{\text{PCA}}}^{3/2}) \right]_{d\times 1}\\
\left[ O({\epsilon_{\text{PCA}}}) \right]_{p-d\times 1} \\
\end{array}
\right]\in\RR^p,
\end{equation}
where $\{w_k\}_{k=1}^d$ are orthonormal eigenvectors of $A$ satisfying $Aw_k=\lambda^A_kw_k$. Indeed, a direct calculation gives us
\begin{eqnarray}
&&\left[
\begin{array}{ll}
I+{\epsilon_{\text{PCA}}}^{1/2}A & {\epsilon_{\text{PCA}}} C \\
{\epsilon_{\text{PCA}}} C^T & {\epsilon_{\text{PCA}}} B \\
\end{array}
\right] \left[
\begin{array}{l}
w_k+{\epsilon^{3/2}_{\text{PCA}}}v_{3/2}+{\epsilon^2_{\text{PCA}}}v_{2}+O({\epsilon_{\text{PCA}}}^{5/2})\\
{\epsilon_{\text{PCA}}}z_{1}+{\epsilon^{3/2}_{\text{PCA}}} z_{3/2}+O({\epsilon_{\text{PCA}}}^{2}) \\
\end{array}
\right]\nonumber\\
&& \label{lhs}\\
&=&\left[
\begin{array}{l}
w_k+{\epsilon_{\text{PCA}}}^{1/2}Aw_k+{\epsilon_{\text{PCA}}}^{3/2}v_{3/2}+{\epsilon_{\text{PCA}}}^{2}(Av_{3/2}+v_{2}+Cz_{1})+O({\epsilon_{\text{PCA}}}^{5/2})\\
{\epsilon_{\text{PCA}}}C^Tw_k+{\epsilon_{\text{PCA}}}^{2}Bz_{1}+O({\epsilon_{\text{PCA}}}^{5/2}) \\
\end{array}
\right], \nonumber
\end{eqnarray}
where $v_{3/2},v_{2}\in\RR^d$ and $z_{1},z_{3/2}\in\RR^{p-d}$. On the other hand,
\begin{eqnarray}
&&(1+{\epsilon_{\text{PCA}}}^{1/2}\lambda^A_k+{\epsilon_{\text{PCA}}}^{2}\lambda_{2}+O({\epsilon_{\text{PCA}}}^{5/2}))\left[
\begin{array}{l}
w_k+{\epsilon_{\text{PCA}}}^{3/2}v_{3/2}+{\epsilon_{\text{PCA}}}^{2}v_{2}+O({\epsilon_{\text{PCA}}}^{5/2})\\
{\epsilon_{\text{PCA}}}z_{1}+{\epsilon_{\text{PCA}}}^{3/2} z_{3/2}+O({\epsilon_{\text{PCA}}}^{2}) \\
\end{array}
\right]\nonumber\\
&& \label{rhs} \\
&=&\left[
\begin{array}{l}
w_k+{\epsilon_{\text{PCA}}}^{1/2}\lambda_k^A w_k+{\epsilon_{\text{PCA}}}^{3/2}v_{3/2}+{\epsilon_{\text{PCA}}}^{2}(\lambda_k^A v_{2}+v_{3/2}+\lambda_{2}w_k)+O({\epsilon_{\text{PCA}}}^{5/2})\\
{\epsilon_{\text{PCA}}}z_{1}+{\epsilon_{\text{PCA}}}^{3/2}(\lambda^A_kz_{1}+z_{3/2})+O({\epsilon_{\text{PCA}}}^{2}) \\
\end{array}
\right],\nonumber
\end{eqnarray}
where $\lambda_{2}\in\RR$. Matching orders of ${\epsilon_{\text{PCA}}}$ between (\ref{lhs}) and (\ref{rhs}), we conclude that
\begin{eqnarray}
O({\epsilon_{\text{PCA}}}):\quad & z_{1}&=C^Tw_k, \nonumber \\
O({\epsilon_{\text{PCA}}}^{3/2}):\quad & z_{3/2}&=-\lambda_k^Az_{1}, \nonumber \\
O({\epsilon_{\text{PCA}}}^{2}):\quad & (A-\lambda_k^A I)v_{3/2}&=\lambda_{2}w_k-CC^Tw_k. \label{ACCT}
\end{eqnarray}
Note that the matrix $(A-\lambda_k^A I)$ appearing in (\ref{ACCT}) is singular and its null space is spanned by the vector $w_k$, so the solvability condition is $\lambda_{2}=\|C^Tw_k\|^2/\|w_k\|^2$. We mention that $A$ is a generic symmetric matrix generated due to random finite sampling, so almost surely the eigenvalue $\lambda_k^A$ is simple.

Denote $O_i$ the $p\times d$ matrix whose $k$-th column is the vector $u_k$. We measure the deviation of the $d$-dim subspace of $\RR^p$ spanned by $u_k$, $k=1,\ldots,d$, from $\iota_*T_{x_i}\MM$ by
\begin{equation}
\min_{O\in O(d)} \|O_i^T \Theta_i - O\|_{HS},
\end{equation}
where $\Theta_i$ is a $p\times d$ matrix whose $k$-th column is $v_k$ (recall that $v_k$ is the $k$-th standard unit vector in $\mathbb{R}^p$). Let $\hat{O}$ be the $d\times d$ orthonormal matrix
\[
\hat{O}=\left[
\begin{array}{c}
 w_1^T \\
  \vdots  \\
 w_d^T
\end{array}
\right]_{d\times d}.
\]
Then,
\begin{equation}
\min_{O\in O(d)} \|O_i^T \Theta_i - O\|_{HS} \leq \|O_i^T \Theta_i - \hat{O}\|_{HS} = O({\epsilon_{\text{PCA}}}^{3/2}),
\end{equation}
which completes the proof for points away from the boundary.


Next, we consider $x_i\in \MM_{\sqrt{\epsilon_{\text{PCA}}}}$. The proof is almost the same as the above, so we just point out the main differences without giving the full details. The notations $\Xi_i$, $F_j(k,l)$, $p_{k,l}(n,\alpha)$, $Y_j(k,l)$ refer to the same quantities. Here the expectation of $F_j(k,l)$ is:
\begin{align}\label{bdryEFkl}
\begin{split}
\EE F(k,l)=\int_{B_{\sqrt{\epsilon_{\text{PCA}}}}(x_i)\cap \MM} K_{\epsilon_{\text{PCA}}}(x_i,y)\langle\iota(y)-\iota(x_i),v_k\rangle\langle \iota(y)-\iota(x_i),v_l\rangle p(y)
\ud V(y).
\end{split}
\end{align}
Due to the asymmetry of the integration domain $\exp_{x_i}^{-1}(B_{\sqrt{\epsilon_{\text{PCA}}}}(x_i)\cap \MM)$ when $x_i$ is near the boundary, we do not expect $\EE F_j(k,l)$ to be the same as (\ref{firstmomentFjkl}) and (\ref{secondmoment}), since integrals involving odd powers of $\theta$ do not vanish. In particular, when $l=d+1,\ldots,p$, $k=1,\ldots,d$ or $k=d+1,\ldots,p$, $l=1,\ldots,d$, (\ref{bdryEFkl}) becomes
\begin{eqnarray}
\EE F(k,l)&=&\int_{\exp_{x_i}^{-1}(B_{\sqrt{\epsilon_{\text{PCA}}}}(x_i)\cap \MM)} K\left(\frac{t}{\sqrt{\epsilon_{\text{PCA}}}}\right)\langle \iota_*\theta, v_k\rangle \langle \Pi(\theta,\theta), v_l\rangle p(x_i)t^{d+2}\ud t \ud \theta+O({\epsilon_{\text{PCA}}}^{d/2+2}) \nonumber \\
&=&O({\epsilon_{\text{PCA}}}^{d/2+3/2}).
\end{eqnarray}
Note that for $x_i\in\MM_{\sqrt{\epsilon_{\text{PCA}}}}$ the bias term in the expansion of the covariance matrix differs from (\ref{covariance-bias-variance2}) when $l=d+1,\ldots,p$, $k=1,\ldots,d$ or $k=d+1,\ldots,p$, $l=1,\ldots,d$. Similar calculations show that
\begin{equation}\label{firstmomentbdry}
\EE F(k,l)=\left\{
\begin{array}{ll}
O({\epsilon_{\text{PCA}}}^{d/2+1})&\mbox{ when }k,l=1,\ldots,d,\\
O({\epsilon_{\text{PCA}}}^{d/2+2})&\mbox{ when }k,l=d+1,\ldots,p,\\
O({\epsilon_{\text{PCA}}}^{d/2+3/2})&\mbox{ otherwise, }
\end{array}
\right.
\end{equation}
\begin{equation}\label{secondmomentbdry}
\EE [F(k,l)^2]=\left\{
\begin{array}{ll}
O({\epsilon_{\text{PCA}}}^{d/2+2})&\mbox{ when }k,l=1,\ldots,d,\\
O({\epsilon_{\text{PCA}}}^{d/2+4})&\mbox{ when }k,l=d+1,\ldots,p,\\
O({\epsilon_{\text{PCA}}}^{d/2+3})&\mbox{ otherwise, }
\end{array}
\right.
\end{equation}
and
\begin{equation}
\operatorname{Var} F(k,l) =\left\{
\begin{array}{ll}
O({\epsilon_{\text{PCA}}}^{d/2+2})&\mbox{ when }k,l=1,\ldots,d,\\
O({\epsilon_{\text{PCA}}}^{d/2+4})&\mbox{ when }k,l=d+1,\ldots,p,\\
O({\epsilon_{\text{PCA}}}^{d/2+3})&\mbox{ otherwise. }
\end{array}
\right.
\end{equation}
Similarly, $Y_j(k,l)$ are also bounded random variables satisfying
\begin{equation}
Y_j(k,l) = \left\{\begin{array}{ll}
                    O({\epsilon_{\text{PCA}}}) & \mbox{for } k,l=1,\ldots,d,\\
                    O({\epsilon_{\text{PCA}}}^2) & \mbox{for } k,l=d+1,\ldots,p, \\
                    O({\epsilon_{\text{PCA}}}^{3/2}) & \mbox{otherwise}.
                  \end{array}
 \right.
\end{equation}
Consider first the case $k,l=1,\ldots,d$, for which Bernstein's inequality gives
\begin{align}\begin{split}\label{pklbdry}
p_{k,l}(n,\alpha)\leq\exp\left\{-\frac{(n-1)\alpha^2}{O({\epsilon_{\text{PCA}}}^{d/2+2})+O({\epsilon_{\text{PCA}}})\alpha}\right\},
\end{split}\end{align}
From (\ref{pklbdry}) it follows that w.h.p.
\[
\alpha =  O\left(\frac{{\epsilon_{\text{PCA}}}^{d/4+1}}{n^{1/2}}\right),
\]
provided (\ref{condition-n-epsilon}). Similarly, for $k,l=d+1,\ldots,p$, we have
\begin{align*}\begin{split}
p_{k,l}(n,\alpha)\leq \exp\left\{-\frac{(n-1)\alpha^2}{O({\epsilon_{\text{PCA}}}^{d/2+4})+O({\epsilon_{\text{PCA}}}^2)\alpha}\right\},
\end{split}\end{align*}
which means that w.h.p.
\[
\alpha = O\left(\frac{{\epsilon_{\text{PCA}}}^{d/4+2}}{n^{1/2}}\right)
\]
provided (\ref{condition-n-epsilon}).
Finally, for $k=d+1,\ldots,p$, $l=1,\ldots,d$ or $l=d+1,\ldots,,p$, $k=1,\ldots,d$, we have
\begin{align*}\begin{split}
p_{k,l}(n,\alpha)\leq \exp\left\{-\frac{(n-1)\alpha^2}{O({\epsilon_{\text{PCA}}}^{d/2+3})+O({\epsilon_{\text{PCA}}}^{3/2})\alpha}\right\},
\end{split}\end{align*}
which means that w.h.p.
\[
\alpha = O\left(\frac{{\epsilon_{\text{PCA}}}^{d/4+3/2}}{n^{1/2}}\right)
\]
provided (\ref{condition-n-epsilon}).
As a result, under the condition in the statement of the theorem for the sampling rate, namely, ${\epsilon_{\text{PCA}}}=O(n^{-\frac{2}{d+2}})$, we have w.h.p.
\begin{eqnarray}
\Xi_i &=& {\epsilon_{\text{PCA}}}^{d/2+1}\left[
\begin{array}{ll}
O(1) & 0 \\
0 & 0 \\
\end{array}
\right]\nonumber\\
&&+ {\epsilon_{\text{PCA}}}^{d/2+3/2}\left[\begin{array}{ll}
                                                                      O(1) & O(1) \\
                                                                      O(1) & O({\epsilon_{\text{PCA}}}^{1/2})
                                                                    \end{array}\right] + \frac{{\epsilon_{\text{PCA}}}^{d/4+1}}{\sqrt{n}}\left[\begin{array}{ll}
                                                                      O(1) & O({\epsilon_{\text{PCA}}}^{1/2}) \\
                                                                      O({\epsilon_{\text{PCA}}}^{1/2}) & O({\epsilon_{\text{PCA}}})
                                                                    \end{array}
 \right]\nonumber\\
&=& {\epsilon_{\text{PCA}}}^{d/2+1}\left\{\left[
\begin{array}{ll}
O(1) & 0_{d\times p-d} \\
0_{p-d\times d} & 0_{p-d\times p-d} \\
\end{array}
\right] +  \left[\begin{array}{ll}
                                                                      O({\epsilon_{\text{PCA}}}^{1/2}) & O({\epsilon_{\text{PCA}}}^{1/2}) \\
                                                                      O({\epsilon_{\text{PCA}}}^{1/2}) & O({\epsilon_{\text{PCA}}})
                                                                    \end{array}
 \right]\right\}.\nonumber
\end{eqnarray}
Then, by the same argument as in the case when $x_i\notin\MM_{\sqrt{\epsilon_{\text{PCA}}}}$, we conclude that
\[
\min_{O\in O(d)} \|O_i^T \Theta_i - O\|_{HS}=O({\epsilon_{\text{PCA}}}^{1/2}).
\]

Similar calculations show that for $\epsilon_{\text{PCA}} = O(n^{-\frac{2}{d+1}})$ we get
\[
\min_{O\in O(d)} \|O_i^T \Theta_i - O\|_{HS}=O({\epsilon_{\text{PCA}}}^{5/4})
\]
for $x_i\notin\MM_{\sqrt{\epsilon_{\text{PCA}}}}$, and
\[
\min_{O\in O(d)} \|O_i^T \Theta_i - O\|_{HS}=O({\epsilon_{\text{PCA}}}^{3/4})
\]
for $x_i\in\MM_{\sqrt{\epsilon_{\text{PCA}}}}$.

\end{proof}

\subsection{[Proof of Theorem \ref{relateOP}]}

\begin{proof}
Denote by $O_i$ the $p\times d$ matrix whose columns $u_l(x_i)$, $l=1,\ldots,d$ are orthonormal inside $\RR^p$ as determined by local PCA around $x_i$. As in (\ref{hatQ}), we denote by $e_l(x_i)$ the $l$-th column of $Q_i$, where $Q_i$ is a $p\times d$ matrix whose columns form an orthonormal basis of $\iota_* T_{x_i}\MM$ so by Theorem \ref{localpcatheorem} $\|O_i^TQ_i-Id\|_{HS}=O(\epsilon_{\text{PCA}}^{3/2})$ for $\epsilon_{\text{PCA}} = O(n^{-\frac{2}{d+2}})$, which is the case of focus here (if $\epsilon_{\text{PCA}} = O(n^{-\frac{2}{d+1}})$ then $\|O_i^TQ_i-Id\|_{HS}=O(\epsilon_{\text{PCA}}^{5/4})$).

Fix $x_i$ and the normal coordinate $\{\partial_l\}_{l=1}^d$ around $x_i$ so that $\iota_*\partial_l(x_i)=e_l(x_i)$. Let $x_j=\exp_{x_i}t\theta$, where $\theta\in T_{x_i}\MM$, $\|\theta\|=1$ and $t=O(\sqrt{\epsilon})$. Then, by the definition of the parallel transport, we have
\begin{equation}\label{expandp}
P_{x_i,x_j}X(x_j)=\sum_{l=1}^dg(X(x_j),P_{x_j,x_i}\partial_l(x_i))\partial_l(x_i)
\end{equation}
and since the parallel transport and the embedding $\iota$ are isometric we have
\begin{equation}
g(P_{x_i,x_j}X(x_j),\partial_l(x_i))=g(X(x_j),P_{x_j,x_i}\partial_l(x_i))=\langle \iota_*X(x_j),\iota_*P_{x_j,x_i}\partial_l(x_i)\rangle.
\end{equation}

Local PCA provides an estimation of an orthonormal basis spanning $\iota_*T_{x_i}\MM$, which is free up to $O(d)$. Thus, there exists $R\in O(p)$ so that $\iota_*T_{x_j}\MM$ is invariant under $R$ and $e_l(x_j)=R\iota_*P_{x_j,x_i}\partial_l(x_i)$ for all $l=1,\ldots,d$. Hence we have the following relationship:
\begin{align}\label{p49}
\begin{split}
&\quad\langle \iota_*X(x_j),\iota_*P_{x_j,x_i}\partial_l(x_i)\rangle=\langle \sum_{k=1}^d \langle\iota_*X(x_j),e_k(x_j)\rangle e_k(x_j),\iota_*P_{x_j,x_i}\partial_l(x_i)\rangle\\
&=\sum_{k=1}^d\langle \iota_*X(x_j),e_k(x_j)\rangle\langle e_k(x_j),\iota_*P_{x_j,x_i}\partial_l(x_i)\rangle\\
&=\sum_{k=1}^d \langle R\iota_*P_{x_j,x_i}\partial_k(x_i),\iota_*P_{x_j,x_i}\partial_l(x_i)\rangle\langle \iota_*X(x_j),e_k(x_j)\rangle
\\&:=\sum_{k=1}^d \bar{R}_{l,k}\langle \iota_*X(x_j),e_k(x_j)\rangle :=\bar{R}X_j,
\end{split}
\end{align}
where $\bar{R}_{l,k}:=\langle R\iota_*P_{x_j,x_i}\partial_k(x_i),\iota_*P_{x_j,x_i}\partial_l(x_i)\rangle$, $\bar{R}:=[\bar{R}_{l,k}]_{l,k=1}^d$ and $X_j=(\langle \iota_*X(x_j),e_k(x_j)\rangle)_{k=1}^d$.

On the other hand, Lemma \ref{relatebasis} gives us
\begin{align}\label{oioj}
\begin{split}
Q^T_iQ_j&=\Big[\iota_*\partial_l(x_i)^TR\iota_*P_{x_j,x_i}\partial_k(x_i)\Big]_{l,k=1}^d\\
&=\Big[\iota_*P_{x_j,x_i}\partial_l(x_i)^TR\iota_*P_{x_j,x_i}\partial_k(x_i)\Big]_{l,k=1}^d\\
&\quad-t\Big[\Pi(\theta,\partial_l(x_i))^TR\iota_*P_{x_j,x_i}\partial_k(x_i)\Big]_{l,k=1}^d\\
&\quad-\frac{t^2}{6}\Big[2\nabla_{\partial_l(x_i)}\Pi(\theta,\theta)^TR\iota_*P_{x_j,x_i}\partial_k(x_i)
+\nabla_{\theta}\Pi(\theta,\partial_l(x_i))^T R\iota_*P_{x_j,x_i}\partial_k(x_i)\\
&\quad\quad\quad-(\iota_*P_{x_j,x_i}\mathcal{R}(\theta,\partial_l(x_i))\theta)^TR\iota_*P_{x_j,x_i}\partial_k(x_i)\Big]_{l,k=1}^d\\
&\quad+O(t^{3}).
\end{split}
\end{align}
We now analyze the right hand side of (\ref{oioj}) term by term.
Note that since $\iota_*T_{x_j}\MM$ is invariant under $R$, we have $R\iota_*P_{x_j,x_i}\partial_k(x_i)=\sum_{r=1}^d\bar{R}_{r,k}\iota_*P_{x_j,x_i}\partial_r(x_i)$. For the $O(t)$ term, we have
\begin{align}\label{firstorder}
\begin{split}
&\quad\Pi(\theta,\partial_l(x_i))^TR\iota_*P_{x_j,x_i}\partial_k(x_i)=\sum_{r=1}^d\bar{R}_{r,k}\Pi(\theta,\partial_l(x_i))^T\iota_*P_{x_j,x_i}\partial_r(x_i)\\
&=\sum_{r=1}^d\bar{R}_{r,k}\Pi(\theta,\partial_l(x_i))^T\left[\iota_*\partial_r(x_i)+t\Pi(\theta,\partial_r(x_i))+O(t^2)\right]\\
&=t\sum_{r=1}^d\bar{R}_{r,k}\Pi(\theta,\partial_l(x_i))^T\Pi(\theta,\partial_r(x_i))+O(t^2)
\end{split}
\end{align}
where the second equality is due to Lemma \ref{relatebasis} and the third equality holds since $\Pi(\theta,\partial_l(x_i))$ is perpendicular to $\iota_*\partial_r(x_i)$ for all $l,r=1,\ldots,d$. Moreover, Gauss equation gives us
\[
0=\langle \mathcal{R}(\theta,\theta)\partial_r(x_i),\partial_l(x_i)\rangle=\Pi(\theta,\partial_l(x_i))^T\Pi(\theta,\partial_r(x_i))-\Pi(\theta,\partial_r(x_i))^T\Pi(\theta,\partial_l(x_i)),
\]
which means the matrix $S_1:=\Big[\Pi(\theta,\partial_l(x_i))^T\Pi(\theta,\partial_r(x_i))\Big]_{l,r=1}^d$ is symmetric.

Fix a vector field $X$ on a neighborhood around $x_i$ so that $X(x_i)=\theta$. By definition we have
\begin{equation}\label{weingartenx}
\nabla_{\partial_l}\Pi(X,X)=\nabla_{\partial_l}(\Pi(X,X))-2\Pi(X,\nabla_{\partial_l}X)
\end{equation}
Viewing $T\MM$ as a subbundle of $T\RR^p$, we have the equation of Weingarten:
\begin{equation}\label{weingartenxx}
\nabla_{\partial_l}(\Pi(X,X))=-A_{\Pi(X,X)}\partial_l+\nabla^\perp_{\partial_l}(\Pi(X,X)),
\end{equation}
where $A_{\Pi(X,X)}\partial_l$ and $\nabla^\perp_{\partial_l}(\Pi(X,X))$ are the tangential and normal components of $\nabla_{\partial_l}(\Pi(X,X))$ respectively. Moreover, the following equation holds:
\begin{equation}\label{weingarten}
\langle A_{\Pi(X,X)}\partial_l,\iota_*\partial_k\rangle=\langle \Pi(\partial_l,\partial_k),\Pi(X,X)\rangle.
\end{equation}
By evaluating (\ref{weingartenx}) and (\ref{weingartenxx}) at $x_i$, we have
\begin{align}\label{firstorder3}
\begin{split}
&\quad\quad\nabla_{\partial_l(x_i)}\Pi(\theta,\theta)^TR\iota_*P_{x_j,x_i}\partial_k(x_i)
=\sum_{r=1}^d\bar{R}_{r,k}\nabla_{\partial_l(x_i)}\Pi(\theta,\theta)^T\iota_*P_{x_j,x_i}\partial_r(x_i)\\
&=\sum_{r=1}^d\bar{R}_{r,k}(-A_{\Pi(\theta,\theta)}\partial_l(x_i)+\nabla^\perp_{\partial_l(x_i)}(\Pi(X,X))-2\Pi(\theta,\nabla_{\partial_l(x_i)}X))^T\left[\iota_*\partial_r(x_i)+t\Pi(\theta,\partial_r(x_i))+O(t^2)\right]\\
&=-\sum_{r=1}^d\bar{R}_{r,k}(A_{\Pi(\theta,\theta)}\partial_l(x_i))^T\iota_*\partial_r(x_i)+O(t)
=-\sum_{r=1}^d\bar{R}_{r,k}\langle \Pi(\theta,\theta),\Pi(\partial_l(x_i),\partial_r(x_i))\rangle+O(t).
\end{split}
\end{align}
where the third equality holds since $\Pi(\theta,\nabla_{\partial_l(x_i)}X)$ and $\nabla^\perp_{\partial_l(x_i)}(\Pi(X,X))$ are perpendicular to $\iota_*\partial_l(x_i)$ and the last equality holds by (\ref{weingarten}). Due to the symmetry of the second fundamental form, we know the matrix $S_2=\Big[\langle \Pi(\theta,\theta),\Pi(\partial_l(x_i),\partial_r(x_i))\rangle\Big]_{l,r=1}^d$ is symmetric.

Similarly we have
\begin{align}\label{firstorder35}
\begin{split}
\nabla_{\theta}\Pi(\partial_l(x_i),\theta)^TR\iota_*P_{x_j,x_i}\partial_k(x_i)=\sum_{r=1}^d\bar{R}_{r,k}(A_{\Pi(\partial_l(x_i),\theta)}\theta)^T\iota_*\partial_r(x_i)+O(t).
\end{split}
\end{align}
Since
$(A_{\Pi(\partial_l(x_i),\theta)}\theta)^T\iota_*\partial_r(x_i)=\Pi(\theta,\partial_l(x_i))^T\Pi(\theta,\partial_r(x_i))$ by (\ref{weingarten}), which we denoted earlier by $S_1$ and used Gauss equation to conclude that it is symmetric.

To estimate the last term, we work out the following calculation by using the isometry of the parallel transport:
\begin{align}\label{firstorder2}
\begin{split}
&\quad\quad(\iota_*P_{x_j,x_i}\mathcal{R}(\theta,\partial_l(x_i))\theta)^TR\iota_*P_{x_j,x_i}\partial_k(x_i)
=\sum_{r=1}^d\bar{R}_{r,k}\langle\iota_*P_{x_j,x_i}(\mathcal{R}(\theta,\partial_l(x_i))\theta),\iota_*P_{x_j,x_i}\partial_r(x_i)\rangle\\
&=\sum_{r=1}^d\bar{R}_{r,k}g(P_{x_j,x_i}(\mathcal{R}(\theta,\partial_l(x_i))\theta),P_{x_j,x_i}\partial_r(x_i))
=\sum_{r=1}^d\bar{R}_{r,k}g(\mathcal{R}(\theta,\partial_l(x_i))\theta,\partial_r(x_i))
\end{split}
\end{align}
Denote $S_3=\Big[g(\mathcal{R}(\theta,\partial_l(x_i))\theta,\partial_r(x_i))\Big]_{l,r=1}^d$, which is symmetric by the definition of $\mathcal{R}$.

Substituting (\ref{firstorder}), (\ref{firstorder3}), (\ref{firstorder35}) and (\ref{firstorder2}) into (\ref{oioj}) we have
\begin{align}\label{oioj3}
\begin{split}
Q^T_iQ_j&=\bar{R}+t^2(-S_1-S_2/3+S_1/6-S_3/6)\bar{R}+O(t^3)=\bar{R}+t^2 S\bar{R}+O(t^3),
\end{split}
\end{align}
where $S:=-S_1-S_2/3+S_1/6-S_3/6$ is a symmetric matrix.

Suppose that both $x_i$ and $x_j$ are not in $\MM_{\sqrt{\epsilon}}$. To finish the proof, we have to understand the relationship between $O_i^TO_j$ and $Q_i^TQ_j$, which is rewritten as:
\begin{equation}\label{oiojqiqj}
O_i^TO_j= Q_i^TQ_j+(O_i-Q_i)^TQ_j+O_i^T(O_j-Q_j).
\end{equation}
From (\ref{minOiQi}) in Theorem \ref{localpcatheorem}, we know
\begin{equation*}
\|(O_i-Q_i)^TQ_i\|_{HS}=\|O_i^TQ_i-Id\|_{HS}=O(\epsilon_{\text{PCA}}^{3/2}),
\end{equation*}
which is equivalent to
\begin{equation}\label{differenceorder}
(O_i-Q_i)^TQ_i=O(\epsilon_{\text{PCA}}^{3/2}).
\end{equation}
Due to (\ref{oioj3}) we have $Q_j=Q_i\bar{R}+t^2 Q_iS\bar{R}+O(t^3)$, which together with (\ref{differenceorder}) gives
\begin{equation}\label{ojt}
(O_i-Q_i)^TQ_j=(O_i-Q_i)^T(Q_i\bar{R}+t^2 Q_iS\bar{R}+O(t^3))=O(\epsilon_{\text{PCA}}^{3/2}+\epsilon^{3/2}).
\end{equation}
Together with the fact that $Q_i^T=\bar{R}Q_j^T+t^2 S\bar{R}Q_j^T+O(t^3)$ derived from (\ref{oioj3}), we have
\begin{eqnarray}\label{ojt2}
O_i^T(O_j-Q_j)&=&Q_i^T(O_j-Q_j)+(O_i-Q_i)^T(O_j-Q_j)\label{oit}\\
&=&(\bar{R}Q_j^T+t^2 S\bar{R}Q_j^T+O(t^3))(O_j-Q_j)+(O_i-Q_i)^T(O_j-Q_j)\nonumber\\
&=&O(\epsilon_{\text{PCA}}^{3/2}+\epsilon^{3/2})+(O_i-Q_i)^T(O_j-Q_j)\nonumber
\end{eqnarray}
Recall that the following relationship between $O_i$ and $Q_i$ holds (\ref{uk-wk})
\begin{equation}\label{oiqi}
O_i=Q_i+\left[\begin{array}{c}O(\epsilon_{\text{PCA}}^{3/2})\\ O(\epsilon_{\text{PCA}})\end{array}\right]
\end{equation}
when the embedding $\iota$ is properly translated and rotated so that it satisfies $\iota(x_i)=0$, the first $d$ standard unit vectors $\{v_1,\ldots,v_d\}\subset\RR^p$ form the orthonormal basis of $\iota_*T_{x_i}\MM$, and the normal coordinates $\{\partial_k\}_{k=1}^d$ around $x_i$ satisfy $\iota_*\partial_k(x_i)=v_k$. Similarly, the following relationship between $O_j$ and $Q_j$ holds (\ref{uk-wk})
\begin{equation}\label{ojqj}
O_j=Q_j+\left[\begin{array}{c}O(\epsilon_{\text{PCA}}^{3/2})\\ O(\epsilon_{\text{PCA}})\end{array}\right]
\end{equation}
when the embedding $\iota$ is properly translated and rotated so that it satisfies $\iota(x_j)=0$, the first $d$ standard unit vectors $\{v_1,\ldots,v_d\}\subset\RR^p$ form the orthonormal basis of $\iota_*T_{x_j}\MM$, and the normal coordinates $\{\partial_k\}_{k=1}^d$ around $x_j$ satisfy $\iota_*\partial_k(x_j)=\bar{R}\iota_*P_{x_j,x_i}\partial_k(x_i)=v_k$. Also recall that $\iota_*T_{x_j}\MM$ is invariant under the rotation $R$ and from Lemma \ref{relatebasis}, $e_k(x_i)$ and $e_k(x_j)$ are related by $e_k(x_j)=e_k(x_i)+O(\sqrt{\epsilon})$. Therefore,
\begin{equation}\label{relatedback}
O_j-Q_j=(R+O(\sqrt{\epsilon}))\left[\begin{array}{c}O(\epsilon_{\text{PCA}}^{3/2})\\ O(\epsilon_{\text{PCA}})\end{array}\right]=\left[\begin{array}{c}O(\epsilon_{\text{PCA}}^{3/2})\\ O(\epsilon_{\text{PCA}})\end{array}\right]
\end{equation}
when expressed in the standard basis of $\RR^p$ so that the first $d$ standard unit vectors $\{v_1,\ldots,v_d\}\subset\RR^p$ form the orthonormal basis of $\iota_*T_{x_i}\MM$. Hence, plugging (\ref{relatedback}) into (\ref{ojt2}) gives
\begin{eqnarray}
O_i^T(O_j-Q_j)=O(\epsilon_{\text{PCA}}^{3/2})+(O_i-Q_i)^T(O_j-Q_j)=O(\epsilon_{\text{PCA}}^{3/2})\label{ojt3}
\end{eqnarray}
Inserting (\ref{oit}) and (\ref{ojt3}) into (\ref{oiojqiqj}) concludes
\begin{equation}
O_i^TO_j=Q_i^TQ_j+O(\epsilon_{\text{PCA}}^{3/2}+\epsilon^{3/2}).
\end{equation}

Recall that $O_{ij}$ is defined as $O_{ij}=UV^T$, where $U$ and $V$ comes from the singular value decomposition of $O_i^TO_j$, that is, $O_i^TO_j=U\Sigma V^T$. As a result,
\begin{align*}
\begin{split}
O_{ij}&=\argmin_{O\in O(d)}\left\|O_i^TO_j-O\right\|_{HS}=\argmin_{O\in O(d)}\left\|Q_i^TQ_j+O(\epsilon_{\text{PCA}}^{3/2}+\epsilon^{3/2})-O\right\|_{HS}\\
&=\argmin_{O\in O(d)}\|\bar{R}^TQ_i^TQ_j+O(\epsilon_{\text{PCA}}^{3/2}+\epsilon^{3/2})-\bar{R}^TO\|_{HS}\\
&=\argmin_{O\in O(d)}\|Id+t^2\bar{R}^TS\bar{R}+O(\epsilon_{\text{PCA}}^{3/2}+\epsilon^{3/2})-\bar{R}^TO\|_{HS}.
\end{split}
\end{align*}
Since $\bar{R}^TS\bar{R}$ is symmetric, we rewrite $\bar{R}^TS\bar{R}=U\Sigma U^T$, where $U$ is an orthonormal matrix and $\Sigma$ is a diagonal matrix with the eigenvalues of $\bar{R}^TS\bar{R}$ on its diagonal. Thus, $$Id+t^2\bar{R}^TS\bar{R}+O(\epsilon_{\text{PCA}}^{3/2}+\epsilon^{3/2})-\bar{R}^TO=U(Id+t^2\Sigma)U^T+O(\epsilon_{\text{PCA}}^{3/2}+\epsilon^{3/2})-\bar{R}^TO.$$ Since the Hilbert-Schmidt norm is invariant to orthogonal transformations, we have
\begin{eqnarray*}
\|Id+t^2\bar{R}^TS\bar{R}+O(\epsilon_{\text{PCA}}^{3/2}+\epsilon^{3/2})-\bar{R}^TO \|_{HS} &=& \|U(Id+t^2\Sigma)U^T+O(\epsilon_{\text{PCA}}^{3/2}+\epsilon^{3/2})-\bar{R}^TO \|_{HS} \\
&=& \|Id + t^2\Sigma + O(\epsilon_{\text{PCA}}^{3/2}+\epsilon^{3/2})-U^T\bar{R}^TOU \|_{HS}.
\end{eqnarray*}
Since $U^T\bar{R}^TOU$ is orthogonal, the minimizer must satisfy $U^T\bar{R}^TOU=Id + O(\epsilon_{\text{PCA}}^{3/2}+\epsilon^{3/2})$, as otherwise the sum of squares of the matrix entries would be larger. Hence we conclude $O_{ij}=\bar{R}+O(\epsilon_{\text{PCA}}^{3/2}+\epsilon^{3/2})$.

Applying (\ref{p49}) and (\ref{uk-wk}), we conclude
\begin{align*}
\begin{split}
O_{ij}\bar{X}_j&=\bar{R}\bar{X}_j+O(\epsilon_{\text{PCA}}^{3/2}+\epsilon^{3/2})=\bar{R}(\langle \iota_*X(x_j),u_l(x_j)\rangle)_{l=1}^d+O(\epsilon_{\text{PCA}}^{3/2}+\epsilon^{3/2})\\
&=\bar{R}(\langle \iota_*X(x_j),e_l(x_j)+O(\epsilon_{\text{PCA}}^{3/2})\rangle)_{l=1}^d+O(\epsilon_{\text{PCA}}^{3/2}+\epsilon^{3/2}) \\
&=(\langle \iota_*X(x_j),\iota_*P_{x_j,x_i}\partial_l(x_i)\rangle)_{l=1}^d+O(\epsilon_{\text{PCA}}^{3/2}+\epsilon^{3/2})\\
&=\left(\langle \iota_*P_{x_i,x_j}X(x_j),e_l(x_i)\rangle\right)^d_{l=1}+O(\epsilon_{\text{PCA}}^{3/2}+\epsilon^{3/2})\\
&=\left(\langle \iota_*P_{x_i,x_j}X(x_j),u_l(x_i)\rangle\right)^d_{l=1}+O(\epsilon_{\text{PCA}}^{3/2}+\epsilon^{3/2})
\end{split}
\end{align*}
This concludes the proof for points away from the boundary.

When $x_i$ and $x_j$ are in $\MM_{\sqrt{\epsilon_{\text{PCA}}}}$, by the same reasoning as above we get
\begin{align*}
\begin{split}
O_{ij}\bar{X}_j=\left(\langle \iota_*P_{x_i,x_j}X(x_j),u_l(x_i)\rangle\right)^d_{l=1}+O(\epsilon_{\text{PCA}}^{1/2}+\epsilon^{3/2}).
\end{split}
\end{align*}
This concludes the proof. We remark that similar results hold for $\epsilon_{\text{PCA}} = O(n^{-\frac{2}{d+1}})$ using the results in Theorem \ref{localpcatheorem}. 
\end{proof}

\subsection{[Proof of Theorem \ref{convergetokernel}]}
\begin{proof}
We demonstrate the proof for the case when the data is uniformly distributed over the manifold. The proof for the non-uniform sampling case is the same but more tedious. Note that when the data is uniformly distributed, $T_{\epsilon,\alpha}=T_{\epsilon,0}$ for all $0<\alpha\leq 1$, so in the proof we focus on analyzing $T_\epsilon:=T_{\epsilon,0}$. Denote $K_\epsilon:=K_{\epsilon,0}$. Fix $x_i \notin \mathcal{M}_{\sqrt{\epsilon_{\text{PCA}}}}$. We rewrite the left hand side of (\ref{conv}) as
\begin{equation}\label{estimator}
\frac{\sum_{j=1,j\neq i}^n K_\epsilon\left(x_i,x_j\right)O_{ij}\bar{X}_j}{\sum_{j=1,j\neq i}^n K_\epsilon\left(x_i,x_j\right)}=\frac{\frac{1}{n-1}\sum_{j=1,j\neq i}^n F_j}{\frac{1}{n-1}\sum_{j=1,j\neq i}^n G_j},
\end{equation}
where
\[
F_j=K_\epsilon\left(x_i,x_j\right)O_{ij}\bar{X}_j,\ \ \ \ G_j=K_\epsilon\left(x_i,x_j\right).
\]

Since $x_1,x_2,\ldots,x_n$ are i.i.d random variables, then $G_j$ for $j\neq i$ are also i.i.d random variables. However, the random vectors $F_j$ for $j\neq i$ are not independent, because the computation of $O_{ij}$ involves several data points which leads to possible dependency between $O_{ij_1}$ and $O_{ij_2}$. Nonetheless, Theorem \ref{relateOP} implies that the random vectors $F_j$ are well approximated by the i.i.d random vectors $F_j'$ that are defined as
\begin{equation}
F_j' := K_\epsilon\left(x_i,x_j\right)\left(\langle \iota_*P_{x_i,x_j}X(x_j),u_l(x_i)\rangle\right)^d_{l=1},
\end{equation}
and the approximation is given by
\begin{equation}
F_j= F_j'+K_\epsilon (x_i,x_j) O(\epsilon_{\text{PCA}}^{3/2}+\epsilon^{3/2}),
\end{equation}
where we use $\epsilon_{\text{PCA}} = O(n^{-\frac{2}{d+2}})$ (the following analysis can be easily modified to adjust the case $\epsilon_{\text{PCA}} = O(n^{-\frac{2}{d+1}})$).

Since $G_j$, when $j\neq i$, are identical and independent random variables and $F_j'$, when $j\neq i$, are identical and independent random vectors, we hereafter replace $F_j'$ and $G_j$ by $F'$ and $G$ in order to ease notation. By the law of large numbers we should expect the following approximation to hold
\begin{equation}
\label{EEF}
\frac{\frac{1}{n-1}\sum_{j=1,j\neq i}^n F_j}{\frac{1}{n-1}\sum_{j=1,j\neq i}^n G_j} = \frac{\frac{1}{n-1}\sum_{j=1,j\neq i}^n [F_j' + G_j O(\epsilon_{\text{PCA}}^{3/2}+\epsilon^{3/2})]}{\frac{1}{n-1}\sum_{j=1,j\neq i}^n G_j}\approx \frac{\EE F'}{\EE G} + O(\epsilon_{\text{PCA}}^{3/2}+\epsilon^{3/2}),
\end{equation}
where
\begin{equation}
\EE F'=\left(\left\langle\iota_*\int_\MM K_{\epsilon}(x_i,y) P_{x_i,y}X(y)\ud V(y),u_l(x_i))\right\rangle\right)_{l=1}^d,
\end{equation}
and
\begin{equation}
\EE G=\int_\MM K_{\epsilon}(x_i,y)\ud V(y).
\end{equation}

In order to analyze the error of this approximation, we make use of the result in \cite{Singer2006a} (equation (3.14), p. 132) to conclude a large deviation bound on each of the $d$ coordinates of the error. Together with a simple union bound we obtain the following large deviation bound:
\begin{equation}
\Pr \left\{\left\|\frac{\frac{1}{n-1}\sum_{j=1,j\neq i}^n F_j'}{\frac{1}{n-1}\sum_{j=1,j\neq i}^n G_j}- \frac{\EE F'}{\EE G}\right\| > \alpha \right\}  \leq C_1
\exp\left\{-\frac{C_2(n-1)\alpha^2
\epsilon^{d/2}\mbox{vol}(\mathcal M)}{2\epsilon\left[\|\nabla|_{y=x_i} \langle \iota_* P_{x_i,y}X(y),u_l(x_i)\rangle  \|^2 + O(\epsilon)\right]}
\right\},
\end{equation}
where $C_1$ and $C_2$ are some constants (related to $d$). This large deviation bound implies that w.h.p. the variance term is $O(\frac{1}{n^{1/2}\epsilon^{d/4-1/2}})$. As a result,
\begin{eqnarray*}
\frac{\sum_{j=1,j\neq i}^n K_{\epsilon,\alpha}\left(x_i,x_j\right)O_{ij}\bar{X}_j}{\sum_{j=1,j\neq i}^n K_{\epsilon,\alpha}\left(x_i,x_j\right)} &=& (\langle \iota_*T_{\epsilon,\alpha} X(x_i),u_l(x_i)\rangle )^d_{l=1} \\
&& + O\left(\frac{1}{n^{1/2}\epsilon^{d/4-1/2}} + \epsilon_{\text{PCA}}^{3/2}+\epsilon^{3/2}\right),
\end{eqnarray*}
which completes the proof for points away from the boundary. The proof for points inside the boundary is similar.

\end{proof}

\subsection{[Proof of Theorem \ref{Tepsexpansion}]}
\begin{proof}
We begin the proof by citing the following Lemma from \cite[Lemma 8]{Coifman20065}:
\begin{lem}\label{Kf}
Suppose $f\in \mathcal{C}^3(\MM)$ and $x\notin \MM_{\sqrt{\epsilon}}$, then
\[
\int_{B_{\sqrt{\epsilon}}(x)} \epsilon^{-d/2}K_\epsilon(x,y)f(y)\ud V(y)=m_0f(x)+\epsilon\frac{m_2}{d}\left[\frac{\Delta f(x)}{2}+ w(x)f(x)\right]+O(\epsilon^{2})
\]
where $w(x)=s(x) +\frac{m'_3z(x)}{24|S^{d-1}|}$, $s(x)$ is the scalar curvature of the manifold at $x$, $m_l=\int_{B_1(0)}\|x\|^lK(\|x\|)\ud x$, $B_1(0)=\{x\in\RR^d: \|x\|_{\RR^d}\leq 1\}$, $m'_l=\int_{B_1(0)}\|x\|^lK'(\|x\|)\ud x$, $z(x)=\int_{S^{d-1}}\|\Pi(\theta,\theta)\|\ud \theta$, and $\Pi$ is the second fundamental form of $\MM$ at $x$.
\end{lem}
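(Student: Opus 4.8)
\textbf{Proof sketch for Lemma \ref{Kf}.}
The plan is to pass to geodesic polar coordinates around $x$ and reduce the integral to a radial integral against the kernel profile, producing every constant by superimposing three Taylor expansions. The one structural hypothesis that matters is $x\notin\MM_{\sqrt{\epsilon}}$: it guarantees that $B_{\sqrt{\epsilon}}(x)\cap\MM$ is the $\exp_x$–image of a \emph{full} ball of radius $\asymp\sqrt{\epsilon}$ in $T_x\MM$, with no truncation by $\partial\MM$, and this symmetry is exactly what annihilates the odd-in-$\theta$ contributions and makes the error $O(\epsilon^2)$ rather than $O(\sqrt{\epsilon})$ (contrast Theorem \ref{connlapbdry}). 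Writing $y=\exp_x(t\theta)$ with $\theta\in S^{d-1}\subset T_x\MM$, $t\in(0,\sqrt{\epsilon})$, and $h=h(t,\theta):=\|\iota(x)-\iota(y)\|_{\RR^p}$, the quantity to analyse is $\int_{S^{d-1}}\int_0^{\sqrt{\epsilon}}\epsilon^{-d/2}K\!\bigl(h/\sqrt{\epsilon}\bigr)\,f(\exp_x t\theta)\,J(t,\theta)\,\ud t\,\ud\theta$.

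First I would record the three ingredients, all for $t\ll 1$. The Riemannian volume element, by Lemma \ref{volexpansion}, is $J(t,\theta)=t^{d-1}\bigl(1+t^{2}\Ric(\theta,\theta)+O(t^{3})\bigr)$. The chord-versus-geodesic comparison, by Lemma \ref{rdist} (equivalently by squaring the expansion of $\iota\circ\exp_x$ in Lemma \ref{relateexp} and using $\Pi(\theta,\theta)\perp\iota_*\theta$), is $h=t-c(\theta)\,t^{3}+O(t^{4})$, where $c(\theta)$ is a homogeneous polynomial in $\theta$ built from the second fundamental form; since $K\in C^2([0,1))$ this yields $K(h/\sqrt{\epsilon})=K(t/\sqrt{\epsilon})-c(\theta)\tfrac{t^{3}}{\sqrt{\epsilon}}K'(t/\sqrt{\epsilon})+(\text{terms of higher order in }t/\sqrt{\epsilon})$. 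The Taylor expansion of $f$ along the geodesic $s\mapsto\exp_x(s\theta)$ is $f(\exp_x t\theta)=f(x)+t\nabla_\theta f(x)+\tfrac{t^{2}}{2}\nabla^2_\theta f(x)+\tfrac{t^{3}}{6}\nabla^3_\theta f(x)+O(t^{4})$, with remainder controlled by $\|f\|_{C^3(\MM)}$ uniformly in $\theta$.

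Next I would multiply out the three expansions, rescale radially by $u=t/\sqrt{\epsilon}$, and integrate over $S^{d-1}$. The rescaling turns each radial integral into a moment, $\int_0^{\sqrt{\epsilon}}\epsilon^{-d/2}K(t/\sqrt{\epsilon})t^{k}\ud t=\epsilon^{(k+1-d)/2}\int_0^1K(u)u^{k}\ud u$ (and similarly with $K'$), so that $|S^{d-1}|\int_0^1K(u)u^{d-1}\ud u=m_0$, $|S^{d-1}|\int_0^1K(u)u^{d+1}\ud u=m_2$, and $|S^{d-1}|\int_0^1K'(u)u^{d+2}\ud u=m'_3$ appear on their own. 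On the sphere every monomial of odd degree in $\theta$ integrates to zero, while $\int_{S^{d-1}}\theta_i\theta_j\,\ud\theta=\tfrac{|S^{d-1}|}{d}\delta_{ij}$ converts $\int_{S^{d-1}}\nabla^2_\theta f\,\ud\theta$ into $\tfrac{|S^{d-1}|}{d}\Delta f(x)$ and $\int_{S^{d-1}}\Ric(\theta,\theta)\,\ud\theta$ into $\tfrac{|S^{d-1}|}{d}s(x)$, and the $\Pi$–dependent integrand gives $z(x)=\int_{S^{d-1}}\|\Pi(\theta,\theta)\|\,\ud\theta$. The leading term is $m_0f(x)$; at order $\epsilon$ exactly three products survive the angular average: (i) $\tfrac{t^{2}}{2}\nabla^2_\theta f(x)$ paired with $K\,t^{d-1}$, producing the $\tfrac12\Delta f(x)$ piece with prefactor $\epsilon\tfrac{m_2}{d}$; (ii) the $t^{2}\Ric(\theta,\theta)$ factor of $J$ paired with $K\,f(x)$, producing the $s(x)f(x)$ piece; and (iii) the correction $c(\theta)\tfrac{t^{3}}{\sqrt{\epsilon}}K'$ paired with $f(x)\,t^{d-1}$, producing the $z(x)f(x)$ piece. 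Assembling these and matching the coefficients gives $m_0f(x)+\epsilon\tfrac{m_2}{d}\bigl[\tfrac{\Delta f(x)}{2}+w(x)f(x)\bigr]$ with $w(x)=s(x)+\tfrac{m'_3z(x)}{24|S^{d-1}|}$.

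Finally I would control the remainder. Every product not listed above has total radial degree $\ge d+3$ once the $\epsilon^{-d/2}$ and, where present, the $\epsilon^{-1/2}$ attached to $K'$ are counted, hence integrates to $O(\epsilon^2)$ by the same rescaling, uniformly in $x$ by compactness of $\MM$ together with $\|f\|_{C^3}<\infty$ and $\|K\|_{C^2}<\infty$; the one apparently dangerous contribution, the $O(t^{4})$ term in $h=t-c(\theta)t^{3}+\cdots$ which would naively give $O(\epsilon^{3/2})$, has an odd-in-$\theta$ coefficient and therefore vanishes after the angular integration — again this uses $x\notin\MM_{\sqrt{\epsilon}}$. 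I expect the main obstacle to be precisely this constant bookkeeping: deciding which of the $O(t)$, $O(t^{2})$, $O(t^{3})$ pieces of the three superimposed expansions survive the angular average, and in particular correctly fusing the \emph{intrinsic} curvature contribution coming from $J(t,\theta)$ with the \emph{extrinsic} contribution coming from the discrepancy between the Euclidean chord $h$ and the geodesic length $t$ into the single scalar weight $w(x)$. The analytic ingredients — polar coordinates, Taylor with remainder, and the moment identities — are routine by comparison.
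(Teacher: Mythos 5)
Your reconstruction is not competing with an in-paper argument: the paper never proves Lemma \ref{Kf}, it simply quotes it from \cite[Lemma 8]{Coifman20065} at the start of the proof of Theorem \ref{Tepsexpansion}. That said, your route is the standard one and is exactly the style of computation the paper performs in adjacent proofs (the first-moment expansion in Theorem \ref{localpcatheorem} and the expansion of the term $A$ in Theorem \ref{Tepsexpansion}): geodesic polar coordinates, the three superimposed expansions from Lemma \ref{volexpansion}, Lemma \ref{rdist} fed through a Taylor expansion of $K$, and the $C^3$ expansion of $f$, followed by the radial rescaling that produces $m_0,m_2,m'_3$ and the parity argument on $S^{d-1}$. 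Your observation that the $O(t^4)$ correction to $h$ has a degree-five, hence odd, homogeneous coefficient in $\theta$ (so the apparently $O(\epsilon^{3/2})$ contribution vanishes after angular integration) is correct and is the right way to see why the interior hypothesis $x\notin\MM_{\sqrt{\epsilon}}$ buys an $O(\epsilon^2)$ error.

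The gap is in the final step, ``matching the coefficients,'' which is asserted rather than derived and in fact does not come out as stated. By your own intermediate formulas the three surviving order-$\epsilon$ contributions are $\epsilon\frac{m_2}{2d}\Delta f(x)$, $\epsilon\frac{m_2}{d}s(x)f(x)$ (using the paper's normalization in Lemma \ref{volexpansion}), and, for the extrinsic correction, a term whose radial moment is $\int_0^1K'(u)u^{d+2}\ud u=m'_3/|S^{d-1}|$ with \emph{no} factor $\frac{m_2}{d}$ attached, namely $-\epsilon\frac{m'_3}{24|S^{d-1}|}\,z(x)f(x)$, the minus sign coming from your (correct) expansion $K(h/\sqrt{\epsilon})=K(t/\sqrt{\epsilon})-c(\theta)\frac{t^3}{\sqrt{\epsilon}}K'(t/\sqrt{\epsilon})+\cdots$ since $h\le t$. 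This cannot be reassembled into $\epsilon\frac{m_2}{d}\bigl[\frac{\Delta f}{2}+w f\bigr]$ with $w=s+\frac{m'_3z}{24|S^{d-1}|}$, because that bracketing forces the $z$-term to carry an extra factor $\frac{m_2}{d}$ (and a plus sign). So you must either exhibit where that factor would come from --- it does not arise within your conventions --- or explicitly flag that the constant in front of $z(x)$ in the statement (transcribed from \cite{Coifman20065}, where the kernel is a function of the \emph{squared} distance, so the moment bookkeeping differs) is not reproduced by this computation. The discrepancy is harmless for the rest of the paper, since $w$ enters Theorem \ref{Tepsexpansion} only through terms that cancel between the numerator and denominator of $T_{\epsilon,\alpha}$, but as a proof of the lemma as stated the identification of $w(x)$ is the one step that is missing, and it is precisely the ``constant bookkeeping'' you yourself predicted would be the main obstacle.
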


Without loss of generality we may assume that $m_0=1$ for convenience of notation. By Lemma \ref{Kf}, we get
\begin{equation}\label{pepsilon}
p_\epsilon(y)=p(y)+\epsilon\frac{m_2}{d}\left(\frac{\Delta p(y)}{2}+w(y)p(y)\right)+O(\epsilon^2),
\end{equation}
which leads to
\begin{equation}\label{approximatepalpha}
\frac{p(y)}{p^{\alpha}_\epsilon(y)}=p^{1-\alpha}(y)\left[1-\alpha\epsilon\frac{m_2}{d}\left(w(y)+\frac{\Delta p(y)}{2p(y)}\right)\right]+O(\epsilon^{2}).
\end{equation}
Plug (\ref{approximatepalpha}) into the numerator of $T_{\epsilon,\alpha}X(x)$:
\begin{align*}
\begin{split}
&\ \ \ \ \int_{B_{\sqrt{\epsilon}}(x)} K_{\epsilon,\alpha}(x,y)P_{x,y} X(y)p(y)\ud V(y)\\
&=p_\epsilon^{-\alpha}(x)\int_{B_{\sqrt{\epsilon}}(x)} K_{\epsilon}(x,y)P_{x,y} X(y)p_\epsilon^{-\alpha}(y)p(y)\ud V(y)\\
&=p_\epsilon^{-\alpha}(x)\int_{B_{\sqrt{\epsilon}}(x)} K_{\epsilon}(x,y)P_{x,y} X(y)p^{1-\alpha}(y)\left[1-\alpha\epsilon\frac{m_2}{d}\left(w(y)+\frac{\Delta p(y)}{2p(y)}\right)\right]\ud V(y)+O(\epsilon^{d/2+2})\\
&= p_\epsilon^{-\alpha}(x)\int_{B_{\sqrt{\epsilon}}(x)} K_{\epsilon}(x,y)P_{x,y} X(y)p^{1-\alpha}(y)\ud V(y)\\
&\ \ \ \ -\frac{m_2\epsilon}{d}\alpha p_\epsilon^{-\alpha}(x)\int_{B_{\sqrt{\epsilon}}(x)} K_{\epsilon}(x,y)P_{x,y} X(y)p^{1-\alpha}(y)\left(w(y)+\frac{\Delta p(y)}{2p(y)}\right)\ud V(y)+O(\epsilon^{d/2+2})\\
&:=p_\epsilon^{-\alpha}(x)A-\frac{m_2\epsilon}{d}\alpha p_\epsilon^{-\alpha}(x)B+O(\epsilon^{d/2+2})
\end{split}
\end{align*}
where
\[\left\{\begin{array}{l}A:=\int_{B_{\sqrt{\epsilon}}(x)} K_{\epsilon}(x,y)P_{x,y} X(y)p^{1-\alpha}(y)\ud V(y),\\
B:=\int_{B_{\sqrt{\epsilon}}(x)} K_{\epsilon}(x,y)P_{x,y} X(y)p^{1-\alpha}(y)\left(w(y)+\frac{\Delta p(y)}{2p(y)}\right)\ud V(y).\end{array}\right.\]
Note that the $\alpha$-normalized integral operator (\ref{TepsalphaDEF}) is evaluated by changing the integration variables to the local coordinates, and the odd monomials in the integral vanish because the kernel is symmetric. Thus, applying Taylor's expansion to $A$ leads to:
{\allowdisplaybreaks\begin{align*}
\begin{split}
A&=\int_{S^{d-1}}\int_0^{\sqrt{\epsilon}} \left[K\left(\frac{t}{\sqrt{\epsilon}}\right)+K'\left(\frac{t}{\sqrt{\epsilon}}\right)\frac{\|\Pi(\theta,\theta)\|t^3}{24\sqrt{\epsilon}}+O\left(\frac{t^6}{\epsilon}\right)\right]\times\\
&\quad\left[X(x)+\nabla_\theta X(x)t+\nabla^2_{\theta,\theta}X(x)\frac{t^2}{2}+O(t^3)\right]\times \\
&\quad \left[p^{1-\alpha}(x)+\nabla_\theta(p^{1-\alpha})(x)t+\nabla^2_{\theta,\theta}(p^{1-\alpha})(x)\frac{t^2}{2}+O(t^3)\right]
\left[t^{d-1}+\Ric(\theta,\theta)t^{d+1}+O(t^{d+2})\right]\ud t\ud\theta\\
&=p^{1-\alpha}(x)X(x)\int_{S^{d-1}}\int_0^{\sqrt{\epsilon}} \left\{K\left(\frac{t}{\sqrt{\epsilon}}\right)\left[1+\Ric(\theta,\theta)t^2\right]t^{d-1}+ K'\left(\frac{t}{\sqrt{\epsilon}}\right)\frac{\|\Pi(\theta,\theta)\|t^{d+2}}{24\sqrt{\epsilon}}\right\}\ud t\ud\theta\\
&\quad+p^{1-\alpha}(x)\int_{S^{d-1}}\int_0^{\sqrt{\epsilon}} K\left(\frac{t}{\sqrt{\epsilon}}\right)\nabla^2_{\theta,\theta}X(x)\frac{t^{d+1}}{2}\ud t\ud\theta\\
&\quad+X(x)\int_{S^{d-1}}\int_0^{\sqrt{\epsilon}} K\left(\frac{t}{\sqrt{\epsilon}}\right)\nabla^2_{\theta,\theta}(p^{1-\alpha})(x)\frac{t^{d+1}}{2}\ud t\ud\theta\\
&\quad+\int_{S^{d-1}}\int_0^{\sqrt{\epsilon}} K\left(\frac{t}{\sqrt{\epsilon}}\right)\nabla_{\theta}X(x)\nabla_\theta(p^{1-\alpha})(x)t^{d+1}\ud t\ud\theta+O(\epsilon^{d/2+2})
\end{split}
\end{align*}
}
From the definition of $z(x)$ it follows that
\[
\int_{B_{\sqrt{\epsilon}}(0)} \frac{1}{\epsilon^{d/2}}K'\left(\frac{t}{\sqrt{\epsilon}}\right)\frac{\|\Pi(\theta,\theta)\|t^{d+2}}{24\sqrt{\epsilon}}\ud t\ud\theta=\frac{\epsilon^{d/2+1} m'_3z(x)}{24|S^{d-1}|}.
\]

Suppose $\{E_l\}_{l=1}^d$ is an orthonormal basis of $T_{x}\MM$, and express $\theta=\sum_{l=1}^d\theta_lE_l$. A direct calculation shows that
\begin{align*} \begin{split} \int_{S^{d-1}}\nabla^2_{\theta,\theta}X(x)\ud\theta
=\sum_{k,l=1}^d\int_{S^{d-1}}\theta_l\theta_k\nabla^2_{E_l,E_k}X(x)\ud\theta=\frac{|S^{d-1}|}{d}\nabla^2
X(x),
\end{split}
\end{align*}
and similarly
\begin{align*}
\begin{split}
\int_{S^{d-1}}\Ric(\theta,\theta)d\theta=\frac{|S^{d-1}|}{d}s(x).
\end{split}
\end{align*}
Therefore, the first three terms of $A$ become
\begin{equation}\label{FPEdenominatorresulta1}
\epsilon^{d/2}p^{1-\alpha}(x)\left\{\left(1+\frac{\epsilon m_2}{d}\frac{\Delta(p^{1-\alpha})(x)}{2p^{1-\alpha}(x)}+\frac{\epsilon m_2}{d}w(x)\right)X(x)+\frac{\epsilon m_2}{2d}\nabla^2X(x)\right\}.
\end{equation}
The last term is simplified to
\begin{align*}
\begin{split}
\int_{S^{d-1}}\int_0^{\sqrt{\epsilon}} K\left(\frac{t}{\sqrt{\epsilon}}\right)\nabla_{\theta}X(x)\nabla_\theta(p^{1-\alpha})(x)t^{d+1}\ud t\ud\theta=\epsilon^{d/2+1}\frac{m_2}{|S^{d-1}|}\int_{S^{d-1}} \nabla_{\theta}X(x)\nabla_\theta(p^{1-\alpha})(x)\ud\theta.
\end{split}
\end{align*}

Next, we consider $B$. Note that since there is an $\epsilon$ in front of $B$, we only need to consider the leading order term. Denote $Q(y)=p^{1-\alpha}(y)\left(w(y)+\frac{\Delta p(y)}{2p(y)}\right)$ to simplify notation. Thus, applying Taylor's expansion to each of the terms in the integrand of $B$ leads to:
\begin{align*}
\begin{split}
B&=\int_{B_{\sqrt{\epsilon}}(x)} K_{\epsilon}(x,y)P_{x,y} X(y)Q(y)\ud V(y)\\
&=\int_{S^{d-1}}\int_0^{\sqrt{\epsilon}} \left[K\left(\frac{t}{\sqrt{\epsilon}}\right)+K'\left(\frac{t}{\sqrt{\epsilon}}\right)\frac{\|\Pi(\theta,\theta)\|t^3}{24\sqrt{\epsilon}}+O\left(\frac{t^6}{\epsilon}\right)\right]\left[X(x)+\nabla_\theta X(x)t+O(t^2)\right]\\
&\quad\quad\quad\left[Q(x)+\nabla_\theta Q(x)t+O(t^2)\right]\left[t^{d-1}+\Ric(\theta,\theta)t^{d+1}+O(t^{d+2})\right]\ud t\ud\theta\\
&=\epsilon^{d/2}X(x)Q(x)+O(\epsilon^{d/2+1})\\
\end{split}
\end{align*}
In conclusion, the numerator of $T_{\epsilon,\alpha}X(x)$ becomes
\begin{align*}
\begin{split}
&\quad\epsilon^{d/2} p_\epsilon^{-\alpha}(x)p^{1-\alpha}(x)\left\{1+\frac{\epsilon m_2}{d}\left[\frac{\Delta(p^{1-\alpha})(x)}{2p^{1-\alpha}(x)}-\alpha\frac{\Delta p(x)}{2p(x)}\right]\right\}X(x)\\
&+\epsilon^{d/2+1} \frac{m_2}{2d} p_\epsilon^{-\alpha}(x)p^{1-\alpha}(x)\nabla^2X(x)+\epsilon^{d/2+1} \frac{m_2}{|S^{d-1}|} p_\epsilon^{-\alpha}(x)\int_{S^{d-1}}\nabla_{\theta}X(x)\nabla_\theta(p^{1-\alpha})(x)\ud\theta+O(\epsilon^{d/2+2})
\end{split}
\end{align*}
Similar calculation of the denominator of the $T_{\epsilon,\alpha} X(x)$ gives
\begin{align*}
\begin{split}
&\quad\int_{B_{\sqrt{\epsilon}}(x)} K_{\epsilon,\alpha}(x,y)p(y)\ud V(y)\\
&=p_\epsilon^{-\alpha}(x)\int_{B_{\sqrt{\epsilon}}(x)} K_{\epsilon}(x,y)p^{1-\alpha}(y)\left[1-\alpha\epsilon\frac{m_2}{d}\left(w(y)+\frac{\Delta p(y)}{2p(y)}\right)\right]\ud V(y)+O(\epsilon^{d/2+2})\\
&=p_\epsilon^{-\alpha}(x)\int_{B_{\sqrt{\epsilon}}(x)} K_{\epsilon}(x,y)p^{1-\alpha}(y)\ud V(y)\\
&\quad-\frac{m_2\epsilon}{d}\alpha p_\epsilon^{-\alpha}(x)\int_{B_{\sqrt{\epsilon}}(x)} K_{\epsilon}(x,y)p^{1-\alpha}(y)\left(w(y)+\frac{\Delta p(y)}{2p(y)}\right)\ud V(y)+O(\epsilon^{d/2+2})\\
&=p_\epsilon^{-\alpha}(x)C-\frac{\epsilon m_2}{d}\alpha p_\epsilon^{-\alpha}(x)D+O(\epsilon^{d/2+2})
\end{split}
\end{align*}
where
\[\left\{\begin{array}{l}C:=\int_{B_{\sqrt{\epsilon}}(x)} K_{\epsilon}(x,y)p^{1-\alpha}(y)\ud V(y),\\
D:=\int_{B_{\sqrt{\epsilon}}(x)} K_{\epsilon}(x,y)p^{1-\alpha}(y)\left(w(y)+\frac{\Delta p(y)}{2p(y)}\right)\ud V(y).\end{array}\right.\]
We apply Lemma \ref{Kf} to $C$ and $D$:
\begin{align*}
\begin{split}
C=\epsilon^{d/2}p^{1-\alpha}(x)\left[1+\frac{\epsilon m_2}{d}\left(w(x)+\frac{\Delta(p^{1-\alpha})(x)}{2p^{1-\alpha}(x)}\right)\right]+O(\epsilon^{d/2+2}),
\end{split}
\end{align*}
and
\begin{align*}
\begin{split}
D=\epsilon^{d/2}p^{1-\alpha}_\epsilon(x)\left(s(x)+\frac{\Delta p(x)}{2p(x)}\right)+O(\epsilon^{d/2+1}).
\end{split}
\end{align*}
In conclusion, the denominator of $T_{\epsilon,\alpha}X(x)$ is
\begin{align*}
\begin{split}
\epsilon^{d/2}p_\epsilon^{-\alpha}(x)p^{1-\alpha}(x)\left\{1+\epsilon\frac{ m_2}{d}\left(\frac{\Delta(p^{1-\alpha})(x)}{2p^{1-\alpha}(x)}-\alpha\frac{\Delta p(x)}{2p(x)}\right)\right\}+O(\epsilon^{d/2+2})
\end{split}
\end{align*}

Putting all the above together, we have
\begin{align*}
\begin{split}
&T_{\epsilon,\alpha} X(x)=X(x)+\epsilon\frac{m_2}{2d}\nabla^2X(x)+\epsilon\frac{m_2}{|S^{d-1}|}\frac{\int_{S^{d-1}}\nabla_{\theta}X(x)\nabla_\theta(p^{1-\alpha})(x)\ud\theta}{p^{1-\alpha}(x)} +O(\epsilon^2)
\end{split}
\end{align*}
In particular, when $\alpha=1$, we have:
\begin{align*}
\begin{split}
T_{\epsilon,1} X(x)=X(x)+\epsilon\frac{m_2}{2d}\nabla^2 X(x)+O(\epsilon^2)
\end{split}
\end{align*}
\end{proof}

\subsection{[Proof of Theorem \ref{connlapbdry}]}

\begin{proof}
Suppose $\min_{y\in\partial\MM}d(x,y)=\tilde{\epsilon}$. Choose a normal coordinate $\{\partial_1,\ldots,\partial_{d}\}$ on the geodesic ball $B_{\epsilon^{1/2}}(x)$ around $x$ so that $x_0=\exp_x(\tilde{\epsilon}\partial_d(x))$. Due to Gauss Lemma, we know $\mbox{span}\{\partial_1(x_0),\ldots,\partial_{d-1}(x_0)\}=T_{x_0}\partial\MM$ and $\partial_d(x_0)$ is outer normal at $x_0$.

We focus first on the integral appearing in the numerator of $T_{\epsilon,1}X(x)$: $$\int_{B_{\sqrt{\epsilon}}(x)\cap \MM} \frac{1}{\epsilon^{d/2}}K_{\epsilon,1}(x,y)P_{x,y} X(y)p(y)\ud V(y).$$
We divide the integral domain $\exp_{x}^{-1}(B_{\sqrt{\epsilon}}(x)\cap \MM)$ into slices $S_\eta$ defined by
\[
S_{\eta}=\{(\vu,\eta)\in\RR^d:\|(u_1,\ldots,u_{d-1},\eta)\|<\sqrt{\epsilon}\},
\]
where $\eta\in[-\epsilon^{1/2},\epsilon^{1/2}]$ and $\vu=(u_1,\ldots,u_{d-1})\in\RR^{d-1}$. By Taylor's expansion and (\ref{approximatepalpha}), the numerator of $T_{\epsilon,1}X$ becomes
\begin{align}\label{bdrycal}
\begin{split}
&\quad\quad\int_{B_{\sqrt{\epsilon}}(x)\cap \MM} K_{\epsilon,1}(x,y) P_{x,y}X(y)p(y)\ud V(y)\\
&=p_{\epsilon}^{-1}(x)\int_{S_\eta}\int^{\sqrt{\epsilon}}_{-\sqrt{\epsilon}} K\left( \frac{\sqrt{\|\vu\|^2+\eta^2}}{\sqrt{\epsilon}} \right)\left(X(x)+\sum_{i=1}^{d-1}u_i\nabla_{\partial_i}X(x)+\eta\nabla_{\partial_d} X(x)+O(\epsilon)\right)\\
&\quad\quad \left[1-\epsilon\frac{m_2}{d}\left(w(y)+\frac{\Delta p(y)}{2p(y)}\right)+O(\epsilon^{2})\right]\ud \eta\ud \vu\\
&=p^{-1}(x)\int_{S_\eta}\int^{\sqrt{\epsilon}}_{-\sqrt{\epsilon}} K\left( \frac{\sqrt{\|\vu\|^2+\eta^2}}{\sqrt{\epsilon}} \right)\left(X(x)+\sum_{i=1}^{d-1}u_i\nabla_{\partial_i}X(x)+\eta\nabla_{\partial_d} X(x)+O(\epsilon)\right)\ud \eta\ud \vu
\end{split}
\end{align}
Note that in general the integral domain $S_\eta$ is not symmetric with related to $(0,\ldots,0,\eta)$, so we will try to symmetrize $S_\eta$ by defining the symmetrized slices:
\[
\tilde{S}_\eta=\cap^{d-1}_{i=1}(R_iS_\eta\cap S_\eta),
\]
where $R_i(u_1,\ldots,u_i,\ldots,\eta)=(u_1,\ldots,-u_i,\ldots,\eta)$. Note that from (\ref{jacobirelation}) in Lemma \ref{relatebasis}, the orthonormal basis $\{P_{x_0,x}\partial_1(x),\ldots,P_{x_0,x}\partial_{d-1}(x)\}$ of $T_{x_0}\partial\MM$ differ from $\{\partial_1(x_0),\ldots,\partial_{d-1}(x_0)\}$ by $O(\epsilon)$. Also note that up to error of order $\epsilon^{3/2}$, we can express $\partial\MM\cap B_{\epsilon^{1/2}}(x)$ by a homogeneous degree 2 polynomial with variables $\{P_{x_0,x}\partial_1(x),\ldots,P_{x_0,x}\partial_{d-1}(x)\}$. Thus the difference between $\tilde{S}_\eta$ and $S_\eta$ is of order $\epsilon$ and (\ref{bdrycal}) can be reduced to:
\begin{align}\label{bdrycal2}
\begin{split}
p^{-1}(x)\int_{\tilde{S}_\eta}\int^{\sqrt{\epsilon}}_{-\sqrt{\epsilon}} K\left( \frac{\sqrt{\|\vu\|^2+\eta^2}}{\sqrt{\epsilon}} \right)\left(X(x)+\sum_{i=1}^{d-1}u_i\nabla_{\partial_i}X(x)+\eta\nabla_{\partial_d} X(x)+O(\epsilon)\right)\ud \eta\ud \vu\\
\end{split}
\end{align}
Next, we apply Taylor's expansion on $X(x)$:
\[
P_{x,x_0}X(x_0)=X(x)+\tilde{\epsilon}\nabla_{\partial_d}X(x)+O(\epsilon).
\]
Since
\[
\nabla_{\partial_d}X(x)=P_{x,x_0}(\nabla_{\partial_d}X(x_0))+O(\epsilon^{1/2}),
\]
the Taylor's expansion of $X(x)$ becomes:
\begin{equation}\label{taylor1}
X(x)=P_{x,x_0}(X(x_0)-\tilde{\epsilon}\nabla_{\partial_d}X(x_0)+O(\epsilon)),
\end{equation}
Similarly for all $i=1,\ldots,d$ we have
\begin{equation}\label{taylor2}
P_{x,x_0}(\nabla_{\partial_i}X(x_0))=\nabla_{\partial_i}X(x)+O(\epsilon^{1/2})
\end{equation}
Plugging (\ref{taylor1}) and (\ref{taylor2}) into (\ref{bdrycal2}) further reduce (\ref{bdrycal}) into:
\begin{align}\label{bdrycal3}
\begin{split}
p^{-1}(x)\int_{\tilde{S}_\eta}\int^{\sqrt{\epsilon}}_{-\sqrt{\epsilon}} K\left( \frac{\sqrt{\|\vu\|^2+\eta^2}}{\sqrt{\epsilon}} \right)P_{x,x_0}\left(X(x_0)+\sum_{i=1}^{d-1}u_i\nabla_{\partial_i}X(x_0)+(\eta-\tilde{\epsilon})\nabla_{\partial_d}X(x_0)+O(\epsilon)\right)\ud \eta\ud \vu\\
\end{split}
\end{align}
The symmetry of the kernel implies that for $i=1,\ldots,d-1$,
\begin{equation}\label{symmcancel}
\int_{\tilde{S}_\eta}K\left( \frac{\sqrt{\|\vu\|^2+\eta^2}}{\sqrt{\epsilon}}\right)u^i\ud \vu=0,
\end{equation}
and hence the numerator of $T_{1,\epsilon}X(x)$ becomes
\begin{equation}\label{bdryrslt1}
p^{-1}(x)P_{x,x_0}(m^\epsilon_0X(x_0)+m^\epsilon_1\nabla_{\partial_d}X(x_0))+O(\epsilon^{d/2+1})
\end{equation}
where
\begin{equation}\label{meps0}
m^\epsilon_0=\int_{\tilde{S}_\eta}\int^{\sqrt{\epsilon}}_{-\sqrt{\epsilon}} K\left( \frac{\sqrt{\|u\|^2+\eta^2}}{\sqrt{\epsilon}}\right)\ud \eta\ud x=O(\epsilon^{d/2})
\end{equation}
and
\begin{equation}\label{meps1}
m^\epsilon_1=\int_{\tilde{S}_\eta}\int^{\sqrt{\epsilon}}_{-\sqrt{\epsilon}} K\left( \frac{\sqrt{\|u\|^2+\eta^2}}{\sqrt{\epsilon}}\right)(\eta-\tilde{\epsilon})\ud \eta\ud x=O(\epsilon^{d/2+1/2}).
\end{equation}
Similarly, the denominator of $T_{\epsilon,1}X$ can be expanded as:
\begin{equation}\label{bdryrslt2}
\int_{B_{\sqrt{\epsilon}}(x)\cap \MM} K_{\epsilon,1}(x,y)p(y)\ud V(y)=p^{-1}(x)m^\epsilon_0+O(\epsilon^{d/2+1/2}),
\end{equation}
which together with (\ref{bdryrslt1}) gives us the following asymptotic expansion:
\begin{equation}\label{bdryrslt3}
T_{\epsilon,1} X(x)=P_{x,x_0}\left(X(x_0)+\frac{m^\epsilon_1}{m^\epsilon_0}\nabla_{\partial_d}X(x_0)\right)+O(\epsilon).
\end{equation}
Combining (\ref{bdryrslt3}) with (\ref{convbdry}) in Theorem \ref{convergetokernel}, we conclude the theorem.
\end{proof}

\subsection{[Proof of Theorem \ref{summaryheatkernel}]}
\begin{proof}
We denote the spectrum of $\nabla^2$ by $\{\lambda_l\}_{l=0}^\infty$, where $0\leq\lambda_0\leq\lambda_1\leq \ldots$, and the corresponding eigenspaces by $E_l:=\{X\in L^2(T\MM):~\nabla^2X=-\lambda_l X\}$, $l=0,1,\ldots$. The eigen-vector-fields are smooth and form a basis for $L^2(T\MM)$, that is,
\[
L^2(T\MM)=\overline{\oplus_{l\in\NN\cup\{0\}} E_l}.
\]
Thus we proceed by considering the approximation through eigen-vector-field subspaces. To simplify notation, we rescale the kenrel $K$ so that $\frac{m_2}{2dm_0}=1$.

Fix $X_l\in E_l$. When $x\notin\MM_{\sqrt{\epsilon}}$, from Corollary \ref{Tepsexpansioncor} we have uniformly
\[
\frac{T_{\epsilon,1}X_l(x)-X_l(x)}{\epsilon}=\nabla^2X_l(x)+O(\epsilon).
\]
When $x\in\MM_{\sqrt{\epsilon}}$, from Theorem \ref{connlapbdry} and the Neumann condition, we have uniformly
\begin{equation}\label{tepsbdry}
T_{\epsilon,1}X_l(x)=P_{x,x_0}X_l(x_0)+O(\epsilon).
\end{equation}
Note that we have
\[
P_{x,x_0}X_l(x_0)=X_l(x)+P_{x,x_0}\sqrt{\epsilon}\nabla_{\partial_d}X_l(x_0)+O(\epsilon),
\]
thus again by the Neumann condition at $x_0$, (\ref{tepsbdry}) becomes
\[
T_{\epsilon,1}X_l(x)=X_l(x)+O(\epsilon).
\]
In conclusion, when $x\in\MM_{\sqrt{\epsilon}}$ uniformly we have
\[
\frac{T_{\epsilon,1}X_l(x)-X_l(x)}{\epsilon}=O(1).
\]
Note that when the boundary of the manifold is smooth, the measure of $\MM_{\sqrt{\epsilon}}$ is $O(\epsilon^{1/2})$. We conclude that in the $L^2$ sense,
\begin{equation}\label{Ieps1}
\left\|\frac{T_{\epsilon,1}X_l-X_l}{\epsilon}-\nabla^2X_l\right\|_{L^2} = O(\epsilon^{1/4}),
\end{equation}

Next we show how $T_{\epsilon,1}^{t/\epsilon}$ converges to $e^{-t\nabla^2}$. We know $I+\epsilon\nabla^2$ is invertible on $E_l$ with norm $\frac{1}{2}\leq\|I+\epsilon\nabla^2\|<1$ when $\epsilon<\frac{1}{2\lambda_l}$. Next, note that if $B$ is a bounded operator with norm $\|B\|<1$, we have the following bound for any $s>0$ by the binomial expansion:
\begin{align}\label{binomialexp}
\begin{split}
\|(I+B)^s-I\|&=\left\|sB+\frac{s(s-1)}{2!}B^2+\frac{s(s-1)(s-2)}{3!}B^3+\ldots\right\|\\
&\leq s\|B\|+\frac{s(s-1)}{2!}\|B\|^2+\frac{s(s-1)(s-2)}{3!}\|B\|^3+\ldots\\
&= s\|B\|\left\{1+\frac{s-1}{2!}\|B\|+\frac{(s-1)(s-2)}{3!}\|B\|^2+\ldots\right\}\\
&\leq s\|B\|\left\{1+\frac{s-1}{1!}\|B\|+\frac{(s-1)(s-2)}{2!}\|B\|^2+\ldots\right\}\\
&=s\|B\|(1+\|B\|)^{s-1}
\end{split}
\end{align}
On the other hand, note that on $E_l$
\begin{equation}\label{Ieps2}
e^{t\nabla^2}=(I+\epsilon\nabla^2)^{\frac{t}{\epsilon}}+O(\epsilon).
\end{equation}
Indeed, for $X\in E_l$, we have $e^{t\nabla^2}X=(1-t\lambda_l+t^2\lambda_l^2/2+\ldots)X$ and $(I+\epsilon\nabla^2)^{\frac{t}{\epsilon}}X=(1-t\lambda_l+t^2\lambda_l^2/2-t\epsilon\lambda_l^2/2+\ldots)X$ by the binomial expansion. Thus we have the claim.

Put all the above together, over $E_l$, for all $l\geq 0$, when $\epsilon<\frac{1}{2\lambda_l}$ we have:
\begin{align*}
\begin{split}
\|T^{\frac{t}{\epsilon}}_{\epsilon,1}-e^{t\nabla^2}\|&=
\left\|(I+\epsilon\nabla^2+O(\epsilon^{\frac{5}{4}}))^{\frac{t}{\epsilon}}-\left(I+\epsilon\nabla^2\right)^{\frac{t}{\epsilon}}+O(\epsilon)\right\|\\ &\leq \left\|\left(I+\epsilon\nabla^2\right)^{\frac{t}{\epsilon}}\right\|\left\|\left[I+\left(I+\epsilon\nabla^2\right)^{-1}O(\epsilon^{5/4})\right]^{\frac{t}{\epsilon}}-I+O(\epsilon)\right\|\\
&=(1+t+O(\epsilon))(\epsilon^{1/4}t+O(\epsilon))
=O(\epsilon^{\frac{1}{4}}),
\end{split}
\end{align*}
where the first equality comes from (\ref{Ieps1}) and (\ref{Ieps2}), the third inequality comes from (\ref{binomialexp}). Thus we have $\|T^{\frac{t}{\epsilon}}_{\epsilon,1}-e^{t\nabla^2}\|\leq O(\epsilon^{1/4})$ on $\overline{\oplus_{l: \lambda_l<\frac{1}{2\epsilon}} E_l}$. By taking $\epsilon\rightarrow 0$, the proof is completed.
\end{proof}

\section{Multiplicities of eigen-1-forms of Connection Laplacian over $S^n$}
\label{app-A}
All results and proofs in this section can be found in \cite{fultonharris} and \cite{taylor}. Consider the following setting:
\begin{align*}
\begin{split}
G=SO(n+1),~~K=SO(n),~~\MM=G/K=S^{n},~~\fg=so(n+1),~~\fk=so(n)
\end{split}
\end{align*}
Denote $\Omega^1(S^{n})$ the complexified smooth $1$ forms, which is a $G$-module by $(g\cdot s)(x)=g\cdot s(g^{-1}x)$ for $g\in G$, $s\in \Omega^1(S^{n})$, and $x\in S^{n}$. Over $\mathcal{M}$ we have Haar measure $d\mu$ and Hodge Laplacian operator $\Delta=d\delta+\delta d$. Since $\Delta$ is a self-adjoint and uniform second order elliptic operator on $\Omega^p(S^{n})$, the eigenvalues $\lambda_i$ are discrete and non-negative real numbers, with only accumulation point at $\infty$, and their related eigenspaces $E_i$ are of finite dimension. We also know $\oplus_{i=1}^{\infty} E_i$ is dense in $\Omega^1(S^{n})$ in the topology defined by the inner product $(f,g)_{S^{n}}\equiv \int_{S^{n}}\langle f,g\rangle d\mu$, where $\langle\cdot,\cdot\rangle$ is the left invariant hermitian metric defined on $S^{n}$.

Since $\Delta$ is a $G$-invariant differential operator, its eigenspaces $E_\lambda$ are $G$-modules. We will count the multiplicity of $E_\lambda$ by first counting how many copies of $E_\lambda$ are inside $\Omega^1(S^{n})$ through the Frobenius reciprocity law, followed by the branching theorem and calculating $\dim E_\lambda$. On the other hand, since $S^{n}$ is a symmetric space, we know $\Delta=-C$ over $\Omega^1(S^{n})$, where $C$ is the Casimir operator on $G$, and we can determine the eigenvalue of $C$ over any finite dimensional irreducible submodule of $\Omega^1(S^{n})$ by Freudenthal's Formula. Finally we consider the relationship between real forms of $g$ and complex forms of $g$.

Note that $\fg/\fk\otimes_\RR\CC\cong \CC^n$ when $G=SO(n+1)$ and $K=SO(n)$. Denote $V=\CC^n=\Lambda^p(\fg/\fk)\otimes_\RR \CC$ as the standard representation of $SO(n)$.

There are two steps toward calculating the multiplicity of eigenforms over $S^{n}$.

\textbf{Step 1}
Clearly $\Omega^1(S^{n})$ is a reducible $G$-module. For $\lambda\in \mbox{Irr}(G,\CC)$, construct a $G$-homomorphism
\[
\Hom_G(\Gamma_\lambda,\Omega^1(S^{n}))\otimes_\CC \Gamma_\lambda\rightarrow \Omega^1(S^{n})
\]
by $\phi\otimes v\mapsto \phi(v)$. We call the image the $V_\lambda$-isotypical summand in $\Omega^1(S^{n})$ with multiplicity $\dim_\CC \Hom_G(\Gamma_\lambda,\Omega^1(S^{n}))$. Then we apply Frobenius reciprocity law:
\[
\Hom_G(\Gamma_\lambda,\Omega^1(S^{n}))\cong \Hom_K(\res^G_K \Gamma_\lambda,\Lambda^1V)
\]
Thus if we can calculate $\dim\Hom_K(\res^G_K \Gamma_\lambda,\Lambda^1V)$, we know how many copies of the irreducible representation $\Gamma_\lambda$ inside $\Omega^1(S^{n})$. To calculate it, we apply the following facts. When $V_i$ and $W_j$ are irreducible representations of $G$, we have by Schur's lemma:
\[
\Hom_G(\oplus^N_{i=1} V_i,\oplus^M_{j=1} W_j)\cong \oplus^{N,M}_{i=1,j=1} \Hom_G(V_i,W_j),
\]

Denote $L_1...L_n$ the basis for the dual space of Cartan subalgebra of $so(2n)$ or $so(2n+1)$. Then $L_1...L_n$, together with $\frac{1}{2}\sum^n_{i=1}L^i$ generate the weight lattice. The Weyl chamber of $SO(2n+1)$ is
\[\mathcal{W}=\left\{\sum a_iL_i: a_1\geq a_2\geq...\geq a_n\geq 0\right\},
\]
and the edges of the $\mathcal{W}$ are thus the rays generated by the vectors $L_1,L_1+L_2,...,L_1+L_2...+L_n$; for $SO(2n)$, the Weyl chamber is
\[\mathcal{W}=\left\{\sum a_iL_i: a_1\geq a_2\geq...\geq |a_n|\right\},
\]
and the edges are thus the rays generated by the vectors $L_1,L_1+L_2,...,L_1+L_2...+L_{n-2}, L_1+L_2...+L_n$ and $L_1+L_2...-L_n$.

To keep notations unified, we denote the fundamental weights $\omega_i$ separately. When $G=SO(2n)$, denote
\begin{equation}
\begin{array}{ll}
\omega_0=0& \mbox{when~~} p=0\\
\omega_p=\sum^p_{i=1}\lambda_i& \mbox{when~~} 1\leq p\leq n-1\\
\omega_n=\frac{1}{2}\sum^n_{i=1}\lambda_i& \mbox{when~~} p=n;
\end{array}
\end{equation}

when $G=SO(2n+1)$, denote
\begin{equation}
\begin{array}{ll}
\omega_0=0& \mbox{when~~} p=0\\
\omega_p=\sum^p_{i=1}\lambda_i& \mbox{when~~} 1\leq p\leq n-2\\
\omega_{n-1}=\frac{1}{2}\left(\sum^{n-1}_{i=1}\lambda_i-\lambda_n\right)& \mbox{when~~} p=n\\
\omega_n=\frac{1}{2}\left(\sum^{n-1}_{i=1}\lambda_i+\lambda_n\right)& \mbox{when~~} p=n
\end{array}
\end{equation}

\begin{thm}

(1) When $m=2n+1$, the exterior powers $\Lambda^pV$ of the standard representation $V$ of $so(2n+1)$ is the irreducible representation with the highest weight $\omega_p$, when $p< n$ and $2\omega_n$ when $p=n$.

(2) When $m=2n$, the exterior powers $\Lambda^pV$ of the standard representation $V$ of $so(2n)$ is the irreducible representation with the highest weight $\omega_p$, when $p\leq n-1$; when $p=n$, $\Lambda^nV$ splits into two irreducible representations with the highest weight $2\omega_{m-1}$ and $2\omega_m$.
\end{thm}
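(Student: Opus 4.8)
The plan is to realize each $\Lambda^pV$ as a standard highest-weight module and then establish irreducibility by an elementary argument. Fix the Cartan subalgebra $\mathfrak{h}\subset\mathfrak{so}(m)$ and the basis $L_1,\dots,L_n$ of $\mathfrak{h}^*$ used above, with positive roots $\{L_i\pm L_j:i<j\}$, together with $\{L_i:1\le i\le n\}$ when $m=2n+1$; the dominant weights are then the $\sum a_iL_i$ with $a_1\ge\dots\ge a_n\ge0$ in type $B_n$, resp.\ $a_1\ge\dots\ge a_{n-1}\ge|a_n|$ in type $D_n$. First I would record the weights of the standard module $V=\CC^m$: these are $\pm L_1,\dots,\pm L_n$, together with $0$ when $m=2n+1$, each of multiplicity one. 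Choosing a weight basis $e_1,\dots,e_n,f_1,\dots,f_n$ (and $e_0$ in the odd case) with $e_i$, $f_i$, $e_0$ of weights $L_i$, $-L_i$, $0$ respectively, the module $\Lambda^pV$ inherits the weight basis consisting of the $\binom{m}{p}$ wedges of $p$ distinct such vectors, the weight of a wedge being the sum of the weights of its factors.

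Next I would pin down the highest weight. Every weight of $\Lambda^pV$ has the form $\sum_{i\in S^+}L_i-\sum_{i\in S^-}L_i$ with $S^+,S^-\subset\{1,\dots,n\}$ disjoint and $|S^+|+|S^-|$ equal to $p$, or to $p-1$ in the odd case when $e_0$ is among the factors; a short check in the dominance order shows that the unique maximal weight is $L_1+\dots+L_p$, realised only by the wedge $v:=e_1\wedge\dots\wedge e_p$ and hence of multiplicity one. When $p<n$ (resp.\ $p\le n-1$ in type $D_n$) this is $\omega_p$ by the formulas stated above; when $p=n$ and $m=2n+1$ it is $L_1+\dots+L_n=2\omega_n$, since $\omega_n=\tfrac12(L_1+\dots+L_n)$ in type $B_n$. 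When $p=n$ and $m=2n$ the Weyl group $W(D_n)$, consisting of signed permutations with an even number of sign changes, no longer identifies $e_1\wedge\dots\wedge e_n$ with $e_1\wedge\dots\wedge e_{n-1}\wedge f_n$: these are two extremal vectors, of weights $L_1+\dots+L_n$ and $L_1+\dots+L_{n-1}-L_n$ (twice the two half-spin fundamental weights), and $\Lambda^nV$ splits accordingly as $\Lambda^n_+V\oplus\Lambda^n_-V$, the two eigenspaces of the Hodge $\star$-operator on $n$-forms.

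The remaining, and main, task is irreducibility. I would show that $v=e_1\wedge\dots\wedge e_p$ generates $\Lambda^pV$ over $U(\mathfrak{so}(m))$: the lowering operators $X_{L_j-L_i}$ ($i<j$), $X_{-L_i-L_j}$, and $X_{-L_i}$ (in the odd case) act on $V$ by sending $e_i$ to $e_j$, to $f_j$, and to $e_0$ respectively while annihilating the other basis vectors, so by the Leibniz rule they replace a chosen factor of a wedge by another weight vector, and a suitable sequence of such moves carries $v$ to any basis wedge. Since $\Lambda^pV$ is completely reducible and its top weight has multiplicity one, $v$ lies in the copy of the irreducible module of that highest weight, and a generating vector cannot lie inside a proper summand; hence $\Lambda^pV$ is that irreducible module. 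For $p=n$ in type $D_n$ the same argument, run separately on $e_1\wedge\dots\wedge e_n$ and on $e_1\wedge\dots\wedge e_{n-1}\wedge f_n$ --- the lowering operators commute with $\star$ and so preserve $\Lambda^n_\pm V$ --- shows each half is cyclic on its extremal vector and therefore irreducible, and the two are inequivalent because their highest weights differ. An alternative to the combinatorial step is to verify via the Weyl dimension formula that $\dim L(\omega_p)=\binom{m}{p}$ and that $\dim L(2\omega_{n-1})+\dim L(2\omega_n)=\binom{2n}{n}$, which forces irreducibility at the cost of a product computation. The delicate points are thus confined to $p=n$: in type $D_n$ because $\Lambda^nV$ genuinely reduces, and in type $B_n$ because one must carefully track the zero weight vector $e_0$ of $V$ through the action of the lowering operators on wedges. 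All of this is classical; see Fulton--Harris~\cite{fultonharris} and~\cite{taylor}.
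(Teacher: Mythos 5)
The paper offers no proof of this classical fact---it simply cites Fulton--Harris~\cite{fultonharris}---so there is no argument in the paper's text to compare against; your sketch is effectively supplying one. What you propose is the standard textbook route: enumerate the weights of $\Lambda^p V$ via the weight basis of $V$, identify $L_1+\cdots+L_p$ as the unique maximal weight and check it has multiplicity one, and conclude irreducibility from cyclicity of the highest weight vector (or, alternatively, from the Weyl dimension formula). This is correct, and you rightly isolate $p=n$ as the delicate case. Two details there deserve care in a full write-up. First, the Leibniz-rule description ``replace a chosen factor of a wedge by another weight vector'' is slightly inexact: a single lowering operator such as $X_{-(L_i+L_j)}$ moves two basis vectors ($e_i\mapsto f_j$ and $e_j\mapsto -f_i$), so it can produce two surviving terms; for $p<n$ (resp.\ $p\le n-1$ in type $D_n$) one always has a spare index $>p$ to route through so that only one term survives, and it is precisely the absence of such a spare index at $p=n$ in $D_n$ that makes cyclicity of $e_1\wedge\cdots\wedge e_n$ in all of $\Lambda^n V$ fail and lets the module split. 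Second, the $\star$-operator argument needs to be completed by confirming that $e_1\wedge\cdots\wedge e_n$ and $e_1\wedge\cdots\wedge e_{n-1}\wedge f_n$ are genuine $\star$-eigenvectors with distinct eigenvalues and that the two eigenspaces together exhaust $\Lambda^n V$; showing each eigenspace is irreducible is not by itself enough to identify the decomposition. The dimension-formula alternative you mention closes both gaps cleanly, and is in fact closer to what Fulton--Harris actually does. Finally, note a typographical slip in the theorem as quoted: ``$2\omega_{m-1}$ and $2\omega_m$'' should read ``$2\omega_{n-1}$ and $2\omega_n$'' (with $m=2n$), as you correctly interpret.
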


\begin{proof}
Please see \cite{fultonharris} for details.
\end{proof}

\begin{thm} (Branching theorem)

When $G=SO(2n)$ or $G=SO(2n+1)$, the restriction of the irreducible representations of $G$ will be decomposed as direct sum of the irreducible representations of $K=SO(2n-1)$ or $K=SO(2n)$ in the following way. Let $\Gamma_\lambda$ be an irreducible $G$-module over $\CC$ with the highest weight $\lambda=\sum^n_{i=1}\lambda_iL_i\in\mathcal{W}$

then as a $K$-module, $\Gamma_\lambda$ decomposes into $K$-irreducible modules as follows:

(1) if $m=2n$ ($G=SO(2n)$ and $K=SO(2n-1)$),
\[ \Gamma_\lambda=\oplus \Gamma_{\sum^n_{i=1} \lambda'_iL_i}\]
where $\oplus$ runs over all $\lambda'_i$ such that
\[\lambda_1\geq \lambda'_1\geq \lambda_2\geq \lambda'_2\geq...\geq \lambda'_{n-1}\geq |\lambda_n| \]
with $\lambda'_i$ and $\lambda_i$ simultaneously all integers or all half integers. Here $\Gamma_{\sum^n_{i=1} \lambda'_iL_i}$ is the irreducible $K$-module with the highest weight $\sum^n_{i=1} \lambda'_iL_i$

(2) if $m=2n+1$ ($G=SO(2n+1)$ and $K=SO(2n)$),
\[ \Gamma_\lambda=\oplus \Gamma_{\sum^{n}_{i=1} \lambda'_iL_i}\]
where $\oplus$ runs over all $\lambda'_i$ such that
\[\lambda_1\geq \lambda'_1\geq \lambda_2\geq \lambda'_2\geq...\geq \lambda'_{n-1}\geq \lambda_n\geq |\lambda'_n| \]
with $\lambda'_i$ and $\lambda_i$ simultaneously all integers or all half integers. Here $\Gamma_{\sum^{n}_{i=1} \lambda'_iL_i}$ is the irreducible $K$-module with the highest weight $\sum^{n}_{i=1} \lambda'_iL_i$

\end{thm}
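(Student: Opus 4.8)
The plan is to reduce the statement to the classical branching rule for the restriction $SO(m)\downarrow SO(m-1)$ (with $m=2n$ or $m=2n+1$), and to prove the latter in two stages: first that the restriction is multiplicity free, and then that its irreducible constituents are precisely those whose highest weights interlace the given highest weight $\lambda$ in the stated sense. Throughout I work with complexified representations, so that $\res^G_K$ refers to restriction of $SO(m)_\CC$-modules to $SO(m-1)_\CC$.

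\textbf{Step 1 (multiplicity freeness).} First I would establish that $\dim\Hom_K(\res^G_K\Gamma_\lambda,\Gamma_{\lambda'})\le 1$ for all highest weights $\lambda,\lambda'$. This is classical --- it is the statement that the chain $SO(m)\supset SO(m-1)\supset\cdots\supset SO(2)$ is a Gelfand--Tsetlin chain --- and it can be obtained from the sphericity of the affine quadric $SO(m)_\CC/SO(m-1)_\CC=\{z\in\CC^m:\sum_i z_i^2=1\}$: a Borel subgroup of $SO(m)_\CC$ has a dense orbit on this quadric, so by the Vinberg--Kimelfeld criterion $\CC[SO(m)_\CC/SO(m-1)_\CC]$ is a multiplicity-free $SO(m)_\CC$-module, which by Frobenius reciprocity says $\dim\Hom_K(\res^G_K\Gamma_\lambda,\mathbf{1})\le 1$; a standard argument then upgrades this to multiplicity freeness of the entire restriction functor. (Equivalently one may simply invoke the existence of Gelfand--Tsetlin bases for the orthogonal groups.)

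\textbf{Step 2 (identifying the constituents).} It then remains to pin down which $\lambda'$ occur. I would match characters on the maximal torus $T_K\subset T_G$: write $\chi_{\Gamma_\lambda}$ via the Weyl character formula for the root system $B_n$ (case $m=2n+1$) or $D_n$ (case $m=2n$), restrict the torus coordinates from $T_G$ to $T_K$ --- which sets one coordinate equal to $1$ in the $B_n\downarrow D_n$ case and deletes one coordinate in the $D_n\downarrow B_{n-1}$ case --- and expand the resulting ratio of alternating sums. The classical Littlewood-type determinant manipulations then rewrite this as $\sum_{\lambda'}\chi_{\Gamma_{\lambda'}}$, the sum running over exactly the $\lambda'$ obeying the interlacing inequalities in the theorem, including the refinements at the last coordinate (the $\ge|\lambda'_n|$ and $\ge\lambda_n$ conditions) and the constraint that $\lambda'$ and $\lambda$ be simultaneously integral or simultaneously half-integral, the latter being forced by the weight lattice. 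As an alternative route that avoids the determinant computation, I would argue separately that (a) each interlacing $\lambda'$ actually occurs --- by exhibiting an explicit $T_K$-highest-weight vector of weight $\lambda'$ inside a concrete model of $\Gamma_\lambda$ (harmonic tensors on $\RR^m$), or by a Cartan-component / PRV-type argument --- and (b) no other $\lambda'$ occurs, which follows from Step 1 together with the dimension identity $\dim\Gamma_\lambda=\sum_{\lambda'}\dim\Gamma_{\lambda'}$; by Weyl's dimension formula this identity reduces to a telescoping polynomial identity in the entries of $\lambda$.

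The main obstacle is Step 2, specifically the bookkeeping that pins down the exact interlacing pattern: the $B_n$ and $D_n$ cases genuinely differ at the last coordinate (hence the absolute values $|\lambda_n|$ and $|\lambda'_n|$), and the integrality/half-integrality compatibility must be tracked at every stage. Carried out via characters this is exactly the determinantal content summarized in \cite{fultonharris}; carried out via the dimension count plus explicit highest-weight vectors it is more elementary but requires an honest construction of a model for $\Gamma_\lambda$ together with a verification that the candidate vectors are nonzero and of the correct weight, and a careful treatment of the boundary cases $p=n,n-1$ where the exterior powers of the standard representation become reducible or otherwise special.
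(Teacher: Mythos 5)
The paper does not supply a proof of this branching theorem: it simply cites Fulton--Harris, so there is no argument in the paper for you to agree or disagree with, and your outline is exactly the kind of proof the cited reference contains. Your sketch is essentially correct, but one step deserves tightening. The sphericity of the affine quadric $SO(m)_\CC/SO(m-1)_\CC$ (via Vinberg--Kimelfeld and Frobenius reciprocity) only gives $\dim\Hom_K(\res^G_K\Gamma_\lambda,\mathbf{1})\le 1$, i.e.\ that $(G,K)$ is a Gelfand pair. Multiplicity freeness of the \emph{entire} restriction functor, $\dim\Hom_K(\res^G_K\Gamma_\lambda,\Gamma_{\lambda'})\le 1$ for all $\lambda'$, is the \emph{strong} Gelfand pair property and does not formally follow from the weaker statement; the ``standard argument that upgrades'' must be spelled out, e.g.\ by showing $(G\times K)/\Delta K$ is spherical, or by Gelfand's anti-involution trick applied to the double coset algebra, or — as you note in parentheses — by directly invoking the existence of Gelfand--Tsetlin bases for the orthogonal chain. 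With that gap filled, Step 2 (Weyl character restriction and Littlewood-type determinant manipulation, or alternatively explicit $T_K$-highest-weight vectors in the harmonic-tensor model plus the telescoping Weyl-dimension identity) is a legitimate and complete route to the interlacing conditions, and you have correctly flagged the genuine subtleties at the last coordinate (the $|\lambda_n|$ and $|\lambda_n'|$ constraints distinguishing the $B$ and $D$ cases) and the simultaneous integrality/half-integrality requirement.
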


\begin{proof}
Please see \cite{fultonharris} for details.
\end{proof}

Based on the above theorems, we know how to calculate $\dim_\CC \Hom_G(\Gamma_\lambda,\Omega^1(M))$. To be more precise, since the right hand side of $\Hom_K(\res^G_K \Gamma_\lambda,\Lambda^1V)$ is the irreducible representation of $K$ with the highest weight $\omega_1$ (or splits in the low dimension case), we know the $\dim\Hom_K(\res^G_K \Gamma_\lambda,\Lambda^1V)$ can be $1$ only if $\res^G_K \Gamma_\lambda$ has the same highest weight by Schur's lemma and classification theorem. Please see \cite{fultonharris} for details.

\textbf{Step 2}
In this step we relate the irreducible representation $\Gamma_\lambda\subset\Omega^1(S^n)$ to the eigenvalue of eigenforms. Consider the Laplace operator on $SO(n+1)$, which is endowed with a bi-invariant Riemannian metric. Since $SO(n+1)$ is semi-simple, we can take the metric on $\fg$ to be given by the negative of the Killing form: $(X,Y)=-\tr(adXadY)$. The Laplace operator $\Delta$ related to this metric is the Hodge-Laplace operator which enjoys the following relationship between its eigenvalue and its related highest weight.

\begin{thm}
Suppose $\Gamma_\mu\subset \Omega^1(S^n)$ is an irreducible $G$-module with the highest weight $\mu$, then we have
\[
\Delta=\langle \mu+2\rho,\mu\rangle Id_{\Gamma_\mu}
\]
where $f\in \Gamma_\mu$, $\rho$ is the half sum of all positive roots, and $\langle\cdot,\cdot\rangle$ is induced inner product on the dual Cartan subalgebra of $\fg$ from the Killing form $B$.
\end{thm}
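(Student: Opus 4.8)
The plan is to deduce the eigenvalue of the Hodge--Laplacian $\Delta=d\delta+\delta d$ on an irreducible summand of $\Omega^1(S^n)$ from the action of the Casimir operator, exploiting that $S^n=G/K$ is a symmetric space. Concretely, the chain is: first, establish (or invoke) the operator identity $\Delta=-C$ on $\Omega^1(S^n)$, where $C\in U(\fg)$ is the Casimir element built from the Killing form $B$ of $\fg=so(n+1)$, acting through the $G$-module structure of $\Omega^1(S^n)$; second, observe that $C$ is central in $U(\fg)$, so by Schur's lemma it acts by a scalar on any irreducible $G$-submodule $\Gamma_\mu$; third, evaluate that scalar by Freudenthal's formula.

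For the first and substantive step I would use the symmetric-space machinery. With the Cartan decomposition $\fg=\fk\oplus\fp$ and $S^n$ carrying the $G$-invariant metric descending from the bi-invariant metric $-B$ on $G$ (normalized to agree, up to the fixed scalar, with the round metric used throughout Section~\ref{app-A}), the de Rham complex of $S^n$ is a complex of $G$-modules and $\Delta$ is a $G$-invariant elliptic operator. The Weitzenböck identity writes $\Delta$ on $1$-forms as the connection Laplacian plus the Weitzenböck curvature endomorphism (essentially the Ricci operator of $S^n$). On a symmetric space the Levi-Civita connection and the curvature are expressible purely through the Lie bracket and the splitting $\fg=\fk\oplus\fp$ with $[\fp,\fp]\subset\fk$, and assembling the two summands reproduces exactly the action of $-C$; this is the classical fact quoted from \cite{taylor} (for spheres originating with Ikeda--Taniguchi). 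I would either present this Weitzenböck computation in detail or invoke it directly, in both cases taking care that the metric on $S^n$ is the one induced by $-B$, so that no free constant is left floating.

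Given $\Delta=-C$, let $\Gamma_\mu\subset\Omega^1(S^n)$ be irreducible of highest weight $\mu$. Centrality of $C$ and Schur's lemma give $C|_{\Gamma_\mu}=c_\mu\,\mathrm{Id}_{\Gamma_\mu}$ for a scalar $c_\mu$, and Freudenthal's formula computes $c_\mu=-\langle\mu,\mu+2\rho\rangle$, the sign being the one that makes $\Delta=-C$ a nonnegative operator, where $\langle\cdot,\cdot\rangle$ is the form on the dual Cartan subalgebra $\fh^*$ induced by $B$ and $\rho$ is the half-sum of the positive roots. Hence $\Delta|_{\Gamma_\mu}=\langle\mu+2\rho,\mu\rangle\,\mathrm{Id}_{\Gamma_\mu}$, which is manifestly nonnegative since $\mu$ is dominant and $\rho$ is dominant and regular.

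The main obstacle is the operator identity $\Delta=-C$ together with its correct normalization: verifying that the Weitzenböck curvature term on $1$-forms over the symmetric space $S^n$ combines with the connection Laplacian into precisely the Casimir of $\fg$, and keeping track of the single scalar by which the $-B$-metric differs from the round sphere metric (which rescales every eigenvalue by that constant). Once that normalization is pinned down and $\Delta=-C$ is in hand, the remaining ingredients --- centrality of the Casimir, Schur's lemma, and Freudenthal's formula --- are standard and amount only to bookkeeping with the weight-lattice conventions fixed earlier in this appendix.
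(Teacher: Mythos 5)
Your proposal is correct and follows essentially the same route as the paper: the paper's Step 2 (immediately before the theorem statement) invokes exactly the identity $\Delta=-C$ for the symmetric space $S^n=G/K$, then reduces to Schur's lemma and the Casimir eigenvalue $\langle\mu+2\rho,\mu\rangle$, deferring the details of $\Delta=-C$ to Taylor's book \cite{taylor} just as you propose to do. Your Weitzenböck-plus-Kuga account of why $\Delta=-C$ and your emphasis on fixing the $-B$ normalization of the metric so that no stray constant appears are precisely the points the cited reference handles, so the outline is sound.
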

\begin{proof}
Please see \cite{taylor} for details.
\end{proof}

Note that for $SO(2n+1)$, $\rho=\sum^n_{i=1}\left(n+\frac{1}{2}-i\right)L_i$ since $R^+=\left\{L_i-L_j,L_i+L_j,L_i: i<j\right\}$; for $SO(2n)$, $\rho=\sum^n_{i=1}\left(n-i\right)L_i$ since $R^+=\left\{L_i-L_j,L_i+L_j: i<j\right\}$.

Combining these theorems, we know if $\Gamma_\mu$ is an irreducible representation of $G$, then it is an eigenspace of $\Delta$ with eigenvalue $\lambda=\langle \mu+2\rho,\mu\rangle$. In particular, if we can decompose the eigenform space $E_\lambda\subset \Omega^1(S^n)$ into irreducible $G$-module, we can not only determine the eigenvalue $\lambda$ but also its multiplicity. Indeed, we know $E_\lambda=\Gamma_\mu^{\oplus N}$, where $\lambda=\langle \mu+2\rho,\mu\rangle$, is the isotypical summand of $\Gamma_\mu$ inside $\Omega^1(S^n)$.

\textbf{Step 3}
Now we apply Weyl Character Formula to calculate the dimension of $\Gamma_\lambda$ for $G$.

\begin{thm}

(1) When $m=2n+1$, consider $\lambda=\sum_{i=1}^n \lambda_iL_i$, where $\lambda_1\geq...\geq\lambda_n\geq 0$, the highest weight of an irreducible representation $\Gamma_\lambda$. Then $\dim \Gamma_\lambda=\Pi_{i<j} \frac{l_i-l_j}{j-i}\Pi_{i\leq j}\frac{l_i+l_j}{2n+1-i-j}$, where $l_i=\lambda_i+n-i+1/2$. In particular, when $\lambda=\omega_p$, $p\leq n$, $\dim \Gamma_\lambda=C^{2n+1}_p$.

(2) When $m=2n$, consider $\lambda=\sum_{i=1}^n \lambda_iL_i$, where $\lambda_1\geq...\geq|\lambda_n|$, the highest weight of an irreducible representation $\Gamma_\lambda$. Then $\dim \Gamma_\lambda=\Pi_{i<j} \frac{l_i-l_j}{j-i}\frac{l_i+l_j}{2n-i-j}$, where $l_i=\lambda_i+n-1$. In particular, when $\lambda=\omega_p$, $\dim \Gamma_\lambda=C^{2n}_p$ when $p<n$ and $\dim V_\lambda=C^{2n}_n/2$.
\end{thm}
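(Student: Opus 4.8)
The plan is to obtain both dimension formulas as the evaluation at the identity of the Weyl character formula and then read off the fundamental-weight cases from Step 1. Recall that for a complex semisimple Lie algebra with chosen positive roots $R^+$ and Weyl vector $\rho=\tfrac12\sum_{\alpha\in R^+}\alpha$, the irreducible module $\Gamma_\lambda$ of highest weight $\lambda$ satisfies the Weyl dimension formula
\[
\dim_\CC\Gamma_\lambda=\prod_{\alpha\in R^+}\frac{\langle\lambda+\rho,\alpha\rangle}{\langle\rho,\alpha\rangle},
\]
which is the $t\to0$ limit of the character formula (see \cite{fultonharris}). First I would list the positive roots. For $so(2n+1)$ they are $L_i-L_j$ and $L_i+L_j$ with $i<j$, together with the short roots $L_i$, so $\rho=\sum_{i=1}^n\bigl(n-i+\tfrac12\bigr)L_i$; for $so(2n)$ they are $L_i-L_j$ and $L_i+L_j$ with $i<j$, so $\rho=\sum_{i=1}^n(n-i)L_i$. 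Writing $l_i=\lambda_i+n-i+\tfrac12$ in the odd case and $l_i=\lambda_i+n-i$ in the even case (so the excerpt's ``$l_i=\lambda_i+n-1$'' should read $l_i=\lambda_i+n-i$), the numerator $\langle\lambda+\rho,\alpha\rangle$ evaluates to $l_i-l_j$, $l_i+l_j$, or $l_i$ according to the type of $\alpha$, while the denominator is the same expression at $\lambda=0$, namely $j-i$, $2n+1-i-j$ (resp.\ $2n-i-j$), and $n-i+\tfrac12$. Collecting the factors, and noting that the short-root contributions $\prod_i\frac{l_i}{n-i+1/2}=\prod_i\frac{2l_i}{2n+1-2i}$ account exactly for the $i=j$ terms of $\prod_{i\le j}$, produces the stated products.

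For the ``in particular'' clauses I would not telescope the product but invoke the first theorem of Step 1: for $p<n$ it identifies $\Gamma_{\omega_p}$ with the exterior power $\Lambda^pV$ of the standard module $V=\CC^m$, and $\dim_\CC\Lambda^p\CC^m=C^m_p$; in the odd case with $p=n$ the module $\Lambda^nV$ (highest weight $2\omega_n$) is still irreducible, so $\dim\Gamma_{2\omega_n}=\dim\Lambda^n\CC^{2n+1}=C^{2n+1}_n$. In the even case $m=2n$ with $p=n$, that theorem says $\Lambda^nV$ decomposes into two irreducible summands (highest weights $2\omega_{n-1}$ and $2\omega_n$) which are exchanged by the outer automorphism of $SO(2n)$ and therefore have equal dimension; since they total $\dim\Lambda^n\CC^{2n}=C^{2n}_n$, each has dimension $C^{2n}_n/2$. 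As an internal check I would substitute $\lambda=\omega_1$ directly into the product formula ($l_1=n+\tfrac12$, $l_i=n-i+\tfrac12$ for $i\ge2$ in the odd case) and confirm it collapses to $2n+1$, and similarly to $2n$ in the even case.

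The one place requiring real care is the $D_n$ bookkeeping at the last coordinate: there dominance reads $\lambda_1\ge\cdots\ge|\lambda_n|$ rather than $\lambda_n\ge0$, the relevant long root is $L_{n-1}+L_n$ (whose pairing is $l_{n-1}+l_n$ with $l_n$ possibly negative), and one must match the fundamental weights $\omega_{n-1},\omega_n$ of Step 1 with the correct integer or half-integer entries so that the resulting product is manifestly positive on the dominant cone. Keeping the sign and normalization conventions consistent across Steps 1--3 is the main obstacle; once that is fixed, the remainder is a routine substitution into a classical formula, following \cite{fultonharris}.
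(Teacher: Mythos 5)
Your derivation is essentially the content of the result: the paper itself gives no proof here (it simply cites Fulton--Harris), so what you've written is a legitimate filling-in of exactly the argument that citation refers to. You apply the Weyl dimension formula $\dim\Gamma_\lambda=\prod_{\alpha\in R^+}\langle\lambda+\rho,\alpha\rangle/\langle\rho,\alpha\rangle$ with the correct positive roots and Weyl vectors for $B_n$ and $D_n$, and you correctly observe that in the $B_n$ case the short-root factors $\prod_i l_i/(n-i+\tfrac12)$ are precisely the $i=j$ diagonal terms of $\prod_{i\le j}(l_i+l_j)/(2n+1-i-j)$, which is why the statement's product is over $i\le j$ rather than $i<j$ plus a separate short-root product. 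Your spot-checks ($\omega_1\mapsto 2n+1$ and $2n$) and the $D_n$ degree-$n$ case via the outer automorphism are all sound.

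You also correctly flag two slips in the statement as printed. First, in the $m=2n$ clause, $l_i=\lambda_i+n-1$ must read $l_i=\lambda_i+n-i$: with $l_i=\lambda_i+n-1$ the factors $l_i-l_j$ would reduce to $\lambda_i-\lambda_j$ and the formula would fail already for $\lambda=\omega_1$ in $SO(4)$. Second, in the $m=2n+1$ ``in particular'' clause, $p\le n$ with $\lambda=\omega_p$ is inconsistent with the paper's own Step~1 convention, under which $\Lambda^n V$ has highest weight $2\omega_n$ (the weight $\omega_n$ itself is the spin weight, of dimension $2^n$, not $\binom{2n+1}{n}$). Your reading — that the intended claim is $\dim\Gamma=\binom{2n+1}{p}$ for the highest weight of $\Lambda^pV$, which is $\omega_p$ for $p<n$ and $2\omega_n$ for $p=n$ — is the correct repair, and likewise the $D_n$ case with the two halves of $\Lambda^nV$ each of dimension $\binom{2n}{n}/2$. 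In short: correct proof, same route as the cited source, plus two warranted corrections to the statement.
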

\begin{proof}
Please see \cite{fultonharris} for details.
\end{proof}

\textbf{Step 4}
We need the following theorem about real representations of $G$ to solve the original problem:
\begin{thm}
(1) When $n$ is odd. Let $\omega_i$ be the highest weight of the representation $\Lambda^iV$ os $so_{2n+1}\CC$. For any weight $\lambda=a_1\omega_1+...+a_{n-1}\omega_{n-1}+a_n\omega_n/2$ or $so_{2n+1}\CC$, the irreducible representation $\Gamma_\lambda$ with highest weight $\lambda$ is real if $a_n$ is even, or if $n\cong 0$ or $3$ mod $4$; if $a_n$ is odd and $n\equiv 1$ or $2$ mod $4$, then $\Gamma_\lambda$ is quaternionic.

(2) When $n$ is even. The representation $\Gamma_\lambda$ of $so_{2n}\RR$ with highest weight $\lambda=a_1\omega_1+...+a_{n-2}\omega_{n-2}+a_{n-1}\omega_{n-1}+a_n\omega_n$ will be complex if $n$ is odd and $a_{n-1}\neq a_n$; it will be quaternionic if $n\equiv 2$ mod $4$ and $a_{n-1}+a_n$ is odd; and it will be real otherwise.
\end{thm}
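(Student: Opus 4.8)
The plan is to prove this via the Frobenius--Schur indicator together with the standard trichotomy for reality types of irreducible representations of a compact Lie group (equivalently, of its complexification $G$). For a complex irreducible representation $\Gamma_\lambda$ the indicator $\nu(\Gamma_\lambda):=\int_G\chi_{\Gamma_\lambda}(g^2)\,dg$ takes the value $+1$ exactly when $\Gamma_\lambda$ is of real (orthogonal) type, $-1$ exactly when it is of quaternionic (symplectic) type, and $0$ exactly when it is of complex type; these are precisely the three alternatives in the statement. So the proof splits into (i) deciding when $\nu(\Gamma_\lambda)\neq 0$, i.e.\ when $\Gamma_\lambda$ is self-dual, and (ii) computing the sign $\pm1$ in the self-dual case.

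For step (i) I would use that $\Gamma_\lambda^\ast\cong\Gamma_{-w_0\lambda}$, where $w_0$ is the longest element of the Weyl group. For $\fg=so_{2n+1}$ (type $B_n$) one has $w_0=-\mathrm{id}$, so every $\Gamma_\lambda$ is self-dual and the complex case never occurs. For $\fg=so_{2n}$ (type $D_n$), $w_0=-\mathrm{id}$ when $n$ is even, while for $n$ odd $w_0$ equals $-\mathrm{id}$ composed with the order-two diagram automorphism $\tau$ that interchanges the two spinor fundamental weights $\omega_{n-1}\leftrightarrow\omega_n$; hence $\Gamma_\lambda\cong\Gamma_\lambda^\ast$ iff $\tau\lambda=\lambda$, i.e.\ $a_{n-1}=a_n$. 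This already yields the clause ``complex if $n$ is odd and $a_{n-1}\neq a_n$''.

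For step (ii) the structural input is that, restricted to the monoid of self-dual dominant weights, $\nu$ is given by a character of the center of the simply connected group: there is $c\in Z(\widetilde G)$ with $c^2=e$ and $\nu(\Gamma_\lambda)=\lambda(c)$ for every self-dual $\lambda$. I would make this explicit for $B_n$ and $D_n$ by reducing to the fundamental representations. The exterior powers $\Lambda^pV$, being built functorially from the real vector representation, are of real type and contribute trivially; more generally $\nu(\Gamma_\lambda)$ depends on $\lambda$ only through the parity of $a_n$ (type $B_n$), resp.\ of $a_{n-1}+a_n$ (type $D_n$). The one nontrivial ingredient is the reality type of the (half-)spin representation, which is governed by the mod-$8$ periodicity of real Clifford algebras: evaluating the table at $m=2n+1$ gives that the spin module is real for $2n+1\equiv\pm1\pmod 8$, i.e.\ $n\equiv0,3\pmod4$, and quaternionic for $2n+1\equiv\pm3\pmod 8$, i.e.\ $n\equiv1,2\pmod4$; at $m=2n$ it gives real for $n\equiv0\pmod4$, quaternionic for $n\equiv2\pmod4$, and complex for $n$ odd (the two half-spins being mutually dual). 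Assembling the $+1$ contributions from the tensor weights with the spinor contribution raised to the power $a_n$, resp.\ $a_{n-1}+a_n$, modulo $2$ reproduces exactly the stated criteria.

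The main obstacle is step (ii): establishing the central-character formula $\nu(\Gamma_\lambda)=\lambda(c)$ and, inside it, pinning down the reality type of the (half-)spin representation so that the mod-$2$ dependence on the spinor multiplicities couples correctly with the mod-$4$ (equivalently mod-$8$) dependence on $n$. Either route --- the explicit Clifford-algebra model with its Bott-periodic table, or a direct evaluation of the Frobenius--Schur integral using the Weyl character formula for the spinor character --- is classical but requires careful bookkeeping of normalizations; the complete argument can be found in \cite{fultonharris}.
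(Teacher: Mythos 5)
The paper's own ``proof'' of this theorem consists of the single line ``Please see \cite{fultonharris} for details,'' so there is no argument in the paper to compare against; what you have written is essentially the proof that the citation points to. Your outline is correct in structure and in the arithmetic. Detecting complex type via self-duality is right: $\Gamma_\lambda^\ast\cong\Gamma_{-w_0\lambda}$, with $-w_0=\mathrm{id}$ in type $B_n$ always and in type $D_n$ exactly when $n$ is even, while for $D_n$ with $n$ odd $-w_0$ is the diagram flip $\omega_{n-1}\leftrightarrow\omega_n$, giving the ``complex iff $a_{n-1}\neq a_n$'' clause. Your Clifford-algebra residue classes also check out: for $m=2n+1$ the spin module is real iff $m\equiv\pm1\pmod 8$, i.e.\ $n\equiv 0,3\pmod 4$, and quaternionic iff $m\equiv\pm3\pmod 8$, i.e.\ $n\equiv 1,2\pmod 4$; for $m=2n$ the two half-spins are both real when $n\equiv 0\pmod 4$, both quaternionic when $n\equiv 2\pmod 4$, and mutually dual complex when $n$ is odd. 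Feeding these through the central-character description of the Frobenius--Schur indicator, with the vector fundamental weights contributing $+1$ and the spinor contribution raised to the power $a_n$ (resp.\ $a_{n-1}+a_n$) modulo $2$, reproduces exactly the stated trichotomy. The one point you rightly flag as the real work is establishing $\nu(\Gamma_\lambda)=\lambda(c)$ for a fixed order-two central element $c$ of the simply connected cover; that identity is where the bookkeeping lives and is what Fulton--Harris supplies (e.g.\ around their Exercise~26.26 and the discussion of spin representations and Bott periodicity). So your proposal is a faithful expansion of what the paper outsources to the reference, with no gaps beyond the one you already identify and correctly defer.
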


\begin{proof}
Please see \cite{fultonharris} for details.
\end{proof}

Combining the following Tables \ref{table:2n} and \ref{table:2n+1} and this Theorem we know all the eigen-1-form spaces are real form.

\textbf{Step 5}
Now we put all the above together. All the eigen-1-form spaces of $S^n$ as an irreducible representation of $SO(n+1)$, $n\geq 4$, are listed in the table \ref{table:2n} and \ref{table:2n+1}. $\lambda$ is the highest weight. The $S^3$ and $S^2$ cases are listed in \ref{table:3} and \ref{table:2}. $S^3$ is separated since it has a different highest weight $kL_1-L_2$, which happens in general for $n$-forms in $S^{2n+1}$.

\begin{table}[H]
\centering \caption{Eigenvalues and their multiplicity of $S^{2n}$, where $n\geq2$. $l_i=\lambda_i+n-i$ and $m_i=n-i$.}
\label{table:2n} \begin{tabular}{c|c|c}
$\lambda$                                   & eigenvalues           & multiplicity \tabularnewline
\hline
$kL_1$, $k\geq1$                         & $k(k+2n-1) $           & $\Pi_{i<j} \frac{l^2_i-l^2_j}{m^2_i-m^2_j}$  \tabularnewline
$kL_1+L_2$, $k\geq1$                        & $(k+1)(k+2n-2)$     & $\Pi_{i<j} \frac{l^2_i-l^2_j}{m^2_i-m^2_j}$  \tabularnewline
\end{tabular}
\end{table}

\begin{table}[H]
\centering \caption{Eigenvalues and their multiplicity of $S^{2n+1}$, where $n\geq2$. $l_i=\lambda_i+n-i+1/2$ and $m_i=n-i+1/2$.}
\label{table:2n+1} \begin{tabular}{c|c|c}
$\lambda$                                   & eigenvalues           & multiplicity \tabularnewline
\hline
$kL_1$, $k\geq1$                         & $k(k+2n)$           & $\Pi_{i<j} \frac{l^2_i-l^2_j}{m^2_i-m^2_j}\Pi_i\frac{l_i}{m_i}$  \tabularnewline
$kL_1+L_2$, $k\geq1$                       & $(k+1)(k+2n-1)$     & $\Pi_{i<j} \frac{l^2_i-l^2_j}{m^2_i-m^2_j}\Pi_i\frac{l_i}{m_i}$  \tabularnewline
\end{tabular}
\end{table}

\begin{table}[H]
\centering \caption{Eigenvalues and their multiplicity of $S^3$.}
\label{table:3} \begin{tabular}{c|c|c}
$\lambda$                                   & eigenvalues           & multiplicity \tabularnewline
\hline
$kL_1$, $k\geq1$                         & $k(k+2)$           & $(k+1)^2$  \tabularnewline
$kL_1+L_2$, $k\geq1$                       & $(k+1)^2$     & $k(k+2)$  \tabularnewline
$kL_1-L_2$, $k\geq1$                       & $(k+1)^2$     & $k(k+2)$  \tabularnewline
\end{tabular}
\end{table}

\begin{table}[H]
\centering \caption{Eigenvalues and their multiplicity of $S^2$.}
\label{table:2} \begin{tabular}{c|c|c}
$\lambda$                                   & eigenvalues           & multiplicity \tabularnewline
\hline
$kL_1$ $k\geq 1$                 & $k(k+1)$           & $2(2k+1)$  \tabularnewline
\end{tabular}
\end{table}

Consider $SO(3)$ for example. In this case, $n=1$ and $\mathcal{M}=S^2$. From the analysis in cryo-EM \cite{hadani20091}, we know that the multiplicities of eigenvectors are $6,10,...$, which echoes the above analysis.

In conclusion, we can see the following compatible first few multiplicities:\\
$S^2$: 6, 10, 14\\
$S^3$: 4, 6, 9, 16, 16\\
$S^4$: 5, 10, 14\\
$S^5$: 6, 15, 20\\
$S^6$: 7, 21, 27

\end{document}